\documentclass[12pt]{article}
\usepackage[margin=1.0in]{geometry}
\usepackage[utf8]{inputenc}
\usepackage{tikz}
\usepackage{mathtools}
\usepackage{mathrsfs,amssymb,amsfonts,amsthm,amsmath,amsbsy}
\usepackage{inputenc}
\usepackage[english]{babel}
\usepackage[T1]{fontenc}
\usepackage{hyperref}
\usepackage{float}
\usepackage{ upgreek }
\usepackage{enumitem}
\usepackage{bm}
\usepackage{amssymb}
\usepackage{mdframed}
\usepackage{verbatim}
\usepackage{dsfont}
\usepackage{stmaryrd}
\usepackage{soul}
\usepackage{titlesec} 
\usepackage{listings}
\usepackage{lipsum}
\usepackage[export]{adjustbox}
\usepackage[
backend=biber,
style=numeric,maxnames=99
]{biblatex}
\newtheorem{theorem}{Theorem}[section]
\newtheorem{proposition}[theorem]{Proposition} 
\newtheorem{corollary}[theorem]{Corollary}

\newtheorem{lemma}[theorem]{Lemma}

\newtheorem{definition1}[theorem]{Definition}

\newtheorem{remark1}[theorem]{Remark}

\newcommand{\somme}[3]{\sum_{#1}^{#2} #3}

\newcommand{\Pf}{\mathbb{P}}

\author{ Tanguy Lions\footnote{ENS Lyon, tanguy.lions@ens-lyon.fr}}

\date{}
\title{\textbf{Local limits of uniform triangulations with boundaries in high genus}}
\begin{document}
\maketitle
\begin{abstract}
We study the local limits of uniform random triangulations with boundaries in the regime where the genus is proportional to the number of faces. Budzinski and Louf proved in \cite{Budzinski_2020} that when there are no boundaries, the local limits exist and are the Planar Stochastic Hyperbolic Triangulation (PSHT) introduced in \cite{PSHT}. 

We show that when the triangulations considered have size $n$ and boundaries with total length $p\to +\infty$ and $p=o(n)$, the local limits around a typical boundary edge are the half-plane hyperbolic triangulations defined by Angel and Ray in \cite{Angel_Ray}. This provides, for the first time, a construction of these hyperbolic half-plane triangulations as local limits of large genus triangulations. 

We also prove that under the condition $p = o(n)$, the local limit when rooted on a uniformly chosen oriented edge is given by the PSHT. Contrary to \cite{Budzinski_2020}, the latter does not rely on the Goulden-Jackson recurrence relation, but only on coarse combinatorial estimates. Thus, we expect that the proof can be adapted to local limits in similar models.
\end{abstract}
\begin{figure}[H]
\centering
\includegraphics[scale=0.13]{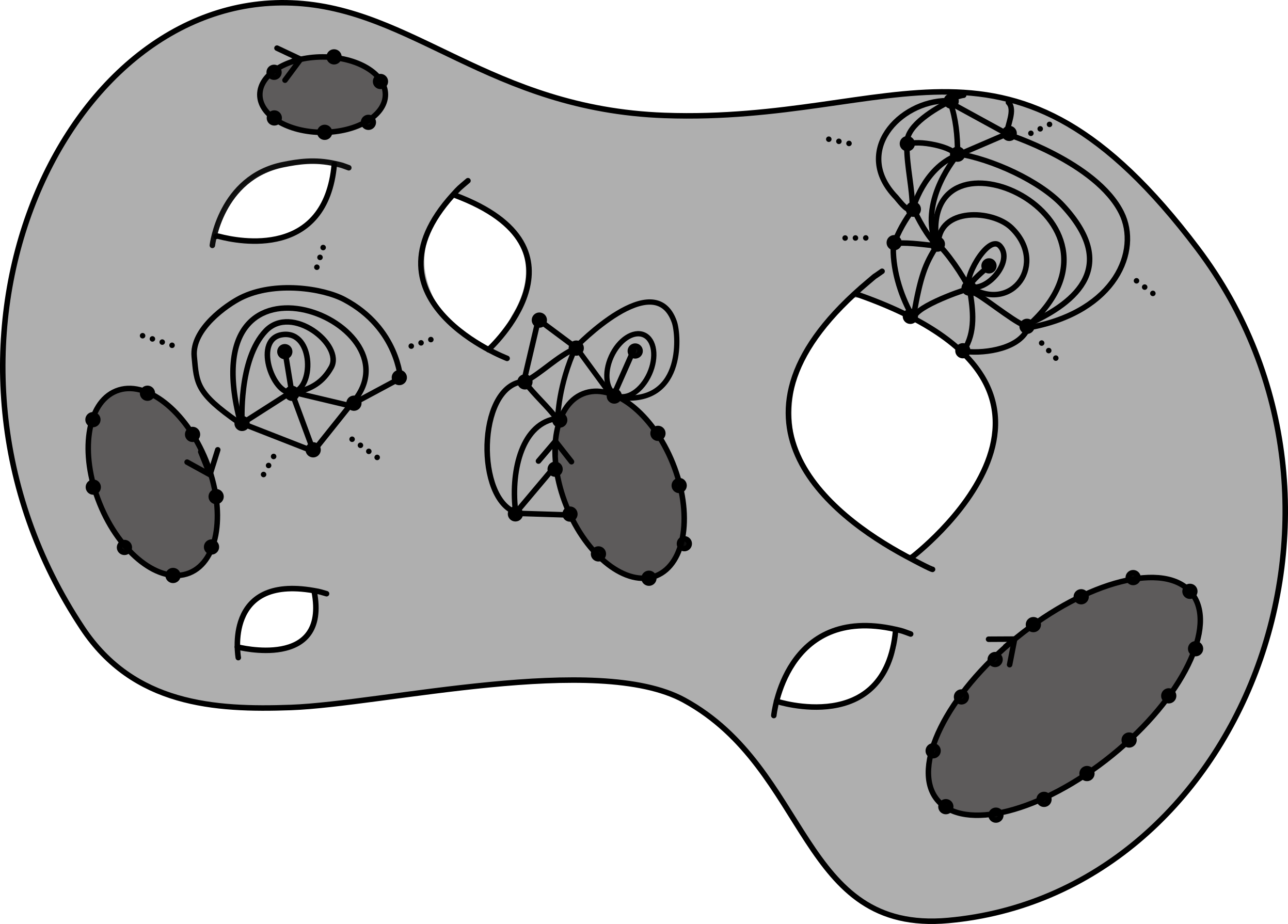}
\end{figure}

\section{Introduction}\label{introduction}
\paragraph{Enumeration of maps.}
Since the enumeration of planar maps by Tutte in the 1960s \cite{tutte}, maps have attracted a lot of interest from different communities within mathematics and theoretical physics. The enumeration of maps in terms of their size $n$ and their genus $g$ is linked to the topological recursion \cite{Eynard} and to solutions of several integrable hierarchies such as the KdV, the KP and $2$-Toda hierarchies \cite{Toda,Solitons,GOULDEN2008932}. These methods provide asymptotics for the enumeration of maps when $g$ is fixed and $n\to +\infty$ \cite{BENDER1986244}. When both $n,g \to +\infty$, these methods have been less successful so far, but the work of Budzinski and Louf \cite{Budzinski_2020} provides asymptotics for the number of triangulations in large genus up to sub-exponential factors.

\paragraph{Random maps.}
Simultaneously, probabilists have also been studying geometric properties of random maps chosen uniformly among a certain class. They particularly focused on certain regimes where the size $n$ becomes large. The planar case $g=0$ is now very well understood in terms of local and scaling limits. Various classes of models have been proved to converge locally to infinite random planar maps such as the UIPT \cite{Angel_UITP} (see also \cite{Stephenson} for type-I UIPT), the UIPQ \cite{krikun2006local,Chassaing_2006} or infinite Boltzmann planar maps \cite{budd2016peeling}. On the other hand, the Brownian sphere \cite{Le_Gall_2013,miermont} appears as the scaling limit of many models of random planar maps \cite{Marzouk,Berry_Albenque,Berry_Albenque2}. For $g \ge 0$, the Brownian sphere has analogues called Brownian surfaces \cite{brownian_surfaces} which have been recently proven in  \cite{bettinelli2022compact} to be the scaling limit of large quadrangulations of fixed genus $g \ge 0$ as $n \to +\infty$. The Brownian sphere is also linked to the Liouville quantum gravity (see \cite{duplantier_miller_sheffield_2021}) approach to random planar geometry.

Another regime that is well understood is when the genus is unconstrained and the map is a random uniform gluing of polygons. In this case, the number of vertices is typically very small, the mean degree of vertices goes to infinity and the genus concentrates close to the maximal possible value (\cite{CHMUTOV201623,unconstrained,Belyi}). Thus, there is no hope for a proper local or scaling limit result in this regime.

\paragraph{Random maps in large genus.}
The investigation of the regimes where $g \to +\infty$ was started much more recently. In the case of unicellular maps (i.e. maps with one face) many results are now known, such as the local convergence to a supercritical random tree \cite{local_unicellular}, the calculation of the diameter \cite{diameter_unicellular} and the convergence of the length spectrum \cite{length_spectrum_unicellular,length_spectrum_unicellular2}. Note that this last result seems to indicate a connection between large genus random maps and large genus random hyperbolic surfaces since the same limit for the length spectrum is obtained in both models \cite{length_spectrum_unicellular2,Mirzakhani_2019,Barazer_2025}. 

Beyond the unicellular case, Budzinski and Louf proved in \cite{Budzinski_2020} that large genus triangulations with a size proportional to the genus locally converge to the Planar Stochastic Hyperbolic Triangulations (PSHT) defined in \cite{PSHT}\footnote{Actually, in \cite{Budzinski_2020}, the authors consider type-I triangulations while the PSHT defined in \cite{PSHT} are type-II triangulations. Type-I refers to triangulations where loops are allowed and type-II to the case where loops are not allowed. The type-I analogue of \cite{PSHT} is defined in \cite{PSHTtypeI}.}. A similar local convergence result has been obtained in the case of arbitrary (even) face degrees in \cite{Budzinski2020LocalLO}. More recently, the paper \cite{budzinski2023distancesisoperimetricinequalitiesrandom}
shows that as soon as $\frac{g}{n} \to \theta \in (0,\frac{1}{2})$, the typical graph distances and the diameter are of logarithmic order with high probability. The present paper again focuses on the same regime $\frac{g}{n} \to \theta \in [0,\frac{1}{2})$ but where now, the triangulations have boundaries with perimeters $p_1^n,\cdots,p_{\ell}^n$ such that $\somme{i=1}{\ell}{p^n_i}\underset{n\to+\infty}{\longrightarrow}+\infty$. The local limit of these triangulations with boundaries is expected to be the half-plane analogue of the PSHT.
\paragraph{Planar/half-planar stochastic hyperbolic triangulations.}
Angel and Ray introduced in \cite{Angel_Ray} a one-parameter family of triangulations of the half-plane. This family splits into two subfamilies, one being \emph{subcritical} and the other \emph{supercritical}. In this paper, we will be interested in the supercritical models. The PSHT introduced in \cite{PSHT} were motivated as the plane analogue of these supercritical half-plane triangulations.\\
Here, we consider the type-I version of the half-plane triangulations of \cite{Angel_Ray}. They form a one-parameter family of triangulations of the half-plane $(\mathbb{H}_{\lambda})_{0 < \lambda \le \lambda_c}$  introduced in \cite{budzinski_geodesics}, where $\lambda_c = (12\sqrt{3})^{-1}$. For $0 < \lambda \le \lambda_c$, let us introduce $h \in (0,\frac{1}{4}]$ such that $\lambda = \frac{h}{(1+8h)^{\frac{3}{2}}}$. The triangulations $\mathbb{H}_{\lambda}$ are characterized by the probabilities of the events of the form $t \subset \mathbb{H}_{\lambda}$ where $t$ are maps of a certain form. More precisely, the maps $t$ considered are those with a finite number of triangles (the grey faces in Figure~\ref{halfplane}), one infinite boundary (the dark grey face in Figure~\ref{halfplane}) and one infinite hole (the white infinite face in Figure~\ref{halfplane}). For such a triangulation $t$, we denote by $|t_{\mathrm{in}}|$ the number of vertices of $t$ not on the infinite boundary (the green vertices in Figure~\ref{halfplane}) and by $|\partial^{*}t|-|\partial t|$ the length of the red segment minus the length of the blue segment in Figure~\ref{halfplane}. Finally, we write $t \subset \mathbb{H}_{\lambda}$ if $\mathbb{H}_{\lambda}$ can be obtained from $t$ by filling the infinite hole with a triangulation of the half-plane. The half-plane triangulations $(\mathbb{H}_{\lambda})_{0 < \lambda \le \lambda_c}$ are characterized by the spatial Markov property: for any such triangulation $t$, we have 
\begin{align*}
\Pf(t \subset \mathbb{H}_{\lambda}) = \bigg(8+\frac{1}{h}\bigg)^{|\partial^{*}t|-|\partial t|}\lambda^{|t_{\mathrm{in}}|}.
\end{align*}

\begin{figure}[H]
\centering
\includegraphics[scale=0.18]{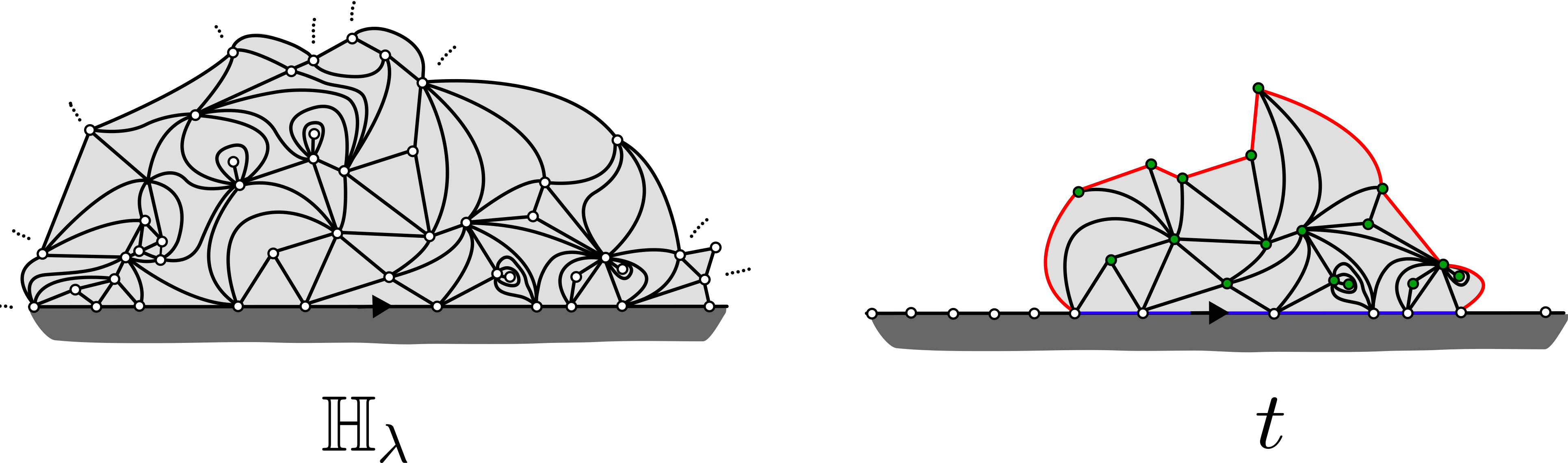}
\caption{On the left: a realisation of $\mathbb{H}_{\lambda}$. On the right, a triangulation $t$ with one infinite boundary (dark grey face) and one infinite hole (white face) such that $t \subset \mathbb{H}_{\lambda}$. On this example, $|t_{\mathrm{in}}| = 17$ denotes the number of green vertices and $|\partial^{*}t|-|\partial t| = 8 - 5 = 3$ denotes the number of red edges minus the number of blue edges.  }
\label{halfplane}
\end{figure}
We also introduce the type-I PSHT $(\mathbb{T}_{\lambda})_{0\le \lambda \le \lambda_c}$ as the full-plane analogue of the half-plane family $(\mathbb{H}_{\lambda})_{0 < \lambda \le \lambda_c}$. The family $(\mathbb{T}_{\lambda})_{0\le \lambda \le \lambda_c}$ was introduced in \cite{PSHTtypeI} and is characterized by the spatial Markov property: for any triangulation $t$ with $|t|$ vertices and one hole of perimeter $p$, we have 
\begin{align*}
\Pf(t \subset \mathbb{T}_{\lambda}) = C_{p}(\lambda)\lambda^{|t|},
\end{align*}
where $C_p(\lambda)$ is explicit (see Section~\ref{triangulation_of_the_plane}). For every $p \ge 1$, we will also need to consider a version $ \mathbb{T}^{(p)}_{\lambda}$ of $ \mathbb{T}_{\lambda}$ with a boundary of perimeter $p$. In that case, the spatial Markov property can be written as follows: for any triangulation $t$ with boundary of perimeter $p$, with $|t_{\mathrm{in}}|$ vertices not on the boundary and with one hole of perimeter $q$, we have 
\begin{align*}
\Pf(t \subset \mathbb{T}^{(p)}_{\lambda}) = \frac{C_{q}(\lambda)}{C_{p}(\lambda)}\lambda^{|t_{\mathrm{in}}|}.
\end{align*}
\paragraph{Uniform triangulations with boundaries.}
We now define the finite models we will be interested in. Fix integers $g,\ell \ge 0$, $n \ge 2g-1$ and $\mathbf{p} = (p_1,\dots,p_{\ell}) \in \mathbb{N}^{\ell}$. We write $|\mathbf{p}|= p_1+\cdots+ p_{\ell}$. We introduce the set of triangulations $\mathcal{T}_{\mathbf{p}}(n,g)$ defined as the set of maps $t$ such that:
\begin{itemize}
	\item[$\bullet$] $t$ has genus $g$ and $n+2-2g$ vertices.
	\item[$\bullet$] $t$ has $\ell$ distinguished faces $\partial_1,\cdots,\partial_{\ell}$ of perimeters $p_1,\cdots,p_{\ell}$ called the \emph{boundaries}, which are simple and vertex-disjoint, and each face $\partial_i$ is equipped with a distinguished oriented edge $e_i$ such that $\partial_i$ lies to the right of $e_i$.
	\item[$\bullet$] The faces of $t$ that are not boundaries have degree $3$.
\end{itemize}  
For $t \in \mathcal{T}_{\mathbf{p}}(n,g)$ and $e$ an oriented edge of $t$, the pair $(t,e)$ is viewed as the triangulation $t$ rooted at $e$. For a sequence $(t_n,e^n)_{n \ge 0}$ of rooted triangulations, we say that the sequence converges locally to a rooted triangulation $(t,e)$ if, for any $r \ge 0$, the ball of radius $r$ around $e^n$ seen as a map is eventually equal to the ball of radius $r$ around $e$ in $t$. See Section~\ref{preliminaries} for more precise definitions. We denote by $T_{n,g,\mathbf{p}}$ an element of $\mathcal{T}_{\mathbf{p}}(n,g)$ chosen uniformly at random. For any $\lambda \in (0,\lambda_c]$, let $h = h(\lambda)$ be the unique $h \in (0,\frac{1}{4}]$ such that $\lambda = \frac{h}{(1+8h)^{\frac{3}{2}}}$ and let
\begin{align*}
d(\lambda) = \frac{h \log\Big(\frac{1+\sqrt{1-4h}}{1-\sqrt{1-4h}}\bigg)}{(1+8h)\sqrt{1-4h}}.
\end{align*} 
It is shown in \cite{Budzinski_2020} that $d(\lambda)$ is increasing with $d(\lambda_c) = \frac{1}{6}$ and $\lim_{\lambda \to 0}d(\lambda) = 0$, and that the quantity $d(\lambda)$ denotes the expectation of the inverse of the root degree in $\mathbb{T}_{\lambda}$. Our main theorems are local convergence results, both around a typical oriented edge and around a typical oriented edge chosen on a boundary. The main new result is the following theorem.
    \begin{theorem}\label{local_limit_boundary}
    Fix $\theta \in [0,\frac{1}{2})$, $\displaystyle \frac{g_n}{n} \underset{n \to +\infty}{\longrightarrow}\theta$ and $\mathbf{p}^{n} = (p^n_{1},\cdots,p^n_{\ell_n})$ such that $|\mathbf{p}^{n}| = o(n)$ and $\displaystyle \frac{|\mathbf{p}^{n}|}{\ell_n}  \underset{n \to +\infty}{\longrightarrow}+\infty$. Denote by $e^n$ a uniformly chosen oriented edge on the union $\partial_1\cup \cdots \cup \partial_{\ell_n}$ of the boundaries of $T_{n,g_n,\mathbf{p}^{n}}$. Then, the following convergence holds for the local topology
\begin{align*}
   (T_{n,g_n,\mathbf{p}^{n}},e^n) \overset{(d)}{\to}\mathbb{H}_{\lambda(\theta)},
\end{align*}
where $\lambda(\theta)$ is the unique solution to the equation 
\begin{align}\label{deflambdatheta}
    d(\lambda(\theta)) = \frac{1-2\theta}{6}.
\end{align}
    \end{theorem}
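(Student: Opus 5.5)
We follow the strategy of Budzinski and Louf, but rooted on the boundary, and replace exact enumeration by coarse combinatorial estimates combined with the spatial Markov characterization of $\mathbb{H}_\lambda$.

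\emph{Step 1: tightness and planarity of the limit.} We first show that the sequence $(T_{n,g_n,\mathbf{p}^n},e^n)$ is tight for the local topology. Tightness amounts to bounding the degrees of vertices near the root uniformly. We would get this from a rerooting or double-counting argument using that the mean degree is bounded (by Euler's formula). Next we show that every subsequential limit is a.s. a one-ended triangulation of the half-plane. Here we use $|\mathbf{p}^n|/\ell_n\to\infty$: a uniform boundary edge lies on a boundary of length tending to infinity in probability, so in the limit its face has infinite degree. Planarity and one-endedness should follow from a peeling argument. Any small cycle that is non-contractible, or that separates a finite component, has small probability, by comparing the counts $\tau_{\mathbf{p}}(n,g)$ for nearby parameters.

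\emph{Step 2: the weak Markov property.} For a fixed finite map $t$ with one infinite boundary and one hole, the event $t\subset T_{n,g_n,\mathbf{p}^n}$ is computed by removing $t$. Exactly, $\Pf(t\subset T_n)$ is a ratio of the form
\[
\frac{\tau_{\mathbf{p}'}(n-|t|,g_n)}{\tau_{\mathbf{p}}(n,g_n)}.
\]
Here $\mathbf{p}'$ replaces $p_i$ by $p_i+|\partial^*t|-|\partial t|$. We would need the ratio asymptotics
\[
\frac{\tau_{\mathbf{p}+k\mathbf{e}_i}(n-m,g)}{\tau_{\mathbf{p}}(n,g)}\to \beta^{k}\lambda^{m},
\]
along subsequences, for some limits $\beta,\lambda$ independent of $t$. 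Getting exact ratio limits is hard, so we would instead argue as in the PSHT case. Any subsequential limit satisfies the spatial Markov property with some random parameters $(\lambda,\beta)$, obtained as subsequential limits of ratios (possibly after conditioning). A classification result then says that half-plane triangulations with this weak Markov property are mixtures of $\mathbb{H}_\lambda$ with $\beta=8+1/h$, for $\lambda\in(0,\lambda_c]$. Monotonicity of $\tau$ in $n$ and $p$, obtained from injections such as adding a vertex or gluing a triangle, guarantees the ratios are bounded. This allows a diagonal extraction.

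\emph{Step 3: identifying $\lambda$.} We compare with the non-boundary result stated in the paper: rooting at a uniform oriented edge gives the limit $\mathbb{T}_{\lambda(\theta)}$. Intuitively, the ratio $\tau(n-1,g)/\tau(n,g)$ governs both limits. The parameter $\lambda$ in $\mathbb{H}$ should coincide with the PSHT parameter, because both come from the same ratio and $|\mathbf{p}|=o(n)$ makes the boundary perturbation negligible. Concretely, we would show that the random $\lambda$ of Step 2 equals the one appearing in the bulk limit. To do this, we peel from the boundary root until we reach a vertex far away. The bulk there looks like $\mathbb{T}_\lambda$ with the same $\lambda$, and $\lambda$ is pinned to $\lambda(\theta)$ by the Euler-formula identity $\mathbb{E}[1/\deg]=(1-2\theta)/6$.

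\emph{Main obstacle.} The hardest step is Step 3, together with ruling out randomness of $\lambda$ in the mixture. Specifically, one must show that the boundary-rooted limit and the edge-rooted limit share the same deterministic parameter. My first attempt would be a coupling: in $T_n$, the peeling exploration from $e^n$ and the one from a uniform edge see the same ratio $\tau_{\mathbf{p}}(n-1,g)/\tau_{\mathbf{p}}(n,g)$. This ratio would be shown to concentrate via log-concavity-type coarse estimates in $n$. Its limit must then equal $\lambda(\theta)$ by the known bulk result.
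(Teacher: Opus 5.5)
Your proposal has two genuine gaps, and it places the main difficulty in the wrong step.

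\emph{First gap: Step 1, which is where almost all of the work lies.} Neither tool you propose reaches it. Euler's formula bounds the average degree over all vertices. But $\rho^n$ is a boundary vertex, and boundary vertices are a vanishing fraction of the map. Translation invariance along the boundaries therefore only gives $\mathbb{E}[\deg \rho^n]\lesssim 6n/|\mathbf{p}^n|\to\infty$.

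More importantly, the real obstructions to a half-plane limit are local configurations near the root:
\begin{itemize}
\item the triangle behind a boundary edge has its third vertex on another boundary (event $(\mathcal{O})$);
\item that third vertex lies far along the same boundary, so that $\partial_i$ folds onto itself at macroscopic scale (event $(\mathcal{H}_k)$);
\item a small hole is filled by a large or non-planar piece.
\end{itemize}
``Comparing counts for nearby parameters'' cannot control $(\mathcal{O})$ or $(\mathcal{H}_k)$. The perimeters change by unbounded amounts: for instance $\partial_i$ and $\partial_j$ merge into a single boundary of length $p_i+p_j+1$. There are also order $|\mathbf{p}^n|$ positions for the third vertex, while Lemma~\ref{bounded_ratio_vertices} only controls $O(1)$ perimeter changes.

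The paper needs a genuinely new argument here (Proposition~\ref{no_event_H_or_O}). Suppose a positive fraction of boundary edges are bad. After ordering the discoveries as in Proposition~\ref{discover_nsep_first}, one of two things happens.
\begin{itemize}
\item The bad triangles cross each other a lot. This lowers $2g+\ell$ by order $|\mathbf{p}^n|$, and a counting argument rules it out; this is where Goulden--Jackson enters.
\item A positive fraction of boundary edges satisfy $(\mathcal{C}_{L,p})$, i.e.\ they see a linear chain of nearby bad triangles. This is atypical by the planar estimate of Lemma~\ref{intermediate_lemma}.
\end{itemize}
This result is combined with Proposition~\ref{Finite_holes_filled_by_finite_maps}, which itself rests on the one-endedness of $\mathbb{T}_\lambda$ through Proposition~\ref{combi_estimate_rooted_middle}. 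Only then does an induction over peeling steps (Propositions~\ref{no_BAD_steps} and~\ref{discover_balls}) yield tightness and the half-plane structure.

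\emph{Second gap: your classification in Step 2 is false.} By Proposition~\ref{classification_halfplane}, weakly Markovian half-plane triangulations are mixtures of $\mathbb{H}_\lambda$ (with $\beta=8+1/h$) \emph{and} of the subcritical family $\widetilde{\mathbb{H}}_\lambda$ (with $\beta=32h+4$). Pinning $\lambda=\lambda(\theta)$ therefore still allows $x\,\widetilde{\mathbb{H}}_{\lambda(\theta)}+(1-x)\,\mathbb{H}_{\lambda(\theta)}$. Nothing in your plan determines $\beta$:
\begin{itemize}
\item the ratio $\tau_{\mathbf{p}}(n-1,g)/\tau_{\mathbf{p}}(n,g)$ is blind to $\beta$;
\item the bulk results give perimeter ratios only for a fixed extra boundary $p$, not for $p^n_i\to\infty$, and interchanging these limits is unjustified.
\end{itemize}
For $\theta>0$, the paper excludes $x>0$ by a surgery on peeling diagrams for the leftmost algorithm $\mathcal{A}_{\mathrm{left}}$. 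In $\widetilde{\mathbb{H}}_{\lambda(\theta)}$, the root triangle swallows a planar region containing at least $r$ boundary edges with probability of order $r^{-1/2}$. In $T_{n,g_n,\mathbf{p}^n}$, re-grafting the swallowed tree at one of the $g_n\ge \theta n/2$ type V steps gives an injection showing that this probability is $O(r^{-1})$ (Lemma~\ref{no_fattrees_locally}).

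\emph{What you call the main obstacle is actually the easy step.} Take $t$ with $|\partial^{*}t|=|\partial t|$ and $r$ inner vertices. The probability that $t\subset(T_{n,g_n,\mathbf{p}^n},e^n)$ is exactly $\tau_{\mathbf{p}^n}(n-r,g_n)/\tau_{\mathbf{p}^n}(n,g_n)$, whichever boundary carries the root. This ratio tends to $\lambda(\theta)^r$ by Proposition~\ref{application_cv_ratio}, which is read off directly from the bulk limit; no coupling, concentration or log-concavity is needed. The method of moments then forces the $\lambda$-marginal to be $\delta_{\lambda(\theta)}$ (Lemma~\ref{lambda_deter}).
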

 The extra hypothesis $\displaystyle \frac{|\mathbf{p}^{n}|}{\ell_n} \underset{n \to +\infty}{\to}+\infty$ ensures that a typical boundary has a size that tends to $+\infty$ (in probability). This theorem answers in the affirmative conjectures in \cite{Angel_Ray}\cite[open question 8.2]{Stflour} stating that the hyperbolic triangulations $\mathbb{H}_{\lambda}$ of the half-plane can be obtained as local limits of large genus triangulations with a boundary length that also tends to infinity. We expect that the result should hold if we choose our root edge to be the distinguished edge $e^n_1$ on the boundary $\partial_1$ and we only assume $p^n_1 \underset{n \to +\infty}{\longrightarrow} +\infty$.

In the regime where $|\mathbf{p}^{n}| \sim \alpha n$ with $\alpha > 0$, we still expect to see a local convergence result. However, in that case, we conjecture that the limit should be a triangulation with infinitely many infinite faces. No canonical such model has yet been studied in the literature. We will investigate this problem in a future work.

Now we give another local convergence result when $e^n$ is chosen uniformly at random among all edges of $T_{n,g_n,\mathbf{p}^n}$. In \cite[Theorem $1.1$]{Budzinski_2020}, the authors treat the case $\mathbf{p}^n = \emptyset$, here we generalize the result to the case $|\mathbf{p}^{n} |=o(n)$. 
\begin{theorem}\label{local_limit_middle}
    Fix $\theta \in [0,\frac{1}{2})$, $\displaystyle \frac{g_n}{n} \to \theta$ and $\mathbf{p}^{n} = (p^n_{1},\cdots,p^n_{\ell_n})$ such that $|\mathbf{p}^{n}|= o(n)$. Let $e^n$ denote an oriented edge chosen uniformly at random in $T_{n,g_n,\mathbf{p}^{n}}$. Then the following convergence holds for the local topology
    \begin{align*}
       &(T_{n,g_n,\mathbf{p}^{n}},e^n) \overset{(d)}{\to}\mathbb{T}_{\lambda(\theta)},
       \end{align*}
 where $\lambda(\theta)$ is the unique solution to the equation 
\begin{align*}
    d(\lambda(\theta)) = \frac{1-2\theta}{6}.
\end{align*}
Moreover, let us also fix $p \ge 1$. Consider $T_{n,g_n,(p,\mathbf{p}^{n})}$ and let $e^n_1$ denote the distinguished edge on the first boundary of $T_{n,g_n,(p,\mathbf{p}^{n})}$. Then the following convergence holds for the local topology
    \begin{align*}
(T_{n,g_n,(p,\mathbf{p}^{n})},e^n_1) \overset{(d)}{\to}\mathbb{T}^{(p)}_{\lambda(\theta)}.
 \end{align*}
    \end{theorem}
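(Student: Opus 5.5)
We follow the strategy of Budzinski and Louf: first prove tightness and planarity of subsequential limits, then identify the limit through the spatial Markov property, and finally pin down the parameter $\lambda$ by a root-degree counting argument. The Goulden--Jackson recurrence is replaced by coarse ratio estimates on the counts $\tau_{\mathbf{p}}(n,g)=|\mathcal{T}_{\mathbf{p}}(n,g)|$.

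\emph{Tightness.} Since the mean degree in $T_{n,g_n,\mathbf{p}^n}$ is bounded (Euler's formula together with $|\mathbf{p}^n|=o(n)$), tightness of the root degree under a uniform edge is immediate. For higher balls we use the usual bounded-ratio argument: removing or adding a bounded neighbourhood changes $\tau_{\mathbf{p}}(n,g)$ only by factors that are bounded polynomially in the perimeter. For a uniformly chosen edge, the $o(n)$ boundary edges are missed with high probability, so the boundaries $\partial_i$ do not appear in the local picture.

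\emph{Planarity and weak Markov property.} Subsequential limits are a.s. planar and one-ended. The argument follows \cite{Budzinski_2020}: a bounded ball containing a handle would cost a factor of order $1/n$ under the rough bounds $\tau_{\mathbf{p}}(n,g-1)/\tau_{\mathbf{p}}(n,g)=O(1)\cdot\tfrac{1}{n}$-type comparisons, summed over the $O(n)$ possible positions. Next, peel the finite triangulation. For a fixed planar map $t$ with one hole of perimeter $q$, the event $t\subset T$ has probability
\[
\frac{\tau_{(q,\mathbf{p}^n)}(n-|t|',g_n)}{\tau_{\mathbf{p}^n}(n,g_n)}\cdot(\text{root-choice factor}).
\]
Subsequentially, the ratios $\tau_{(q,\mathbf{p})}(n-k,g)/\tau_{\mathbf{p}}(n,g)$ converge to some $a_q\lambda^k$. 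This requires showing that the ratio for removing one vertex, $\tau_{\mathbf{p}}(n-1,g)/\tau_{\mathbf{p}}(n,g)$, has a subsequential limit that does not depend on the boundary data. The limit then satisfies the weak Markov property, and the classification of \cite{PSHTtypeI,Budzinski_2020} identifies it as a mixture $\mathbb{T}_{\Lambda}$ with $\Lambda$ random in $[0,\lambda_c]$.

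\emph{Identifying $\Lambda$.} Count edges via inverse degrees. We have
\[
\mathbb{E}\big[1/\deg(\text{root})\big]=\frac{\#\text{vertices}}{2\#\text{edges}}\to\frac{1-2\theta}{6},
\]
again using $|\mathbf{p}^n|=o(n)$ so that the boundary corrections vanish. This gives $\mathbb{E}[d(\Lambda)]=\frac{1-2\theta}{6}$. To conclude that $\Lambda$ is deterministic, we argue as in \cite{Budzinski_2020}. Local observables are concentrated because the triangulation can be split into two halves with $g$ and $n$ proportionally shared; equivalently, one shows $\Lambda$ is a.s. constant through the monotonicity bound $\tau(n+1,g)\tau(n-1,g)\ge c\,\tau(n,g)^2$-type log-concavity. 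This is the main obstacle. Without the Goulden--Jackson recurrence, we would first try a surgery/injection argument giving approximate log-concavity in $n$ at fixed $g/n$, which forces the vertex-removal ratio to converge along the full sequence. By strict monotonicity of $d$ we then get $\Lambda=\lambda(\theta)$.

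\emph{The statement rooted at $e^n_1$.} For the second statement, the same ratio limits apply to $\tau_{(q,p,\mathbf{p}^n)}$ versus $\tau_{(p,\mathbf{p}^n)}$. These give exactly
\[
\frac{C_q(\lambda)}{C_p(\lambda)}\,\lambda^{|t_{\mathrm{in}}|},
\]
once we know $a_q\propto C_q(\lambda)$. That proportionality follows from the first part, since the limit there is $\mathbb{T}_\lambda$.
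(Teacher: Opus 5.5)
The overall architecture is right, but the two steps the paper actually has to work for are asserted rather than proved.

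\emph{Planarity, one-endedness, tightness.} You call the bound $\tau_{\mathbf{p}}(n,g-1)/\tau_{\mathbf{p}}(n,g)=O(1/n)$ a rough estimate, but it does not follow from coarse estimates. In \cite{Budzinski_2020} this input comes from the Goulden--Jackson recursion, which you say you avoid. The coarse bounds (Lemma~\ref{estimate_tau}) only give $e^{2Cn}n^{-2}$ for this ratio, which does not tend to $0$. Also, a cost $1/n$ summed over $O(n)$ positions would give $O(1)$, not $o(1)$.

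The missing idea is the amplification of Lemma~\ref{T_is_almost_surely_planar_rooted_middle}. Write $T_n=T_{n,g_n,\mathbf{p}^n}$ and suppose a fixed non-planar $t$ had $\Pf(t\subset(T_n,e^n))\ge 3\varepsilon$. Take $cn$ independent uniform roots; they are pairwise far apart and far from the boundaries by Lemma~\ref{T_do_not_see_boundaries__rooted_middle} and Proposition~\ref{two_uniform_edges_stay_far}. Then with probability at least $\varepsilon/2$ one finds $\frac{\varepsilon}{2}cn$ disjoint copies of $t$. Cutting them out lowers the genus by order $n$, and the resulting factor $n^{-\Theta(n)}$ beats the $e^{O(n)}$ slack, the $2^{cn}$ choices of index sets and the partition counts.

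One-endedness is likewise only asserted, and your tightness claim is incorrect. Bounded mean degree does not make the degree of the origin of a uniform oriented edge tight, because that vertex is size-biased by its degree. Missing the $o(n)$ boundary edges is also weaker than being at distance $>r$ from them, which needs the path-cutting injection of Lemma~\ref{T_do_not_see_boundaries__rooted_middle}.

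The paper instead proceeds as follows. It proves tightness only for $d^{*}_{\mathrm{loc}}$. It then uses the classification of possibly multi-ended weakly Markovian triangulations (Theorem~\ref{weakly_markovian_mixture}) to get $\mathbb{T}_\Lambda$ with $\Lambda\in[0,\lambda_c]\cup\{\star\}$. Finally it excludes $\Lambda\in\{0,\star\}$ by a peeling argument using only Lemma~\ref{bounded_ratio_vertices}. This yields one-endedness and finite degrees, hence $d_{\mathrm{loc}}$ convergence. Your proposal leaves $\Lambda=0$ open. Note also that weak Markovianity needs no ratio convergence: it is automatic, since $\Pf(t\subset(T_n,e^n))$ depends only on the vertex count and hole perimeters of $t$.

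\emph{Identifying $\Lambda$.} The identity $\mathbb{E}[d(\Lambda)]=\frac{1-2\theta}{6}$ does not determine $\Lambda$, and the log-concavity route is left as something you ``would try''. As stated it would not work. An inequality $\tau(n+1,g)\tau(n-1,g)\ge c\,\tau(n,g)^2$ with $c<1$ forces neither monotonicity nor convergence of the ratio. It is also at fixed $g$, whereas you need the ratio along $(n,g_n)$.

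The missing idea is the two-root argument of Proposition~\ref{deterministic_Lambda}:
\begin{itemize}
\item Take two independent uniform roots $e^n_1,e^n_2$, which are far apart and far from the boundaries. The proof of \cite[Proposition~18]{Budzinski_2020} then shows both local limits carry the same $\Lambda$.
\item By \cite[Lemma~20]{Budzinski_2020}, $\Lambda$ can asymptotically be read off the rooted map through a measurable function $\ell$. Weight by $f(\ell(T_n,e^n_2))$ and use that $\mathbb{E}[1/\deg(\rho^n_1)\mid T_n,e^n_2]=\#V/(2\#E)$ is deterministic.
\item This gives $\mathbb{E}[d(\Lambda)f(\Lambda)]=\frac{1-2\theta}{6}\mathbb{E}[f(\Lambda)]$ for all bounded continuous $f$. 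Hence $d(\Lambda)=\frac{1-2\theta}{6}$ almost surely, and $\Lambda=\lambda(\theta)$ by strict monotonicity of $d$.
\end{itemize}

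Your last step is fine and is what the paper does: you deduce the statement rooted at $e^n_1$ as a ratio of two cylinder probabilities given by the first part.
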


\paragraph{Motivation: long-term exploration of uniform triangulations.}
Let us give a potential application of our theorems. First, one can discover $T_{n,g_n}$ triangle by triangle in a Markovian fashion. This yields a growing sequence of triangulations $t_1\subset \cdots t_k\subset\cdots$, called the \emph{peeling exploration} of $T_{n,g_n}$. Depending on the order in which the triangles are discovered, we can keep track of different geometric information. However, the transition probabilities appearing in the exploration are very hard to understand, which makes the process hard to study. Using \cite[Theorem $1.1$]{Budzinski_2020} or equivalently Theorem~\ref{local_limit_middle}, we have $T_{n,g_n} \to \mathbb{T}_{\lambda(\theta)}$. Thus, as long as we restrict to studying a bounded number of peeling steps, the transition probabilities converge to the ones of $\mathbb{T}_{\lambda(\theta)}$ which are explicit. Still, understanding the exploration for an unbounded number of steps is out of reach using only \cite{Budzinski_2020}. Using Theorem~\ref{local_limit_boundary}, one can understand more steps in the peeling exploration. Indeed, for $k \to +\infty$ and $k = o(n)$ assuming that $T_{n,g_n} \backslash t_k$ is connected, then $T_{n,g_n} \backslash t_k \overset{(d)}{=} T_{n-o(n),g_n-o(n),\mathbf{p}^{n}_k}$ for some random $\mathbf{p}^{n}_k$ that represents the perimeters of the holes of $t_k$. Moreover, Theorem~\ref{local_limit_boundary} gives $(T_{n-o(n),g_n-o(n),\mathbf{p}^{n}_k},e^n) \to \mathbb{H}_{\lambda(\theta)}$ when rooted on a typical edge on the boundary, which ensures that we can understand the peeling transitions for an unbounded number of steps, provided $k = o(n)$.

 In a future work, we plan to use this to study the typical distances in $T_{n,g_n,\mathbf{p}^{n}}$ by discovering the balls of radius $r$ with such a Markovian exploration. We hope this will make more precise some results obtained in \cite{budzinski2023distancesisoperimetricinequalitiesrandom} on the logarithmic order of magnitude of these distances.
 
 Finally, note that as in \cite{Budzinski_2020}, our results provide some combinatorial estimates (see Proposition~\ref{combi_estimate_rooted_middle}), which we hope can be of independent interest.

\paragraph{Strategy of the proof.}\label{strategy_of_proof}
The proofs of Theorem~\ref{local_limit_boundary} and Theorem~\ref{local_limit_middle} are quite robust and rely mainly on coarse combinatorial estimates that hold for many classes of large genus random maps. In particular, the proof of Theorem~\ref{local_limit_middle} does not rely on the Goulden-Jackson recursion formula obtained in \cite{GOULDEN2008932}.

In Section~\ref{section_middle}, we prove Theorem~\ref{local_limit_middle}. The main new argument is the proof that the limit is planar. The rest of the proof is standard and follows the same strategy as in \cite{Budzinski_2020}, so we do not describe it here. To prove the planarity, one roughly needs to prove that the probability $\Pf(t \subset (T_{n,g_n,\mathbf{p}^{n}},e^n))$, where $t$ is a finite triangulation of genus $1$, goes to $0$ as $n \to +\infty$. Let us explain how to prove that. One can write this probability in terms of combinatorial estimates and gets a quantity that roughly looks like:
\begin{center}
$\displaystyle \frac{\tau_{\mathbf{p}^{n}}(n-m,g_n-1)}{\tau_{\mathbf{p}^{n}}(n,g_n)}$, for some fixed $m \ge 0$.
\end{center} 
By standard estimates, there is an absolute constant $C > 0$ such that the following crude estimate holds:
\begin{align}\label{crude_estimate}
\exp(-Cn)n^{2g_n} \le \tau_{\mathbf{p}^{n}}(n,g_n) \le \exp(Cn)n^{2g_n}.
\end{align}
 Thus, we find 
\begin{center}
$\displaystyle \frac{\tau_{\mathbf{p}^{n}}(n-m,g_n-1)}{\tau_{\mathbf{p}^{n}}(n,g_n)} \le \exp(2Cn)n^{-2}$.
\end{center}
Unfortunately, the right-hand side does not tend to $0$. We now describe one of the main new ideas of the paper, which solves this issue. Let us fix $\varepsilon > 0 $ and assume that $\Pf(t \subset (T_{n,g_n,\mathbf{p}^{n}},e^n)) \ge \varepsilon > 0$ for $n$ large enough. Since $e^n$ is uniform, there is a probability at least $\varepsilon/2$ that $T_{n,g_n,\mathbf{p}^{n}}$ contains $\eta n$ copies of $t$  called $t^1,\cdots,t^{\eta n}$, with $\eta > 0$ a small constant. Moreover, up to choosing $\eta$ smaller, we may assume that these copies are disjoint. We can rewrite the probability to have the copies $t^1,\cdots,t^{\eta n}$ contained in $T_{n,g_n,\mathbf{p}^n}$ in terms of combinatorics and obtain a quantity that looks like 
\begin{center}
$\displaystyle \frac{\tau_{\mathbf{p}^{n}}(n-\eta nm,g_n-\eta n)}{\tau_{\mathbf{p}^{n}}(n,g_n)} $.
\end{center}
 By the above estimates \eqref{crude_estimate} we have the bound 
\begin{align*}
\displaystyle \frac{\tau_{\mathbf{p}^{n}}(n-\eta nm,g_n-\eta n)}{\tau_{\mathbf{p}^{n}}(n,g_n)} \le \exp(2Cn)n^{-2\eta n}.
\end{align*}
Taking into account all the possible locations for the copies $t^1,\cdots,t^{\eta n}$ multiplies this quantity by a factor $n^{\eta n}$. We conclude using the fact that $\exp(2Cn)n^{-\eta n} \to 0$. This proves $\Pf(t \subset (T_{n,g_n,\mathbf{p}^{n}},e^n)) \le \varepsilon$ and thus the local planarity. In words, taking many copies of $t$ makes the error factor $\exp(Cn)$ negligible compared to the reduction of the genus by $\eta n$ which divides the volume by $n^{2\eta n}$. This argument still works if we replace $(T_{n,g_n,\mathbf{p}^{n}})_{n \ge 1}$ with any model of triangulations $(\tilde{T}_n)_{n \ge 0}$ that is invariant under rerooting and such that $\tilde{T}_n$ is absolutely continuous with respect to $T_{n,g_n,\mathbf{p}^{n}}$ and such that $\displaystyle \frac{\mathrm{d}\tilde{T}_n}{\mathrm{d}T_{n,g_n,\mathbf{p}^{n}}} \le \exp(Cn)$ for some absolute constant $C > 0$. In particular, one would expect this argument to apply to high genus triangulations equipped with a decoration such as an Ising model.\\

In Section~\ref{section_boundary}, we prove Theorem~\ref{local_limit_boundary}. The proof is very technical here, and the difficulties are very different from those encountered in Theorem~\ref{local_limit_middle}. Indeed, the planarity comes for free, since the planarity in Theorem~\ref{local_limit_middle} gives the estimate
\begin{center}
$\displaystyle \frac{\tau_{\mathbf{p}^{n}}(n,g_n-1)}{\tau_{\mathbf{p}^{n}}(n,g_n)}\to 0$.
\end{center} 
 We also easily get the one-endedness, again by using combinatorial estimates that follow from Theorem~\ref{local_limit_middle}. Let us write $\partial_i$ for the boundary such that $e^n \in \partial_i$. The main difficulty here is to exclude the following two pathological cases that are closely related to the geometry of the boundaries (see Figure~\ref{pathogical_cases}):
 \begin{enumerate}
   \item A small neighbourhood of $e^n$ intersects another boundary $\partial_j$ for $i \neq j$.
   \item A small neighbourhood of $e^n$ sees a part of $\partial_i$ that is far from $e^n$ along $\partial_i$, i.e. the boundary $\partial_i$ folds onto itself near $e^n$.  
\end{enumerate}
\begin{figure}[H]
\centering
\includegraphics[scale=0.25]{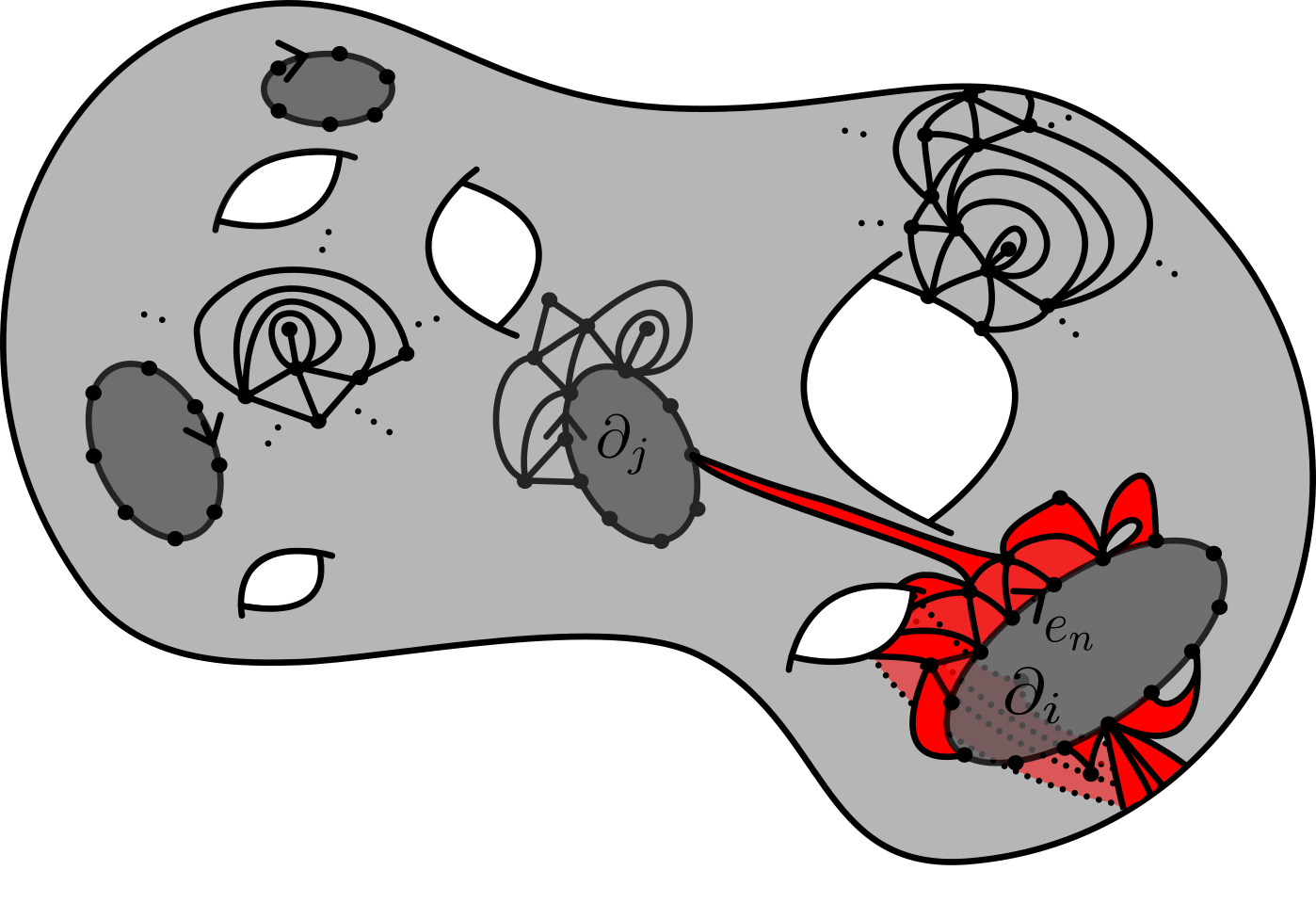}
\caption{The red neighbourhood of the edge $e^n$ touches another boundary and the opposite side of the boundary $\partial_i$. These are the two pathological cases to exclude.}
\label{pathogical_cases}
\end{figure}
 These cases cannot be ruled out using Theorem~\ref{local_limit_middle}, since when $e^n$ is chosen uniformly at random on $T_{n,g_n,\mathbf{p}^{n}}$, it typically lies far from the boundaries. These two cases are excluded in Section~\ref{subsection_pathological} which is the most technical part of this work. Finally, there remains to identify the limit. In \cite{Angel_Ray}, the authors show that the random triangulations of the half-plane that satisfy a nice Markov property form a one-parameter family\footnote{Their work is done in the type $II$ case. In the type-$I$ case the family obtained with their work is much larger. However, with a stronger Markov property that we use here (see Definition~\ref{weak_Markovian_halfplane}) we recover a one-parameter family.} $(H_{\alpha})_{0 < \alpha < 1}$. This family splits into two subfamilies that behave very differently, one being \emph{subcritical} and one \emph{supercritical}:
 \begin{itemize}
 	\item[$\bullet$] The elements of the family $(H_{\alpha})_{\alpha < \frac{1}{\sqrt{3}}}$ are \emph{subcritical}, i.e. the half-plane triangulations look like critical Galton-Watson trees.
 	\item[$\bullet$] The elements of the family $(H_{\alpha})_{\alpha > \frac{1}{\sqrt{3}}}$ are \emph{supercritical}. They share many properties with hyperbolic graphs such as exponential volume growth, positive anchored expansion...
\end{itemize}  
Using structural results on half-plane triangulations satisfying a Markov property, we can show that any subsequential limit in Theorem~\ref{local_limit_boundary} is a mixture of the half-plane triangulations $(H_{\alpha})_{0 < \alpha < 1}$.
Then, using Theorem~\ref{local_limit_middle}, we obtain $$\displaystyle \frac{\tau_{\mathbf{p}^{n}}(n-1,g_n)}{\tau_{\mathbf{p}^{n}}(n,g_n)} \to \lambda(\theta),$$
 and deduce that any subsequential limit is a mixture of $H_{\alpha_1}$ and $H_{\alpha_2}$, where $H_{\alpha_1}$ is subcritical and $H_{\alpha_2} \overset{(d)}{=}\mathbb{H}_{\lambda(\theta)}$ is the expected limit.
 To exclude $H_{\alpha_1}$, we use surgery operations on the \emph{peeling diagram} of $T_{n,g_n,\mathbf{p}^{n}}$ inspired by \cite{Contat2022LastCD}.
\paragraph{Acknowledgements.} 
I am grateful to Thomas Budzinski for his crucial help at various stages of this project. I also thank Grégory Miermont for stimulating discussions. 
\tableofcontents

\section{Preliminaries}\label{preliminaries}

\subsection{Definitions}\label{Definitions}

We begin by recalling the main definitions used throughout the paper.

A (finite or infinite) \emph{map} $m$ is obtained by gluing together a collection of oriented polygons along their edges, with matching orientations, so that the resulting surface is connected. If finitely many polygons are glued, the resulting surface is orientable, and we can define its genus. We say that a map $m$ is \emph{rooted} if it is equipped with a distinguished oriented edge called the \emph{root edge}. The face on the right of the root edge is called the \emph{root face}, and the vertex at its origin is called the \emph{root vertex}. We denote by $m^{*}$ the dual of $m$, defined as the map where the vertices correspond to the faces of $m$ and two vertices of $m^{*}$ are connected by an edge if the corresponding faces are connected by an edge in $m$.

A \emph{triangulation} is a map all of whose faces have degree~$3$. We focus on \emph{type-I triangulations}, that is, triangulations that may contain multiple edges and loops. For $n \ge 1$ and $g \ge 0$, we denote by $\mathcal{T}(n,g)$ the set of rooted triangulations of genus~$g$ with $2n$ faces. By Euler's formula, a triangulation in $\mathcal{T}(n,g)$ has $3n$ edges and $n+2-2g$ vertices. In particular, the set $\mathcal{T}(n,g)$ is non-empty if and only if $n \ge 2g-1$. We denote by $\tau(n,g)$ the cardinality of $\mathcal{T}(n,g)$, and we write $T_{n,g}$ for a uniform random triangulation in $\mathcal{T}(n,g)$.

In this paper, we will consider maps with boundaries. We introduce two notions, the first including the second.

\begin{definition1}\label{triangulation_with_holes}
A triangulation with holes is a finite map $t$ with:
\begin{itemize}
	\item[$\bullet$] A set of distinguished faces, called the boundaries (or external faces) $\partial_1,\dots,\partial_{\ell}$, of degrees $p_1,\dots,p_{\ell} \ge 1$, which are vertex-simple (that is, all vertices of $\partial_i$ are distinct) and share no vertices. Each boundary $\partial_i$ is equipped with a distinguished edge $e_i$ such that $\partial_i$ lies to the right of $e_i$.
	\item[$\bullet$] A set of distinguished faces $h_1,\dots,h_{r}$ of degrees $q_1,\dots,q_r \ge 1$, called the holes, which are edge-simple (each $h_i$ is composed of $q_i$ edges) and share no edges.
	\item[$\bullet$] Faces that are neither boundaries nor holes have degree~$3$.
	\item[$\bullet$] The dual $t^{*}$ remains connected when removing the boundaries.
\end{itemize} 
The perimeter of $t$ is the tuple $\mathbf{p} =(p_1,\dots,p_{\ell})$, and the boundary length or total perimeter is $|\mathbf{p}| = p_1+\cdots+p_{\ell}$. Note that in the above definition, boundaries and holes may share vertices and edges. The internal faces of $t$ are the faces that are not boundaries. In particular, holes are internal faces. Similarly, the internal vertices (resp. edges) are the vertices (resp. edges) that do not lie on a boundary.
\end{definition1}

This definition extends naturally to infinite triangulations and also to the case where $p_i = +\infty$. For a triangulation with holes $t$, we write $\partial^{*}t$ for the union of holes and $\partial t$ for the union of boundaries.

\begin{definition1}\label{triangulation_multipolygon}
For $\ell \ge 0$ and $p_1,\dots,p_{\ell}\ge 1$, a triangulation of the $\mathbf{p}$ multi-polygon (or of the $\mathbf{p}$-gon) is a triangulation with holes that has perimeter $\mathbf{p}$ and has no holes.
\end{definition1}

One can think of a triangulation with holes as a neighborhood of the root in a triangulation of the $\mathbf{p}$-gon (see Figure~\ref{triangulation_multi_polygone}). The holes are faces that can be filled by other triangulations of multi-polygons.

\begin{figure}[H]
    \centering
    \includegraphics[scale = 0.14]{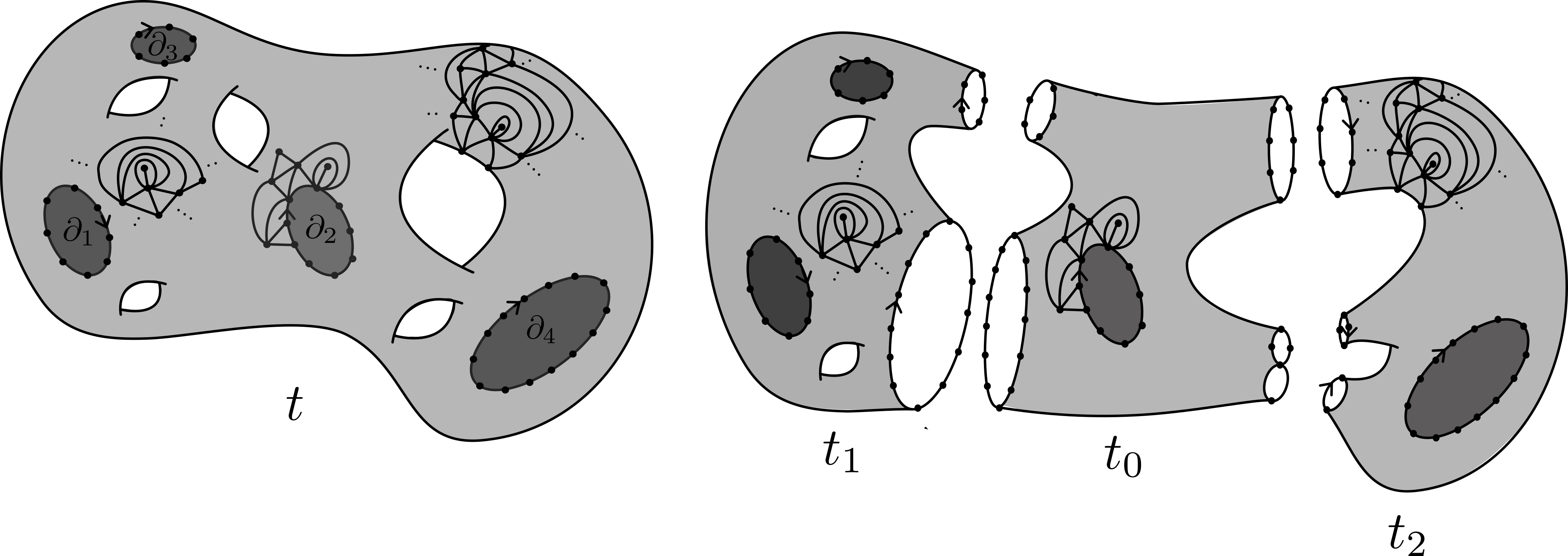}
    \caption{Left: a triangulation of the $(6,8,9,14)$-gon of genus~$5$ with $n$ vertices. 
    Right: a triangulation with holes $t_0 \subset t$, which has $1$ boundary and $5$ holes. Triangulations of multi-polygons $t_1$ and $t_2$ are the connected components of $t \backslash t_0$. The triangulation $t_1$ (resp. $t_2$) is a triangulation of the $(6,8,5,13)$-gon (resp. $(14,2,4,7)$-gon).}
    \label{triangulation_multi_polygone}
\end{figure}

For $\mathbf{p}= (p_1,\dots,p_{\ell}) \in \mathbb{N}^{\ell}$, we denote by $\mathcal{T}_{\mathbf{p}}(n,g)$ the set of triangulations of the $\mathbf{p}$-gon of genus~$g$ with $
2n-\sum_{i=1}^{\ell}(p_i-2)
$ triangles, and by $\tau_{\mathbf{p}}(n,g)$ its cardinality. One can verify that a triangulation of the $\mathbf{p}$-gon has $n+2-2g$ vertices.

\medskip
In the following, we shall introduce a notion of local distance between maps. It is therefore convenient to work with maps having a single root edge, unlike elements of $\mathcal{T}_{\mathbf{p}}(n,g)$. For $t \in \mathcal{T}_{\mathbf{p}}(n,g)$ and $e$ an oriented edge of $t$, we write $(t,e)$ for the map obtained by forgetting the distinguished edges of $t$ and taking $e$ as the root. We denote by $\mathcal{T}^1_{\mathbf{p}}(n,g)$ the set of all such rooted maps, and we define
\[
\mathcal{T}^{1} = \bigsqcup_{\substack{g,n,\ell\ge 0\\p_1,\dots,p_{\ell}\ge 1}}\mathcal{T}^1_{p_1,\dots,p_{\ell}}(n,g),
\]
the disjoint union over all parameters.

If $(t_0,e_0)$ is a rooted triangulation with holes and $(t,e)$ a rooted triangulation of the $\mathbf{p}$-gon, we write $(t_0,e_0)\subset (t,e)$ if $(t,e)$ can be obtained\footnote{ Note that by the last item in Definition~\ref{triangulation_with_holes}, if $(t_0,e_0)\subset (t,e)$, there is a unique way to obtain $(t,e)$ by filling the holes of $(t_0,e_0)$} from $(t_0,e_0)$ by gluing one or several triangulations $t_1,\dots,t_k$ of multi-polygons to the holes of $t_0$ (see Figure~\ref{triangulation_multi_polygone}). We write $(t,e) \backslash (t_0,e_0)$ for the collection of triangulations of multi-polygons that have to be glued to $t_0$ to obtain $(t,e)$. If $(t,e)$ is an infinite rooted triangulation of the $\mathbf{p}$-gon, we say that it is \emph{one-ended} if, for every $(t_0,e_0) \subset (t,e)$ with a finite number of internal faces, only one connected component of $(t,e)\backslash (t_0,e_0)$ is infinite. We say that $(t,e)$ is \emph{planar} if, for every $(t_0,e_0) \subset (t,e)$ with a finite number of internal faces, the triangulation with holes $(t_0,e_0)$ is planar. For two vertices $x,y$ in a triangulation with holes $t$, we write $d_{t}(x,y)$ for the graph distance between $x$ and $y$ in $t$.

\medskip
If $t$ is a triangulation of a multi-polygon, it is more convenient to adopt the convention that the external faces of $t$ do not belong to $t^{*}$. Since the external faces of a triangulation of a multi-polygon are simple and disjoint, the dual map $t^{*}$ is connected. We denote by $d^{*}$ the graph distance on the internal faces of $t$. We extend $d^{*}_t$ to the set of edges by writing for $e,e'$ two distinct edges
\[
d^{*}_t(e,e') = \min_{f,f'} d^{*}_t(f,f')+1,
\]
where $f,f'$ run over all internal faces incident to $e$ and $e'$. We also extend $d^{*}_t$ to the set of vertices by writing, for  $v,v'$ two distinct vertices
\[
d^{*}_t(v,v') = \min_{f,f'} d^{*}_t(f,f') + 1,
\]
where $f,f'$ run over all internal faces incident to $v$ and $v'$.

\subsection{Combinatorics}\label{combi}
In this section, we recall some combinatorial estimates in the planar case.  
For $p \ge 1$ and $n \ge p-2$, a formula is given in \cite{krikun2007explicitenumerationtriangulationsmultiple} for the volumes $\tau_p(n,0)$:
\begin{align*}
\tau_p(n,0) = \frac{p(2p)!}{(p!)^2}\frac{4^{n-p+1}(3n-p+1)!!}{(n-p+2)!(n+p+1)!!} 
\underset{n \to +\infty}{\sim} c(p)\lambda_c^{-n}n^{-\frac{5}{2}},
\end{align*}
where $\lambda_c = \frac{1}{12\sqrt{3}}$, and
\begin{align*}
c(p) = \lambda_c^{2-p}\frac{3^{p-2}p(2p)!}{4\sqrt{2 \pi}(p!)^2} 
\underset{p \to +\infty}{\sim}\frac{1}{36\pi \sqrt{2}}12^{p}\sqrt{p}.
\end{align*}
Note that $\tau_p(n-2+p,0)$ denotes the number of triangulations of the $p$-gon with $n$ internal vertices.
For $p \ge 1$ and $\lambda > 0$, we define 
\[
w_{\lambda}(p) = \sum_{n \ge 0} \tau_p(n-2+p,0)\lambda^{n}.
\]
 This quantity is finite if and only if $\lambda \le \lambda_c$. We also define 
\[
W_{\lambda}(x) = \sum_{p \ge 1} w_{\lambda}(p)x^p.
\]
Then, from Equation~(4) in \cite{krikun2007explicitenumerationtriangulationsmultiple}, we obtain the explicit formula:
\begin{align}\label{formula_W}
W_{\lambda}(x) = \frac{\lambda}{2}\left(\left(1 - \frac{1+8h}{h}x\right)\sqrt{1-4(1+8h)x} - 1 + \frac{x}{\lambda}\right),
\end{align}
where $h \in (0, \frac{1}{4}]$ is given by $\lambda = \frac{h}{(1+8h)^{3/2}}$.  
From this expression, one can derive the formula
\[
w_{\lambda}(1) = \frac{1}{2} - \frac{1+2h}{2\sqrt{1+8h}},
\]
and for $p \ge 2$,
\begin{align}\label{formula_wlambda}
w_{\lambda}(p) = (2+16h)^p \frac{(2p-5)!!}{p!}\frac{((1-4h)p+6h)}{4(1+8h)^{3/2}}.
\end{align}
  
We then define the \emph{Boltzmann triangulation of the $p$-gon} (with parameter $\lambda$) as the random planar triangulation $T^{(p)}_{\lambda}$ that takes the value $t$ with probability $\frac{\lambda^{|t_{\mathrm{in}}|}}{w_{\lambda}(p)}$, where $|t_{\mathrm{in}}|$ denotes the number of vertices of $t$ that do not lie on the boundary face.

\subsection{Local convergence and dual local convergence}\label{subsection_local_convergence}

In this section, we introduce two notions of local distance between maps in $\mathcal{T}^1$.  
The first one is the most important, since Theorems~\ref{local_limit_boundary} and~\ref{local_limit_middle} are stated for this distance.  
However, in Section~\ref{section_middle}, it is sometimes more convenient to use an alternative notion of distance.

\paragraph*{Local distance.}
For $r \ge 0$ and $(t,e) \in \mathcal{T}^1$, we define the ball of radius $r$, denoted $B_r(t,e)$, as the submap consisting of the edges having at least one endpoint at graph distance at most $r-1$ from the origin of $e$ (see Figure~\ref{ball_local}).  
We then introduce the \emph{local distance} on $\mathcal{T}^1$ by
\begin{align}\label{distance_locale}
d_{\mathrm{loc}}((t,e),(t',e')) = (1 + \max \{r \ge 0 : B_r(t,e) = B_r(t',e')\})^{-1}.
\end{align}

The definition used for $B_r(t,e)$ here is not the usual one. Indeed, the most natural definition would be to take all faces incident to a vertex at distance at most $r-1$ from the origin of $e$. However, with this choice, if the root edge $e$ lies on the boundary $\partial_1$, then we would have $\partial_1 \subset B_1(t,e)$. Consequently, as soon as $p^n_1 \underset{n\to+\infty}{\longrightarrow} +\infty$, one cannot expect any local convergence result.

A drawback of this definition is that, in general, we do not have $B_r(t,e) \subset (t,e)$, since the ball $B_r(t,e)$ might not be a triangulation with holes.  
Its completion, denoted $\overline{\mathcal{T}^1}$, is a Polish space that can be viewed as the set of rooted (finite or infinite) triangulations of a multi-polygon (with a finite or infinite number of boundaries) in which all vertices have finite degree.  
Note that $p$ can be infinite, and this space is not compact.

\paragraph*{Dual local distance.}
For $r \ge 0$ and $(t,e) \in \mathcal{T}^1$, we define the dual ball of radius $r$, denoted $B_r^{*}(t,e)$. We define $B_r^{*}(t,e)$ as the triangulation with holes rooted at $e$, obtained as follows (see Figure~\ref{ball_local}):
\begin{itemize}
    \item[$\bullet$] Take all internal faces $f'$ at dual distance at most $r$ from an internal face incident to $e$.
    \item[$\bullet$] Add the boundaries sharing a vertex with one of the faces already included.
\end{itemize}
For any $r \ge 0$, we always have $B^{*}_r(t,e) \subset t$.  
We then define the \emph{local dual distance} on $\mathcal{T}^1$ by
\begin{align}\label{distance_dual_locale}
d^{*}_{\mathrm{loc}}((t,e),(t',e')) = (1 + \max \{r \ge 0 : B^{*}_r(t,e) = B^{*}_r(t',e')\})^{-1}.
\end{align}

Let $\overline{\mathcal{T}^1}^{*}$ denote the completion of $\mathcal{T}^1$ under $d^{*}_{\mathrm{loc}}$.  
The set $\overline{\mathcal{T}^1}^{*}$ is not compact under $d^{*}_{\mathrm{loc}}$, since $B^{*}_r(t,e)$ may touch boundaries and thus may contain arbitrarily large faces.

However, consider a sequence $(T_n,e_n)_{n \ge 0}$ of random rooted triangulations of a multi-polygon, and let $\rho_n$ be the origin of $e_n$.  
Suppose that for any $r \ge 0$, we have $\mathbb{P}(d_{T_n}^{*}(\rho_n,\partial T_n) \le r ) \to 0$, i.e., the root edge typically lies far from the boundaries.  
Then, under the event $\{d_{T_n}(\rho_n,\partial T_n) > r\}$, the dual ball $B^{*}_r(t_n,e_n)$ of radius $r$, has at most $3 \cdot 2^r$ triangles so it can take only finitely many values.  
We deduce that the sequence $(T_n,e_n)$ is tight for $d^{*}_{\mathrm{loc}}$.  
This argument will later be used in Section~\ref{section_middle} to establish tightness easily.  
Indeed, in Section~\ref{section_middle}, we consider $T_n = T_{n,g_n,\mathbf{p}^{n}}$ with $|\mathbf{p}^{n}| = o(n)$ and $e^n$ a uniformly chosen oriented edge on $T_n$.  
Hence, the edge $e^n$ typically lies far from the boundaries.  
In the regime studied in \cite{Budzinski_2020}, the authors consider the case $\partial T_n = \emptyset$, so the condition is always satisfied, and tightness follows automatically under this topology.

\begin{figure}[H]
\centering
\includegraphics[scale=0.2]{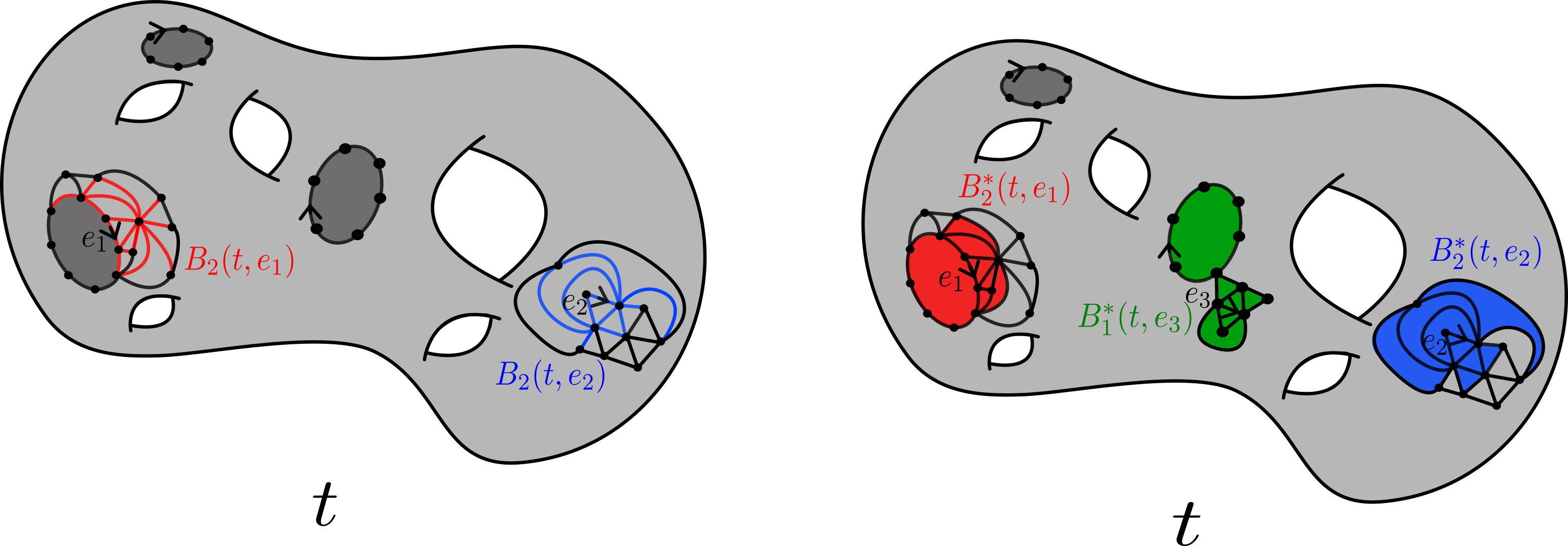}
\caption{We represent a triangulation $t$ of the $\mathbf{p}$-gon. On the left, we show the ball of radius $2$ centered at $e_1$, which lies on a boundary, and at $e_2$, which lies far from the boundaries. On the right, we show the dual local ball of radius $2$ centered at $e_1$, $e_2$, and $e_3$. Note that $B_2^{*}(t,e_1)$ contains the boundary on which $e_1$ lies. Moreover, in $B^{*}_1(t,e_3)$, one of the green internal faces shares a vertex with a boundary face, and thus, this boundary face also belongs to $B^{*}_2(t,e)$.}
\label{ball_local}
\end{figure}

\subsection{Triangulations of the plane}\label{triangulation_of_the_plane}
In this section, we recall the definition of type-I Planar Stochastic Hyperbolic Triangulations (PSHT). They were first defined in \cite{PSHT} for type-II triangulations, and later extended to the type-I setting in \cite{PSHTtypeI}. The interested reader may read \cite[Section 8]{Stflour} for a construction of these random maps in a general setting. The PSHT form a one-parameter family $(\mathbb{T}_{\lambda})_{0 < \lambda \le \lambda_c}$, where $\lambda_c= \frac{1}{12\sqrt{3}}$. They are random triangulations of the plane characterized by a Markov property. For any triangulation $t$ with one hole of perimeter $p$ and $v$ vertices in total, we have
\begin{align*}
    \Pf(t\subset \mathbb{T}_{\lambda}) = C_p(\lambda)\lambda^v.
\end{align*}
Note that $\mathbb{T}_{\lambda_{c}}$ has the law of the Uniform Infinite Planar Triangulation (UIPT). For $\lambda \in (0,\lambda_c)$, let us define $h\in (0,\frac{1}{4}]$ as the unique solution of the equation
\begin{align*}
    \lambda  = \frac{h}{(1+8h)^{\frac{3}{2}}}.
\end{align*}
We have
\begin{align*}
    C_p(\lambda) = \frac{1}{\lambda}\bigg(8+\frac{1}{h}\bigg)^{p-1}\sum_{q=0}^{p-1}\binom{2q}{q}h^q.
\end{align*}
For any $p \ge 0$, we introduce the extension $\mathbb{T}_{\lambda}^{(p)}$ of $\mathbb{T}_{\lambda}$ to the finite boundary case. Indeed, $\mathbb{T}_{\lambda}^{(p)}$ is a random infinite, one-ended, planar triangulation of the $p$-gon characterized by the probabilities
\begin{align*}
    \Pf(t\subset \mathbb{T}_{\lambda}^{(p)}) = \frac{C_{q}(\lambda)}{C_{p}(\lambda)}\lambda^v,
\end{align*}
for any triangulation $t$ with a boundary of perimeter $p$ and one hole of perimeter $q$, where $v$ denotes the total number of vertices of $t$ that do not lie on the boundary. Note that $\mathbb{T}_{\lambda}$ can be identified with $\mathbb{T}^{(1)}_{\lambda}$. Indeed, given an infinite triangulation of the plane $T$, one can split the root edge into a digon and add a loop inside. Then, rerooting on the loop, one obtains a triangulation $T'$ of the plane with boundary of perimeter $1$. This procedure can be reversed, yielding a bijection between triangulations of the plane and those with a boundary of perimeter $1$.\\ 
Note that the triangulations $\mathbb{T}_{\lambda}^{(p)}$ have a nice spatial Markov property since $\mathbb{T}_{\lambda}^{(p)} \backslash t$ has the law of $\mathbb{T}_{\lambda}^{(q)}$. This allows one to explore $\mathbb{T}_{\lambda}^{(p)}$ in a Markovian way and motivates the question of what happens when $p \to +\infty$.

We finally introduce two degenerate infinite planar triangulations. These are deterministic infinite planar triangulations with infinite vertex degrees. The planar triangulation $\mathbb{T}_0$ is obtained by gluing the edges of triangles along the structure of an infinite binary tree. We also define $\mathbb{T}_{\star}$, the planar triangulation obtained by starting with a triangle consisting of three loops and then recursively adding two loops in each loop (see Figure~\ref{degenerate}).
\begin{figure}[H]
\centering
\includegraphics[scale=0.6]{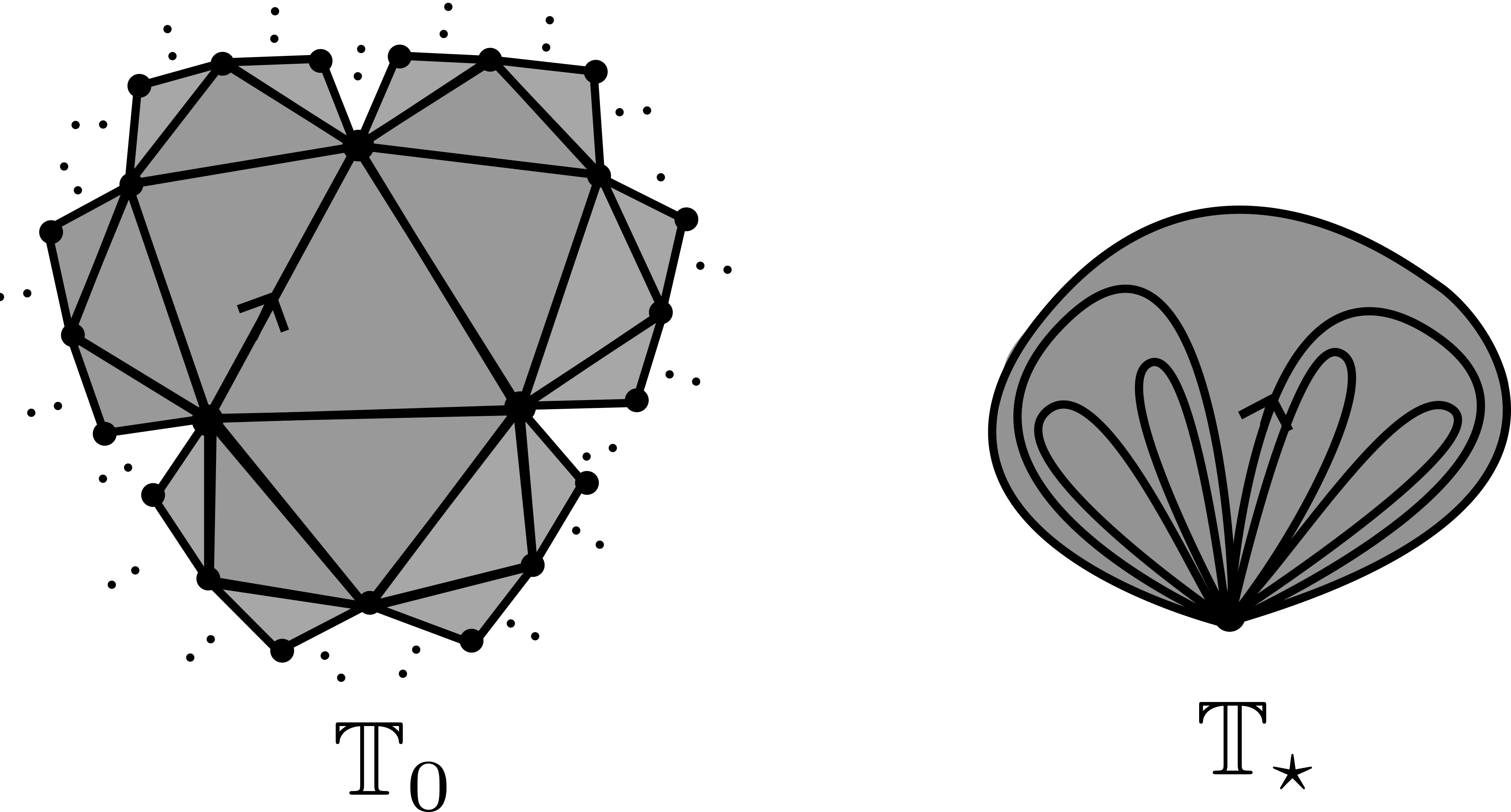}
\caption{On the left, the triangulation $\mathbb{T}_0$. On the right, the triangulation $\mathbb{T}_{\star}$.}
\label{degenerate}
\end{figure}

These two degenerate definitions are motivated by \cite[Theorem~1]{budzinski2022multiendedmarkoviantriangulationsrobust}, which states that any infinite random planar triangulation of the plane satisfying a "reasonable" Markov property is a mixture of $(\mathbb{T}_{\lambda})_{0 < \lambda < \lambda_c}$, $\mathbb{T}_0$, and $\mathbb{T}_{\star}$. 
\begin{definition1}
Let $T$ be a random infinite planar triangulation. We say that $T$ is weakly Markovian if there exist numbers $a_v^{p_1,\dots,p_k}$ for $k \ge 1$ and $p_1,\dots,p_k \ge 1$, $v \ge 0$, such that for any triangulation $t$ with $k$ holes of perimeters $p_1,\dots,p_k$ and $v$ internal vertices,
\begin{align*}
\Pf(t \subset T) = a_v^{p_1,\dots,p_k}.
\end{align*}
\end{definition1}

Then, we state \cite[Theorem $1$]{budzinski2022multiendedmarkoviantriangulationsrobust}
. Note that we do not assume the triangulations to be one-ended here.
\begin{theorem}\label{weakly_markovian_mixture}
Let $T$ be a random infinite planar triangulation that is weakly Markovian. Then, $T$ is of the form $\mathbb{T}_{\Lambda}$ where $\Lambda$ is a random variable taking values in $[0,\lambda_c]\cup\{\star\}$.
\end{theorem}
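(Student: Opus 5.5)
The plan is to follow the strategy of Budzinski--Louf's classification of Markovian planar triangulations. The ingredient is a family of numbers $a_v^{p_1,\dots,p_k}$ depending only on hole perimeters and internal vertex count. We want to show that, conditionally on some random parameter, these probabilities factorize as $C_p(\lambda)\lambda^v$, up to the degenerate cases.

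\textbf{Step 1: reduce to the one-hole numbers.} First we would observe that any planar triangulation with several holes can be completed into a one-hole configuration by summing over fillings of all holes but one with finite planar triangulations, together with the event that the remaining hole contains the infinite part. This expresses the multi-hole numbers through $a_v^{p}$ and the Boltzmann weights $w_\lambda$. The sequence $a_v^{p}$ then satisfies the peeling (Tutte-type) recursion: peeling an edge of the single hole of perimeter $p$ gives either a new vertex ($a_{v+1}^{p+1}$) or a split into two holes of perimeters $i$ and $p-1-i$. The two-hole term is again reduced to sums over one-hole terms and finite fillings:
\[
a_v^{p}=a_{v+1}^{p+1}+\sum_{i} \bigl(\text{finite filling of one side}\bigr)\,a_{v+\cdot}^{\cdot}.
\]

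\textbf{Step 2: a Hausdorff moment problem in $v$.} For fixed $p$, the map $v\mapsto a_v^p$ should be completely monotone in a suitable sense. The key point is that the differences $a_v^p-a_{v+1}^{p+1}$ equal probabilities of configurations, hence are nonnegative. Iterating this gives nonnegativity of all higher differences, after normalizing by the $p$-dependence. By Hausdorff's theorem there is a probability measure $\mu$ on $[0,\lambda_c]$ (the bound $\lambda_c$ coming from summability of $\tau_p(n,0)\lambda^n$) with
\[
a_v^p=\int \lambda^v\, C_p(\lambda)\, d\mu(\lambda).
\]
Here $C_p(\lambda)$ is forced by the recursion once $\lambda$ is fixed, by uniqueness of the solution. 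The one possible loss of mass at the boundary or at $0$ is what produces the degenerate laws $\mathbb{T}_0$ (infinite-degree tree-like, $\lambda=0$) and $\mathbb{T}_\star$, where all mass escapes through loop-type peeling steps. These must be identified separately by checking which deterministic triangulations have all $a_v^p$ vanishing for $v\ge1$.

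\textbf{Step 3: identify the mixture.} Since the finite-dimensional probabilities $\Pf(t\subset T)$ determine the law of $T$ (the balls are determined by such events via inclusion–exclusion), $T$ is the $\mu$-mixture of the $\mathbb{T}_\lambda$, plus the atoms $\mathbb{T}_0,\mathbb{T}_\star$. The main obstacle is Step 2, and specifically the multi-ended case. Without one-endedness the peeling step can split the infinite part into two infinite holes. Then the recursion for $a^p_v$ involves genuinely two-hole numbers that cannot be eliminated by finite fillings. To handle this, I would first establish the complete monotonicity jointly for all $a_v^{p_1,\dots,p_k}$. I would then show that the only solutions with positive multi-ended mass are the degenerate ones ($\mathbb{T}_0$ is infinitely ended), using that for $\lambda>0$ the relevant generating functions force a single infinite hole almost surely.
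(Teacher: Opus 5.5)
The paper does not prove this statement. It imports it as \cite[Theorem~1]{budzinski2022multiendedmarkoviantriangulationsrobust}. The whole point of that result, compared with the earlier classification of \emph{one-ended} weakly Markovian triangulations, is that neither one-endedness nor finiteness of vertex degrees is assumed.

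Your proposal has a genuine gap exactly there. The reduction in Step~1 fills every hole but one by a finite planar triangulation, with the remaining hole carrying ``the infinite part''. This is an identity only when exactly one hole is infinite, i.e.\ under one-endedness. In general, the two-hole numbers produced by a splitting peeling step contain, besides the terms where one hole is filled by a finite triangulation, the contribution $\Pf(t\subset T,\ \text{both holes of } t \text{ are filled by infinite triangulations})$. This is a new unknown, so the peeling recursion for $(a^p_v)$ is not closed, and Step~2 cannot even start without first controlling these terms.

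Your last paragraph acknowledges this but only announces what one ``would'' prove. The proposed mechanism (``for $\lambda>0$ the generating functions force a single infinite hole'') is circular. The parameter $\lambda$ and its generating functions only exist after the representation of Step~2, and that representation was derived assuming one-endedness. Likewise, $\mathbb{T}_0$ and $\mathbb{T}_\star$ have trees as duals and hence infinitely many ends. They cannot arise as ``loss of mass'' at the edge of a moment representation built in a one-ended framework; they must come out of the missing multi-ended analysis.

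This is not a side issue for the paper. The theorem is used in Proposition~\ref{one_endedness_rooted_middle} precisely to \emph{deduce} that subsequential limits are one-ended with finite degrees. An argument presupposing one-endedness would be useless there.

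Even in the one-ended case, Step~2 does not work as written. The peeling identity gives $a^p_v\ge a^{p+1}_{v+1}$, which compares \emph{different} perimeters. Hausdorff's theorem instead needs $(-1)^k\Delta_v^k a^p_v\ge 0$ for every $k$ at \emph{fixed} $p$, where $\Delta_v$ is the forward difference in $v$. Higher differences are inclusion--exclusion combinations of probabilities with no evident sign, so ``iterating'' does not give them.

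``Normalizing by the $p$-dependence'' also fails. It presupposes a product form $a^p_v=c_p\,\alpha_v$, which holds for each extremal law $\mathbb{T}_\lambda$ but not for a mixture, because $C_p(\lambda)$ depends on the unknown mixing variable. That step is therefore circular too.

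What is needed is a genuine extremal-decomposition argument. For instance, view $a^p_v$ divided by the UIPT values as a nonnegative harmonic function of the perimeter--volume peeling chain, and classify the extremal ones, in the spirit of the proof of Proposition~\ref{classification_halfplane}. Only after that does the peeling equation pin down the admissible parameters.
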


\subsection{Triangulations of the half-plane}\label{halfplane_triangulation}

A \emph{triangulation of the half-plane} is a planar triangulation of the $\infty$-gon that is one-ended and has vertices of finite degree. For a triangulation with holes $t$ with exactly one infinite boundary $\partial t$ and one infinite hole $\partial^{*}t$ (see Figure~\ref{halfplane}), we use the shorthand notation
\[
|\partial^{*}t| - |\partial t| := |\partial^{*}t \backslash \partial t| - |\partial t \backslash \partial^{*}t|.
\]
We also denote by $|t_{\mathrm{in}}|$ the number of vertices of $t$ that do not lie on $\partial t$.

\begin{definition1}\label{weak_Markovian_halfplane}
A random triangulation of the half-plane $H$ is said to satisfy a weak spatial Markov property if there exists a function $a$ such that, for any $t$, we have
\[
\Pf(t \subset H) = a(|\partial^{*}t| - |\partial t|, |t_{\mathrm{in}}|).
\]
We say that $H$ is strongly Markovian if there exist constants $\beta,\lambda > 0$ such that, for any $t$, we have 
\[
\Pf(t \subset H) = \beta^{|\partial^{*}t| - |\partial t|}\lambda^{|t_{\mathrm{in}}|}.
\]
\end{definition1}

For $0 < \lambda \le \lambda_c$, we introduce the type-I random triangulation of the half-plane, denoted by $\mathbb{H}_{\lambda}$, introduced in \cite{budzinski_geodesics}. These triangulations are characterized by
\begin{align}\label{characterize_halfplane}
\Pf(t \subset \mathbb{H}_{\lambda}) = \bigg(8+\frac{1}{h}\bigg)^{|\partial^{*}t| - |\partial t|}\lambda^{v_{\mathrm{in}}}.
\end{align}
In particular, the half-plane triangulation $\mathbb{H}_{\lambda}$ is strongly Markovian. Note that for any $t$ we have $\mathbb{H}_{\lambda} \backslash t \overset{(d)}{=} \mathbb{H}_{\lambda}$, which corresponds to the Markov property considered in \cite{Angel_Ray}. Moreover, the triangulation of the half-plane $\mathbb{H}_{\lambda}$ is characterized by the local limit
\begin{align}\label{plane_to_halfplane}
\mathbb{T}_{\lambda}^{(p)} \underset{p \to +\infty}{\longrightarrow} \mathbb{H}_{\lambda}.
\end{align}

In \cite{Angel_Ray}, the authors introduced a one-parameter family $(H_{\alpha})_{0 \le \alpha < 1}$ of type-$II$ triangulations of the half-plane that describes all translation-invariant random triangulations of the half-plane satisfying a spatial Markov property. The Markov property considered there states that for any $t$, conditionally on $t \subset H_{\alpha}$, we have $H_{\alpha}\backslash t \overset{(d)}{=} H_{\alpha}$. This result holds for type-II triangulations. Since we work in the type-I setting, there exists a larger class of triangulations satisfying such a Markov property. However, the formula~\eqref{characterize_halfplane} defining $\mathbb{H}_{\lambda}$ above is a stronger assumption. Thus, we recover a one-parameter family of triangulations of the half-plane satisfying this condition (see Proposition~\ref{classification_halfplane} below).

Let us now give a complete description of half-plane triangulations that are weakly or strongly Markovian. This Proposition is the analogue of \cite[Proposition 15]{Budzinski_2020}

\begin{proposition}\label{classification_halfplane}
\begin{enumerate}
\item \label{it1half} For any $0 \le \lambda \le \lambda_c$, there exists a unique random triangulation of the half-plane\footnote{Here $\widetilde{\mathbb{H}}_{\lambda}$ is the analogue of $H_{\alpha}$ for $\alpha \le \frac{2}{3}$, and $\mathbb{H}_{\lambda}$ is the analogue of $H_{\alpha}$ for $\alpha \ge \frac{2}{3}$.} $\widetilde{\mathbb{H}}_{\lambda}$ characterized by the probabilities
\[
\Pf(t \subset \widetilde{\mathbb{H}}_{\lambda}) = \big(32h+4\big)^{|\partial^{*}t| - |\partial t|}\lambda^{v_{\mathrm{in}}}.
\]
In particular, we have $\widetilde{\mathbb{H}}_{\lambda_c} = \mathbb{H}_{\lambda_c}$.
\item \label{it3half} Let $H$ be a strongly Markovian random triangulation of the half-plane. Then $H$ is either of the form $\mathbb{H}_{\lambda}$ for some $0 < \lambda \le \lambda_c$ or $\widetilde{\mathbb{H}}_{\lambda}$ for some $0 \le \lambda \le \lambda_c$.

\item \label{it2half} Let $H$ be a weakly Markovian random triangulation of the half-plane. Then $H$ is a mixture of $\{\mathbb{H}_{\lambda}\}_{0 < \lambda \le \lambda_c} \cup \{\widetilde{\mathbb{H}}_{\lambda}\}_{0 \le \lambda \le \lambda_c}$.

\end{enumerate}
\end{proposition}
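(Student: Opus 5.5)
The plan is to reduce everything to the peeling process of a half-plane triangulation, as in the proof of \cite[Proposition 15]{Budzinski_2020} and in \cite{Angel_Ray}. Peel $H$ along its infinite boundary using a deterministic rule, for instance always peeling the edge immediately to the right of the root. For a strongly Markovian $H$, the event that a given peeling step reveals a triangle with a new interior vertex has probability $\beta\lambda$. Indeed, $|\partial^*t|-|\partial t|$ increases by $1$ and $|t_{\mathrm{in}}|$ increases by $1$. The event that the peeling step swallows a segment of $k$ boundary edges to the left (or right) and fills the enclosed finite $(k+1)$-gon with a given planar triangulation with $v$ inner vertices has probability $\beta^{-k}\lambda^{v+k}$. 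Here, under our convention, the $k$ swallowed boundary vertices count as inner vertices of $t$; I will fix these conventions carefully. Summing over fillings, the total probability of the events at one step must equal $1$:
\[
\beta\lambda + 2\sum_{k\ge 1}\beta^{-k}\lambda^{k}\,w_\lambda(k+1) = 1 .
\]
This must hold together with finiteness, which forces $\lambda\le\lambda_c$. In addition, $H$ must be one-ended with finite degrees, so no infinite swallowing can occur. Using the explicit generating function \eqref{formula_W}, this identity becomes an algebraic equation in $x=\lambda/\beta$:
\[
\lambda/x + \frac{2}{x}\bigl(W_\lambda(x)-w_\lambda(1)x\bigr) = 1 .
\]
I expect it to factor as a product of two terms vanishing exactly at $\beta = 8+\frac1h$ and $\beta = 32h+4$. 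These two roots coincide when $h=\frac14$, which gives $\widetilde{\mathbb{H}}_{\lambda_c}=\mathbb{H}_{\lambda_c}$. For the $\lambda=0$ case of $\widetilde{\mathbb{H}}$ there are no inner vertices, and one checks directly that $\beta=4$ works: only triangles whose third vertex is on the boundary appear, with fillings made of vertex-free triangulations.

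Point~\ref{it1half} then follows. Existence comes from defining $\widetilde{\mathbb{H}}_\lambda$ as the law of the i.i.d.\ peeling process with these step probabilities, which sum to $1$. The swallowed holes are filled with independent Boltzmann triangulations $T^{(k+1)}_\lambda$. One must check that the resulting map is a one-ended half-plane triangulation with finite degrees: each vertex on the boundary is swallowed or covered after finitely many steps almost surely, since the drift and the tails of the step distribution are under control. Then $\Pf(t\subset H)$ has the product form for every $t$, because every $t$ can be reached by a peeling exploration (a filled-in version of the standard argument). Uniqueness holds because the product form determines the law of all balls.

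For point~\ref{it3half}, the same one-step identity is a necessary condition. Hence $(\beta,\lambda)$ must be one of the two roots above, and uniqueness identifies $H$. The main algebraic obstacle is carrying out this factorization cleanly from \eqref{formula_W}, and verifying that no other positive root with $\lambda\le\lambda_c$ exists (the case $\lambda>\lambda_c$ is excluded since $w_\lambda=\infty$).

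For point~\ref{it2half}, I follow the de Finetti style argument of \cite{Budzinski_2020}. Given a weakly Markovian $H$, set $a(k,v)=\Pf(t\subset H)$. Peeling gives linear relations of the form
\[
a(k,v) = a(k+1,v+1) + \sum_{j\ge 1} 2\,a(k-j, v+j+\cdot)\cdots
\]
Together with the fact that the increments of the exploration are exchangeable, these show that conditionally on the tail $\sigma$-field of the peeling process, the probabilities factor as $\beta^{k}\lambda^{v}$. Concretely, I would show that along the exploration the ratios $\lambda_n := \#\text{inner vertices}/\text{steps}$ and the corresponding perimeter ratio converge almost surely. Conditionally on these limits $(B,\Lambda)$, $H$ is strongly Markovian. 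This de Finetti step is the hardest part: one needs exchangeability of the peeling step types under a weakly Markovian law. I would get it from the fact that $a$ depends only on $(|\partial^*t|-|\partial t|,|t_{\mathrm{in}}|)$, which is invariant under permuting steps. Hewitt–Savage/de Finetti then gives a mixture of i.i.d.\ peeling laws, each of which is strongly Markovian and hence, by point~\ref{it3half}, one of $\mathbb{H}_\lambda$ or $\widetilde{\mathbb{H}}_\lambda$.
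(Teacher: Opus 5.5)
Your treatment of Items~\ref{it1half} and~\ref{it3half} follows the paper: a one-step peeling identity, then $\widetilde{\mathbb{H}}_\lambda$ built as an i.i.d.\ peeling process with Boltzmann fillings. However, the identity you write is wrong, and with it the factorization you anticipate does not occur. A vertex swallowed by a step of type $\mathrm{II}_k$ or $\mathrm{III}_k$ either lies on $\partial H$, hence on $\partial t$, or was already counted when it was created. So such a step changes $|\partial^{*}t|-|\partial t|$ by $-k$, and changes $|t_{\mathrm{in}}|$ only by the number $v$ of inner vertices of the filling. Also, $k=0$ (a loop) is allowed in type~I. The correct identity is therefore $\beta\lambda+2\sum_{k\ge 0}\beta^{-k}w_\lambda(k+1)=\beta\lambda+2\beta W_\lambda(\beta^{-1})=1$. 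By \eqref{formula_W} its left side equals $1+\lambda\beta\bigl(1-\frac{1+8h}{h}\beta^{-1}\bigr)\sqrt{1-4(1+8h)\beta^{-1}}$. Since finiteness forces $\beta\ge 32h+4$, the solutions are exactly $\beta\in\{8+\frac{1}{h},\,32h+4\}$, and $\beta=4$ when $\lambda=0$. Your version has three errors: a spurious factor $\lambda^k$, a missing $k=0$ term, and $\lambda/x=\beta$ where $\beta\lambda$ should stand. It is not even satisfied by $\mathbb{H}_{\lambda_c}$: at $\lambda=\lambda_c$, $\beta=12$, its left side is close to $1/\sqrt{3}$.

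The more serious gap is in Item~\ref{it2half}. The paper argues via harmonic functions. It shows $a_{p,v}/b_{p,v}$ is harmonic for the increment walk of $\mathbb{H}_{\lambda_c}$, and Corollary~\ref{harmonic} writes it as a mixture of exponentials $\eta^p\alpha^v$ plus $\gamma h_0$. Finiteness of the root degree gives $\gamma=0$, and the one-step identity localizes the mixing measure.

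Your exchangeability observation is correct. In the half-plane, every outcome (Case~I, or a swallow of $k\ge 0$ edges to the left or right with a given filling) is available at every step, and a finite outcome sequence has probability $a$ evaluated at its total increment. So de Finetti gives a random step law $\mu$ with $\Pf(S_1=s_1,\dots,S_j=s_j)=\mathbb{E}\bigl[\prod_{i\le j}\mu(s_i)\bigr]$. But the claim that each component is strongly Markovian is exactly what has to be proved, and your concrete plan cannot prove it. For every $\lambda<\lambda_c$, the number of edges swallowed by a step of $\widetilde{\mathbb{H}}_\lambda$ has tail of order $k^{-1/2}$. So the perimeter ratio tends to $-\infty$ whatever $\lambda$ is, and for $\lambda>0$ the vertex ratio tends to $+\infty$. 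Conditionally on these limits, a mixture of $\widetilde{\mathbb{H}}_{\lambda_1}$ and $\widetilde{\mathbb{H}}_{\lambda_2}$ with $0<\lambda_1<\lambda_2<\lambda_c$ is still a mixture.

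What is needed is that $\mu$ itself has product form. Write $\Delta(s)=(\Delta P(s),\Delta V(s))$ for the increment of an outcome $s$. Weak Markovianity gives $\mathbb{E}[(\mu(s)-\mu(s'))^2]=0$ when $\Delta(s)=\Delta(s')$, and $\mathbb{E}[(\mu(s_1)\mu(s_2)-\mu(s_3))^2]=0$ when $\Delta(s_1)+\Delta(s_2)=\Delta(s_3)$. This holds because, after expansion, every term is the probability of an outcome sequence (of possibly different length) with the same total increment. Solving the resulting multiplicative equation on the set of increments gives $\mu(s)=\beta^{\Delta P(s)}\lambda^{\Delta V(s)}$, \emph{or} $\mu=\delta_{\mathrm{I}}$ (every step is Case~I). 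The latter satisfies all these relations but is not of product form. It must be excluded using finiteness of the root degree; this is the analogue of the paper's $\gamma=0$ step and is missing from your sketch.

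With these additions your route is a valid alternative to the Martin-boundary argument. It also yields $\beta\lambda+2\beta W_\lambda(\beta^{-1})=1$ for each component separately. In the paper's integrated form, the relation at $(p,v)=(0,0)$ alone does not localize the mixing measure, because for $\lambda>0$ and $\beta>8+\frac{1}{h}$ the integrand is finite and exceeds $1$. There the relations at all $(p,v)\in X$ must be used.
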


\begin{proof}
Let us start by proving Item~\ref{it1half}. The proof is standard and follows the lines of \cite{Angel_Ray}. We define $\widetilde{\mathbb{H}}_{\lambda}$ using a peeling procedure. Indeed, if $\widetilde{\mathbb{H}}_{\lambda}$ exists, let $f_0$ be the triangle incident to the root edge of $\widetilde{\mathbb{H}}_{\lambda}$. If the third vertex of $f_0$ does not lie on the boundary, we say that \emph{Case~I} occurs. For $i \ge 0$, if the third vertex lies $i$ vertices to the left (resp.\ $i+1$ to the right) of the root vertex, we say that \emph{Case~$\text{II}_i$} occurs (resp.\ \emph{Case~$\text{III}_i$}). Note that in Cases~$\text{II}_i$ and~$\text{III}_i$, a hole of perimeter $i+1$ is created, which is filled with an independent Boltzmann triangulation of the $(i+1)$-gon with parameter $\lambda$.

\begin{figure}[H]
\centering
\includegraphics[scale=0.6]{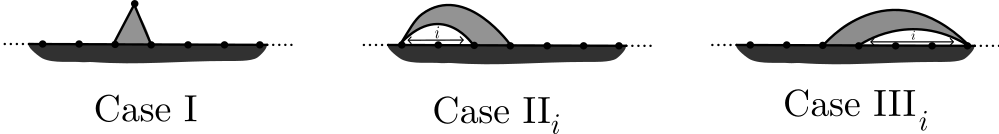}
\caption{The three possible cases.}
\label{three_cases}
\end{figure}

We can write
\[
\Pf(\text{Case I occurs}) = (32h+4)\lambda.
\]
For $i \ge 0$, we also have
\[
\Pf(\text{Case II}_i \text{ occurs}) = \Pf(\text{Case III}_i \text{ occurs}) = (32h+4)^{-i}w_{\lambda}(i+1).
\]
Summing these probabilities gives
\[
(32h+4)\lambda + 2\sum_{i\ge 0}(32h+4)^{-i}w_{\lambda}(i+1) = (32h+4)\lambda + 2(32h+4)W_{\lambda}\big((32h+4)^{-1}\big) = 1.
\]
by~\eqref{formula_W}. Since these probabilities sum to $1$, we can construct $\widetilde{\mathbb{H}}_{\lambda}$ by peeling with these transitions. When a peeling step of type $\text{II}_i$ or $\text{III}_i$ occurs, we fill the newly created hole with a Boltzmann triangulation of the $(i+1)$-gon with parameter $\lambda$. See \cite{Angel_Ray,PSHT} for similar constructions.\\

We now prove Item~\ref{it2half}. Fix a weakly Markovian triangulation of the half-plane $H$. Let $X = \{(p,v)\in \mathbb{Z}^2 : v \ge \max(0,p)\}$. For $(p,v)\in X$, let $t_{p,v}$ be a triangulation with one infinite boundary, one infinite hole such that $|\partial^{*}t_{p,v}| - |\partial t_{p,v}| = p$ and exactly $v$ vertices not lying on the boundary. Since $H$ is weakly Markovian, define 
\[
a_{p,v} = \Pf(t_{p,v}\subset H).
\]
We also introduce 
\[
b_{p,v} = \Pf(t_{p,v}\subset \mathbb{H}_{\lambda_c}) = 12^{-p}\bigg(\frac{1}{12\sqrt{3}}\bigg)^{v},
\]
and define
\[
h(p,v) = \frac{a_{p,v}}{b_{p,v}}.
\]

Let $(T_k)_{k\ge 0}$ be a peeling exploration of $\mathbb{H}_{\lambda_c}$ for an arbitrary peeling algorithm $\mathcal{A}$ (see Section~\ref{peeling_def}). Let $(P_k,V_k)_{k \ge 0}$ denote the process where $P_k := |\partial^{*}T_k| - |\partial T_k|$ and $V_k$ is the number of vertices of $T_k$ not on the boundary. Since we consider the evolution of the volume and perimeter of $(T_k)_{k\ge 0}$, the distribution of this process is independent of the choice of $\mathcal{A}$.  Since $\mathbb{H}_{\lambda_c}\backslash T_k$ is distributed as $\mathbb{H}_{\lambda_c}$, the process $(P_k,V_k)_{k \ge 0}$ is a random walk starting from $(0,0)$ with i.i.d.\ increments in $\mathbb{Z}\times \mathbb{N}$. The transition probabilities are given by 
\[
\begin{cases}
Q\big((p,v),(p+1,v+1)\big) = \frac{1}{\sqrt{3}},\\[4pt]
Q\big((p,v),(p-k,v+m)\big) = 2\cdot 12^{k}\bigg(\frac{1}{12\sqrt{3}}\bigg)^{m}, \quad \forall (k,m)\in \mathbb{N}\times \mathbb{N}.
\end{cases}
\]
Thus, one can verify that $(P_k,V_k)$ takes values in $X$. We claim that $h$ is $Q$-harmonic. Indeed, conditionally on $t_{p,v}\subset H$, we can fix an edge on the hole of $t_{p,v}$ and reveal the triangle incident to this edge. Filling any finite hole created yields
\begin{align}\label{peeling_equa1}
a_{p,v} = a_{p+1,v+1} + 2\sum_{m,k \ge 0} a_{p-k,v+m}\tau_{k+1}(k+m-1,0).
\end{align}
Dividing by $b_{p,v}$ gives
\[
h(p,v) = h(p+1,v+1) Q\big((p,v),(p+1,v+1)\big) + \sum_{m,k \ge 0} Q\big((p,v),(p-k,v+m)\big) h(p-k,v+m),
\]
showing that $h$ is $Q$-harmonic.

Our main tool is the classification of $Q$-harmonic functions obtained in Corollary~\ref{harmonic}. Let $h_0$ be the function on $X$ defined by $h_0(p,v) = \mathbf{1}_{\{p=v\}} Q((0,0),(1,1))^{-v}$. By Corollary~\ref{harmonic}, there exists a measure $\Lambda_0$ on $(\mathbb{R}_{+})^2$ and $\gamma \ge 0$ such that for all $(p,v) \in X$,
\[
h(p,v) = \int_{(\mathbb{R}_{+})^2} \eta^{p}\alpha^{v}\, \Lambda_0(d\eta,d\alpha) + \gamma\, h_0(p,v).
\]
Multiplying by $b_{p,v}$ gives
\[
a_{p,v} = \int_{(\mathbb{R}_{+})^2} \bigg(\frac{\eta}{12}\bigg)^{p}\bigg(\frac{\alpha}{12\sqrt{3}}\bigg)^{v} \Lambda_0(d\eta,d\alpha) + \gamma\, b_{p,v} h_0(p,v).
\]
Letting $\Lambda$ be the pushforward of $\Lambda_0$ by $(\eta,\alpha)\mapsto(\frac{\eta}{12},\frac{\alpha}{12\sqrt{3}})$, we have
\[
a_{p,v} = \int_{(\mathbb{R}_{+})^2} \beta^{p}\lambda^{v}\, \Lambda(d\beta,d\lambda) + \gamma\, b_{p,v} h_0(p,v).
\]

We now show that $\gamma = 0$. First note that
\[
b_{p,v} h_0(p,v) = \mathbf{1}_{\{p=v\}} Q(1,1)^{-v} Q(1,1)^v = \mathbf{1}_{\{p=v\}}.
\]
Choosing $t_{v,v}$ a triangulation with a hole in which the root vertex has degree $2+v$, we deduce from the fact that the root edge of $H$ almost surely has finite degree that
\[
0 \le \gamma \le \Pf(t_{v,v} \subset H) \xrightarrow[v\to\infty]{} 0,
\]
and hence $\gamma=0$.

Rewriting \eqref{peeling_equa1} with $p=v=0$, we find that $\Lambda$ is a probability measure satisfying
\[
1 = a_{0,0} = \int_{(\mathbb{R}_+)^2} \bigg(\beta \lambda + 2 \sum_{m,k \ge 0}\lambda^{m}\beta^{-k}\tau_{k+1}(k+m-1,0)\bigg)\Lambda(d\beta,d\lambda).
\]
Using \eqref{formula_W}, the term in parentheses can be rewritten as $\beta \lambda + 2 \beta W_{\lambda}(\beta^{-1})$. The function $W_{\lambda}(\beta^{-1})$ is finite if and only if $\lambda \le \lambda_c$ and $\beta \in [32h+4,8+\frac{1}{h}]$. For $0 \le \lambda \le \lambda_c$ and $\beta \in [32h+4,8+\frac{1}{h}]$ (if $\lambda = 0$, then $\beta \in [4,+\infty)$), we have
\begin{align}\label{relation}
\beta \lambda + 2 \beta W_{\lambda}(\beta^{-1}) = 1 + \lambda \beta \bigg(1-\frac{1+8h}{h}\beta^{-1}\bigg)\sqrt{1-4(1+8h)\beta^{-1}}.
\end{align}
Since the right-hand side is negative, we deduce that $\Lambda$ is supported on 
\[
\{(\lambda,\beta) : 0 < \lambda \le \lambda_c,\, \beta \in \{32h+4, (1+8h)/h\} \} \cup \{(0,4)\}.
\]
This concludes the proof of Item~\ref{it2half}.

Item~\ref{it3half} follows from Item~\ref{it2half}. Indeed, fix a strongly Markovian triangulation $H$ of the half-plane. Then, there exist $\lambda_1,\beta_1 \ge 0$ such that for any $(p,v)\in X$,
\[
\Pf(t_{p,v} \subset H) = \lambda_1^v \beta_1^p.
\]
Assuming $\lambda_1 > 0$ (the case $\lambda_1=0$ being analogous), applying \eqref{peeling_equa1} with $p=v=0$ gives
\[
1 = \beta_1 \lambda_1 + 2 \beta_1 W_{\lambda_1}(\beta_1^{-1}).
\]
The right-hand side is finite if and only if $\lambda_1 \le \lambda_c$ and $\beta_1 \in [32h+4, (1+8h)/h]$. In that case,
\[
1 = 1 + \lambda_1 \beta_1 \bigg(1 - \frac{1+8h}{h}\beta_1^{-1}\bigg)\sqrt{1 - 4(1+8h)\beta_1^{-1}},
\]
which implies $\beta_1 \in \{32h+4, (1+8h)/h\}$. This concludes the proof.
\end{proof}

\subsection{Peeling explorations and peeling diagrams}\label{peeling_def}
In this section, we define \emph{peeling explorations}. This method allows discovering a triangulation, triangle by triangle, in a Markovian fashion. We also introduce the \emph{peeling diagram} associated with such an exploration. It provides a convenient graphical representation of the peeling process. This notion is strongly inspired by \cite{Contat2022LastCD}, which we extend to a general genus setting.

\begin{definition1}\label{peeling_algorithm}
A peeling algorithm $\mathcal{A}$ is a function that, given a triangulation with holes $t$, associates one edge $\mathcal{A}(t)$ on one of the holes of $t$.
\end{definition1}

Fix $n,g \ge 0$ and $\mathbf{p} = (p_1,\dots,p_{\ell}) \in \mathbb{N}^{\ell}$.  
Also fix $t \in \mathcal{T}_{(p_1,\dots,p_{\ell})}(n,g)$ and a peeling algorithm $\mathcal{A}$. We define by induction a sequence $(\mathcal{E}_k^{\mathcal{A}}(t))_{k \ge 0}$ of triangulations with holes and an increasing sequence $(\mathcal{D}_k^{\mathcal{A}}(t))_{k \ge 0}$ of diagrams. We define $\mathcal{E}_0^{\mathcal{A}}(t)$ as the map consisting of a single $p_1$-gon, and $\mathcal{D}_0^{\mathcal{A}}(t)$ as the graph with one vertex labelled $p_1$.  

For $k \ge 0$, suppose we have already constructed the triangulation with holes $\mathcal{E}_k^{\mathcal{A}}(t)$ and the decorated graph $\mathcal{D}_k^{\mathcal{A}}(t)$.  
If $\mathcal{E}_k^{\mathcal{A}}(t) = t$, we set $\mathcal{E}_{k+1}^{\mathcal{A}}(t) = \mathcal{E}_k^{\mathcal{A}}(t)$ and $\mathcal{D}_{k+1}^{\mathcal{A}}(t) = \mathcal{D}_k^{\mathcal{A}}(t)$.  

Let $h_1,\dots,h_d$ be the holes of $\mathcal{E}_k^{\mathcal{A}}(t)$, and let $\ell_1,\dots,\ell_d$ be their lengths.  
We assume that the holes $h_1,\dots,h_d$ are represented in $\mathcal{D}_k^{\mathcal{A}}(t)$ by vertices $x_1,\dots,x_d$ labelled $\ell_1,\dots,\ell_d$.  
We denote by $h_i$ the hole containing the edge $\mathcal{A}(\mathcal{E}_k^{\mathcal{A}}(t))$.  
We then define $\mathcal{E}_{k+1}^{\mathcal{A}}(t)$ and $\mathcal{D}_{k+1}^{\mathcal{A}}(t)$ according to the type of triangle revealed incident to this edge.

\begin{minipage}[c]{0.4\textwidth}
\vspace{0.5em}
\textbf{Type I.} The hole $h_i$ has length $2$ ($\ell_i = 2$) and is filled with the empty map. Then $\mathcal{E}_{k+1}^{\mathcal{A}}(t)$ is obtained by identifying the two edges of $h_i$. The decorated graph $\mathcal{D}_{k+1}^{\mathcal{A}}(t)$ is obtained by adding a vertex labelled $0$ and an oriented edge from $x_i$ to this new vertex.
\vspace{0.5em}
\end{minipage}%
\hfill
\begin{minipage}[c]{0.55\textwidth}
\centering
\includegraphics[width =\linewidth]{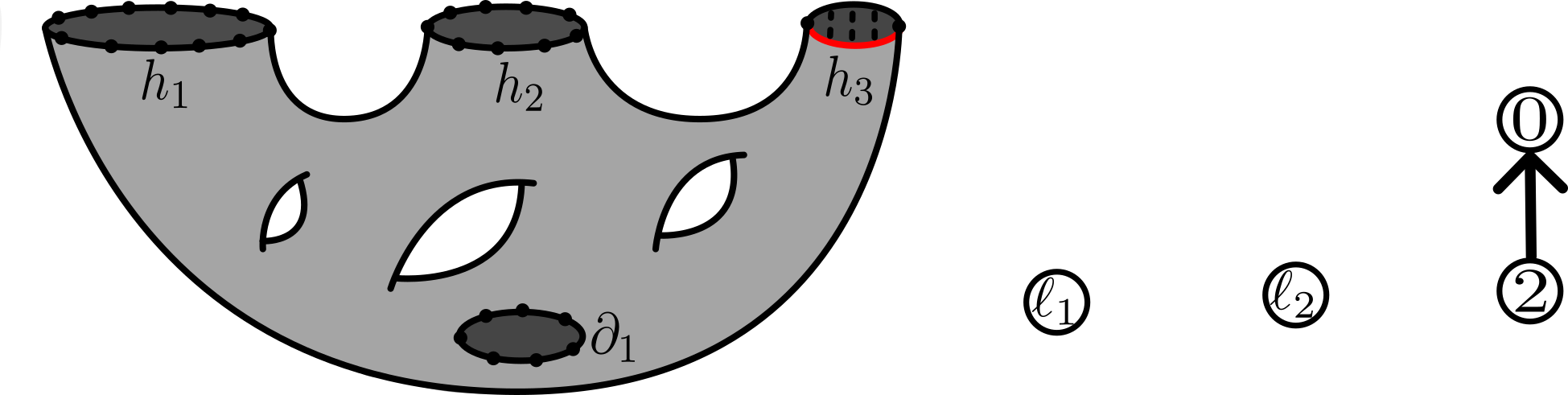}
\end{minipage}

\begin{minipage}[c]{0.4\textwidth}
\vspace{0.5em}
\textbf{Type II.} The discovered triangle has its third vertex not lying on any hole nor on any boundary $\partial_j$ of $t$. Then $\mathcal{E}_{k+1}^{\mathcal{A}}(t)$ is obtained by gluing this triangle along $\mathcal{A}(\mathcal{E}_k^{\mathcal{A}}(t))$. The decorated graph $\mathcal{D}_{k+1}^{\mathcal{A}}(t)$ is obtained by adding a vertex labelled $\ell_i + 1$ and an oriented edge from $x_i$ to this new vertex.
\vspace{0.5em}
\end{minipage}%
\hfill
\begin{minipage}[c]{0.55\textwidth}
\centering
\includegraphics[width =\linewidth]{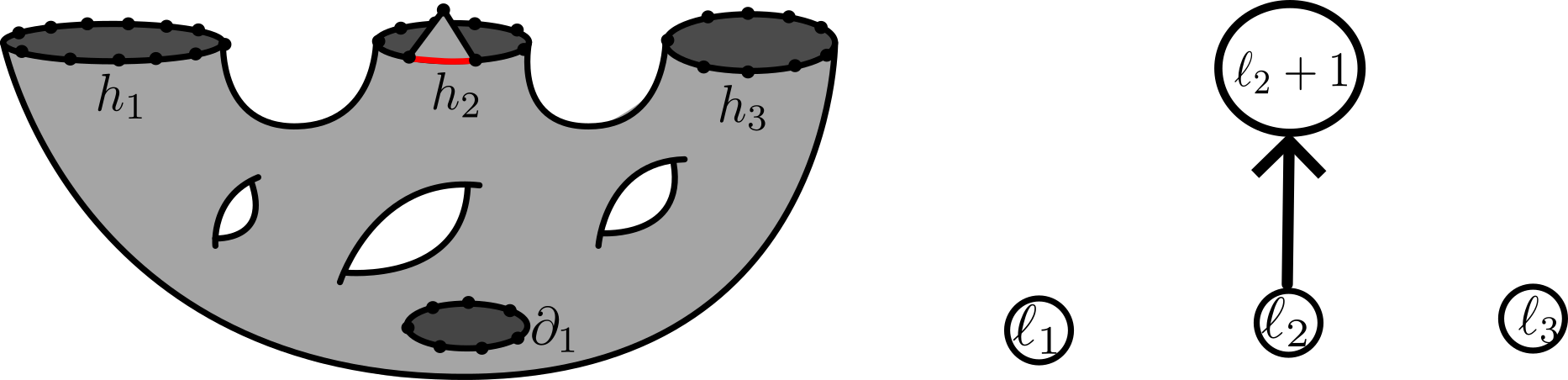}
\end{minipage}

\begin{minipage}[c]{0.4\textwidth}
\vspace{0.5em}
\textbf{Type III.} The discovered triangle has its third vertex $v$ lying on a boundary $\partial_j$ of $t$ not yet revealed in $\mathcal{E}_k^{\mathcal{A}}(t)$. Then $\mathcal{E}_{k+1}^{\mathcal{A}}(t)$ is obtained by gluing this triangle to the boundary $\partial_j$ along $\mathcal{A}(\mathcal{E}_k^{\mathcal{A}}(t))$. The decorated graph $\mathcal{D}_{k+1}^{\mathcal{A}}(t)$ is obtained by adding one vertex labelled $\ell_i + p_j + 1$ and an oriented edge from $x_i$ to this new vertex. We label the added edge $(j,b)$, where $b$ is such that $v$ is the $b^{\mathrm{th}}$ vertex to the right of the root of $\partial_j$.
\vspace{0.5em}
\end{minipage}%
\hfill
\begin{minipage}[c]{0.55\textwidth}
\centering
\includegraphics[width =\linewidth]{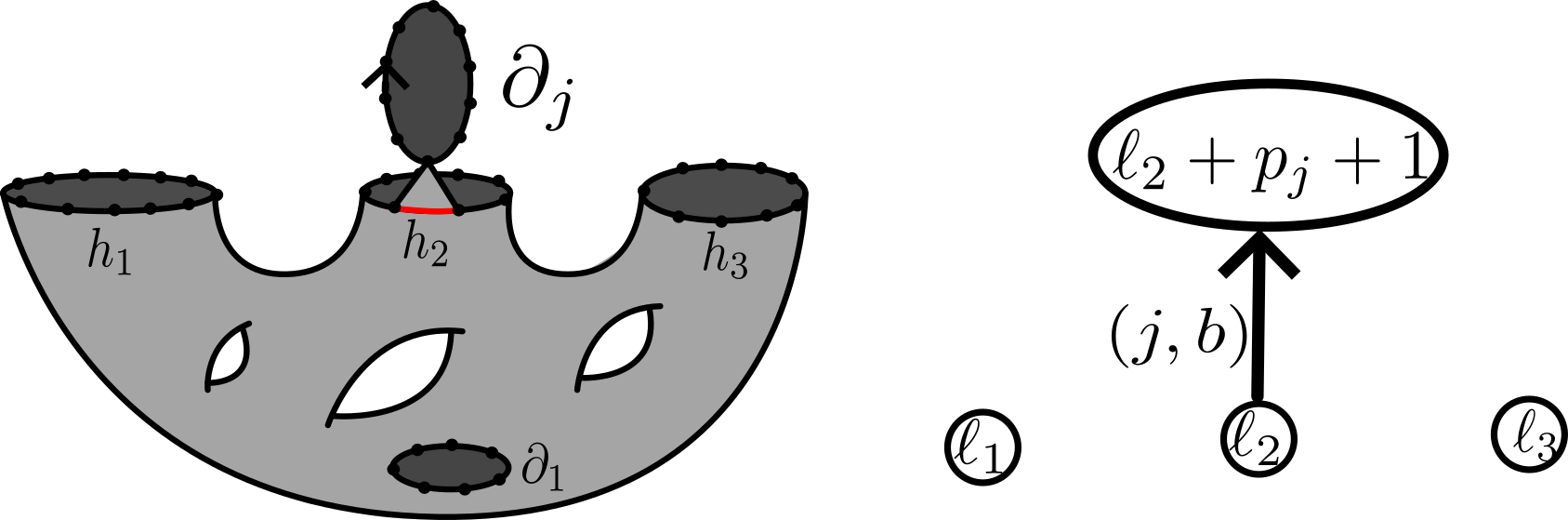}
\end{minipage}

\begin{minipage}[c]{0.4\textwidth}
\vspace{0.5em}
\textbf{Type IV.} The discovered triangle has its third vertex lying on $h_i$, splitting it into two holes. The left one has length $a$ and the right one has length $b$, with $a + b = \ell_i + 1$. Then $\mathcal{E}_{k+1}^{\mathcal{A}}(t)$ is obtained by gluing this trianglealong $\mathcal{A}(\mathcal{E}_k^{\mathcal{A}}(t))$. The decorated graph $\mathcal{D}_{k+1}^{\mathcal{A}}(t)$ is obtained by adding two vertices: one labelled $a$ and one labelled $b$. We add an oriented edge labelled $\leftarrow$ (resp. $\rightarrow$) from $x_i$ to the vertex labelled $a$ (resp. $b$).
\vspace{0.5em}
\end{minipage}%
\hfill
\begin{minipage}[c]{0.55\textwidth}
\centering
\includegraphics[width =\linewidth]{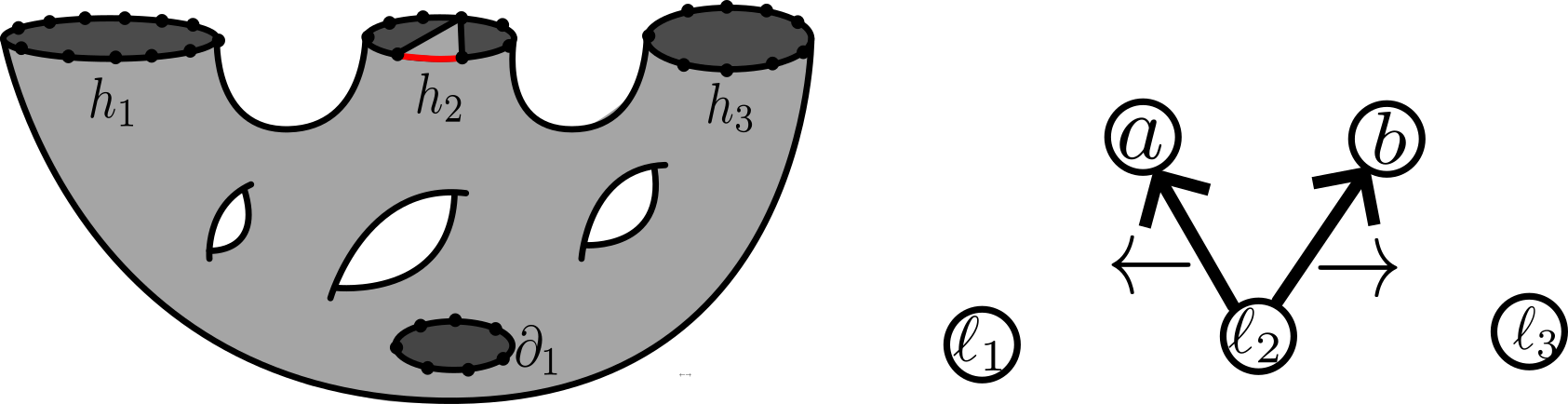}
\end{minipage}

\begin{minipage}[c]{0.4\textwidth}
\vspace{0.5em}
\textbf{Type V.} The discovered triangle has its third vertex $v$ lying on another hole $h_j$ with $j \neq i$. Thus it merges the two holes $h_i$ and $h_j$. Then $\mathcal{E}_{k+1}^{\mathcal{A}}(t)$ is obtained by gluing this triangle along $\mathcal{A}(\mathcal{E}_k^{\mathcal{A}}(t))$. The decorated graph $\mathcal{D}_{k+1}^{\mathcal{A}}(t)$ is obtained by adding a vertex $x$ labelled $\ell_i + \ell_j + 1$. We add two oriented edges: one going from $x_i$ to $x$ and one going from $x_j$ to $x$. The edge from $x_i$ to $x$ is labelled by the integer $b$ such that $v$ is the $b^{\mathrm{th}}$ vertex to the right of the leftmost vertex of $h_j$ at minimal distance from the root $\rho$.
\vspace{0.5em}
\end{minipage}%
\hfill
\begin{minipage}[c]{0.55\textwidth}
\centering
\includegraphics[width =\linewidth]{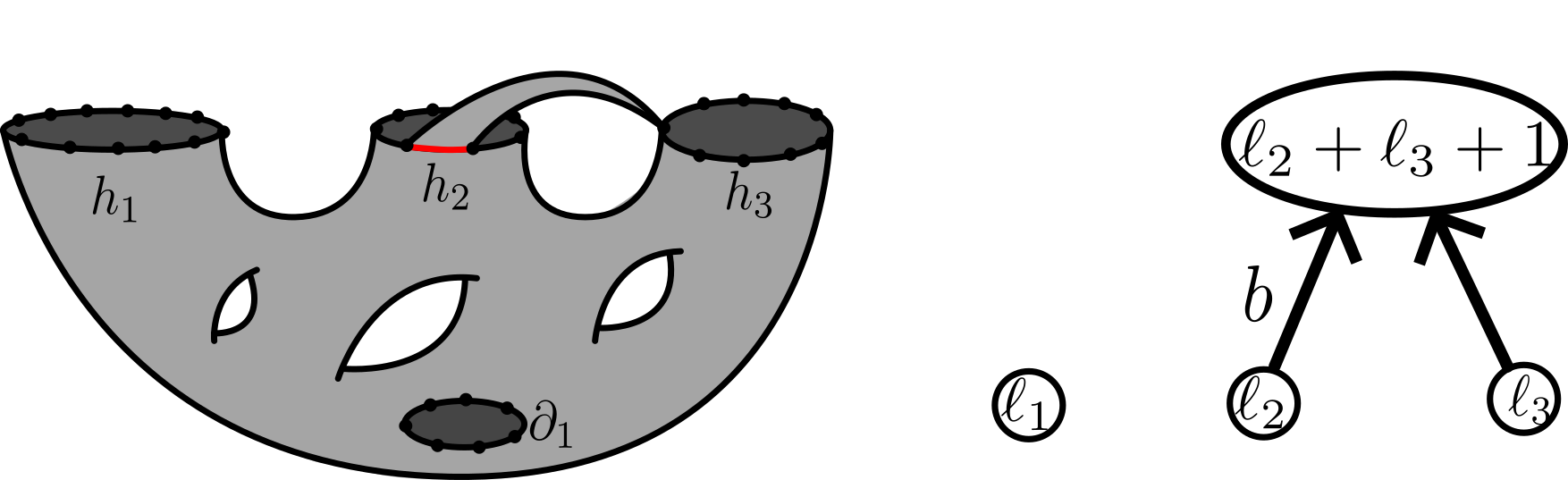}
\end{minipage}

This defines two eventually constant families $(\mathcal{E}_k^{\mathcal{A}}(t))_{k \ge 0}$ and $(\mathcal{D}_k^{\mathcal{A}}(t))_{k \ge 0}$.  
Let $\mathcal{D}^{\mathcal{A}}(t)$ denote the limit value of $(\mathcal{D}_k^{\mathcal{A}}(t))_{k \ge 0}$.  
We call $\mathcal{D}^{\mathcal{A}}(t)$ the \emph{peeling diagram} of $t$ associated with the algorithm $\mathcal{A}$.  
It is a decorated graph that encodes the complete genealogy of the peeling exploration.  
In other words, given $\mathcal{A}$ and $\mathcal{D}^{\mathcal{A}}(t)$, one can reconstruct the entire peeling exploration $(\mathcal{E}_k^{\mathcal{A}}(t))_{k \ge 0}$ by following the transitions encoded in the diagram.

We denote by $\mathcal{D}^{\mathcal{A}}(n,g,\mathbf{p}) := \mathcal{D}^{\mathcal{A}}(\mathcal{T}_{\mathbf{p}}(n,g))$ the set of all possible peeling diagrams obtained with algorithm $\mathcal{A}$.  
It is important to note that this set depends on the chosen algorithm $\mathcal{A}$.

The vertex labels encode the sizes of the holes during the exploration.  
The edge labels $(j,b)$ in \emph{Type III} steps indicate which boundary of $T$ is being revealed and which vertex is being touched by the triangle.  
The arrows $\leftarrow$ and $\rightarrow$ in \emph{Type IV} steps specify which hole was created on the left and which one on the right.  
Finally, in \emph{Type V} steps, the label $b$ on an edge indicates where the third vertex of the discovered triangle lies on $h_j$.

Note that in the planar case ($g = 0$), peeling diagrams are trees, since \emph{Type V} steps cannot occur.  
Each \emph{Type V} step increases the genus of $\mathcal{E}_k^{\mathcal{A}}(T)$ by exactly one, so there are precisely $g$ such steps.  
In the planar case ($g=0$), the set of all possible peeling diagrams does not depend on $\mathcal{A}$.  
Moreover, each \emph{Type II} step corresponds to the discovery of one internal vertex, so the number of internal vertices $n + 2 - 2g - \sum_{i=1}^{\ell} p_i$ equals the number of Type II steps.  
In \cite{Contat2022LastCD}, a similar construction is used for planar triangulations, though the definitions differ slightly since the peeling procedures are not identical: our boundaries are simple, whereas in \cite{Contat2022LastCD} they are not. Nevertheless, the two constructions remain closely related.

\begin{figure}[H]
\includegraphics[scale=0.15]{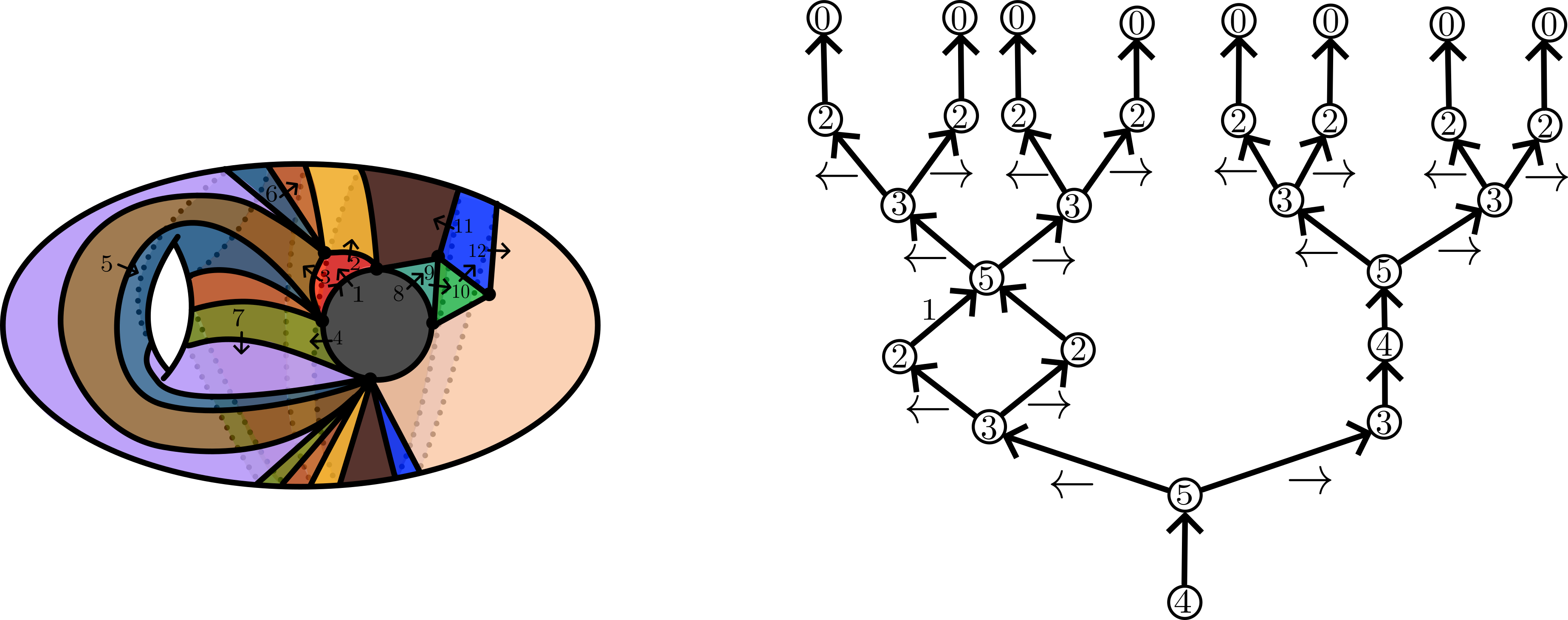}
\label{peeling_diagram}
\caption{Left: a triangulation $t \in \mathcal{T}_{4}(7,1)$. Right: its \emph{peeling diagram} associated with an algorithm $\mathcal{A}$. The peeling order is indicated by the numbers. Starting from a polygon of length $4$, one can reconstruct the entire peeling exploration of the triangulation on the left using the peeling diagram on the right.}
\end{figure}

\section{Combinatorial estimates}

This section collects the coarse combinatorial estimates used throughout the paper. 
We begin with a crude bound on the volume $\tau(n,g)$. 
A sharper asymptotic, valid in the regime $\frac{g}{n}\to \theta$, can be found in \cite[Theorem~3]{Budzinski_2020}. 
Here we only require the following weaker estimate, which also holds for various other models of random maps of large genus.

\begin{lemma}\label{estimate_tau}
There exists a constant $C > 0$ such that for all $n,g \ge 0$ with $n \ge 2g-1$,
\begin{align*}
    e^{-Cn} n^{2g} \le \tau(n,g) \le e^{Cn} n^{2g}.
\end{align*}
\end{lemma}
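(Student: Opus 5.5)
Both bounds will come from encoding triangulations of genus $g$ with $2n$ faces as gluings of $2n$ triangles. The upper bound is a crude count of all such gluings. The lower bound is an explicit surgery producing many distinct triangulations.

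\emph{Upper bound.} A rooted triangulation in $\mathcal{T}(n,g)$ is determined by a perfect matching of the $6n$ sides of $2n$ labelled triangles, up to relabelling. So $\tau(n,g)\le (6n-1)!!$, but this is about $n^{3n}$, far too big. To get $n^{2g}$ I will instead use the vertex count. A gluing is the pair of permutations $(\sigma,\alpha)$ on the $6n$ half-edges, with $\sigma$ the face permutation (fixed up to labelling) and $\alpha$ a fixed-point-free involution. The vertex permutation $\sigma\alpha$ has exactly $V=n+2-2g$ cycles. A standard bound on the number of involutions $\alpha$ for which $\sigma\alpha$ has $V$ cycles is
\[
e^{O(n)}\,n^{3n-V} = e^{O(n)}\,n^{2n-2+2g}.
\]
Dividing by the $(2n)!\,3^{2n}$ relabellings, which is $e^{O(n)}n^{2n}$, leaves $e^{O(n)}n^{2g}$.

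I would prove the bound on involutions by building $\alpha$ pair by pair. Processing half-edges in a canonical order, each new pairing either closes a vertex cycle or does not. At most $V$ choices close a cycle and there are $\binom{3n}{V}\le 2^{3n}$ ways to choose when this happens. When a choice does not close a cycle it has at most $6n$ options, while the steps that close a cycle essentially force the partner. If this is done carefully, the product is $e^{O(n)}(6n)^{3n-V}$, giving $e^{Cn}n^{2g}$. An alternative I would keep as a fallback is to cite the Goulden--Jackson recursion or the known bounds $\tau(n,g)\le$ the number of unicellular-type gluings from \cite{Budzinski_2020}. I expect this step, the upper bound, to be the main obstacle, namely making the "forced choice" counting rigorous.

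\emph{Lower bound.} I will construct, by induction on $g$, many triangulations. Repeatedly adding a handle by gluing two triangles into a tube costs $O(1)$ faces and multiplies the count by roughly the number of pairs of positions, about $n^2$. Concretely, start from a planar triangulation of size $n-cg$; there are $e^{-Cn}$-many such, by the asymptotic $\lambda_c^{-n}n^{-5/2}$ recalled in Section~\ref{combi}. Then pick $g$ disjoint pairs of faces among $\Theta(n)$ faces. Remove each pair and glue in a fixed cylinder of $c$ triangles. The number of choices is at least
\[
\frac{1}{g!}\binom{\Theta(n)}{2}^g \ge \frac{(cn)^{2g}}{g!\,2^g} \ge e^{-Cn}n^{2g},
\]
since $g\le n$ gives $g!\le n^g$, which is too lossy. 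So I would instead choose ordered pairs with the second face in a prescribed distinct region, reducing overcount. More simply, count gluings up to the $e^{O(n)}$ overcount: each resulting map arises from at most $e^{O(n)}\cdot g!$ choices. Then use $n^{2g}/g!\ge n^{g}$... This is still lossy, so the cleanest route is the reverse recursion. I would show $\tau(n,g)\ge c\,n^2\,\tau(n-c',g-1)$ via a handle insertion that is injective up to a factor $O(n)$, giving a net factor $\ge c n$ per handle and $\tau(n,g)\ge e^{-Cn}n^{g}\cdot$... To reach $n^{2g}$ I will instead directly count gluings. The number of involutions with $\sigma\alpha$ having at least $V$ cycles is at least $e^{-O(n)}n^{3n-V}$, by pairing half-edges so that each of $V$ chosen steps closes a vertex. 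Then divide by the labelling factor $e^{O(n)}n^{2n}$, mirroring the upper bound.
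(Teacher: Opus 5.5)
\paragraph{The upper bound.} The key counting claim behind your upper bound is not proved, and as you state it it is false: you state it for all fixed-point-free involutions $\alpha$, with no connectivity requirement. Glue the $2n$ triangles in pairs into $n$ two-triangle spheres. Then $\sigma\alpha$ has $V=3n$ cycles, so your bound $e^{O(n)}n^{3n-V}$ becomes $e^{O(n)}$. Yet there are $(2n-1)!!\,3^{n}=n^{n}e^{O(n)}$ such involutions. Your exploration never uses connectivity, so it cannot be made rigorous as sketched.

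The step that breaks is ``the steps that close a cycle essentially force the partner''. A vertex cycle often closes at a step where $\alpha(x)$ was already fixed earlier, namely when $\alpha(x)$ was the current side and chose $x$ as its partner. Such a closing involves no choice and gives no saving of a factor $n$. What the exploration actually yields is $e^{O(n)}(6n)^{3n-|J|}$, where $J$ is the set of closings that happen at genuine choice steps. Nothing forces $V-|J|=O(n/\log n)$, which is what you would need. So the upper bound needs an ingredient that uses connectivity in an essential way. Pointing to Goulden--Jackson as a fallback is not a proof.

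\paragraph{The lower bound.} This is not established either. Handle insertion loses the $g!$ coming from unordered choices. Since $g!=n^{g}e^{O(n)}$ when $g=\Theta(n)$, it only reaches $n^{g}$, as you noticed. Your final ``direct count'' mirrors the flawed heuristic. It would moreover have to produce gluings that have \emph{exactly} $V$ vertex cycles (more cycles means lower genus or a disconnected gluing), that are connected and pairwise distinct, and in which the partner that would close the current cycle is still unpaired when you want it. None of this is addressed.

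\paragraph{The paper's route.} Your passing mention of unicellular gluings points in the right direction, and this is what the paper does. Marking a spanning tree $S$ of $t$ costs a factor at most $\binom{3n}{n+1-2g}\le 8^{n}$. Cutting $t^{*}$ along the duals of the edges of $S$ gives a bijection between pairs $(t,S)$ and pairs consisting of a precubic unicellular map of genus $g$ with $4n+1-2g$ edges and a non-crossing matching of its leaves; there are at most $4^{n}$ such matchings. Chapuy's closed formula $\frac{(8n-4g+2)(4n-2g)!}{12^{g}g!\,(2n-4g+2)!\,(2n-g)!}$ then gives both bounds by Stirling.

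Connectivity is built into the unicellular encoding. The factorial ratio $\frac{(4n-2g)!}{(2n-4g+2)!\,(2n-g)!}$ is $n^{3g}$ up to $n^{O(1)}e^{O(n)}$, and this is exactly what compensates the $1/g!$ that defeats your handle count. For the lower bound alone, iterating $\tau(n,g)\ge n^{2}\tau(n-2,g-1)$, which follows from the Goulden--Jackson recursion, would also work.
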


\begin{proof}
Let $\widetilde{\mathcal{T}}(n,g)$ denote the set of triangulations $t \in \mathcal{T}(n,g)$ endowed with a spanning tree $S$. 
Since $t$ has $n + 2 - 2g$ vertices, the tree $S$ has $n + 1 - 2g$ edges. 
Write $\tilde{\tau}(n,g)$ for the cardinality of this set. 
For each $t \in \mathcal{T}(n,g)$, since $t$ has exactly $3n$ edges, the number of spanning trees of $t$ is at most $\binom{3n}{n + 1 - 2g} \le 8^n$. 
Hence
\begin{align*}
    8^{-n} \tilde{\tau}(n,g) \le \tau(n,g) \le \tilde{\tau}(n,g).
\end{align*}

We now estimate $\tilde{\tau}(n,g)$. 
Given $t$ and a spanning tree $S$ of $t$, consider $U(t,S)$, the unicellular map of genus $g$ obtained by cutting the dual map $t^*$ along all edges dual to those of $S$. 
The resulting map is precubic (each vertex has degree $1$ or $3$). 
This cutting operation defines a non-crossing matching $\mathcal{M}$ of the leaves on the boundary face of $U(t,S)$. 
The map $U(t,S)$ has $4n + 1 - 2g$ edges. 
For such a unicellular map, the number of possible matchings equals the Catalan number $Cat(n + 1 - 2g) \le 4^n$. 
Setting $\varphi(t,S) = (U(t,S), \mathcal{M})$, we obtain an injective map from $\widetilde{\mathcal{T}}(n,g)$ to the set of precubic unicellular maps of genus $g$ with $4n + 1 - 2g$ edges and a matching. 
In fact, this mapping is a bijection.

The number of such precubic unicellular maps is given explicitly in \cite[Corollary~7]{CHAPUY2011874} as
\begin{align*}
    \frac{(8n - 4g + 2)(4n - 2g)!}{12^g g!\,(2n - 4g + 2)!(2n - g)!}.
\end{align*}
Consequently,
\begin{align}\label{cardinal_triangulation_decorated_with_spanning_tree}
    \tilde{\tau}(n,g) = Cat(n + 1 - 2g) \cdot 
    \frac{(8n - 4g + 2)(4n - 2g)!}{12^g g!\,(2n - 4g + 2)!(2n - g)!}.
\end{align}

A direct estimate yields
\begin{align*}
    12^{-n} (4n - 4g + 2)^{2g - 2} 
    \le \tilde{\tau}(n,g) 
    \le 4^{n + 1} (8n + 2) (4n)^{2g - 2}.
\end{align*}
Using $n \ge 2g - 1$, we obtain
\begin{align*}
    n^{-2} 12^{-n} n^{2g} 
    \le \tilde{\tau}(n,g) 
    \le (8n + 2) n^{-2} 64^{n + 1} n^{2g}.
\end{align*}
This proves the claim by choosing $C > 0$ large enough so that 
$(8n + 2) n^{-2} 64^{n + 1} \le e^{Cn}$ 
and $e^{-Cn} \le n^{-2} 12^{-n}$.
\end{proof}

\begin{lemma}\label{bound_separating}
For all $n,g \ge 0$ such that $n \ge 2g-1$, we have
\begin{align*}
    \sum_{\substack{n_1 + n_2 = n \\ g_1 + g_2 = g}}
    \tau(n_1,g_1)\tau(n_2,g_2)
    \le \tau(n + 1, g).
\end{align*}
\end{lemma}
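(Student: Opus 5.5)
We prove the bound with an explicit injective gluing map
\[
\Phi:\bigsqcup_{\substack{n_1+n_2=n\\ g_1+g_2=g}}\mathcal{T}(n_1,g_1)\times\mathcal{T}(n_2,g_2)\longrightarrow \mathcal{T}(n+1,g).
\]
Given two rooted triangulations $(t_1,e_1)\in\mathcal{T}(n_1,g_1)$ and $(t_2,e_2)\in\mathcal{T}(n_2,g_2)$, we cut each along its root edge, opening it into a digon. We then glue the two digons to the two sides of a new object so that the result is a connected triangulation of genus $g_1+g_2$ with $2(n_1+n_2)+2=2(n+1)$ faces. The count is what fixes the choice of gadget: we must add exactly two triangles and, to keep the genus at $g$, exactly one vertex. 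The Euler count is
\[
V=(n_1+2-2g_1)+(n_2+2-2g_2)-2+1=n+3-2g,
\]
which is exactly the number of vertices of an element of $\mathcal{T}(n+1,g)$. Since the connected sum along a digon is planar surgery, the genus is additive.

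The natural gadget is the one used in the proof of Lemma~\ref{estimate_tau} and in the standard separating-edge argument. Open $t_1$ along $e_1$ into a digon $D_1$ with vertices $u_1,v_1$, and open $t_2$ along $e_2$ into a digon $D_2$ with vertices $u_2,v_2$. Identify $u_1$ with $u_2$. Then fill the remaining region, which is a quadrilateral $v_1,u,v_2,u$ after identification (both digon sides of each $D_i$), with two triangles glued along a new edge joining $v_1$ to $v_2$.

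We first check the bookkeeping of this construction. The result has $2(n_1+n_2)+2$ faces. Each $e_i$ was doubled, so the edge count gains one edge from each side plus the new diagonal, and $3n_1+3n_2+3=3(n+1)$ agrees. The vertex count loses one through the identification, matching the computation above. We root the result at the new diagonal edge $v_1\to v_2$.

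Next we prove injectivity. From the image we recover the construction as follows. The root edge is the diagonal, and the two triangles incident to it have the shape $(v_1,v_2,u)$. Removing the diagonal and the two triangles, then splitting $u$, disconnects the map into two pieces, because $u$ is a cut vertex of the quadrilateral region in the surgery. Closing each digon back into an edge recovers $(t_1,e_1)$ and $(t_2,e_2)$ with their roots, and the orientation of the root distinguishes which piece was $t_1$. Since $n_1,g_1$ are recovered from $t_1$, the map $\Phi$ is injective on the disjoint union. This gives
\[
\sum_{\substack{n_1+n_2=n\\ g_1+g_2=g}}\tau(n_1,g_1)\,\tau(n_2,g_2)\le \tau(n+1,g).
\]

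The main obstacle I expect is making the gadget precise in the type-I setting, and in particular the recovery step. We must guarantee that the splitting at $u$ is canonical, that is, that the two corners of $u$ to split along are determined by the root triangles. They are: they are the corners of $u$ inside the two new triangles.

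We also have to handle degenerate cases. The first is $n_i=0$, if it is allowed. Note that $\tau(0,0)$ is the degenerate map, and the constraint $n_i\ge 2g_i-1$ must be respected. The second is the case where the root is a loop, so that $u_i=v_i$. If the loop case complicates the argument, the fallback is to verify injectivity purely combinatorially via the face and edge counts above, checking that the decoding never depends on whether the endpoints coincide.
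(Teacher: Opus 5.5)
Your proposal is correct and is essentially the paper's argument: an explicit injection that opens both root edges into digons, merges them at one vertex and adds two triangles. The only difference is the local gadget. The paper inserts a loop at the origin of each $e_i$ inside the digon and glues the two maps along these loops, rooting at the resulting separating loop. Your diagonal $v_1v_2$ (with $u_i$ the origin of $e_i$) is exactly the other diagonal, i.e.\ the flip of that loop, in the same quadrilateral $u\,v_1\,u\,v_2$.

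Your loop and degenerate cases need no counting fallback, and face or edge counts could not prove injectivity anyway. Define the merge at the corner of the opened digon at the origin of $e_i$, and decode by splitting at the two corners opposite the root in its two incident triangles. Phrased with corners rather than vertices, everything is canonical even when $e_1$, $e_2$ or the new root is a loop, just as in the paper's construction.
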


\begin{proof}
For each $(t_1, t_2) \in 
\displaystyle \bigsqcup_{\substack{n_1 + n_2 = n \\ g_1 + g_2 = g}} 
\mathcal{T}(n_1,g_1) \times \mathcal{T}(n_2,g_2)$,
let $\varphi(t_1,t_2)$ be the map obtained as follows: 
in each $t_i$, split the root edge $e_i$ into a digon, 
fill the digon with a loop based at the origin of $e_i$, 
and glue the two resulting maps along the loops, rooting the resulting map on this loop. 
This produces $\varphi(t_1,t_2) \in \mathcal{T}(n + 1, g)$. 
The construction is clearly injective, completing the proof.
\end{proof}

We now state a version of the bounded-ratio lemma \cite[Lemma~4]{Budzinski_2020} adapted to our setting.\footnote{%
There is a minor inaccuracy in the proof of \cite[Lemma~4]{Budzinski_2020}: 
the gluing operation described there does not necessarily yield a map in $\mathcal{T}_p(n-1,g)$ when one of the two \textbf{GOOD} vertices lies on a boundary. 
The conclusion remains correct since the authors only consider the regime $|\mathbf{p}| = \mathcal{O}(1)$.}
Since in our work we consider boundaries of perimeters $\mathbf{p}^n=(p^n_1,\dots,p^n_{\ell_n})$ satisfying $|\mathbf{p}^n| = o(n)$, 
we also require an estimate showing that changing the total perimeter by a constant only modifies the volume by a constant factor.

\begin{lemma}\label{bounded_ratio_vertices}
Let $\varepsilon > 0$. There exists a constant $C_{\varepsilon} > 0$ such that for all $g \ge 0$ and $n \ge 2g-1$ with $\frac{g}{n} \le \frac{1}{2} - 4\varepsilon$ and for all $p_1,\dots,p_{\ell} \ge 1$ with $|\mathbf{p}| \le \varepsilon n$,
\begin{align*}
    1 \le \frac{\tau_{\mathbf{p}}(n+1,g)}{\tau_{\mathbf{p}}(n,g)} \le C_{\varepsilon},
\end{align*}
and
\begin{align*}
    C_\varepsilon^{-1} \le 
    \frac{\tau_{(p_1 + 1, p_2, \dots, p_{\ell})}(n,g)}{\tau_{(p_1, p_2, \dots, p_{\ell})}(n,g)} 
    \le 1.
\end{align*}
\end{lemma}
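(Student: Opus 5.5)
The plan is to prove the four inequalities by explicit surgeries: three are direct injections, and one is a double-counting argument. The perimeter lower bound is reduced to the vertex-ratio upper bound.

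\textbf{Step 1: $\tau_{\mathbf{p}}(n+1,g)\ge\tau_{\mathbf{p}}(n,g)$.} I would use an injection. Given $t\in\mathcal{T}_{\mathbf{p}}(n,g)$, let $f$ be the internal triangle incident to the distinguished edge $e_1$, on the side opposite to $\partial_1$. Insert a new vertex inside $f$ and join it to the three corners of $f$.
\begin{itemize}
\item This adds one vertex and two triangles, so the vertex count becomes $n+3-2g$ and the genus and boundaries are unchanged. Hence the result lies in $\mathcal{T}_{\mathbf{p}}(n+1,g)$.
\item The map is invertible. In the image, the triangle incident to $e_1$ has as third vertex the new degree-$3$ vertex, and deleting it recovers $t$.
\end{itemize}

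\textbf{Step 2: $\tau_{(p_1+1,\dots)}(n,g)\le\tau_{(p_1,\dots)}(n,g)$.} I would again use an injection. Let $a,b,c$ be the first three vertices of $\partial_1$ starting from the root. Glue a triangle $abc$ onto the two boundary edges $ab,bc$, which creates a new boundary edge $ac$.
\begin{itemize}
\item This adds one triangle and no vertex. The number of triangles $2n-\sum(p_i-2)$ indeed drops by one when $p_1$ increases, so the counts match.
\item The new boundary is still simple and vertex-disjoint from the others, since its vertex set is contained in the old one.
\item Rooting the new boundary at $ac$ lets us recover the glued triangle, so the map is injective.
\item The degenerate case $p_1+1=2$ (a digon boundary) is handled by the analogous gluing of a triangle with a loop at $a$.
\end{itemize}

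\textbf{Step 3: the perimeter lower bound.} I would construct an injection $\mathcal{T}_{(p_1,\dots)}(n-1,g)\to\mathcal{T}_{(p_1+1,\dots)}(n,g)$. Glue a triangle $a v b$ with a new vertex $v$ onto the outer side of $e_1=ab$, so that the boundary becomes $a\,v\,b\cdots$ of length $p_1+1$.
\begin{itemize}
\item Vertex and triangle counts check out directly.
\item The map is reversible from the root.
\end{itemize}
Hence
\[
\tau_{(p_1+1,\dots)}(n,g)\ge\tau_{(p_1,\dots)}(n-1,g)\ge C_\varepsilon^{-1}\tau_{(p_1,\dots)}(n,g),
\]
by the vertex-ratio upper bound applied at $n-1$. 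The hypotheses at $n-1$ still hold after shrinking $\varepsilon$ slightly, or by absorbing small $n$ into the constant.

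\textbf{Step 4: $\tau_{\mathbf{p}}(n+1,g)\le C_\varepsilon\tau_{\mathbf{p}}(n,g)$ (main step).} I would adapt \cite[Lemma~4]{Budzinski_2020}, but avoid edge contraction, since that caused the inaccuracy mentioned in the footnote when boundaries are involved.
\begin{itemize}
\item \emph{Many low-degree internal vertices.} Let $t\in\mathcal{T}_{\mathbf{p}}(n+1,g)$. Since $g\le(\frac12-4\varepsilon)n$, $t$ has at least $8\varepsilon n$ vertices. Since $|\mathbf{p}|\le\varepsilon n$, at least $7\varepsilon n$ of them are internal. The total degree is at most $2\cdot 3(n+1)$, so by Markov at least $3\varepsilon n$ internal vertices have degree at most $D:=\lceil 4/\varepsilon\rceil$.
\item \emph{Deletion surgery.} For such a vertex $v$ of degree $d\le D$, delete $v$ and its $d$ incident triangles. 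The star of an internal vertex is a disk, so this leaves a hole of perimeter $d$, possibly with repeated vertices.
\item \emph{Refilling.} Refill the hole with the fan of $d-2$ triangles from the corner following a canonical edge. Loops and multiple edges are allowed in type I, and chords between boundary vertices are harmless because boundaries stay faces and remain simple and disjoint.
\item \emph{Counts.} The surgery removes one vertex and two triangles and keeps the genus, so it lands in $\mathcal{T}_{\mathbf{p}}(n,g)$.
\item \emph{Bounded preimages.} Given the image and an oriented edge marking the fan apex together with $d\le D$, one reconstructs $(t,v)$. So each image has at most $6n\cdot D$ preimages.
\item \emph{Conclusion.} Double counting gives
\[
3\varepsilon n\,\tau_{\mathbf{p}}(n+1,g)\le 6nD\,\tau_{\mathbf{p}}(n,g),
\]
which is the bound with $C_\varepsilon=2D/\varepsilon$.
\end{itemize}

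The main obstacle I expect is verifying that this deletion-and-fan surgery always yields a valid element of $\mathcal{T}_{\mathbf{p}}(n,g)$. This means checking that the boundaries stay vertex-simple and disjoint, that the dual stays connected, and that the reconstruction is really unambiguous when the link of $v$ has repeated vertices or touches a boundary. This is exactly where the original argument was inaccurate, so I would treat these cases carefully. If something fails, I would restrict to vertices with no boundary neighbours. There are still order $\varepsilon n$ of them, because at most $|\mathbf{p}|D$ low-degree vertices are adjacent to boundaries; to make this count small, I would take the total-perimeter bound in the hypothesis smaller than $\varepsilon n$ (of order $\varepsilon^2 n$) when necessary.
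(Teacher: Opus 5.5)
Your Steps 1--3 are sound and essentially coincide with the paper. The paper uses the same two boundary injections: a triangle glued on two consecutive edges inside $\partial_1$, and a triangle with a new vertex glued on $e_1$ inside $\partial_1$, the latter combined with the vertex-ratio upper bound. Your vertex insertion also correctly supplies $\tau_{\mathbf{p}}(n,g)\le\tau_{\mathbf{p}}(n+1,g)$, which the paper leaves implicit.

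The gap is in Step 4: the claim that the star of an internal vertex is a disk is false for type-I triangulations. If $v$ carries a loop, the faces on both sides of the loop have corners at $v$. Deleting $v$ with its incident edges then does not leave a single $d$-gonal disk face; the removed region can be an annulus, so there is no $d$-gon to refill with a fan. If $\deg v=1$ (a pendant vertex inside a loop), the hole is a $1$-gon, and no fan with $d-2=-1$ triangles exists, since every triangulation of a $1$-gon has an internal vertex. So your surgery is only defined on loop-free vertices of degree at least $2$. Your Markov count of $3\varepsilon n$ low-degree internal vertices says nothing about how many of those are admissible.

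A finer count cannot fix this, because some triangulations in the range of the lemma have no admissible vertex at all. On the torus, take $u_1,\dots,u_k$ in cyclic order, each carrying a meridian loop $m_i$. Cut the annulus between $m_i$ and $m_{i+1}$ into two triangles using an edge $u_iu_{i+1}$ and one diagonal. This gives an element of $\mathcal{T}(k,1)$ in which every vertex has degree $6$ and a non-contractible loop, and deleting any $u_i$ leaves an annular face. In genus $0$, take a single vertex carrying nested triangles of loops (as in $\mathbb{T}_{\star}$), with a pendant vertex inside each innermost loop; every other vertex then has degree $1$. Hence $3\varepsilon n\,\tau_{\mathbf{p}}(n+1,g)\le 6nD\,\tau_{\mathbf{p}}(n,g)$ is not justified, since it needs linearly many admissible vertices in every $t$.

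Your fallback (avoid boundary neighbours, strengthen the hypothesis to $|\mathbf{p}|=O(\varepsilon^2 n)$) targets a non-issue: you are right that chords between boundary vertices are harmless. It would also prove a weaker statement than the lemma.

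The paper does not delete a single vertex. It imports the Budzinski--Louf surgery on \textbf{GOOD} pairs $(v,v')$ of non-boundary vertices with $\deg v+\deg v'\le 6/\varepsilon$ and $d^{*}(v,v')\le 12/\varepsilon$, of which there are at least $\frac{\varepsilon}{12}n$. Its output is an element of $\mathcal{T}_{\mathbf{p}}(n,g)$ with a marked vertex of degree at most $36/\varepsilon$ and two marked edges, so the inversion cost is bounded. Excluding boundary vertices from the pairs is exactly the fix mentioned in the paper's footnote.
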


\begin{proof}
For the first inequality, we follow the argument of \cite[Lemma~4]{Budzinski_2020}. 
For $t \in \mathcal{T}_{\mathbf{p}}(n,g)$, we call a pair of vertices $(v,v')$ \textbf{GOOD} if 
$\deg(v) + \deg(v') \le \frac{6}{\varepsilon}$, 
$d^{*}(v,v') \le \frac{12}{\varepsilon}$, 
and neither vertex lies on a boundary\footnote{Our definition slightly differs from that in \cite[Lemma~5]{Budzinski_2020}, 
where boundary vertices were allowed.}.

As in \cite[Lemma~5]{Budzinski_2020}, one shows that there are at least $\frac{\varepsilon}{12}n$ such pairs in $t$. 
Indeed, the average degree $m$ satisfies
\[
m= \frac{2(3n + \sum_{i=1}^{\ell} (3 - p_i))}{n + 2 - 2g} \le \frac{3}{2\varepsilon},
\]
and since $n + 2 - 2g \ge 8\varepsilon n$, there are at least $\varepsilon n$ vertices of degree less than $\frac{3}{\varepsilon}$ (excluding boundary vertices).

Following the proof of \cite[Lemma~4]{Budzinski_2020}, to each $(t,v,v')$ with $t \in \mathcal{T}_{\mathbf{p}}(n+1,g)$ and $(v,v')$ a \textbf{GOOD} pair, 
associate a triangulation $\varphi(t,v,v')$ in $\mathcal{T}_{\mathbf{p}}(n,g)$ with a marked vertex $v$ of degree at most $\frac{36}{\varepsilon}$ and two marked edges incident to $v$. 
The number of possible outputs is bounded by 
$n (36/\varepsilon)^{24/\varepsilon + 2} \tau_{\mathbf{p}}(n,g)$. 
Since $\varphi$ is injective, we deduce
\begin{align*}
    \frac{\varepsilon}{12} n \tau_{\mathbf{p}}(n+1,g)
    \le \Big(\frac{36}{\varepsilon}\Big)^{24/\varepsilon + 2} n \tau_{\mathbf{p}}(n,g).
\end{align*}

For the second inequality, the right-hand side follows from the injection 
$\mathcal{T}_{(p_1 + 1, p_2, \dots, p_{\ell})}(n,g) \hookrightarrow \mathcal{T}_{(p_1, p_2, \dots, p_{\ell})}(n,g)$ 
obtained by adding an edge inside the first external boundary between the root edge and the second vertex to its right. 
The left-hand side follows from the injection 
$\mathcal{T}_{(p_1, p_2, \dots, p_{\ell})}(n,g) \hookrightarrow \mathcal{T}_{(p_1 + 1, p_2, \dots, p_{\ell})}(n+1,g)$ 
constructed by gluing a triangle along the distinguished edge $e_1$ inside the first boundary, 
and applying the first inequality.
\end{proof}

\begin{lemma}\label{add_new_boundary}
Let $\varepsilon > 0$. 
There exists $C_{\varepsilon} > 0$ such that for all $g \ge 0$ and $n \ge 2g-1$ with $\frac{g}{n} \le \frac{1}{2} - 4\varepsilon$ and all $p_1,\dots,p_{\ell} \ge 1$ with $|\mathbf{p}| \le \varepsilon n$,
\begin{align*}
    \tau_{(p_1,\dots,p_{\ell})}(n,g) \le (6n)^{\ell - 1} \tau(n,g),
\end{align*}
and
\begin{align*}
    (C_{\varepsilon})^{-1} n 
    \le \frac{\tau_{(1,p_1,\dots,p_{\ell})}(n,g)}{\tau_{(p_1,\dots,p_{\ell})}(n,g)}.
\end{align*}
\end{lemma}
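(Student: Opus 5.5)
The plan is to prove both inequalities by explicit injections, in the spirit of Lemma~\ref{bounded_ratio_vertices}.

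\emph{First inequality.} I will argue by induction on $\ell$, removing one boundary at a time while keeping $n$ and $g$ fixed. The case $\ell=1$ (and $\ell=0$ if needed) is handled by filling the unique boundary $\partial_1$ with a fixed planar triangulation of the $p_1$-gon. I will choose this filling without internal vertices, for instance a fan from the root vertex of $\partial_1$. The vertex count is $n+2-2g$ before and after, and the genus is unchanged, so the result lies in $\mathcal{T}(n,g)$. I will root it at the former distinguished edge $e_1$. This map is injective: knowing $p_1$ and the root, one locates the fan and removes it. The one thing to check is that the fan chords do not create forbidden configurations, but since type-I triangulations allow multiple edges, this should be harmless.

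\emph{Inductive step.} Given $t\in\mathcal{T}_{(p_1,\dots,p_\ell)}(n,g)$ with $\ell\ge2$, I will fill $\partial_\ell$ with a fan in the same way, producing an element of $\mathcal{T}_{(p_1,\dots,p_{\ell-1})}(n,g)$. This loses only the position of the former distinguished edge $e_\ell$. To make the map injective, I will record $e_\ell$ as an oriented edge of the new map; there are at most $6n$ choices, which is a crude bound on the number of oriented edges. Knowing $e_\ell$ and $p_\ell$, the fan can be identified and removed. Iterating gives the factor $(6n)^{\ell-1}$. The hypotheses on $g/n$ and $|\mathbf{p}|$ are not needed for this part.

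\emph{Second inequality.} I need $\tau_{(1,\mathbf{p})}(n,g)\ge C_\varepsilon^{-1}n\,\tau_{\mathbf{p}}(n,g)$. A triangulation with a $1$-boundary is equivalent to one with a marked loop enclosing an empty face, so I will build it by inserting a new boundary at a marked internal location. Concretely, I will take $t\in\mathcal{T}_{\mathbf{p}}(n-1,g)$ together with an internal oriented edge, of which there are at least $3(n-1)-|\mathbf{p}|\ge n$ choices since $|\mathbf{p}|\le\varepsilon n$. I will split that edge into a digon and place inside it a loop whose inner face is the new boundary of perimeter $1$; the loop's base vertex is not on any other boundary, so boundaries stay vertex-disjoint. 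This adds two triangles and no vertices. Checking the face count, $2n-\sum(p_i-2)$ increases by $2+1=3$ when we add a boundary with $p=1$, which is consistent with $n\mapsto n+1$ and the vertex count $n+2-2g$ being preserved since $n$ increases by one and no vertex is added.

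The resulting map on $\mathcal{T}_{\mathbf{p}}(n-1,g)\times\{\text{internal oriented edges}\}\to\mathcal{T}_{(1,\mathbf{p})}(n,g)$ is injective: collapsing the loop and digon recovers both $t$ and the marked edge. This gives $\tau_{(1,\mathbf{p})}(n,g)\ge n\,\tau_{\mathbf{p}}(n-1,g)$. Lemma~\ref{bounded_ratio_vertices} then gives $\tau_{\mathbf{p}}(n-1,g)\ge C_\varepsilon^{-1}\tau_{\mathbf{p}}(n,g)$, which concludes the argument, up to adjusting $\varepsilon$ to accommodate $n-1$ in place of $n$.

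\emph{Main obstacle.} I expect the main work to be checking that the parameter bookkeeping and the vertex-simplicity and disjointness conditions are preserved by both surgeries, and making sure the application of Lemma~\ref{bounded_ratio_vertices} at $n-1$ stays inside its hypotheses.
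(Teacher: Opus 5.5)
\textbf{The second inequality fails as you set it up.} This is the part the paper actually proves. Splitting an edge into a digon and putting a loop inside adds one triangle, one boundary face of degree $1$, and no vertex. A map in $\mathcal{T}_{\mathbf{q}}(m,g)$ has $m+2-2g$ vertices and $2m-\sum_i(q_i-2)$ triangles. So your surgery sends $\mathcal{T}_{\mathbf{p}}(m,g)$ into $\mathcal{T}_{(1,\mathbf{p})}(m,g)$ at the \emph{same} level $m$. Passing from $(n-1,\mathbf{p})$ to $(n,(1,\mathbf{p}))$ would require one new vertex and three new triangles. Your bookkeeping sentence, saying $n$ increases by one while no vertex is added and $n+2-2g$ is preserved, contradicts itself.

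This is exactly why the paper inserts a second digon with a fresh vertex $z$ and takes the loop at $z$ as the new boundary. That adds one vertex and three triangles and lands in $\mathcal{T}_{(1,\mathbf{p})}(n+1,g)$; Lemma~\ref{bounded_ratio_vertices} then brings $n+1$ back to $n$.

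There is a second problem. An \emph{internal} edge can still have an endpoint on some $\partial_j$. If its origin does, your new $1$-boundary shares a vertex with $\partial_j$, and the output is not in $\mathcal{T}_{(1,\mathbf{p})}$ at all. You must restrict to oriented edges whose origin is not a boundary vertex. Your count $3(n-1)-|\mathbf{p}|$ then no longer gives a lower bound, since boundary vertices may carry almost all of the degree.

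Both defects can be repaired. Apply your surgery at level $n$ itself, to oriented edges whose origin is off $\partial t$. Each of the $n+2-2g-|\mathbf{p}|\ge 7\varepsilon n$ non-boundary vertices contributes at least one such edge. This gives $\tau_{(1,\mathbf{p})}(n,g)\ge 7\varepsilon n\,\tau_{\mathbf{p}}(n,g)$ directly, with no appeal to Lemma~\ref{bounded_ratio_vertices}; it is the bijection used in Proposition~\ref{application_cv_ratio}. In this version the hypothesis on $g/n$ enters through the vertex count. The paper's fresh vertex instead lets every oriented edge be used, at the cost of the shift $n\to n+1$.

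\textbf{The first inequality breaks for small perimeters.} The paper only cites this one from Budzinski--Louf, but your fan filling has two problems.
\begin{itemize}
\item There is no triangulation of the $1$-gon without an internal vertex, so nothing can be glued into a boundary with $p_\ell=1$ while keeping the vertex count. There you must instead delete the loop together with the triangle behind it, glue that triangle's two remaining sides, and record the glued edge; this inverts the loop insertion. For $p_\ell=2$ the ``fan'' means gluing the two sides of the digon, which is fine.
\item $6n$ is not a bound on the number of oriented edges. A map in $\mathcal{T}_{\mathbf{q}}(n,g)$ has $3n+\sum_i(3-q_i)$ edges, which exceeds $3n$ as soon as some $q_i\le 2$. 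You can recover the factor $(6n)^{\ell-1}$ in two steps. First, the recorded edge has its origin off the remaining boundaries, by vertex-disjointness. Second, the vertices of a remaining boundary of perimeter $q_i\le 2$ have total degree at least $2(3-q_i)$; a vertex carrying a $1$-boundary has degree at least $4$. Together these show the admissible oriented edges number at most $6n$.
\end{itemize}
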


\begin{proof}
The first inequality follows from \cite[Lemma~1]{Budzinski_2020}. 
For the second, fix $t \in \mathcal{T}_{(p_1,\dots,p_{\ell})}(n,g)$ and an oriented edge $e$ with endpoints $x$ and $y$ (possibly $x=y$). 
Perform the following operations (see Figure~\ref{split_edge}):
\begin{enumerate}
    \item Split $e$ into a digon.
    \item Add a loop based at $x$ inside the digon.
    \item Add a digon inside that loop and denote by $z$ the new vertex.
    \item Add a loop inside the new digon starting at $z$.
\end{enumerate}
\begin{figure}[H]
\centering
\includegraphics[scale=0.7]{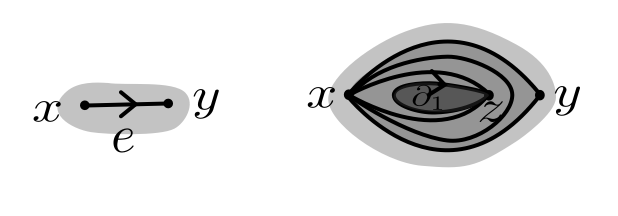}
\caption{Left: the distinguished edge. Right: the resulting map after the sequence of operations.}
\label{split_edge}
\end{figure}
This operation preserves the boundary lengths and adds one vertex and one boundary (the last loop) of length $1$. Moreover, the operation is injective. We deduce
\begin{align*}
    2\Big(3n + \sum_{i=1}^{\ell} (3 - p_i)\Big) 
    \tau_{(p_1,\dots,p_{\ell})}(n,g) 
    \le \tau_{(1,p_1,\dots,p_{\ell})}(n+1,g),
\end{align*}
where the factor on the left-hand side counts the number of oriented edges. 
Combining this with Lemma~\ref{bounded_ratio_vertices} gives
\begin{align*}
    \frac{\tau_{(1,p_1,\dots,p_{\ell})}(n,g)}{\tau_{(p_1,\dots,p_{\ell})}(n,g)}
    \ge (C_{\varepsilon})^{-1} (6 - 2\varepsilon) n.
\end{align*}
\end{proof}

\begin{proposition}\label{planarity_estimate}
Let $\varepsilon > 0$. 
There exists $C_{\varepsilon} > 0$ such that for all $g \ge 0$ and $n \ge 2g-1$ with $\frac{g}{n} \le \frac{1}{2} - \varepsilon$, 
any $k \ge 0$, and sequences $\mathbf{p}_1,\dots,\mathbf{p}_k$ with 
$\mathbf{p}_i = (p_{1,i},\dots,p_{\ell_i,i})$, we have
\begin{align*}
    \sum_{\substack{n_1 + \cdots + n_k = n \\ h_1 + \cdots + h_k = g}}
    \prod_{i=1}^k \tau_{\mathbf{p}_i}(n_i,h_i)
    \le \exp(C_{\varepsilon}(n+k)) n^{2g + \sum_{i=1}^{k} (\ell_i - 1)}.
\end{align*}
\end{proposition}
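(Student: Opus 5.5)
We reduce to the boundaryless bound of Lemma~\ref{estimate_tau} in two steps: first remove the boundaries of each factor using Lemma~\ref{add_new_boundary}, then bound the resulting sum of products of $\tau(n_i,h_i)$ directly, term by term. Note that Lemma~\ref{add_new_boundary} requires $h_i/n_i \le \frac12 - 4\varepsilon$ and $|\mathbf{p}_i| \le \varepsilon n_i$, which need not hold for each individual factor. I will therefore not invoke it as stated. Instead I will use its first inequality in the form coming from \cite[Lemma~1]{Budzinski_2020}, namely $\tau_{\mathbf{p}}(m,h) \le (6m)^{\ell-1}\tau(m,h)$. That argument (which I take to be a gluing/injection that merges boundaries one at a time, each merge costing at most a factor $6m$ for the choice of an edge) does not use the constraints on $h/m$ or $|\mathbf{p}|$. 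If a constraint were genuinely needed, the fallback is a cruder injection: close each boundary face by a fan triangulation. This costs at most $e^{O(|\mathbf{p}_i|)}$ and shifts $n_i$ by $O(|\mathbf{p}_i|)$. I expect this to be absorbed into $e^{C(n+k)}$ only if $\sum_i |\mathbf{p}_i| = O(n)$, which holds because boundaries of a triangulation in $\mathcal{T}_{\mathbf{p}_i}(n_i,h_i)$ have total length at most $3n_i + O(1)$. Once boundaries are removed, each factor satisfies $\tau_{\mathbf{p}_i}(n_i,h_i) \le (6n_i)^{\ell_i-1}\tau(n_i,h_i) \le (6n)^{\ell_i-1} e^{Cn_i} n_i^{2h_i}$.

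For the main estimate, I bound $n_i^{2h_i} \le n^{2h_i}$, so that each summand is at most $\prod_i (6n)^{\ell_i-1}e^{Cn_i}n^{2h_i} = 6^{\sum(\ell_i-1)} e^{Cn} n^{2g+\sum_i(\ell_i-1)}$. The factor $6^{\sum_i(\ell_i-1)}$ must be absorbed. I control it by $\ell_i \le |\mathbf{p}_i| \le 3n_i + O(1)$, valid whenever the term is nonzero (each boundary has length at least $1$). This gives $6^{\sum_i \ell_i} \le e^{C(n+k)}$. Small cases ($n_i = 0$ or tiny) contribute only to the additive $k$ in the exponent, which is exactly why the statement includes $+k$.

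Finally, the number of summands is at most $\binom{n+k-1}{k-1}\binom{g+k-1}{k-1} \le 2^{n+k}\cdot 2^{g+k} \le e^{C(n+k)}$, since $g \le n$. Multiplying everything together gives the claimed bound, after enlarging $C_\varepsilon$.

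The main obstacle is the uniformity of the boundary-removal step when individual pieces violate the hypotheses on $h_i/n_i$ or $|\mathbf{p}_i|/n_i$, together with degenerate pieces ($n_i$ small, where $n_i^{2h_i}$ versus $n^{2h_i}$ and the convention $0^0$ matter). I would handle this by making the injection $\tau_{\mathbf{p}}(m,h)\le (6m)^{\ell-1}\tau(m,h)$ explicit and checking that it holds without constraints, treating $m \le 1$ separately. The hypothesis $g/n \le \frac12-\varepsilon$ would then only enter through Lemma~\ref{estimate_tau}, which in fact does not need it. So I expect the bound to hold with a constant independent of $\varepsilon$; $\varepsilon$ is kept only for consistency with its later use.
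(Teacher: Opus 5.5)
Your argument is correct, but it takes a different route from the paper's.

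\textbf{The paper's route.} It starts with the same boundary-stripping step, $\tau_{\mathbf{p}_i}(n_i,h_i)\le(6n)^{\ell_i-1}\tau(n_i,h_i)$, citing Lemma~\ref{add_new_boundary} piece by piece without checking its hypotheses. That is exactly the issue you flag. The paper then collapses the whole convolution into a single closed triangulation. Iterating the gluing injection of Lemma~\ref{bound_separating} gives $\sum\prod_i\tau(n_i,h_i)\le\tau(n+k-1,g)$. Lemma~\ref{bounded_ratio_vertices} then gives $\tau(n+k-1,g)\le C_\varepsilon^{k}\tau(n,g)$; this is the only place where $g/n\le\frac12-\varepsilon$ enters. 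Lemma~\ref{estimate_tau} is applied once at the end.

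\textbf{Your route.} You apply the upper bound of Lemma~\ref{estimate_tau}, which has no constraint on $g/n$, to each factor separately. You use $n_i^{2h_i}\le n^{2h_i}$ and pay at most $4^{n+k}$ for the number of compositions.

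\textbf{What each buys.} Your route needs neither Lemma~\ref{bound_separating} nor the bounded-ratio lemma, and it gives a constant independent of $\varepsilon$. The hypothesis on $g/n$ is indeed superfluous. Even in the paper's route, applying Lemma~\ref{estimate_tau} directly to $\tau(n+k-1,g)$ suffices, since $(1+\frac{k-1}{n})^{2g}\le e^{k}$. The paper's gluing injection avoids the loss from counting summands. That would matter for finer estimates but is invisible at precision $e^{C(n+k)}$. You also make explicit the absorption of $6^{\sum_i(\ell_i-1)}$, which the paper leaves implicit. The cleanest justification is $\ell_i\le|\mathbf{p}_i|\le n_i+2-2h_i$, because boundaries are vertex-simple and vertex-disjoint.

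\textbf{Your fallback.} Done carefully, it is itself a proof of the unconstrained inequality. Closing each boundary face without creating vertices leaves $n$ and $g$ unchanged. Marking the root of every boundary but the first then costs exactly $(6m)^{\ell-1}$, not $e^{O(|\mathbf{p}_i|)}$. Only the degenerate cases $m\le1$ need separate treatment, as you note.

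\textbf{A caveat shared with the paper.} The boundary-removal inequality needs $\ell_i\ge1$; for $\ell_i=0$ it would assert $\tau\le\tau/(6n)$. The statement is in fact false if empty $\mathbf{p}_i$ are allowed. Take $g=0$, $k=n$, all $\ell_i=0$ and all $n_i=1$. The left-hand side is at least $\tau(1,0)^n\ge1$, while the right-hand side is $e^{2C_\varepsilon n}n^{-n}\to0$. In the application (Lemma~\ref{T_is_almost_surely_planar_rooted_middle}) every component is glued to at least one hole, so $\ell_i\ge1$ holds there. You should state this assumption explicitly.
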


\begin{proof}
Using Lemma~\ref{add_new_boundary} and Lemma~\ref{bound_separating} and the fact that $n_i \le n$ for all $i$, we obtain
\begin{align*}
    \sum_{\substack{n_1+\cdots+n_k = n \\ h_1+\cdots+h_k = g}}
    \prod_{i=1}^{k} \tau_{\mathbf{p}_i}(n_i,h_i)
    &\le (6n)^{\sum_{i=1}^{k} (\ell_i - 1)}
    \sum_{\substack{n_1+\cdots+n_k = n \\ h_1+\cdots+h_k = g}}
    \prod_{i=1}^{k} \tau(n_i,h_i) \\
    &\le (6n)^{\sum_{i=1}^{k} (\ell_i - 1)} \tau(n + k - 1, g).
\end{align*}
Since $\frac{g}{n+k-1} \le \frac{g}{n} \le \frac{1}{2} - \varepsilon$, using Lemma~\ref{bounded_ratio_vertices} and
Lemma~\ref{estimate_tau} yields
\[
    \tau(n + k - 1, g)
    \le C_{\varepsilon}^k\exp(C_{\varepsilon}n ) n^{2g},
\]
and the result follows.
\end{proof}
We conclude this section with an estimate on planar triangulations that will be useful later.
\begin{lemma}\label{estimate_big_hole}
For any $\alpha,\varepsilon > 0$, there exists a constant $C_{\alpha,\varepsilon} > 0$ such that for any $n,p \ge 1$ such that $n +2 \ge p$ we have 
   \begin{align*}
    \sum_{\substack{n_1+n_2 = n-1\\p_1+p_2 = p+1 \\  \alpha p \le p_1 \le \frac{p}{2}}} \frac{\tau_{p_1}(n_1,0) \tau_{p_2}(n_2,0)}{\tau_{p}(n,0)} \le C_{\alpha,\varepsilon}p^{-\frac{1}{4}+\varepsilon}.
    \end{align*}
\end{lemma}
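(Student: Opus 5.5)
We will prove the bound using Krikun's explicit formula and the asymptotics recalled in Section~\ref{combi}. The plan is to split the sum according to whether $n$ is large or comparable to $p$.

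\emph{Step 1: two-sided bounds on $\tau_p(n,0)$.} From the exact formula, write $n = p-2+m$ with $m\ge 0$ the number of internal vertices. We want a bound valid uniformly in $p$ and $m$:
\[
\tau_p(p-2+m,0) \asymp 12^{p}\sqrt{p}\,\lambda_c^{-m}\,(m+p)^{-5/2}\,\Phi(p,m),
\]
where $\Phi$ is a correction factor that is bounded above and below when $m\gtrsim p^2$. It should behave like $\exp(-c\,p^2/m)$-type terms, or polynomial in $p/\sqrt{m}$, when $m\ll p^2$. Rather than pinning down $\Phi$ exactly, I would derive the needed inequalities from Stirling's formula applied to the double factorials, in two regimes of $m$:
\begin{itemize}
\item[$\bullet$] $m \ge p^{2-\delta}$, for a small $\delta$ chosen depending on $\varepsilon$;
\item[$\bullet$] $m < p^{2-\delta}$.
\end{itemize}

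\emph{Step 2: split the sum over $n_1$.} For fixed $p_1\in[\alpha p,p/2]$ and $p_2=p+1-p_1\ge p/2$, each term has the form
\[
\frac{\tau_{p_1}(n_1,0)\tau_{p_2}(n_2,0)}{\tau_p(n,0)} .
\]
Its exponential factors cancel exactly, since $12^{p_1+p_2}=12\cdot 12^{p}$ and the $\lambda_c^{-m}$ factors combine with $m_1+m_2 = m$ (up to a constant). What remains is a ratio of polynomial and correction factors.

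In the case $m\ge p^{2-\delta}$, the dominant configurations put most of the volume in one piece. The prefactor is $\sqrt{p_1}\sqrt{p_2}/\sqrt{p}\asymp \sqrt p$. Summing $(m_1+p_1)^{-5/2}$ over $m_1$ gives roughly $p^{-3/2}$, contributed by $m_1 = O(p^2)$. Summing over the $\asymp p$ values of $p_1$ then produces a total of order $p\cdot\sqrt p\cdot p^{-3/2}$ times a correction-factor gain. The correction factor for a $p_1$-gon with only $m_1\lesssim p_1^2$ internal vertices must supply the decay, so the crude bound $O(1)$ has to be refined using $\Phi$.

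\emph{Main obstacle.} The heart of the argument is extracting the exponent $-\tfrac14$. It should come from the cost of the smaller hole of size $p_1\ge\alpha p$ being macroscopic, since a macroscopic split is atypical in a planar triangulation. I would handle this with a local limit computation: for large $m$, the Boltzmann-type peeling of the $p$-gon has a jump law with tail $k^{-5/2}$. The probability that a given step cuts off $\alpha p \le p_1 \le p/2$ is then $\asymp p^{-3/2}$. Multiplying by the $\asymp p^{5/4}$ effective number of choices coming from the $n_1$ sum in the intermediate regime $m_1\sim p^{3/2}$ leaves $p^{-1/4}$. The $\varepsilon$ loss absorbs the logarithms and the boundary between the two regimes.

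For $m<p^{2-\delta}$, the triangulation is nearly a thin strip. Here I would compare directly with the explicit formula, expecting an even better bound; the ratio should be controlled via the injection of Lemma~\ref{bounded_ratio_vertices} and monotonicity in $n$.

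I expect the intermediate-regime Stirling estimates to be the most delicate part of the proof.
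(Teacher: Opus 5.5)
Your proposal is a plan rather than a proof, and the step you call the heart of the argument rests on a miscount. For fixed $p_1$, the quantity $\sum_{n_1}\tau_{p_1}(n_1,0)\tau_{p_2}(n_2,0)/\tau_p(n,0)$ is \emph{exactly} the probability of the following event in a uniform element of $\mathcal{T}_p(n,0)$: the triangle on the root edge has its third vertex on the boundary and cuts off a hole of perimeter $p_1$ on a fixed side. The sum over $n_1$ is already part of that probability. So if the peeling heuristic gives $\asymp p^{-3/2}$ for a macroscopic cut, nothing remains to be multiplied, and the ``$p^{5/4}$ effective number of choices coming from the $n_1$ sum'' is double counting tuned to land on the target. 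The exponent $-\tfrac14$ is not intrinsic. As the Remark after the lemma says, the truth is of order $p^{-3/2}$ when $n\gg p^2$ and of order $p^{-1/2}$ when $p\asymp n$. Hence your two regimes have their roles reversed. The regime $m\ge p^{2-\delta}$ is the benign one: once the correction factor is made quantitative, the decay $e^{-cp_1^2/n_1}$ removes $n_1\ll p^2$. This leaves about $\sqrt p\sum_{n_1\gtrsim p^2}n_1^{-5/2}\asymp p^{-5/2}$ per $p_1$, i.e.\ roughly $p^{-3/2}$ in total, but you do not carry this out. The regime you set aside, $m<p^{2-\delta}$ and in particular $p\asymp n$, is where the sum is largest, and your proposed tool cannot work there. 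Lemma~\ref{bounded_ratio_vertices} assumes $|\mathbf{p}|\le\varepsilon n$, which fails when $p\asymp n$. In any case it only bounds ratios by constants, which can never produce a negative power of $p$.

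What is missing is one estimate that is uniform in $(n,p)$. The paper obtains it from Krikun's formula by Stirling: up to absolute constants, $\tau_p(n,0)$ is an explicit polynomial prefactor times $(12\sqrt3)^n3^{-p/2}\exp\big(nh(\tfrac{p}{n+2})\big)$, with $h$ explicit and strictly concave. The factors $(12\sqrt3)^n3^{-p/2}$ cancel up to a constant. Concavity of $(n,p)\mapsto nh(p/n)$ penalises $n_1/n$ away from $p_1/p$. In a window of width $n^{1/2+\varepsilon}$ around $n_1=np_1/p$ the prefactors are $O\big(\sqrt p\,n^{-2}\sum_i(n_i+3-p_i)^{-1/2}\big)$, which sums to $O(n^{-5/4+\varepsilon})$ per $p_1$. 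There are at most $p$ values of $p_1$, and $n\ge p-2$, which gives $p^{-1/4+\varepsilon}$; so $-\tfrac14=1-\tfrac54$ is a counting artifact.

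One caution if you write this out. Since $h''\le-1$ on $[0,1)$, the concavity deficit is at least $\frac{(p_1n-pn_1)^2}{2nn_1n_2}\ge\frac{2p^2}{n}\big(\frac{n_1}{n}-\frac{p_1}{p}\big)^2$. When $p\ll n$ and $n_1\asymp n_2\asymp n$ it is also at most of that order, so it is comparable to $n\big(\frac{n_1}{n}-\frac{p_1}{p}\big)^2$ only when $p\asymp n$. Consequently, for $p\ll n$ the terms outside the window are not exponentially small, contrary to what \eqref{exp_bound} with $n$ in the exponent would suggest. They must instead be controlled through the decay $e^{-cp_1^2/n_1}$ for $n_1\ll p_1^2$, which is precisely your $\Phi$. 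Your two-regime instinct is thus the right structure, but every quantitative step in it is still missing.
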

\begin{remark1}
This last estimate is expected to be non-optimal. In the regime where $\frac{p}{n} \to (0,1)$ the decay should be of order $p^{-\frac{1}{2}}$ while in the regime $p = \mathcal{O}(\sqrt{n})$ it should be of order $p^{-\frac{3}{2}}$. Here, we obtain the bound $p^{-\frac{1}{4}+\varepsilon}$ in all regimes. We did not attempt to optimize, as it is sufficient for the rest of the paper.
\end{remark1}
\begin{proof}
Let us fix $\alpha,\varepsilon > 0$.
In this proof, we use the notation $f(n,p) = \Theta(g(n,p))$ when $cg(n,p) \le f(n,p) \le Cg(n,p)$ for some absolute constants  $c,C > 0$.

For $p \ge 1$ and $n \ge p-2$, we recall the formula obtained in \cite{krikun2007explicitenumerationtriangulationsmultiple} for the volumes $\tau_p(n,0)$:
\begin{align*}
\tau_p(n,0) = \frac{p(2p)!}{(p!)^2}\frac{4^{n-p+1}(3n-p+1)!!}{(n-p+2)!(n+p+1)!!}.
\end{align*}
Using Stirling's formula, we obtain the following uniform estimate:
    \begin{align*}
    \tau_p(n,0) &= \Theta\left(\frac{\sqrt{p}\sqrt{3n-p+1}}{\sqrt{n+3-p}(n+p+1)^{\frac{5}{2}}}\frac{4^{n}(3n-p+1)^{\frac{3n-p+1}{2}}}{(n+2-p)^{n+2-p}(n+p+1)^{\frac{n+p-3}{2}}}\right)\\
    &= \Theta\left(\frac{\sqrt{p}\sqrt{3n-p+1}}{\sqrt{n+3-p}(n+p+1)^{\frac{5}{2}}}(12 \sqrt{3})^{n}3^{-\frac{p}{2}}\varphi(n,p)\right),
    \end{align*}   
   where
   \begin{align*}
    \varphi(n,p)= \exp\left(n h\left(\frac{p}{n+2}\right)\right),
   \end{align*}
    where for any $u \in [0,1]$,
    \begin{align*}
    h(u) = \frac{3-u}{2}\log\left(1-\frac{u}{3}\right) - (1-u)\log(1-u) - \frac{1+u}{2}\log(1+u).
    \end{align*}
 Consequently, for any $n_1+n_2 = n-1$ and $p_1+p_2 = p+1$ such that $\alpha p \le p_1 \le \frac{p}{2}$, we can write
    \begin{align}\label{ineq_ratio}
    \frac{\tau_{p_1}(n_1,0) \tau_{p_2}(n_2,0)}{\tau_{p}(n,0)} = \mathcal{O}\bigg( &\frac{\sqrt{p_1 p_2}}{\sqrt{p}} \frac{\sqrt{3n_1-p_1+1}\sqrt{3n_2-p_2+1}}{\sqrt{3n-p+1}} \nonumber \\
    &\times \frac{\sqrt{n+3-p}}{\sqrt{n_1+3-p_1}\sqrt{n_2+3-p_2}} \nonumber \\
    &\times \frac{(n+p+1)^{\frac{5}{2}}}{(n_1+p_1+1)^{\frac{5}{2}}(n_2+p_2+1)^{\frac{5}{2}}} \nonumber \\
    &\times \frac{\varphi(n_1,p_1)\varphi(n_2,p_2)}{\varphi(n,p)}\bigg).
    \end{align}
    
     Now, we observe that
    \begin{align}\label{bound_on_phi2}
    \frac{\varphi(n_1,p_1)\varphi(n_2,p_2)}{\varphi(n,p)}  = \mathcal{O}\bigg( \exp\left(-n\left(h\left(\frac{p}{n+2}\right)-\frac{n_1}{n}h\left(\frac{p_1}{n_1+2}\right)- \frac{n_2}{n}h\left(\frac{p_2}{n_2+2}\right)\right)\right) \bigg).
    \end{align} 
    It can be verified that $h$ is strictly concave with $h''(0) < 0$. Therefore, there exists a constant $c_{\alpha} > 0$ such that 
    \begin{align}\label{exp_bound}
    \frac{\varphi(n_1,p_1)\varphi(n_2,p_2)}{\varphi(n,p)} \le \exp\left( -c_{\alpha}n \left|\frac{n_1}{n}-\frac{p_1}{p}\right|^2\right).
    \end{align}
     Then, for $p_1+p_2 = p+1$ fixed, we write 
    \begin{align}\label{boundbyAandB}
    \somme{n_1+n_2=n-1}{}{\frac{\tau_{p_1}(n_1,0) \tau_{p_2}(n_2,0)}{\tau_{p}(n,0)}} = \mathcal{O} \bigg( \mathcal{A}_n(p,p_1,p_2) + \mathcal{B}_n(p,p_1,p_2)\bigg) ,
    \end{align}
    where $\mathcal{A}_n(p,p_1,p_2)$ contains the terms in the sum such that $\bigg|n_1-p_1 \frac{n}{p}\bigg| > n^{\frac{1}{2}+\varepsilon}$ and $\mathcal{B}_n(p,p_1,p_2)$ contains the terms in the sum such that $\bigg|n_1-p_1 \frac{n}{p}\bigg| \le n^{\frac{1}{2}+\varepsilon}$. Using \eqref{ineq_ratio} and \eqref{exp_bound}, it follows that 
    \begin{align}\label{boundonA}
    \mathcal{A}_n(p,p_1,p_2) = \mathcal{O}\bigg( n \times n^{5}  \exp(-c_{\alpha}n^{\varepsilon}) \bigg).
    \end{align}
The factor $n^{5}$ is a crude bound for the factors in \eqref{ineq_ratio} (except the last factor). The factor $n$ bounds the number of terms appearing in the sum $\mathcal{A}_n(p,p_1,p_2)$.
   
    Now let us bound $\mathcal{B}_n(p,p_1,p_2)$. Since $(n_1+3-p_1) + (n_2+3-p_2) = n+6-p > n + 3 -p$ we deduce that 
   \begin{align*}
   \min_{i=1,2} \frac{\sqrt{n+3-p}}{\sqrt{n_i+3-p_i}} \le \sqrt{2}.
   \end{align*}
Moreover we also have
\begin{align*}
   \max_{i=1,2} \frac{\sqrt{3n_i-p_i+1}}{\sqrt{3n-p+1}} \le \frac{\sqrt{3}}{\sqrt{2}}.
   \end{align*} 
For $n_1$ such that $\bigg|n_1-p_1 \frac{n}{p}\bigg| \le n^{\frac{1}{2}+\varepsilon}$, we have $n_1 \ge \alpha n - n^{\frac{1}{2}+ \varepsilon}$ and $n_2 = n - 1 - n_1 \ge \frac{n}{2} - 1 - n^{\frac{1}{2}+ \varepsilon}$. We deduce the following bounds 
\begin{align*}
\frac{\tau_{p_1}(n_1,0) \tau_{p_2}(n_2,0)}{\tau_{p}(n,0)}  & = \mathcal{O}_{\alpha}\bigg(  \sqrt{p}n^{-\frac{5}{2}}\max_{i=1,2}\frac{\sqrt{3n_i-p_i+1}}{\sqrt{n_i+3-p_i}} \bigg) \\
& =  \mathcal{O}_{\alpha}\bigg( \sqrt{p}n^{-2}\bigg(\frac{1}{\sqrt{n_1+3-p_1}}+\frac{1}{\sqrt{n_2+3-p_2}}\bigg)\bigg).
\end{align*}
Finally summing over $n_1$, we obtain the bound 
\begin{align}\label{boundonB}
\mathcal{B}_n(p,p_1,p_2) =  \mathcal{O}_{\alpha}\bigg(\sqrt{p}n^{-2} \somme{k=1}{2n^{\frac{1}{2}+\varepsilon}+1}{\frac{1}{\sqrt{k}}} \bigg)  =\mathcal{O}_{\alpha}\bigg( n^{-\frac{5}{4}+2\varepsilon} \bigg),
\end{align}
where the first bound follows from the fact there are at most $2n^{\frac{1}{2}+\varepsilon}+1$ terms appearing in $\mathcal{B}_n(p,p_1,p_2)$ and the sum of the terms $\frac{1}{\sqrt{n_1+3-p_1}}+\frac{1}{\sqrt{n_2+3-p_2}}$ is at most\footnote{This inequality is very crude. This is why we obtain $p^{-\frac{5}{4}+\varepsilon}$ and not $p^{-\frac{3}{2}}$.} $2\somme{k=1}{2n^{\frac{1}{2}+\varepsilon}+1}{\frac{1}{\sqrt{k}}}$.

Putting together \eqref{boundonA} and \eqref{boundonB} in \eqref{boundbyAandB}, we obtain 
      \begin{align*}
    \somme{n_1+n_2=n-1}{}{\frac{\tau_{p_1}(n_1,0) \tau_{p_2}(n_2,0)}{\tau_{p}(n,0)}} = \mathcal{O}_{\alpha,\varepsilon}\bigg( n^{-\frac{5}{4}+\frac{\varepsilon}{2}}\bigg).
    \end{align*}
    Then summing over $p_1 \in \{cp,\cdots,\frac{p}{2}\}$ and using the fact that there are at most $n$ terms gives
    \begin{align*}
     \sum_{\substack{n_1+n_2 = n-1\\p_1+p_2 = p+1 \\  \alpha p \le p_1 \le \frac{p}{2}}} \frac{\tau_{p_1}(n_1,0) \tau_{p_2}(n_2,0)}{\tau_{p}(n,0)} = \mathcal{O}_{\alpha,\varepsilon}\bigg( p^{-\frac{1}{4}+\frac{\varepsilon}{2}} \bigg).
    \end{align*}
    This concludes the proof.
\end{proof}

\section{Local limit from a uniform point in $T_{n,g_n,\mathbf{p}^{n}}$}\label{section_middle}

In this section, let $\theta \in [0,\frac{1}{2})$, assume that $\displaystyle \frac{g_n}{n} \to \theta$, and let $\mathbf{p}^{n} = (p^n_{1},\dots,p^n_{\ell_n})$ be such that $|\mathbf{p}^{n}|= o(n)$. 
Let $e^n$ denote an oriented edge chosen uniformly at random in $T_{n,g_n,\mathbf{p}^{n}}$ and $\rho^n$ its starting point. 
The goal of this section is to show that the local limit of $(T_{n,g_n,\mathbf{p}^{n}},e^n)$ with respect to the topology $d_{\mathrm{loc}}$ is $\mathbb{T}_{\lambda(\theta)}$, where $\lambda(\theta)$ is defined in~\eqref{deflambdatheta}.

Following the approach of~\cite{Budzinski_2020}, we first obtain estimates ensuring that any subsequential limit is planar and one-ended. 
We then conclude that every subsequential limit must be $\mathbb{T}_{\lambda(\theta)}$.

The main new idea of this section lies in the proof of Lemma~\ref{T_is_almost_surely_planar_rooted_middle}, which establishes the planarity of any subsequential limit. 
We refer the reader to Section~\ref{strategy_of_proof} for an overview of the proof strategy.

\subsection{Planarity when rooted in the middle}\label{planarity_middle}

In this section, we focus on planarity.  
We begin by showing that a uniformly chosen oriented edge is, with high probability, at graph distance greater than $r$ from the boundaries. 

\begin{lemma}\label{T_do_not_see_boundaries__rooted_middle}
Recall that $e^n$ denotes a uniformly chosen oriented edge in $T_{n,g_n,\mathbf{p}^{n}}$ and $\rho^n$ its starting point. For any $r >0$, we have
\[
\Pf\big(d(\rho^n,\partial T_{n,g_n,\mathbf{p}^{n}}) \le r\big)\underset{n\to +\infty}{\to}0.
\]
\end{lemma}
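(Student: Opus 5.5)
The plan is a counting argument: a uniform oriented edge is picked among roughly $6n$ oriented edges, while only $|\mathbf{p}^n| = o(n)$ vertices lie on the boundaries. So it suffices to show that the number of oriented edges whose origin is within graph distance $r$ of $\partial T_{n,g_n,\mathbf{p}^{n}}$ is $o(n)$ with high probability. Crude deterministic bounds fail, since boundary vertices can have huge degree. Instead I will exploit the fact that the total degree is fixed and that high-degree vertices are rare.

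First I reduce to degrees. Write $T=T_{n,g_n,\mathbf{p}^n}$ and let $N_r$ be the number of oriented edges with origin at distance at most $r$ from $\partial T$. For a threshold $K$, call a vertex \emph{heavy} if its degree exceeds $K$. The total degree is $2(3n+\sum_i(3-p_i)) \le 6n+6\ell_n \le 12n$, so at most $12n/K$ vertices are heavy, and they carry the mass that matters. Consider any vertex $v$ within distance $r$ of the boundary. Either there is a path of length at most $r$ from $\partial T$ to $v$ whose intermediate vertices are all light (degree at most $K$), or the path passes through a heavy vertex. In the first case, since $|\partial T| \le |\mathbf{p}^n|$, the number of such $v$ is at most
\[
|\mathbf{p}^n|\cdot \max_{u\in\partial T}\deg(u)\cdot K^{r}.
\]
This still involves boundary degrees. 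I would therefore rather bound the total degree of the vertices at distance $\le r$ directly, by induction on $r$.

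The inductive quantity is $D_s$, the sum of degrees of the vertices at distance $\le s$ from $\partial T$; note $N_r \le D_r$. A vertex at distance $s+1$ is a neighbour of a vertex at distance $s$, so the number of vertices at distance $\le s+1$ is at most $D_s + |\mathbf{p}^n|$. What is needed is then a statement of the form: with high probability, any set $S$ of $o(n)$ vertices has total degree $o(n)$. Equivalently, the empirical degree distribution of $T$ is uniformly integrable. Given this, $D_0=o(n)$ (the boundary has $o(n)$ vertices), and inductively $D_{s+1}\le$ the total degree of a set of at most $D_s+|\mathbf{p}^n| = o(n)$ vertices, which is $o(n)$. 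Finally $\Pf(d(\rho^n,\partial T)\le r)\le \mathbb{E}[\min(1,N_r/(6n))]\to 0$.

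The main obstacle is this uniform integrability: with high probability, the number of oriented edges whose origin has degree $\ge K$ is at most $\delta(K)n$, where $\delta(K)\to0$. I would prove it with the bounded-ratio technology of Lemma~\ref{bounded_ratio_vertices}, via a surgery that removes degree from a high-degree vertex. Take a vertex of degree $\ge K$ and split it into two vertices by cutting along a path of $K/2$ corners and inserting a new vertex; this gives an injection into $\mathcal{T}_{\mathbf{p}}(n+1,g)$ with a marked structure, at a cost of a factor polynomial in $K$. An alternative, closer to the paper's spirit, is a many-copies union bound using Proposition~\ref{planarity_estimate}: having $\delta n$ edges at high-degree vertices forces an atypical configuration, and one compares its $\exp(O(n))$-type cost with an entropy gain in $K$. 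If that proves delicate, I would instead use the known tail of root degree in the limit, but that is circular before identifying the limit. So the surgery/bounded-ratio route is my first attempt, with $K$ chosen large depending on the target $\varepsilon$ and on $g/n\le\frac12-\varepsilon$.
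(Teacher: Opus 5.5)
Your reduction to degrees is sound, but all of the lemma's content now sits in the uniform-integrability claim. You leave that claim as a plan, and the accounting as written does not close.

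Suppose you split a heavy vertex at a position dictated by $e$ and mark the new edge. This injects $\{(t,e):\deg\rho\ge K\}$ into elements of $\mathcal{T}_{\mathbf{p}^n}(n+1,g_n)$ with a marked oriented edge. By Lemma~\ref{bounded_ratio_vertices} that only gives $\Pf(\deg\rho^n\ge K)\le C$, and a further ``cost polynomial in $K$'' makes it worse. What is missing is a gain, and it has to come from the freedom in where you split.

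Concretely, take a vertex $v$ and an ordered pair of distinct corners of $v$ lying in distinct triangular (non-boundary) faces. Replace $v$ by an edge $v'v''$: the half-edges on one side between the two corners go to $v'$, the others to $v''$. Then add a canonical diagonal in each of the two quadrilaterals created. This is the inverse of an edge contraction. It preserves genus and boundaries, lands in $\mathcal{T}_{\mathbf{p}^n}(n+1,g_n)$, and is injective once the new edge is marked.

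Each $v$ has at least $\deg(v)^2-5\deg(v)$ admissible pairs. Hence $\mathbb{E}\big[\sum_v\deg(v)^2\big]\le O(n)+O(n)\,\tau_{\mathbf{p}^n}(n+1,g_n)/\tau_{\mathbf{p}^n}(n,g_n)=O(n)$, i.e.\ $\mathbb{E}[\deg\rho^n]=O(1)$. Markov's inequality then gives your heavy-mass bound with $K\asymp\delta^{-2}$.

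Two smaller repairs are needed in the induction:
\begin{itemize}
\item The neighbourhood bound should be $|V_{s+1}|\le |V_s|+D_s\le 2D_s$, not $D_s+|\mathbf{p}^n|$.
\item Run the induction as ``$D_s/n\to 0$ in probability'', fixing $K=K(\delta)$ before letting $n\to\infty$, rather than chaining deterministic ``$o(n)$'' statements.
\end{itemize}

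Once this is filled in, your route is correct but quite different from the paper's, which never controls degrees. The paper encodes the short path to the boundary into the boundary itself, in three cases:
\begin{itemize}
\item If $e\in\partial T$, there are $2|\mathbf{p}^n|$ choices.
\item If only $\rho\in\partial T$, it splits $e$ into a digon and $\rho$ into two vertices joined by a new boundary edge, landing in perimeter $p_i+1$.
\item Otherwise, it cuts along a simple path of length $s\le r+1$ starting with $\pm e$ and ending on $\partial_i$. This lands in $\mathcal{T}_{(\dots,p_i+2s,\dots)}(n+s,g_n)$ with $e$ marked on the boundary.
\end{itemize}
Lemma~\ref{bounded_ratio_vertices}, applied in both $n$ and the perimeter, then gives directly $\Pf(d(\rho^n,\partial T)\le r)\le C_{\theta,r}|\mathbf{p}^n|/n$. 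That is one injection per case, no induction over $r$, and an explicit rate. The same cutting trick also yields the $O(1/n)$ bound of Proposition~\ref{two_uniform_edges_stay_far}, which the planarity argument requires.

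Your argument only gives $o(1)$. In exchange, it isolates a reusable fact: the root has bounded mean degree under uniform rooting. From that, the statement follows with any set of $o(n)$ vertices in place of $\partial T$.
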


\begin{proof}
The argument is similar to the proof of the lower bound in~\cite[Theorem~1]{budzinski2023distancesisoperimetricinequalitiesrandom}.

Fix $(t,e)$ where $t \in \mathcal{T}_{\mathbf{p}^{n}}(n,g_n)$ and $e$ is an oriented edge on $t$ starting from a vertex $\rho$. Suppose that there exists a vertex $x \in \bigcup_{i=1}^{\ell_n}\partial_i t$ such that $d(x,\rho) \le r$, where $\rho$ denotes the starting point of $e$. 
We distinguish three cases.

\smallskip
\noindent
\textbf{Case 1.}  
Assume first that $e \in \partial t$.  
The number of such pairs $(t,e)$ equals $2|\mathbf{p}^{n}|\tau_{\mathbf{p}^{n}}(n,g_n)$.

\smallskip
\noindent
\textbf{Case 2.}  
Assume now that $\rho \in \partial t$ but $e \notin \partial t$. 
Fix $1 \le i \le \ell_n$ such that $\rho \in \partial_i$. 
Consider the triangulation obtained by splitting $e$ into a digon and then splitting $\rho$ into two vertices $x$ and $y$ joined by a boundary edge. 
This produces a triangulation $t' \in \mathcal{T}_{(p^n_1,\dots,p^n_i+1,\dots,p^n_{\ell_n})}(n,g_n)$. 
Let $e'$ denote the oriented edge corresponding to the new edge created on the $i^{\mathrm{th}}$ boundary, oriented so that $\partial_i$ lies to its right (see Figure~\ref{add_triangle}). 
We define $\varphi(t,e) = (t',e')$. 
This map is injective, since $(t,e)$ can be recovered by identifying the endpoints of $e'$, removing $e'$, and merging the two edges of the resulting digon. 
This procedure applies even when $e$ is a loop. 
Hence, the number of such pairs $(t,e)$ is bounded by
\begin{align}\label{first_bound}
    \sum_{i=1}^{\ell_n}\tau_{(p^n_1,\dots,p^n_i+1,\dots,p^n_{\ell_n})}(n,g_n)(p^n_i+1) 
    \le C_{\theta}|\mathbf{p}^{n}|\tau_{\mathbf{p}^{n}}(n,g_n),
\end{align}
where the inequality follows from Lemma~\ref{bounded_ratio_vertices}.

\begin{figure}[H]
    \centering
    \includegraphics[scale =0.8]{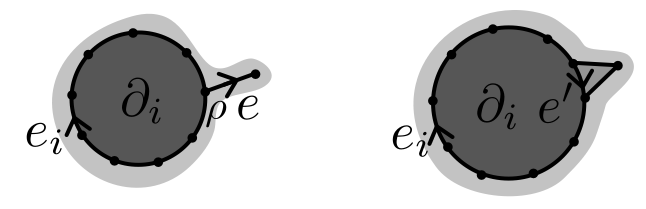}
    \caption{Left: the distinguished edge $e$ with starting point $\rho$ on $\partial_i$. 
    Right: the map obtained after the operation.}
    \label{add_triangle}
\end{figure}

\smallskip
\noindent
\textbf{Case 3.}  
Assume finally that $\rho \notin \bigcup_{i=1}^{\ell_n}\partial_i$. 
Choose a simple path $\gamma = (\gamma_0,\dots,\gamma_s)$ such that the first edge $(\gamma_0,\gamma_1)$ is $e$ (or its reverse $-e$) and the endpoint $\gamma_s$ lies on some $\partial_i$ with $1\le i\le \ell_n$. 
Assume moreover that $\gamma_j \notin \partial t$ for all $j < s$ and that $s \le r+1$. 
Cutting along $\gamma$ produces a triangulation $t' \in \mathcal{T}_{p^n_1,\dots,p^n_{i}+2s,\dots,p^n_{\ell_n}}(n+s,g_n)$. 
We equip the $i^{\mathrm{th}}$ boundary of $t'$ with a distinguished oriented edge $e'$ corresponding to $e$ after the cutting, oriented so that $\partial_i$ lies to its right. 
We also distinguish the segment $\gamma'$ on $\partial_i$ arising from the cut. 
This defines $\varphi(t,e) = (t',e',\gamma')$. 
The map is injective since $(t,e)$ can be recovered from $(t',e',\gamma')$ by gluing the edges along $\gamma'$. 
Hence, the number of inputs is bounded by 
\begin{align}\label{second_bound}
    \sum_{i=1}^{\ell_n}\sum_{s=0}^{r+1} 2(p^n_i+2s)\tau_{p^n_1,\dots,p^n_{i}+2s,\dots,p^n_{\ell_n}}(n+s,g_n)
    \le C_{\theta,r} |\mathbf{p}^{n}|\tau_{\mathbf{p}^{n}}(n,g_n),
\end{align}
where the inequality again follows from Lemma~\ref{bounded_ratio_vertices}. 
Here, the extra factor $p_i^n+2s$ accounts for the number of choices for $e'$ on $\partial_i$, and the factor $2$ bounds the number of possible segments $\gamma'$.

\begin{figure}[H]
    \centering
    \includegraphics[scale =0.8]{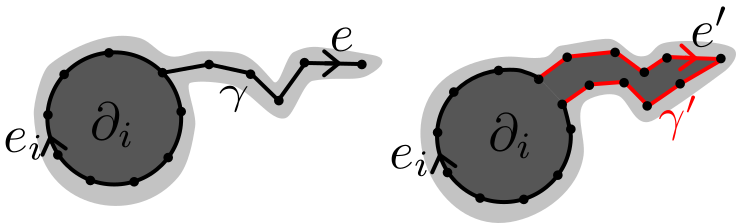}
    \caption{Left: the distinguished edge $e$ and a path $\gamma$ from $e$ to $\partial_i$. 
    Right: the resulting map after cutting along $\gamma$.}
    \label{path_to_boundary}
\end{figure}

Combining \eqref{first_bound} and \eqref{second_bound}, we obtain
\[
\Pf\big(d(\rho^n,\partial T_{n,g_n,\mathbf{p}^{n}}) \le r\big)
\le C_{\theta,r}\frac{|\mathbf{p}^{n}|}{6n - 2\sum_{i=1}^{\ell_n}(3-p^n_i)}.
\]
Since $|\mathbf{p}^{n}| = o(n)$, the result follows.
\end{proof}

\begin{corollary}\label{tightness}
The sequence $(T_{n,g_n,\mathbf{p}^{n}},e^n)$ is tight with respect to the topology $d^{*}_{\mathrm{loc}}$, and any subsequential limit is a weakly Markovian triangulation.
\end{corollary}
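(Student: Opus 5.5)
Tightness for $d^{*}_{\mathrm{loc}}$ will follow from the argument sketched in Section~\ref{subsection_local_convergence}, combined with Lemma~\ref{T_do_not_see_boundaries__rooted_middle}. Fix $r\ge 0$ and $\varepsilon>0$. By Lemma~\ref{T_do_not_see_boundaries__rooted_middle}, for $n$ large we have $\Pf(d(\rho^n,\partial T_{n,g_n,\mathbf{p}^n})\le r+2)\le\varepsilon$. On the complementary event, no internal face at dual distance at most $r$ from the root shares a vertex with a boundary. Hence $B^{*}_r(T_{n,g_n,\mathbf{p}^n},e^n)$ contains no boundary faces and consists of at most $3\cdot 2^{r}$ triangles glued together with holes. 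Such a ball can take only finitely many values. So for each $r$ the law of $B^{*}_r$ is tight, uniformly in $n$ up to mass $\varepsilon$. A diagonal argument then gives tightness in the completion $\overline{\mathcal{T}^1}^{*}$. Along the way we also get that any subsequential limit $T$ has no boundary at all: a.s.\ $B^{*}_r(T)$ avoids boundaries for every $r$. Moreover $T$ is infinite, since the number of internal faces tends to infinity and the root is uniform, so $|B^{*}_r|$ grows.

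For the weak Markov property, fix a finite triangulation with holes $t$ with $k$ holes of perimeters $q_1,\dots,q_k$, $v$ internal vertices, and no boundaries. The event $\{t\subset T\}$ is determined by $B^{*}_{R}(T)$ for $R$ large compared to the size of $t$. Its indicator is continuous for $d^{*}_{\mathrm{loc}}$ (the event is open and closed). Hence $\Pf(t\subset T)=\lim \Pf(t\subset (T_{n,g_n,\mathbf{p}^n},e^n))$ along the subsequence. Now I count. The triangulation $T_{n,g_n,\mathbf{p}^n}$ contains $t$ around $e^n$ exactly when it is obtained by filling the holes of $t$ with a collection of triangulations of multi-polygons. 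These fillings are glued to the holes, possibly with several holes filled by the same component, possibly with some of the original boundaries $\partial_i$ inside them, and possibly with positive genus. The number of such fillings depends on $t$ only through $(q_1,\dots,q_k)$, on the number of vertices and faces of $t$ (hence on $v$, $q_1,\dots,q_k$ by Euler's formula, together with the genus of $t$), and on $n,g_n,\mathbf{p}^n$. Dividing by the total number $2\cdot(\#\text{edges})\cdot\tau_{\mathbf{p}^n}(n,g_n)$ of rooted pairs, the probability is a function $a^{(n)}$ of $(v,q_1,\dots,q_k)$ and the genus of $t$. Its limit is again such a function.

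The main obstacle is the dependence on the genus of $t$. The definition of weakly Markovian is stated for planar $t$ with $k$ holes, but the limit $T$ is not yet known to be planar. If the definition is read as quantifying over planar $t$, the counting above already gives a function of $(v,q_1,\dots,q_k)$ alone, and this is all the corollary requires. Planarity itself is deferred to the next lemma. One technical check remains: the correspondence ``$t\subset(T,e)$ iff $(T,e)$ is a filling of $t$'' must be bijective. This holds by the uniqueness of the filling in the footnote after Definition~\ref{triangulation_with_holes}, since the root of $T$ determines the embedding of $t$.
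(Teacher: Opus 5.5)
Your proposal is correct and follows the paper's proof. Tightness comes from the discussion in Section~2.3 combined with Lemma~\ref{T_do_not_see_boundaries__rooted_middle}, and the weak Markov property comes from writing $\Pf(t\subset(T_{n,g_n,\mathbf{p}^n},e^n))$ as a ratio of filling counts that depends only on the data of $t$, then passing to the limit. Your remark that this count also depends on the genus of $t$ is a legitimate refinement the paper's $\varphi_n(v,p_1,\dots,p_k)$ glosses over: the property should be read for planar $t$, with planarity supplied afterwards by Lemma~\ref{T_is_almost_surely_planar_rooted_middle}.
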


\begin{proof}
The tightness, and the fact that any subsequential limit is a triangulation, follow from the discussion in Section~\ref{subsection_local_convergence} together with Proposition~\ref{T_do_not_see_boundaries__rooted_middle}. Fix $T$ a subsequential limit. Fix a finite rooted triangulation with holes $t$ having no boundary. 
Writing $\Pf(t \subset (T_{n,g_n,\mathbf{p}^{n}},e^n))$ as a ratio of two combinatorial quantities, there are functions $\varphi_n$ such that
\[
\Pf(t \subset (T_{n,g_n,\mathbf{p}^{n}},e^n)) = \varphi_n(v,p_1,\dots,p_k),
\]
where $v$ denotes the number of vertices of $t$ and $p_1,\dots,p_k$ are the perimeters of its holes. 
Hence $\Pf(t\subset T )= \lim_n \varphi_n(v,p_1,\dots,p_k) $, which shows that $T$ is weakly Markovian.
\end{proof}

For the rest of this section, we introduce a family $(e^n_i)_{i\ge 0}$ of independent uniformly chosen oriented edges on $T_{n,g_n,\mathbf{p}^{n}}$. For $i \ge 0$, we write $\rho^n_i$ for the starting point of $e^n_i$.
For any $r \ge 0$ and $i,j \in \mathbb{N}$ such that $i \neq j$, we define the events
\begin{align}\label{events_C_and_D}
    &\mathcal{A}_{n}(i,j,r) = \big\{d(\rho^n_i,\rho^n_j) > 2r \big\},\\
    &\mathcal{B}_n(i,r) = \big\{d(\rho^n_i,\partial T_{n,g_n,\mathbf{p}^{n}}) > 3r\big\}.
\end{align}
Using Proposition~\ref{T_do_not_see_boundaries__rooted_middle}, we obtain that for any $i \ge 0$,
\[
\Pf(\mathcal{B}_n(i,r)) \xrightarrow[n\to\infty]{} 1.
\]

 The next proposition shows that, conditionally on $\mathcal{B}_n(i,r)$, the event $\mathcal{A}_n(i,j,r)$ holds with high probability.

\begin{remark1}
The probability of the event $\mathcal{A}_n(i,j,r)$ does not depend on the choice of $i$ and $j$.
\end{remark1}

\begin{proposition}\label{two_uniform_edges_stay_far}
For any $r \ge 0$, there exists a constant $C_{r,\theta} > 0$ such that for $n$ large enough
\[
\Pf\big(\mathcal{B}_n(1,r)\cap \mathcal{A}_n(1,2,r)^{c}\big) \le \frac{C_{r,\theta}}{n}.
\]
\end{proposition}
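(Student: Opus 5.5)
We bound the probability by counting triples $(t,e_1,e_2)$ where $t\in\mathcal{T}_{\mathbf{p}^n}(n,g_n)$, $e_1,e_2$ are oriented edges, $\rho_1$ is at distance more than $3r$ from $\partial t$, and $d(\rho_1,\rho_2)\le 2r$. The number of such triples is then divided by $(6n+O(|\mathbf{p}^n|))^2\,\tau_{\mathbf{p}^n}(n,g_n)$, following the injection strategy of Lemma~\ref{T_do_not_see_boundaries__rooted_middle}.

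Fix such a triple and choose, by a deterministic rule, a geodesic path $\gamma=(\gamma_0,\dots,\gamma_s)$ from $\rho_1$ to $\rho_2$ with $s\le 2r$. Since $d(\rho_1,\partial t)>3r$, every vertex of $\gamma$ is at distance more than $r$ from the boundaries, so $\gamma$ avoids $\partial t$. We prepend the edge $e_1$ (or its reverse) and append $e_2$, and make the path simple by truncating loops; this gives a path of length $s'\le 2r+2$ disjoint from $\partial t$. Cutting $t$ along this path produces a new hole of perimeter $2s'$ in the interior, and it must be turned into a boundary. The natural choice is to make it a new boundary face: the result is $t'\in\mathcal{T}_{(2s',\mathbf{p}^n)}(n+s'-?,g_n)$, with the vertex count adjusted accordingly. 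Vertex-simplicity and disjointness from the other boundaries hold because the path is simple and avoids $\partial t$; degenerate cases such as loops or $\rho_1=\rho_2$ are handled as in Case 2 of Lemma~\ref{T_do_not_see_boundaries__rooted_middle}, by first splitting into a digon. We then mark on the new boundary the position of $e_1$, which takes $O(r)$ choices. The map $(t,e_1,e_2)\mapsto (t',\text{marks})$ is injective, since regluing the new boundary recovers $t$, $e_1$ and $e_2$, and $e_2$ is determined by $O(r)$ further bits.

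The whole argument therefore reduces to a combinatorial ratio estimate of the form
\[
\frac{\tau_{(2s',\mathbf{p}^n)}(n+O(r),g_n)}{\tau_{\mathbf{p}^n}(n,g_n)}\le C_{r,\theta}\, n .
\]
By Lemma~\ref{bounded_ratio_vertices}, changing $n$ by $O(r)$ and changing the perimeter $2s'$ to $1$ each cost only a constant factor $C^{O(r)}$. This leaves the ratio $\tau_{(1,\mathbf{p}^n)}(n,g_n)/\tau_{\mathbf{p}^n}(n,g_n)$, for which we need an \emph{upper} bound $O(n)$.

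Proving this upper bound is the main obstacle: Lemma~\ref{add_new_boundary} only gives the lower bound. I would try the reverse surgery. A boundary loop of length $1$ sits inside a triangle whose other two edges form a digon-like configuration, and removing it (the inverse of the operation in Lemma~\ref{add_new_boundary}, up to a bounded local rearrangement) maps $\mathcal{T}_{(1,\mathbf{p}^n)}(n,g_n)$ into $\mathcal{T}_{\mathbf{p}^n}(n-1,g_n)$ together with a marked oriented edge. This mapping is at most $O(1)$-to-one, which gives the bound $\le 6n\,\tau_{\mathbf{p}^n}(n-1,g_n)\le 6n\,\tau_{\mathbf{p}^n}(n,g_n)$ by Lemma~\ref{bounded_ratio_vertices}.

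Assembling the pieces, the number of bad triples is $O_{r,\theta}(n)\,\tau_{\mathbf{p}^n}(n,g_n)$. Dividing by the $\sim 36n^2\,\tau_{\mathbf{p}^n}(n,g_n)$ total triples gives the required $C_{r,\theta}/n$.
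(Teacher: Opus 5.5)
Your proof is correct and follows the paper's argument: cut along a short simple path through $e_1,e_2$ to create a new boundary of length $2s\le 4r+4$, which reduces everything to the bound $\tau_{(2s,\mathbf{p}^n)}(n+s-1,g_n)\le C_{r,\theta}\,n\,\tau_{\mathbf{p}^n}(n,g_n)$. You are also right that this requires an \emph{upper} bound $\tau_{(1,\mathbf{p}^n)}(n,g_n)=O(n)\,\tau_{\mathbf{p}^n}(n,g_n)$, which Lemma~\ref{add_new_boundary} does not supply (it only gives the lower bound), even though the paper cites it. Your loop-removal surgery is exactly the bijection used in the proof of Proposition~\ref{application_cv_ratio}; note that it keeps every vertex, so it lands in $\mathcal{T}_{\mathbf{p}^n}(n,g_n)$ rather than $\mathcal{T}_{\mathbf{p}^n}(n-1,g_n)$, and gives directly $\tau_{(1,\mathbf{p}^n)}(n,g_n)\le \big(6n+2\sum_i(3-p^n_i)\big)\,\tau_{\mathbf{p}^n}(n,g_n)$.
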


\begin{proof}
The proof follows the same general strategy as that of Proposition~\ref{T_do_not_see_boundaries__rooted_middle}. 
Fix $r \ge 0$, and consider $t \in \mathcal{T}_{\mathbf{p}^{n}}(n,g_n)$ together with two oriented edges $e$ and $e'$. We introduce the starting points $\rho$ and $\rho'$ of $e$ and $e'$. Suppose that $d(\rho,\rho') \le 2r$ and $d(\rho,\partial t) > 3r$. 
Choose a vertex-injective path $\gamma = (\gamma_0,\dots,\gamma_s)$ of length at most $2r+2$, whose first edge is either $e$ or its reverse $-e$, and whose last edge is either $e'$ or $-e'$. 
Since $d(\rho,\partial t) > 3r$, this path is vertex-disjoint from all boundaries of $t$.

Cutting along $\gamma$ produces a triangulation $t' \in \mathcal{T}_{(2s,p_1^n,\dots,p^n_{\ell_n})}(n+s-1,g_n)$ and an additional oriented edge $e^{*}$ lying on $\partial_1$. 
Intuitively, the distinguished boundary of length $2s$ encodes the path $\gamma$, the root edge of this boundary encodes $e$ and $e^{*}$ encodes $e'$. 
This construction defines an injective mapping. Hence,
\begin{align*}
    \Pf\big(\mathcal{B}_n(1,r)\cap \mathcal{A}_n(1,2,r)^{c}\big) 
    &\le 
    \frac{\sum_{s=1}^{2r+2} 2s\, \tau_{(2s,\mathbf{p}^{n})}(n,g_n)}
    {(6n-2\sum_{i=1}^{\ell_n}(3-p^n_i))^2 \, \tau_{\mathbf{p}^{n}}(n,g_n)}\\
    &\le 
    \frac{C_{r,\theta}n}{(6n-2\sum_{i=1}^{\ell_n}(3-p^n_i))^2}\\
    &\le 
    \frac{C_{r,\theta}}{n},
\end{align*}
where the second inequality follows from Lemma~\ref{bounded_ratio_vertices} and Lemma~\ref{add_new_boundary}, and the last one from the fact that $|\mathbf{p}^{n}| = o(n)$.
\end{proof}

\medskip
In the next lemma, we use the main new idea of this section (see the strategy of proof in the Section~\ref{introduction}). We now show that any subsequential limit is planar.

\begin{lemma}\label{T_is_almost_surely_planar_rooted_middle}
Every subsequential limit $(T,e)$ of the sequence $(T_{n,g_n,\mathbf{p}^{n}},e^n)_{n \ge 0}$ with respect to the topology $d^{*}_{\mathrm{loc}}$ is almost surely planar.
\end{lemma}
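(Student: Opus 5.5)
We argue by contradiction, following the strategy sketched in the introduction. If the subsequential limit $(T,e)$ is not almost surely planar, then there exist $r \ge 0$ and a finite rooted triangulation with holes $t_0$ of genus $h \ge 1$, with no boundary, contained in a dual ball of radius $r$ around the root, such that $\Pf(t_0 \subset (T,e)) = 2\varepsilon > 0$. Non-planarity is witnessed by some finite dual ball, and there are countably many such $t_0$. Along the subsequence, $\Pf(t_0 \subset (T_{n,g_n,\mathbf{p}^n},e^n)) \ge \varepsilon$ for $n$ large. Write $m$ for the number of internal vertices of $t_0$ and $q_1,\dots,q_k$ for the perimeters of its holes. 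Lemma~\ref{T_do_not_see_boundaries__rooted_middle} lets us also assume that the copy lies at distance more than $3r$ from $\partial T_{n,g_n,\mathbf{p}^n}$.

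\textbf{Step 1: many disjoint copies.} Let $N_n$ be the number of oriented edges $e$ such that $t_0 \subset (T_{n,g_n,\mathbf{p}^n},e)$ and $e$ is far from the boundary. Since $e^n$ is uniform among about $6n$ edges, $\mathbb{E}[N_n] \ge \varepsilon \cdot 6n/2$, and $N_n \le 6n$. Hence $\Pf(N_n \ge \varepsilon n) \ge \varepsilon/2$. To extract disjoint copies, use a greedy selection. Two copies rooted at $e,e'$ with $d(\rho,\rho') > 2r+2$ are disjoint, since each lies within distance about $r$ of its root. A ball of radius $2r+2$ may contain unboundedly many vertices when degrees are large, so we do not bound it directly. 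Instead, apply Proposition~\ref{two_uniform_edges_stay_far}: the expected number of ordered pairs of good edges at distance at most $2r$ is $O((6n)^2 \cdot C/n) = O(n)$. By Markov's inequality, with probability at least $\varepsilon/4$ there are at most $K n$ such close pairs, with $K = K(\varepsilon,r)$. The conflict graph on the good edges then has at least $\varepsilon n$ vertices and at most $Kn$ edges, so it has an independent set of size at least $\eta n$, with $\eta = \eta(\varepsilon,K) > 0$, by Turán or greedy selection using average degree. Choosing the radii with enough slack, as in Proposition~\ref{two_uniform_edges_stay_far} with $r$ replaced by a larger constant, makes these $\eta n$ copies pairwise disjoint.

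\textbf{Step 2: counting.} On this event, $T_{n,g_n,\mathbf{p}^n}$ contains $\eta n$ disjoint copies $t^1,\dots,t^{\eta n}$ of $t_0$. Count the pairs (triangulation, set of $\eta n$ root edges). Remove the copies: the complement is a collection of at most $\eta n$ (in fact at most $\eta n k$) components. Each is a triangulation of a multi-polygon whose boundaries are the original $\mathbf{p}^n$ together with holes of perimeters $q_j$. The total genus is at most $g_n - h\eta n \le g_n - \eta n$, and the total size parameter is $n - \Theta(\eta n)$. Given the components, the gluing data is bounded: one must specify which hole is glued to which boundary and where the roots go, which costs a factor $e^{O(n)}$ together with the choices of the distinguished edges on the new boundaries. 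By Proposition~\ref{planarity_estimate}, summed over the ways of splitting the genus and size and over the partitions of holes into components, the number of configurations is at most
\[
\exp(C(n+\eta n k))\, n^{2(g_n-\eta n) + \eta n k + \ell_n}.
\]
Here the exponent $\sum(\ell_i-1)$ is at most $\eta n k + \ell_n$. The new boundaries of each component arise from holes of $t_0$.

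The dangerous term is $n^{\eta n k}$, which comes from the added boundaries and could cancel the genus gain $n^{-2\eta n}$. The fix is Euler characteristic bookkeeping. Gluing the holes of a copy back into the complement components changes the genus by an amount that accounts for each extra boundary. Precisely, $\sum_i(2h_i + \ell_i - 1)$ over the components, plus the contribution of the copies, equals a quantity of the form $2g_n - 2h\eta n + (\text{number of components}) - \dots$. The cleaner route is to count with the genus of the whole gluing fixed, so the exponent of $n$ is $2g_n + \ell_n - 2\eta n h - (\text{components})$. I expect this bookkeeping, namely getting the net exponent to be $2g_n - c\,\eta n$ with $c>0$, to be the main obstacle.

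\textbf{Step 3: the denominator and conclusion.} The denominator $\tau_{\mathbf{p}^n}(n,g_n)$ is at least $e^{-Cn} n^{2g_n}$. This follows from Lemma~\ref{estimate_tau}, Lemma~\ref{bounded_ratio_vertices} and Lemma~\ref{add_new_boundary}, after peeling off boundaries. The choices of root locations among $6n$ edges give at most $\binom{6n}{\eta n} \le e^{O(n)}$, which is harmless. The probability of the event from Step 1 is therefore at most $e^{C'n} n^{-c\eta n} \to 0$, contradicting the lower bound $\varepsilon/4$. Hence $(T,e)$ is almost surely planar.
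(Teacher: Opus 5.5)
Your Step 1 works and is a valid alternative to the paper's. The paper samples $cn$ i.i.d.\ uniform edges, requires each to be far from all the others, and pays $2^{cn}$ for the choice of subset; your Tur\'an extraction from the $O(n)$ bound on close pairs given by Proposition~\ref{two_uniform_edges_stay_far} does the same job. The gap is Step~2, which is where the lemma is actually proved. You leave the bookkeeping open, and the accounting you do write down would not close.

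Write $N=\eta n$ for the number of copies, $k$ for the number of holes of $t_0$, and $c$ for the number of components of the complement.
\begin{itemize}
\item The genera of the components sum to exactly $g_n-Nh-(Nk-N-c+1)$. Here $Nk-N-c+1\ge 0$ is the cycle rank of the bipartite multigraph with one edge per hole, joining each copy to the component glued into that hole. Bounding the genus only by $g_n-Nh$, as you do, discards exactly the term that pays for the extra boundaries.
\item With the exact relation, Proposition~\ref{planarity_estimate} gives $n^{2g_n+\ell_n-2Nh-N(k-2)+c-2}$ per gluing pattern. This is not an exponent with $-(\text{number of components})$, as you propose.
\item The gluing data is not $e^{O(n)}$. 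Distributing the $Nk$ labelled holes among $c$ components has up to $c^{Nk}/c!$ choices, which is of order $n^{Nk-c}$. After summing, the numerator is therefore $e^{O(n)}n^{o(n)}\,n^{2g_n+\ell_n-2Nh+2N}$.
\end{itemize}

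Compare this with the denominator $\tau_{\mathbf{p}^n}(n,g_n)\ge e^{-Cn}n^{2g_n+\ell_n-1}$. Your bound $e^{-Cn}n^{2g_n}$ is too weak here, since $n^{\ell_n}$ need not be $e^{O(n)}$ when $\ell_n$ is merely $o(n)$. The ratio is then $n^{-2N(h-1)}$ up to $e^{O(n)}n^{o(n)}$ factors, which does not tend to $0$ when $h=1$.

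The missing factor $n^{-N}$ comes from the roots. In the paper it is the normalisation $(6n)^{-N}$ for $N$ i.i.d.\ uniform root edges. In your formulation it would be the factor $1/N!$: you count unordered sets of roots, while the encoding by components and hole-partitions labels the copies. You instead treat the root choices as a harmless cost $\binom{6n}{\eta n}$, and so miss that they are an indispensable gain. Only when all three corrections are combined does one reach the paper's bound $n^{-N(2h-1)}e^{O(n)}\to 0$, and this is exactly where $h\ge 1$ is used.
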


\begin{proof}
Fix a subsequential limit $(T,e)$. We reason by contradiction. Assume that there exist $\varepsilon > 0$ and a finite rooted triangulation with holes $t$ of genus $g' \ge 1$ such that 
\[
\Pf(t \subset (T,e)) \ge 3\varepsilon > 0.
\]
Then, for $n$ large enough
\[
\Pf(t \subset (T_{n,g_n,\mathbf{p}^{n}},e^n)) > 2\varepsilon > 0.
\]
Let $r \ge 0$ be such that $t$ has diameter at most $r$. Recall the definitions of $\mathcal{A}_n(i,j,r)$ and $\mathcal{B}_n(i,r)$ from~\eqref{events_C_and_D}. 
Let $c := c_{\varepsilon,r,\theta} \le C_{r,\theta}^{-1}\displaystyle \frac{\varepsilon}{2}$, where $C_{r,\theta}$ is the constant from Proposition~\ref{two_uniform_edges_stay_far}. 
For $1\le i \le cn$, we define
\begin{align}\label{new_events}
    \mathcal{C}_{n}(i,t) 
    = 
    \{t \subset (T_{n,g_n,\mathbf{p}^{n}},e^n_i)\}
    \cap 
    \mathcal{B}_n(i,r)
    \cap
    \bigcap_{\substack{j=1\\j\neq i}}^{cn}\mathcal{A}_n(i,j,r).
\end{align}

Since the family $\big(T_{n,g_n,\mathbf{p}^{n}},e^n_i\big)_{i\ge 0}$ is exchangeable, the probabilities $\Pf(\mathcal{C}_n(i,t))$ are identical for all $1 \le i \le cn$. 
By Lemma~\ref{T_do_not_see_boundaries__rooted_middle} and Proposition~\ref{two_uniform_edges_stay_far}, we have $\Pf(\mathcal{C}_n(i,t))\ge \varepsilon$ for $n$ large enough. 
Hence, 
\[
\sum_{i=1}^{cn}\mathbf{1}_{\mathcal{C}_n(i,t)}
\]
is a random variable taking values in $\{1,\dots,cn\}$ with expectation at least $cn\varepsilon$. Therefore,
\begin{align}\label{lower_bound_proba_copies}
    \Pf\!\left(
        \bigcup_{\substack{I\subset \{1,\dots,cn\}\\|I|=\frac{\varepsilon}{2}cn}}
        \bigcap_{i\in I}\mathcal{C}_n(i,t)
    \right)
    =
    \Pf\!\left(
        \sum_{i=1}^{cn}\mathbf{1}_{\mathcal{C}_n(i,t)} 
        \ge \frac{\varepsilon}{2}cn
    \right)
    \ge 
    \frac{\varepsilon}{2}.
\end{align}

On the other hand,
\begin{align}\label{bound_proba_see_copies_t}
    \Pf\!\left(
        \bigcup_{\substack{I\subset \{1,\dots,cn\}\\|I|=\frac{\varepsilon}{2}cn}}
        \bigcap_{i\in I}\mathcal{C}_n(i,t)
    \right)
    \le 
    2^{cn}\,
    \Pf\!\left(\bigcap_{i=1}^{\frac{\varepsilon}{2}cn}\mathcal{C}_n(i,t)\right),
\end{align}
where $2^{cn}$ crudely bounds the number of subsets $I$, and all intersections $\bigcap_{i\in I}\mathcal{C}_n(i,t)$ have the same probability. 
We now estimate $\Pf\!\left(\bigcap_{i=1}^{\frac{\varepsilon}{2}cn}\mathcal{C}_n(i,t)\right)$. 

We denote by $h_1,\dots,h_s$ the holes of $t$ and by $q_1,\dots,q_s$ their respective perimeters. Under the event $\bigcap_{i=1}^{\frac{\varepsilon}{2}cn}\mathcal{C}_n(i,t)$, for each $1 \le i \le \frac{\varepsilon}{2}cn$, let $t_i$ be the copy of $t$ around $e_i^n$ and write $h^i_1,\dots,h^i_s$ for its holes. Note that the copies $t_1,\cdots,t_{\frac{\varepsilon}{2}cn}$ are vertex-disjoint. Let $T_1,\dots,T_k$ be the connected components of $T_{n,g_n,\mathbf{p}^{n}}\setminus \{t_1,\dots,t_{\frac{\varepsilon}{2}cn}\}$. 
This induces a partition $\pi = \{X_1,\dots,X_k\}$ of the set $\bigsqcup_{i=1}^{\frac{\varepsilon}{2}cn}\{h^i_1,\dots,h^i_s\}$, where each $X_j$ consists of the holes glued to the same component $T_j$. 
Similarly, we obtain a partition $\pi'$ of the boundary components $\{\partial_1,\dots,\partial_{\ell_n}\}$ (see Figure~\ref{genus_reducing}).

Let $\alpha_j$ denote the number of holes glued to $T_j$, and $\beta_j$ the number of boundaries $\partial_i$ contained in $T_j$. 
Write $(\mathbf{q}_j,\mathbf{p}_j)$ for the boundary sizes of $T_j$, where $\mathbf{q}_j$ corresponds to the glued holes and $\mathbf{p}_j$ to the original boundaries. 
Let $n_j, b_j \ge 0$ be such that $T_j \in \mathcal{T}_{(\mathbf{q}_j,\mathbf{p}_j)}(n_j,b_j)$. 
Then, a direct computation yields
\begin{align*}
    &\sum_{j=1}^k b_j = g_n - \frac{\varepsilon}{2}cng' -\bigg(\frac{\varepsilon}{2}cn(s-1)-k+1\bigg),\\
    &\sum_{j=1}^k n_j = n + \frac{\varepsilon}{2}cnm,
\end{align*}
where 
\[
m = \frac{1}{2}\left(-|F(t)|+\sum_{i=1}^{r}(q_i - 2)\right).
\]
\begin{figure}[H]
    \centering
    \includegraphics[scale =0.13]{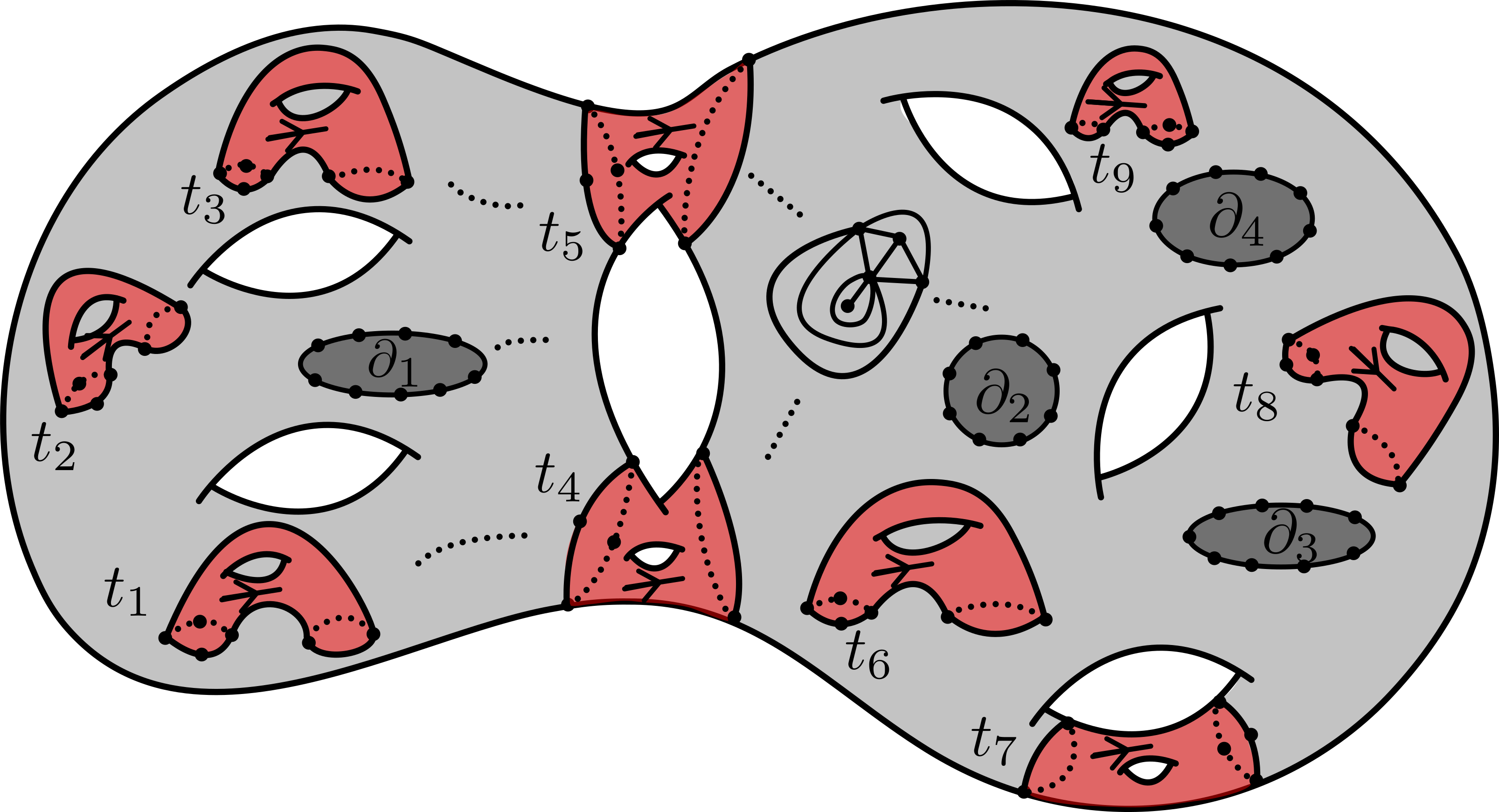}
    \caption{A triangulation $T \in \mathcal{T}_{(9,8,9,9)}(n,21)$ and nine copies of $t$, where $t$ is a triangulation with holes of genus $1$ with two holes of sizes $4$ and $2$. The copies $t_1,\dots,t_9$ split $T$ into two components $T_1,T_2$. The corresponding partitions are $\pi = \{\{h^1_1,h^{1}_2,h^2_1,h^2_2,h^3_1,h^3_2,h^4_1,h^5_1\},\{h^4_2,h^5_2,h^6_1,h^6_2,h^7_1,h^7_2,h^8_1,h^8_2,h^9_1,h^9_2\}\}$ and $\pi' = \{\{\partial_1\},\{\partial_2,\partial_3,\partial_4\}\}$.}
    \label{genus_reducing}
\end{figure}

This yields the bound
\begin{align}\label{sum_to_bound1}
    \Pf\!\left(\bigcap_{i=1}^{\frac{\varepsilon}{2}cn}\mathcal{C}_n(i,t)\right) 
    \le 
    \sum_{\pi,\pi'} 
    \sum_{\substack{n_1+\cdots+n_k = n+\frac{\varepsilon}{2}cn m\\
                   b_1+\cdots+b_k = g_n -\frac{\varepsilon}{2}cng'-(\frac{\varepsilon}{2}cn(s-1)-k+1)}}
    \frac{\prod_{j=1}^{k}\tau_{(\mathbf{q}_j,\mathbf{p}_j)}(n_j,b_j)}
    {(6n-2\sum_{i=1}^{\ell_n}(3-p^n_i))^{\frac{\varepsilon}{2}cn}\tau_{\mathbf{p}^{n}}(n,g_n)}.
\end{align}
The factor $(6n-2\sum_{i=1}^{\ell_n}(3-p^n_i))^{\frac{\varepsilon}{2}cn}$ in the denominator counts the number of choices for $e^n_1,\dots,e^n_{\frac{\varepsilon}{2}cn}$.

Fix $\pi,\pi'$ in the first sum. 
Choosing $c > 0$ small enough, we have for $n$ large enough
\[
\frac{g_n -\frac{\varepsilon}{2}cng'-\bigg(\frac{\varepsilon}{2}cns-k\bigg)}{n+\frac{\varepsilon}{2}cnm} 
\le \frac{g_n}{n}\Big(1+\frac{\varepsilon}{2}cm\Big)^{-1} 
\le \frac{\theta}{2}+\frac{1}{4} < \frac{1}{2}.
\]
Thus, by Proposition~\ref{planarity_estimate}, the contribution of $(\pi,\pi')$ to~\eqref{sum_to_bound1} is bounded by
\[
(6n-2\sum_{i=1}^{\ell_n}(3-p^n_i))^{-\frac{\varepsilon}{2}cn}\tau_{\mathbf{p}^{n}}(n,g_n)^{-1}\exp(C_{\theta} n)\,
n^{2(g_n -\frac{\varepsilon}{2}cng'-(\frac{\varepsilon}{2}cn(s-1)-k+1))+\sum_{j=1}^{k}(\alpha_j+\beta_j -1)}.
\]
By repeatedly applying Lemma~\ref{bounded_ratio_vertices},  Lemma~\ref{add_new_boundary} and Lemma~\ref{estimate_tau}, we obtain the lower bound
\[
\tau_{\mathbf{p}^{n}}(n,g_n) \ge \exp(-c_{\theta}n)\, n^{2g_n+(\ell_n-1)}.
\]
Combining these inequalities, noting that $\sum_{j=1}^{k}\beta_j = \ell_n$, $\sum_{j=1}^{k}\alpha_j = \frac{\varepsilon}{2}cns$ and $\ell_n = o(n)$, we deduce that the contribution of $(\pi,\pi')$ in~\eqref{sum_to_bound1} is bounded by
\[
n^{-\varepsilon cn g' -\frac{\varepsilon}{2}cn(s-1) + k}\exp(C'_{\theta} n).
\]
Now we bound the number of such partitions. 
The number of partitions of a set of $p$ elements into $q$ subsets is at most $\frac{q^p}{q!}$. 
Here $\pi$ partitions the $\frac{\varepsilon}{2}cns$ holes of $\bigsqcup_{i=1}^{\frac{\varepsilon}{2}cn}\{h^i_1,\dots,h^i_s\}$. 
Summing the last bound over all $\pi$ (the number of subsets), we obtain that the contribution of $\pi'$ to \eqref{sum_to_bound1} is bounded by 
\[
n^{-\varepsilon cn g'-\frac{\varepsilon}{2}cn(s-1)} \exp(C'_{\theta} n) \frac{k^{\frac{\varepsilon}{2}csn}}{k!}n^{k}.
\]
Using $k^{\frac{\varepsilon}{2}csn} \le n^{\frac{\varepsilon}{2}csn}$ and 
$
\frac{n^k}{k!} \le \exp(n)$, we finally get the bound
\[
n^{-\varepsilon cn g'+ \frac{\varepsilon}{2}cn} \exp((C'_{\theta}+1)n).
\]
Finally, the number of partitions $\pi'$ is at most $\ell_n^{\ell_n} \le |\mathbf{p}^{n}|^{|\mathbf{p}^{n}|}$. 
Therefore, the right-hand side of~\eqref{bound_proba_see_copies_t} is bounded by
\[
2^{cn}|\mathbf{p}^{n}|^{|\mathbf{p}^{n}|}\, n^{-\varepsilon cn g'+\frac{\varepsilon}{2}cn}\exp((C'_{\theta}+1)n)
\xrightarrow[n\to\infty]{} 0,
\]
since $|\mathbf{p}^{n}| = o(n)$ and $g' \ge 1$.
This contradicts~\eqref{lower_bound_proba_copies} and completes the proof.
\end{proof}

\subsection{One-endedness and finite degrees when rooted in the middle}

By Corollary~\ref{tightness} and Lemma~\ref{T_is_almost_surely_planar_rooted_middle}, any subsequential limit of $(T_{n,g_n,\mathbf{p}^{n}},e^n)$ for $d^{*}_{\mathrm{loc}}$ is a weakly Markovian infinite planar triangulation. 
By Theorem~\ref{weakly_markovian_mixture}, it is therefore of the form $\mathbb{T}_{\Lambda}$, where $\Lambda$ is a random variable taking values in $[0,\lambda_c]\cup \{\star\}$.

\begin{proposition}\label{one_endedness_rooted_middle}
Let $T$ be any subsequential limit of $(T_{n,g_n,\mathbf{p}^{n}},e^n)$. Writing $T = \mathbb{T}_{\Lambda}$, we have 
\[
\Pf(\Lambda\in \{0,\star\}) = 0.
\]
In particular, $T$ is almost surely one-ended and has finite vertex degrees. 
\end{proposition}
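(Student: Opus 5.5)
We rule out the two degenerate values separately, following the strategy of \cite[Section 4]{Budzinski_2020}: for each of $\mathbb{T}_0$ and $\mathbb{T}_\star$ we exhibit a local pattern that has probability bounded below in the limit. We then show via injections that this pattern is rare in $T_{n,g_n,\mathbf{p}^n}$ around a uniform edge.

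For $\mathbb{T}_\star$, the root lies, for every $k$, inside a nested family of loops. In particular, with probability bounded below there is a triangulation with holes $t_k$ of fixed shape around the root: a loop based at $\rho$ which separates a region of perimeter $1$ from the rest, with the root inside, and $k$ nested such loops. Since $\mathbb{T}_\star$ is deterministic, if $\Pf(\Lambda=\star)=\delta>0$ then for every $k$ we have $\liminf_n \Pf(\text{the root of } T_{n,g_n,\mathbf{p}^n} \text{ is enclosed by } k \text{ nested separating loops})\ge \delta$. The key observation is that the root is then surrounded by a loop separating a planar-ish piece of perimeter $1$. Cut along the outermost of these $k$ loops. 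This splits the triangulation into a piece in $\mathcal{T}_{(1,\mathbf{p}^n)}(n_1,g_1)$ and a piece in $\mathcal{T}_1(n_2,g_2)$, with $n_1+n_2\approx n$ and $g_1+g_2=g_n$. The counting then compares
\[
\sum_{n_1+n_2=n,\ g_1+g_2=g_n}\tau_{(1,\mathbf{p}^n)}(n_1,g_1)\,\tau_1(n_2,g_2)\cdot n_2
\]
(the factor $n_2$ chooses the root inside the inner piece) with $6n\,\tau_{\mathbf{p}^n}(n,g_n)$. The $k$ nested loops force the inner piece to contain at least $k$ further separating loops, and each one costs a factor that is small. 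I would instead use a simpler route: the $\mathbb{T}_\star$ case forces the root vertex degree to be unbounded. Since $\mathbb{T}_\star$ has infinite degree at the root, $\Pf(\deg \rho^n\ge K)$ would stay $\ge\delta$ for all $K$.

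The same degree consideration handles $\mathbb{T}_0$, where the root vertex also has infinite degree. So both cases reduce to showing tightness of the root degree, and we will prove the following two facts.

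\emph{First}, the mean degree in $T_{n,g_n,\mathbf{p}^n}$ is $\frac{2(3n+\sum(3-p_i))}{n+2-2g_n}\to \frac{6}{1-2\theta}<\infty$. But tightness of the degree of a uniform edge's origin is not implied by this alone, since the edge is size-biased by degree. The right quantity is $\mathbb{E}[1/\deg\rho^n]$. Since $e^n$ is a uniform oriented edge,
\[
\mathbb{E}\bigl[1/\deg(\rho^n)\bigr]=\frac{\#\text{vertices}}{\#\text{oriented edges}}\to\frac{1-2\theta}{6}.
\]
\emph{Second}, we pass this identity to the limit. The function $1/\deg$ is bounded and continuous for the local topology when extended by $0$ at infinite degree. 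Hence along the subsequence $\mathbb{E}[1/\deg \rho]=\frac{1-2\theta}{6}$ in $T=\mathbb{T}_\Lambda$. Strictly speaking this only holds once boundary vertices are excluded, which follows from Lemma~\ref{T_do_not_see_boundaries__rooted_middle}.

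This gives $\mathbb{E}[d(\Lambda)\mathbf 1_{\Lambda\in(0,\lambda_c]}]=\frac{1-2\theta}{6}$, where $d(0)=d(\star)=0$. That alone does not exclude $\{0,\star\}$, because mass could be moved to large $\lambda$, but $d\le \frac16$ and $\theta\ge 0$ only give $\Pf(\Lambda\notin\{0,\star\})\ge 1-2\theta$. So the real obstacle is a genuine lower bound, and I would obtain it by an injection argument in the style of \cite[Proposition 19]{Budzinski_2020}. For $\mathbb{T}_\star$, a root loop enclosing a region of perimeter $1$ with $k$ nested loops would be excluded by an injection that removes such a nested structure and recycles its budget. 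For $\mathbb{T}_0$, the characteristic event is that the root vertex has degree $\ge K$ while the ball of radius $1$ is tree-like. In both cases I would build an injection that replaces the configuration by a bounded-degree one, using the bounded-ratio Lemma~\ref{bounded_ratio_vertices} to control the change in $n$. This would show that the probability of seeing either configuration tends to $0$ as $K,k\to\infty$ uniformly in $n$. I expect constructing these injections while keeping boundaries intact, which is harmless by Lemma~\ref{T_do_not_see_boundaries__rooted_middle}, to be the main technical step.
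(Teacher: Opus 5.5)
There is a genuine gap: the step that actually excludes $\mathbb{T}_0$ and $\mathbb{T}_\star$ is never carried out.

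Your reduction to tightness of the root degree is sound, since both degenerate maps have infinite root degree. But the only argument you complete, $\mathbb{E}[1/\deg \rho^n]\to\frac{1-2\theta}{6}$, yields $\mathbb{E}[d(\Lambda)\mathbf{1}_{\{\Lambda\in(0,\lambda_c]\}}]=\frac{1-2\theta}{6}$. As you note, this only gives $\Pf(\Lambda\notin\{0,\star\})\ge 1-2\theta$, so it settles only the case $\theta=0$.

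Everything else is deferred to ``an injection that replaces the configuration by a bounded-degree one''. That injection is never constructed. In the form you describe, it also could not produce a bound tending to $0$. Sending each configuration with the degenerate pattern to a single configuration of nearby size and invoking Lemma~\ref{bounded_ratio_vertices} only bounds the probability by a constant. The reason is that this lemma controls $\tau_{\mathbf{p}}(n-m,g)/\tau_{\mathbf{p}}(n,g)$ only between $C_\varepsilon^{-m}$ and $1$. The finer fact that these ratios behave like $\lambda(\theta)^m$ is Proposition~\ref{application_cv_ratio}, which depends on the statement you are proving. What you need is a gain that grows with the depth of the pattern. Your heuristic that each of $k$ nested loops ``costs a small factor'' has the same defect and is left unjustified.

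The missing idea, which is what the paper uses, is to extract this gain one peeling step at a time from the spatial Markov property. Suppose $\Pf(\Lambda=0)>0$. Then for $n$ large, $B^*_r(T_{n,g_n,\mathbf{p}^n},e^n)=B^*_r(\mathbb{T}_0)$ with probability at least $\Pf(\Lambda=0)/2$. On this event, each of the first $r$ peeling steps reveals a triangle with a new third vertex. So after $k<r$ steps the explored region is a planar disk with a simple hole of perimeter $3+k$, away from the boundaries.

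Conditionally on this region, the unexplored part is a uniform triangulation of the $(3+k,\mathbf{p}^n)$-gon of genus $g_n$ with the appropriate volume. Consider an outcome impossible in $\mathbb{T}_0$, for instance the third vertex being the neighbouring hole vertex and the resulting $2$-gon being glued shut. Its conditional probability is a ratio of two $\tau$'s whose volume and one perimeter differ by $O(1)$. By Lemma~\ref{bounded_ratio_vertices}, it is at least $c_\theta>0$ uniformly in $n$. This gives $\Pf(\Lambda=0)/2\le(1-c_\theta)^r$ for every $r$, hence $\Pf(\Lambda=0)=0$.

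The paper treats $\mathbb{T}_\star$ the same way, using the event that the revealed triangle has its third vertex outside the explored region. If you want an injective formulation instead, it would have to encode such independent local alternatives at each of the $r$ levels, not a single replacement.
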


\begin{proof}
First, note that proving $\Pf(\Lambda\in \{0,\star\}) = 0$ implies that $T$ has almost surely finite vertex degrees and is one-ended. 
Indeed, for every $\lambda \in (0,\lambda_c]$, the triangulation $\mathbb{T}_{\lambda}$ almost surely has finite degrees and is one-ended. Thus, we only prove the first part of the proposition.

The argument used here follows the classical tightness argument of \cite{Angel_UITP}. 
For any $r \ge 0$, write $\mathbb{T}_{0,r} = B^{*}_r(\mathbb{T}_0)$ for the dual ball of radius $r$ in $\mathbb{T}_0$. 
It follows that, for $n$ large enough,
\[
\Pf\big(B^{*}_r(T_{n,g_n,\mathbf{p}^{n}},e^n) = \mathbb{T}_{0,r}\big) 
\ge \frac{\Pf(\Lambda = 0)}{2} > 0.
\]

Recall the definition of the peeling exploration from Section~\ref{peeling_def}. 
Fix any peeling algorithm $\mathcal{A}$ and consider the peeling process $(\mathcal{E}^{\mathcal{A}}_n(k))_{k\ge 0}$ applied to $(T_{n,g_n,\mathbf{p}^{n}},e^n)$. 
Under the event $B^{*}_r(T_{n,g_n,\mathbf{p}^{n}},e^n) = \mathbb{T}_{0,r}$, for any $k \in \{0,\dots,r-1\}$, the triangulation with a hole $\mathcal{E}^{\mathcal{A}}_n(k)$ is planar with a single boundary of perimeter $3+k$. 
Moreover, at each step $\mathcal{E}^{\mathcal{A}}_n(k) \subset \mathcal{E}^{\mathcal{A}}_n(k+1)$, the third vertex of the discovered triangle does not belong to the current boundary of $\mathcal{E}^{\mathcal{A}}_n(k)$.

Let $\mathcal{L}_{n}(k)$ denote the event that the triangle discovered behind the peeled edge $\mathcal{A}(\mathcal{E}^{\mathcal{A}}_n(k))$ creates a loop at the left endpoint of that edge, and that this loop is immediately filled by an edge. 
The probability of this event satisfies
\[
\Pf\big(\mathcal{L}_{n}(k)\,\big|\, \mathcal{E}^{\mathcal{A}}_n(k) \subset \mathbb{T}_{0,r}\big)
= 
\frac{\tau_{(3+k,p^n_1,\dots,p^n_{\ell_n})}(n-1,g_n)}
     {\tau_{(3+k,p^n_1,\dots,p^n_{\ell_n})}(n,g_n)}
\ge c_{\theta} > 0,
\]
where the inequality follows from Lemma~\ref{bounded_ratio_vertices}. 
Consequently,
\[
\Pf\big(\mathcal{E}^{\mathcal{A}}_n(r) \subset B^{*}(\mathbb{T}_0,r)\big) 
\le (1-c_{\theta})^r.
\]
We deduce that for any $r \ge 0$, we have  $(1-c_{\theta})^r \ge \frac{\Pf(\Lambda = 0)}{2}$. Thus $\Pf(\Lambda = 0) = 0$.

\medskip
The proof that $\Pf(\Lambda = \star) = 0$ proceeds analogously, replacing the event $\mathcal{L}_{n}(k)$ by $\mathcal{C}_n(k)$, which denotes the event that the triangle revealed behind the peeled edge has its third vertex lying outside $\mathcal{E}^{\mathcal{A}}_n(k)$.
\end{proof}

\subsection{$\mathbb{T}_{\lambda}$ as the limit of $(T_{n,g_n,\mathbf{p}^{n}},e^n)$}
Let $T$ be a subsequential limit of $(T_{n,g_n,\mathbf{p}^{n}},e^n)$ with respect to $d^{*}_{\mathrm{loc}}$, where $e^n$ is an oriented edge chosen uniformly at random in $T_{n,g_n,\mathbf{p}^{n}}$. By Proposition~\ref{one_endedness_rooted_middle}, we can write $T = \mathbb{T}_{\Lambda}$ where $\Lambda$ is a random variable taking values in $(0,\lambda_c]$. Since the vertices of $\mathbb{T}_{\Lambda}$ have almost surely finite degrees, it follows from \cite[Lemma~3]{Budzinski_2020} that the convergence in distribution also holds for $d_{\mathrm{loc}}$. 
The next proposition shows that $\Lambda$ is deterministic, thereby proving the first convergence stated in Theorem~\ref{local_limit_middle}. We recall that $\lambda(\theta)$ is defined in \eqref{deflambdatheta}.

\begin{proposition}\label{deterministic_Lambda}
Almost surely, $\Lambda = \lambda(\theta)$.
\end{proposition}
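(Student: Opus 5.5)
We follow the strategy of \cite{Budzinski_2020}, which rests on the mean inverse root degree. The idea is to compute $\mathbb{E}[1/\deg(\rho)]$ in two ways: directly in the finite triangulation, and in the limit $\mathbb{T}_\Lambda$.

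In $T_{n,g_n,\mathbf{p}^n}$ with $e^n$ uniform among the $2E_n$ oriented edges, where $2E_n = 6n-2\sum_i(3-p^n_i)$, the vertex $\rho^n$ is chosen with probability proportional to its degree. Hence
\[
\mathbb{E}\big[1/\deg(\rho^n)\big] = \frac{n+2-2g_n}{2E_n} \longrightarrow \frac{1-2\theta}{6},
\]
using $|\mathbf{p}^n| = o(n)$. Since $1/\deg$ is bounded by $1$ and is a continuous function for $d_{\mathrm{loc}}$, local convergence along the subsequence gives
\[
\mathbb{E}\big[d(\Lambda)\big] = \mathbb{E}\big[1/\deg_{\mathbb{T}_\Lambda}(\rho)\big] = \frac{1-2\theta}{6}.
\]
This uses the fact from \cite{Budzinski_2020} that $d(\lambda)$ is the mean inverse root degree of $\mathbb{T}_\lambda$.

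This identity alone does not force $\Lambda$ to be deterministic. The key step is a second, concave-type constraint, which comes from ratios of the $\tau$'s, as in \cite[Section 4]{Budzinski_2020}. By the spatial Markov property of $\mathbb{T}_\Lambda$, the probability that the first peeling step from the root discovers a new vertex is $\mathbb{E}[C_{2}(\Lambda)\Lambda/C_1(\Lambda)]$, up to the exact perimeter conventions. In the finite model the same probability equals $\frac{\tau_{(2,\mathbf{p}^n)}(n-1,g_n)}{\tau_{(1,\mathbf{p}^n)}(n,g_n)}$, a deterministic quantity. By Lemma~\ref{bounded_ratio_vertices}, along a further subsequence we may assume that all ratios $\tau_{(q,\mathbf{p}^n)}(n-k,g_n)/\tau_{(1,\mathbf{p}^n)}(n,g_n)$ converge to limits $a_{q,k}$. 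The peeling probabilities for every $t\subset T$ are then the deterministic numbers $a_{q,|t|}$.

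On the other hand, the limit must equal $\mathbb{E}[C_q(\Lambda)\Lambda^{k}]/C_1$-type mixtures. We will use the multiplicative structure: the ratio $\tau(n-k,\cdot)/\tau(n,\cdot)$ factors as a product of one-step ratios, each tending to the same limit $\lambda^*$. Quantitatively, we compare $\tau_{\mathbf{q}}(n-2,g)/\tau_{\mathbf{q}}(n,g)$ with the square of $\tau_{\mathbf{q}}(n-1,g)/\tau_{\mathbf{q}}(n,g)$, shifting $n$ by $O(1)$. Because the ratios are bounded and monotone by Lemma~\ref{bounded_ratio_vertices}, a Cesàro/sub-multiplicativity argument on a further subsequence lets us choose $n$ such that consecutive ratios are nearly equal. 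This gives $\mathbb{E}[f(\Lambda)\Lambda^2] = \mathbb{E}[f(\Lambda)\Lambda]^2/\mathbb{E}[f(\Lambda)]$ for a suitable positive weight $f$. Cauchy--Schwarz equality then forces $\Lambda$ to be a.s.\ constant, equal to some $\lambda^*$.

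Finally, $d(\lambda^*)=\frac{1-2\theta}{6}$ together with the strict monotonicity of $d$ yields $\lambda^*=\lambda(\theta)$. Since the limit does not depend on the subsequence, the whole sequence converges.

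The main obstacle is the determinism step. The shift $n\mapsto n-1$ changes the genus-to-size ratio only negligibly, but we must ensure the subsequence can be chosen uniformly. If the regularity argument fails, the fallback is the approach of \cite{Budzinski_2020}: show that $\Lambda$ is deterministic using the two-independent-roots events of Proposition~\ref{two_uniform_edges_stay_far}. Neighbourhoods of two far-apart uniform roots are asymptotically independent with the same deterministic peeling ratios. A mixture over $\Lambda$ whose law is that of a product must then be degenerate, because the quantity $\mathbb{P}(t_1\subset T)\mathbb{P}(t_2\subset T)$ equals the joint probability computed from the same $\tau$ ratios.
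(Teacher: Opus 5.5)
Your first step is correct; it is the case $f\equiv 1$ of the identity that is actually needed. The determinism step, however, has a genuine gap.

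\emph{The one-step ratios.} The claim that the ratios $\tau_{\mathbf{p}^n}(n-j-1,g_n)/\tau_{\mathbf{p}^n}(n-j,g_n)$ all tend to a common $\lambda^*$ is essentially the conclusion. In the paper, convergence of $\tau_{\mathbf{p}^n}(n-1,g_n)/\tau_{\mathbf{p}^n}(n,g_n)$ (Proposition~\ref{application_cv_ratio}) is deduced \emph{from} this proposition. Lemma~\ref{bounded_ratio_vertices} only gives $1\le \tau_{\mathbf{p}}(n+1,g)/\tau_{\mathbf{p}}(n,g)\le C_\varepsilon$. That is monotonicity of $\tau$, not of the ratios.

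\emph{Why no Ces\`aro argument can close this.} Along your subsequence, $\tau_{\mathbf{p}^n}(n-j,g_n)/\tau_{\mathbf{p}^n}(n,g_n)\to\mathbb{E}[\Lambda^j]$, by the spatial Markov property with $C_1(\lambda)=\lambda^{-1}$. So Cauchy--Schwarz already makes the second differences of $j\mapsto\log\tau_{\mathbf{p}^n}(n-j,g_n)$ asymptotically nonnegative. Bounded first differences then only show that these second differences are small somewhere in a long window. But shifting $n$ by $j$ replaces the law of $\Lambda$ by its tilt $\lambda^j\,\Pf(\Lambda\in\mathrm{d}\lambda)/\mathbb{E}[\Lambda^j]$. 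This tilt becomes nearly degenerate as $j$ grows even when $\Lambda$ is not (take $\Lambda$ uniform on two values). So you never obtain $\mathbb{E}[w\Lambda^2]\,\mathbb{E}[w]=\mathbb{E}[w\Lambda]^2$ for $\Lambda$ itself. A non-degenerate mixture is consistent with every coarse estimate available.

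\emph{The fallback.} It misreads what the two-root argument yields. Write $T_n=T_{n,g_n,\mathbf{p}^n}$. Up to events made negligible by Lemma~\ref{T_do_not_see_boundaries__rooted_middle} and Proposition~\ref{two_uniform_edges_stay_far}, $\Pf(t_1\subset(T_n,e^n_1),\,t_2\subset(T_n,e^n_2))$ is a single ratio $\tau_{(q_1,q_2,\mathbf{p}^n)}(\cdot,g_n)/\tau_{\mathbf{p}^n}(n,g_n)$. It depends on the volumes only through $v_1+v_2$. This forces the two limiting parameters to be \emph{equal}, $\Lambda^1=\Lambda^2$, which is \cite[Proposition~18]{Budzinski_2020}; it does not give independence. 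The factorization $\Pf(t_1\subset T^1,t_2\subset T^2)=\Pf(t_1\subset T)\Pf(t_2\subset T)$ would require
\[
\tau_{(q_1,q_2,\mathbf{p}^n)}(n-a-b,g_n)\,\tau_{\mathbf{p}^n}(n,g_n)\approx\tau_{(q_1,\mathbf{p}^n)}(n-a,g_n)\,\tau_{(q_2,\mathbf{p}^n)}(n-b,g_n).
\]
This is not a relation among ``the same $\tau$ ratios''. Given $\Lambda^1=\Lambda^2$, it is equivalent to the determinism you are trying to prove.

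\emph{The missing idea: localize your first step.} The identity $\mathbb{E}[1/\deg(\rho^n_1)\mid T_n,e^n_2]=\#V/(2\#E)$ holds for every map, not only on average. By \cite[Lemma~20]{Budzinski_2020} there is a measurable readout $\ell$ with $((T_n,e^n),\ell(T_n,e^n))\to(\mathbb{T}_\Lambda,\Lambda)$. Combined with $\Lambda^1=\Lambda^2$, this gives $f(\ell(T_n,e^n_1))-f(\ell(T_n,e^n_2))\to 0$ in $L^1$. Hence, for all bounded continuous $f$,
\[
\mathbb{E}[d(\Lambda)f(\Lambda)]=\lim_{n}\mathbb{E}\Big[\frac{f(\ell(T_n,e^n_2))}{\deg(\rho^n_1)}\Big]=\frac{1-2\theta}{6}\,\mathbb{E}[f(\Lambda)].
\]
So $d(\Lambda)$ is almost surely constant, and injectivity of $d$ gives $\Lambda=\lambda(\theta)$. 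This is the paper's proof.
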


\begin{proof}
We do not provide all the details of the proof since they are the same as in \cite{Budzinski_2020}. 
Let $e^n_1$ and $e^n_2$ be two independent uniformly chosen oriented edges in $T_{n,g_n,\mathbf{p}^{n}}$. 
Since both triangulations $(T_{n,g_n,\mathbf{p}^{n}},e^n_1)$ and $(T_{n,g_n,\mathbf{p}^{n}},e^n_2)$ converge in distribution with respect to $d_{\mathrm{loc}}$, their joint distribution 
\[
((T_{n,g_n,\mathbf{p}^{n}},e^n_1),(T_{n,g_n,\mathbf{p}^{n}},e^n_2))
\]
is tight. 
Fix a subsequential limit $(\mathbb{T}_{\Lambda^1},\mathbb{T}_{\Lambda^2})$. 
By the Skorokhod representation theorem, we may assume that the convergence holds almost surely. 
Then, using Lemma~\ref{T_do_not_see_boundaries__rooted_middle} and Proposition~\ref{two_uniform_edges_stay_far}, the same proof as in \cite[Proposition~18]{Budzinski_2020} applies, yielding $\Lambda^1 = \Lambda^2$ almost surely. Moreover, by \cite[Lemma~20]{Budzinski_2020}, there exists a measurable function 
\[
\ell : \bigcup_{n\ge 1}\mathcal{T}_{\mathbf{p}^{n}}(n,g_n) \longrightarrow (0,\lambda_c]
\]
such that 
\[
\big((T_{n,g_n,\mathbf{p}^{n}},e^n),\ell(T_{n,g_n,\mathbf{p}^{n}},e^n)\big)
\;\xrightarrow[n\to\infty]{(d)}\;
(\mathbb{T}_{\Lambda},\Lambda).
\]
In words, this means that asymptotically the value of $\Lambda$ can be read directly from $(T_{n,g_n,\mathbf{p}^{n}},e^n)$. It follows that 
\[
\big(\ell(T_{n,g_n,\mathbf{p}^{n}},e^n_1),\ell(T_{n,g_n,\mathbf{p}^{n}},e^n_2)\big)
\xrightarrow[n\to\infty]{(d)} (\Lambda,\Lambda).
\]
Let $f$ be any bounded continuous function on $(0,\lambda_c]$. 
Then,
\[
f\!\left(\ell(T_{n,g_n,\mathbf{p}^{n}},e^n_1)\right) 
- f\!\left(\ell(T_{n,g_n,\mathbf{p}^{n}},e^n_2)\right)
\xrightarrow[n\to\infty]{L^1} 0.
\]
Let $ d(\lambda) := \mathbb{E}\!\left[\frac{1}{\deg_{\mathbb{T}_{\lambda}}(\rho)}\right]$ denote the mean inverse degree of the root in $\mathbb{T}_{\lambda}$. 
From \cite[Proposition~20]{Budzinski_2020}, the function $d$ is strictly increasing on $[0,\lambda_c]$. We can then write
\[
\mathbb{E}\!\left[\frac{1}{\deg_{T_{n,g_n,\mathbf{p}^{n}}}(\rho^n_1)}
    f\!\left(\ell(T_{n,g_n,\mathbf{p}^{n}},e^n_1)\right)\right]
=
\mathbb{E}\!\left[\frac{1}{\deg_{T_{n,g_n,\mathbf{p}^{n}}}(\rho^n_1)}
    f\!\left(\ell(T_{n,g_n,\mathbf{p}^{n}},e^n_2)\right)\right] + o(1).
\]
The left-hand side converges to $\mathbb{E}[d(\Lambda)f(\Lambda)]$. 
Conditionally on $(T_{n,g_n,\mathbf{p}^{n}},e^2_n)$, the expectation of $\frac{1}{\deg_{T_{n,g_n,\mathbf{p}^{n}}}(\rho^n_1)}$ equals
\[
\frac{\#V(T_{n,g_n,\mathbf{p}^{n}})}{2\,\#E(T_{n,g_n,\mathbf{p}^{n}})} 
\;\longrightarrow\; \frac{1-2\theta}{6}.
\]
Hence,
\[
\mathbb{E}[d(\Lambda)f(\Lambda)] 
= 
\frac{1-2\theta}{6}\,\mathbb{E}[f(\Lambda)].
\]
We therefore conclude that $d(\Lambda) = \frac{1-2\theta}{6}$ almost surely, and by injectivity of $d$, this implies that $\Lambda$ is almost surely constant, equal to $\lambda(\theta)$.
\end{proof}
We can now complete the proof of Theorem~\ref{local_limit_middle}.
\begin{proof}
The convergence $(T_{n,g_n,\mathbf{p}^{n}},e^n) \to \mathbb{T}_{\lambda}$ follows directly from Proposition~\ref{deterministic_Lambda}. 
We briefly explain the proof of
\[
(T_{n,g_n,(p,\mathbf{p}^{n})},e^n_1) \to \mathbb{T}_{\lambda}^{(p)}.
\]
For a fixed $p \ge 1$, let $t$ be a rooted planar triangulation with one hole of perimeter $p$ and $m$ vertices. 
The value of $m$ is arbitrary but chosen so that such a triangulation $t$ exists. 
The first part of the theorem gives the convergence 
\[
(T_{n+m-p,g_n,\mathbf{p}^{n}},e^n) \to \mathbb{T}_{\lambda}.
\]
On the other hand, conditionally on $t \subset (T_{n+m-p,g_n,\mathbf{p}^{n}},e^n) $, we have
\[
(T_{n+m-p,g_n,\mathbf{p}^{n}},e^n) \setminus t \;\overset{(d)}{=}\; T_{n,g_n,(p,\mathbf{p}^{n})}.
\]
Combining these two facts, we deduce that 
\[
T_{n,g_n,(p,\mathbf{p}^{n})} \to T,
\]
where $T$ has the same distribution as $\mathbb{T}_{\lambda} \setminus t$ conditionally on the event $\{t \subset \mathbb{T}_{\lambda}\}$. 
This is precisely the law of $\mathbb{T}_{\lambda}^{(p)}$. This completes the proof.
\end{proof}

\subsection{Combinatorial consequences}

The fact that $T = \mathbb{T}_{\Lambda}$ with $\Lambda \in (0,\lambda_c]$ is non-trivial and relies on the results of \cite{budzinski2022multiendedmarkoviantriangulationsrobust}. In particular, the one-endedness of $T$ follows directly from Proposition~\ref{one_endedness_rooted_middle}. This property can be rephrased combinatorially, leading to non-trivial combinatorial estimates that will be useful throughout the rest of the paper. In particular, for a triangulation $t$ with two holes $h_1$ and $h_2$, under the event $t \subset (T_{n,g_n,\mathbf{p}^{n}},e^n)$, it is very unlikely that $(T_{n,g_n,\mathbf{p}^{n}},e^n) \setminus t$ remains connected. 
Moreover, if $(T_{n,g_n,\mathbf{p}^{n}},e^n) \setminus t$ has two connected components, then with high probability one of them contains all the genus, all the boundaries $\partial_1,\dots,\partial_{\ell_n}$, and almost all the vertices (see Proposition~\ref{combi_estimate_rooted_middle}). 
We begin with a simpler estimate that captures part of this phenomenon.

\begin{proposition}\label{estimate_genus_reducing}
 Fix $\theta \in [0,\frac{1}{2})$, $\displaystyle \frac{g_n}{n} \to \theta$ and $\mathbf{p}^{n} = (p^n_{1},\cdots,p^n_{\ell_n})$ such that $|\mathbf{p}^{n}|= o(n)$. We have the convergence
\[
\frac{\tau_{\mathbf{p}^{n}}(n,g_n-1)}{\tau_{\mathbf{p}^{n}}(n,g_n)} \longrightarrow 0.
\]
\end{proposition}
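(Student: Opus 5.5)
The plan is to read the ratio as the probability of a local, non-planar event around the uniform root edge, and then use planarity of the limit. Take a rooted triangulation with holes $t$ of genus $1$ with one hole, say a single triangle with two of its edges glued, giving a torus with a hole of some fixed perimeter $q$. Conditionally on $t \subset (T_{n,g_n,\mathbf{p}^{n}},e^n)$, the complement is a triangulation of the $(q,\mathbf{p}^n)$-gon of genus $g_n-1$, whose size parameter is $n$ shifted by a constant $m$ depending only on $t$. Hence
\[
\Pf\big(t\subset (T_{n,g_n,\mathbf{p}^{n}},e^n)\big) = \frac{\tau_{(q,\mathbf{p}^{n})}(n-m,g_n-1)}{\big(6n-2\sum_{i}(3-p^n_i)\big)\,\tau_{\mathbf{p}^{n}}(n,g_n)}.
\]

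First I will check that the left-hand side tends to $0$. By Corollary~\ref{tightness}, Lemma~\ref{T_is_almost_surely_planar_rooted_middle} and Theorem~\ref{local_limit_middle}, the sequence converges to $\mathbb{T}_{\lambda(\theta)}$, which is planar. Moreover, by Lemma~\ref{T_do_not_see_boundaries__rooted_middle} the event $t\subset\cdot$ is determined by the dual ball of bounded radius. The event is not planar, so its probability tends to $0$.

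Second, I will convert the right-hand side back into the desired ratio using the comparison lemmas. Lemma~\ref{add_new_boundary} (second inequality) gives $\tau_{(1,\mathbf{p}^n)}(N,g_n-1)\ge C^{-1} N\,\tau_{\mathbf{p}^n}(N,g_n-1)$. Lemma~\ref{bounded_ratio_vertices} (both parts, applied boundedly many times) lets me pass from perimeter $1$ to $q$ and from size $N=n-m$ to $n$ at the cost of constant factors $C_\theta$. Both lemmas apply since $(g_n-1)/(n-m)\le \frac12-4\varepsilon$ for some $\varepsilon=\varepsilon(\theta)$ and $|\mathbf{p}^n|+q\le\varepsilon n$ eventually. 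These steps combine to
\[
\frac{\tau_{\mathbf{p}^{n}}(n,g_n-1)}{\tau_{\mathbf{p}^{n}}(n,g_n)} \le C_\theta\,\frac{\tau_{(q,\mathbf{p}^{n})}(n-m,g_n-1)}{n\,\tau_{\mathbf{p}^{n}}(n,g_n)} \le C'_\theta\,\Pf\big(t\subset (T_{n,g_n,\mathbf{p}^{n}},e^n)\big)\to 0.
\]
The denominator $6n-2\sum(3-p_i^n)$ is of order $n$, which matches the factor $n$ from adding a boundary.

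The main obstacle is choosing $t$ concretely and getting the bookkeeping of the shift $m$ and the sign of the vertex offset right, so that the complement really lies in $\mathcal{T}_{(q,\mathbf{p}^n)}(n-m,g_n-1)$ with the root correctly encoded. If the direction of the perimeter monotonicity goes the wrong way, I will fix that by taking the hole of perimeter $1$ directly; for instance, a one-vertex torus triangulation with one face replaced by a loop-bounded hole. Then only the bounded-ratio lemma in $n$ is needed.
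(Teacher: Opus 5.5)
Your proposal is correct and is essentially the paper's proof. The paper also fixes a genus-one $t$ with a single hole (of perimeter $1$) and writes $\Pf(t\subset (T_{n,g_n,\mathbf{p}^{n}},e^n))$ as $\tau_{(1,\mathbf{p}^n)}(n-m,g_n-1)$ divided by $(6n-2\sum_{i}(3-p^n_i))\,\tau_{\mathbf{p}^{n}}(n,g_n)$. It sends this to $0$ by Theorem~\ref{local_limit_middle}, and bounds it below by $C_\theta\,\tau_{\mathbf{p}^{n}}(n,g_n-1)/\tau_{\mathbf{p}^{n}}(n,g_n)$ using Lemmas~\ref{bounded_ratio_vertices} and~\ref{add_new_boundary}. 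Your perimeter monotonicity goes in the right direction, so any fixed hole perimeter $q$ works.

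The only slip is your first example: a triangle with two of its sides glued is a disc, not a one-holed torus. Any valid choice suffices, since only the existence of such a $t$ matters, e.g.\ the one-vertex torus triangulation with one of its two triangles declared a hole of perimeter $3$.
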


\begin{proof}
Fix a triangulation $t$ of genus $1$ with one hole of perimeter $1$ and $7$ vertices in total 
(the choice of $7$ is arbitrary and ensures that such a triangulation exists). By Theorem~\ref{local_limit_middle}, we have 
\[
\Pf(t \subset (T_{n,g_n,\mathbf{p}^{n}},e^n)) \longrightarrow 0.
\]
On the other hand, combining Lemmas~\ref{bounded_ratio_vertices} and~\ref{add_new_boundary}, there exists a constant $C_{\theta}>0$ such that, for $n$ large enough,
\[
\Pf(t \subset (T_{n,g_n,\mathbf{p}^{n}},e^n))
= 
\frac{\tau_{1,\mathbf{p}^n}(n-9,g_n-1)}{(6n - 2\sum_{i=1}^{\ell_n}(3-p^n_i))\,\tau_{\mathbf{p}^{n}}(n,g_n)} \ge C_{\theta}\frac{\tau_{\mathbf{p}^{n}}(n,g_n-1)}{\tau_{\mathbf{p}^{n}}(n,g_n)} \longrightarrow 0.
\]
This concludes the proof.
\end{proof}

\medskip
For $a>0$, define $\mathcal{M}(n,a)$ as the set of all pairs $((n_1,g_1,I),(n_2,g_2,J))$ satisfying
\begin{align}\label{definition_M}
\begin{cases}
n_1 + n_2 = n,\\
g_1 + g_2 = g_n,\\
I \sqcup J = \{1,\dots,\ell_n\},\\
n_1 + 2 - 2g_1 \ge \dfrac{n + 2 - 2g_n}{2},\\
n_2 \ge a \ \text{or}\ g_2 \ge 1 \ \text{or}\ J \neq \emptyset.
\end{cases}
\end{align}

\begin{proposition}\label{combi_estimate_rooted_middle}
 Fix $\theta \in [0,\frac{1}{2})$, $\displaystyle \frac{g_n}{n} \to \theta$ and $\mathbf{p}^{n} = (p^n_{1},\cdots,p^n_{\ell_n})$ such that $|\mathbf{p}^{n}|= o(n)$. For any $r_1,r_2 \ge 0$ and any $\varepsilon > 0$, there exists $a > 0$ such that, for all sufficiently large $n$,
\[
(n\tau_{\mathbf{p}^{n}}(n,g_n))^{-1}
\sum_{((n_1,g_1,I),(n_2,g_2,J)) \in \mathcal{M}(n,a)}
\tau_{(r_1,\mathbf{p}_{I}^{n})}(n_1,g_1)\,
\tau_{(r_2,\mathbf{p}_{J}^{n})}(n_2,g_2)
\le \varepsilon.
\]
Moreover,
\[
\frac{\tau_{r_1,r_2,\mathbf{p}^{n}}(n,g_n-1)}{n\tau_{\mathbf{p}^{n}}(n,g_n)} \longrightarrow 0.
\]
In particular,
\[
\frac{\tau_{\mathbf{p}^{n}}(n,g_n-1)}{\tau_{\mathbf{p}^{n}}(n,g_n)} = o(n^{-1}).
\]
\end{proposition}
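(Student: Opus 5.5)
We interpret the sum probabilistically, through a fixed finite neighbourhood of the root with two holes, and then use the local convergence of Theorem~\ref{local_limit_middle} (planarity and one-endedness of $\mathbb{T}_{\lambda(\theta)}$). Fix $r_1,r_2\ge1$ and a planar rooted triangulation with holes $t$ having two holes of perimeters $r_1,r_2$ and $v$ vertices; any $v$ for which such a $t$ exists will do. Filling the two holes of $t$ by triangulations $T_1\in\mathcal{T}_{(r_1,\mathbf{p}^n_I)}(n_1,g_1)$ and $T_2\in\mathcal{T}_{(r_2,\mathbf{p}^n_J)}(n_2,g_2)$ gives an element of $\mathcal{T}_{\mathbf{p}^n}(n+m,g_n)$ rooted at an edge, for some fixed shift $m=m(t)$. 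The boundaries split as $I\sqcup J$; the factor accounting for the relabelling of boundaries is at most a constant. Hence
\[
\Pf\big(t\subset (T_{n+m,g_n,\mathbf{p}^n},e^n),\ \text{the complement has two components with parameters in }\mathcal{M}\big)
\ge \frac{\sum_{\mathcal{M}(n,a)}\tau_{(r_1,\mathbf{p}^n_I)}(n_1,g_1)\tau_{(r_2,\mathbf{p}^n_J)}(n_2,g_2)}{6(n+m)\,\tau_{\mathbf{p}^n}(n+m,g_n)}.
\]
By Lemma~\ref{bounded_ratio_vertices}, $\tau_{\mathbf{p}^n}(n+m,g_n)\le C^m\tau_{\mathbf{p}^n}(n,g_n)$. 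It therefore suffices to show that the left-hand probability is at most $\varepsilon'$ for suitable $a$.

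To bound that probability, we use $t\subset\mathbb{T}_{\lambda}$ in the limit. Conditionally on this event, $\mathbb{T}_\lambda\setminus t$ is planar and one-ended, and it consists of one infinite component together with a finite Boltzmann triangulation. By symmetry of roles (the condition $n_1+2-2g_1\ge (n+2-2g_n)/2$ singles out the large-vertex component as $T_1$), the event that the other piece $T_2$ has $g_2\ge1$, or $J\ne\emptyset$, or $n_2\ge a$, must be shown negligible.
\begin{itemize}
\item \emph{Case $n_2\ge a$ with $g_2=0$ and $J=\emptyset$.} Then $T_2$ has at least roughly $a$ faces. The event ``$t\subset$ and the finite component of the complement has more than $a$ vertices'' is determined by a large ball, or is forced when the component is infinite. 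Its probability tends to $\Pf(\text{finite Boltzmann piece has size}\ge a)+\Pf(\text{both pieces infinite})$ as $n\to\infty$. The second term is $0$ by one-endedness, and the first is small for $a$ large.
\item \emph{Case $g_2\ge1$ or $J\neq\emptyset$.} Here $T_2$ is non-planar or touches a boundary. Restricting to $n_2<a$ (the other case being already handled), $T_2$ is a bounded piece. Non-planarity of a bounded-radius neighbourhood then has vanishing probability by Lemma~\ref{T_is_almost_surely_planar_rooted_middle}, and touching a boundary within bounded distance has vanishing probability by Lemma~\ref{T_do_not_see_boundaries__rooted_middle}.
\end{itemize}
The main delicacy is making the finite-volume approximation rigorous. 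The events above are not local when $n_2$ is large, so we write them as complements of local events: ``the finite component has fewer than $a$ faces and is planar and boundary-free'' is decided inside a ball of radius about $a$. Convergence in distribution then gives the required liminf of the good event, which is close to $\Pf(t\subset\mathbb{T}_\lambda)$. This controls the $\mathcal{M}(n,a)$ sum up to the $n$-normalisation, which matches the denominator.

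For the second statement, gluing $t$ to a single $T'\in\mathcal{T}_{(r_1,r_2,\mathbf{p}^n)}(n',g_n-1)$ makes the complement of $t$ connected, which is excluded in the limit by one-endedness. The same computation gives
\[
\frac{\tau_{r_1,r_2,\mathbf{p}^n}(n,g_n-1)}{n\tau_{\mathbf{p}^n}(n,g_n)}\to0,
\]
after adjusting the volume shift via Lemma~\ref{bounded_ratio_vertices}. Finally, Lemma~\ref{add_new_boundary} applied twice gives
\[
\tau_{(1,1,\mathbf{p}^n)}(n,g_n-1)\ge C^{-2}n^2\,\tau_{\mathbf{p}^n}(n,g_n-1),
\]
which yields the last claim, $\tau_{\mathbf{p}^n}(n,g_n-1)/\tau_{\mathbf{p}^n}(n,g_n)=o(n^{-1})$.
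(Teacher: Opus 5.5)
Your proposal is correct and follows essentially the paper's route. You realize the $\mathcal{M}(n,a)$-sum (resp.\ $\tau_{r_1,r_2,\mathbf{p}^n}(n,g_n-1)$) as the probability that a fixed planar two-holed $t$ sits at the root with its complement split as in $\mathcal{M}(n,a)$ (resp.\ connected), control it via the local event ``some hole is filled by a small planar triangulation of its polygon'' plus one-endedness of $\mathbb{T}_{\lambda(\theta)}$, absorb the volume shift with Lemma~\ref{bounded_ratio_vertices}, and finish with Lemma~\ref{add_new_boundary} applied twice.

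The only slip is bookkeeping. Your second bullet's ``restricting to $n_2<a$, the other case being already handled'' does not cover $n_2\ge a$ with $g_2\ge 1$ or $J\neq\emptyset$. However, your complement-of-a-local-event formulation handles all of $\mathcal{M}(n,a)$ at once, so the detour through Lemmas~\ref{T_is_almost_surely_planar_rooted_middle} and~\ref{T_do_not_see_boundaries__rooted_middle} is unnecessary.
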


\begin{proof}
The last statement follows from the second one by setting $r_1 = r_2 = 1$ and using Lemma~\ref{add_new_boundary}.

Fix $r_1, r_2$ and $\varepsilon > 0$. 
We start by proving the first inequality. 
Consider a triangulation $t$ with two holes $h_1$ and $h_2$ of perimeters $r_1$ and $r_2$, respectively. 
Let 
\[
m = \frac{1}{2}\big(-|F(t)| + r_1 - 2 + r_2 - 2\big).
\]
By Theorem~\ref{local_limit_middle}, the rooted triangulation $(T_{n-m,g_n,\mathbf{p}^{n}},e^n)$ converges in distribution to $\mathbb{T}_{\lambda}$. 
Under the event $t \subset \mathbb{T}_{\Lambda}$, let $\mathfrak{t}_1$ and $\mathfrak{t}_2$ denote the triangulations filling the holes $h_1$ and $h_2$. 
Since $\mathbb{T}_{\Lambda}$ is almost surely one-ended, we have
\begin{align}\label{limit_one_ended_combinatoric}
\Pf\!\left(t \subset \mathbb{T}_{\lambda},\,
\#V(\mathfrak{t}_1) = +\infty,\,
\mathfrak{t}_2 \notin \bigcup_{k=0}^{a}\mathcal{T}_{r_2}(k,0)
\right) 
\underset{a\to\infty}{\longrightarrow} 0.
\end{align}

Now, define the event $\mathcal{D}_n$ on which $t \subset (T_{n-m,g_n,\mathbf{p}^{n}},e^n)$ and $T_{n-m,g_n,\mathbf{p}^{n}} \setminus t$ is disconnected. 
Under $\mathcal{D}_n$, write $T_1$ and $T_2$ for the triangulations filling $h_1$ and $h_2$, respectively, and denote
\[
(T_1,T_2) \in 
\mathcal{T}_{(r_1,\mathbf{p}^{n}_{I})}(n_1,g_1) \times 
\mathcal{T}_{(r_2,\mathbf{p}^{n}_{J})}(n_2,g_2),
\]
with $n_1 + n_2 = n$, $g_1 + g_2 = g_n$, and $I \sqcup J = \{1,\dots,\ell_n\}$. 
Under $\mathcal{D}_n$, the pair $((n_1,g_1,I),(n_2,g_2,J))$ is random. 
For any $a \ge 0$, the event 
\[
\#V(T_1) \ge \frac{n + 2 - 2g_n}{2} 
\quad\text{and}\quad
T_2 \notin \bigcup_{k=0}^{a}\mathcal{T}_{r_2}(k,0)
\]
is equivalent to $((n_1,g_1,I),(n_2,g_2,J)) \notin \mathcal{M}(n,a)$. 
Moreover, by Theorem~\ref{local_limit_middle},
\begin{align*}
\limsup_{n}\Pf\!\left(\mathcal{D}_n,\,
\#V(T_1) \ge \frac{n + 2 - g_n}{2},\,
T_2 \notin \bigcup_{k=0}^{a}\mathcal{T}_{r_2}(k,0)
\right)
\le 
\Pf\!\left(t \subset \mathbb{T}_{\lambda},\,
\#\mathfrak{t}_1 = +\infty,\,
\mathfrak{t}_2 \notin \bigcup_{k=0}^{a}\mathcal{T}_{r_2}(k,0)
\right).
\end{align*}
Then, by~\eqref{limit_one_ended_combinatoric}, there exists $a > 0$ such that, for $n$ large enough
\[
\Pf\!\left(\mathcal{D}_n,\,
\#V(T_1) \ge \frac{n + 2 - g_n}{2},\,
T_2 \notin \bigcup_{k=0}^{a}\mathcal{T}_{r_2}(k,0)
\right) \le \varepsilon.
\]
Rewriting the left-hand side gives the bound
\[
\big((6n - 2\sum_{i=1}^{\ell_n}(3-p^n_i))\,
\tau_{\mathbf{p}^{n}}(n-m,g_n)\big)^{-1}
\sum_{\substack{(n_1,g_1,I),(n_2,g_2,J)\in \mathcal{M}(n,a)}}
\tau_{(r_1,\mathbf{p}_{I}^n)}(n_1,g_1)\,
\tau_{(r_2,\mathbf{p}_{J}^n)}(n_2,g_2)
\le \varepsilon.
\]
Finally, using Lemma~\ref{bounded_ratio_vertices} and the fact that $|\mathbf{p}^n| = o(n)$, there exists $C_{\theta,r_1,r_2} > 0$ such that, for $n$ large enough,
\[
(n\tau_{\mathbf{p}^{n}}(n,g_n))^{-1}
\sum_{\substack{(n_1,g_1,I),(n_2,g_2,J)\in \mathcal{M}(n,a)}}
\tau_{(r_1,\mathbf{p}_{I}^n)}(n_1,g_1)\,
\tau_{(r_2,\mathbf{p}_{J}^n)}(n_2,g_2)
\le C_{\theta,r_1,r_2}\,\varepsilon.
\]
Since this holds for any $\varepsilon > 0$, the first inequality follows. 
The proof of the second estimate is similar, considering instead the event $\mathcal{C}_n$ on which $t \subset T_{n,g_n}$ and $T_{n,g_n} \setminus t$ is connected.
\end{proof}

In the case $\mathbf{p}^n = \emptyset$, Proposition~\ref{combi_estimate_rooted_middle} yields
\[
\frac{\tau(n,g_n-1)}{\tau(n,g_n)} = o(n^{-1}).
\]
This is weaker than the estimate obtained from the Goulden–Jackson recursion formula~\cite{GOULDEN2008932}, which gives $\frac{\tau(n,g_n-1)}{\tau(n,g_n)} = \mathcal{O}(n^{-2})$. 
However, our combinatorial estimate is quantitative and derived solely from coarse combinatorial bounds and probabilistic arguments. 
We thus expect that similar bounds can be obtained for related models using the same approach. 
Moreover, our result holds in the more general regime $|\mathbf{p}^n| = o(n)$.

\medskip
Theorem~\ref{local_limit_middle} also allows us to compute the asymptotic ratio $\frac{\tau_{\mathbf{p}^{n}}(n-1,g_n)}{\tau_{\mathbf{p}^{n}}(n,g_n)}$.

\begin{proposition}\label{application_cv_ratio}
 Fix $\theta \in [0,\frac{1}{2})$, $\displaystyle \frac{g_n}{n} \to \theta$ and $\mathbf{p}^{n} = (p^n_{1},\cdots,p^n_{\ell_n})$ such that $|\mathbf{p}^{n}|= o(n)$. We have the limits
\[
\frac{\tau_{(1,\mathbf{p}^{n})}(n,g_n)}{6n\,\tau_{\mathbf{p}^{n}}(n,g_n)} \underset{n\to +\infty}{\to} 1,
\qquad
\frac{\tau_{\mathbf{p}^{n}}(n-1,g_n)}{\tau_{\mathbf{p}^{n}}(n,g_n)} \underset{n\to +\infty}{\to} \lambda(\theta).
\]
\end{proposition}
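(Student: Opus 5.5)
We derive both limits from the local convergence of Theorem~\ref{local_limit_middle}, by writing probabilities of explicit root configurations as ratios of the counts $\tau$. The first limit handles the loop/boundary-of-length-one correspondence. The second then follows from a peeling step whose outcome has a computable probability in $\mathbb{T}_\lambda$.

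\emph{First limit.} The bijection recalled in Section~\ref{triangulation_of_the_plane} splits the root edge into a digon and adds a loop inside. Applied to $T_{n,g_n,\mathbf{p}^n}$ at an arbitrary oriented edge, it gives a bijection between pairs (triangulation in $\mathcal{T}_{\mathbf{p}^n}(n,g_n)$, oriented edge) and elements of $\mathcal{T}_{(1,\mathbf{p}^n)}(n',g_n)$ carrying a suitable extra constraint, with $n'=n+1$ or $n$ depending on the normalisation. The constraint is that the face across the length-one boundary is the "digon with a loop" configuration. The complement of the constraint is a local event around the root of $T_{n,g_n,(1,\mathbf{p}^n)}$. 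By the second part of Theorem~\ref{local_limit_middle}, its probability converges to the corresponding probability for $\mathbb{T}^{(1)}_\lambda$. That probability is $0$, since the identification $\mathbb{T}_\lambda\simeq \mathbb{T}^{(1)}_\lambda$ is a bijection of plane triangulations: every one-boundary plane triangulation arises this way. It follows that
\[
\frac{2\#E\,\tau_{\mathbf{p}^n}(n,g_n)}{\tau_{(1,\mathbf{p}^n)}(n',g_n)}\to 1 .
\]
Here $2\#E=6n-2\sum_i(3-p_i^n)=6n(1+o(1))$, because $|\mathbf{p}^n|=o(n)$. It remains to pass from $n'$ to $n$. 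If the indices do not match, I would instead use the event at a fixed peeling step, whose probability is a ratio with the same $n$. I expect this index bookkeeping to be the main nuisance, rather than a conceptual difficulty.

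\emph{Second limit.} In $T_{n,g_n,(1,\mathbf{p}^n)}$, peel the boundary edge of length $1$ and consider the event that the revealed triangle has its third vertex a new internal vertex. Removing this triangle yields a triangulation in $\mathcal{T}_{(2,\mathbf{p}^n)}(n-1,g_n)$, so this event has probability $\tau_{(2,\mathbf{p}^n)}(n-1,g_n)/\tau_{(1,\mathbf{p}^n)}(n,g_n)$. In $\mathbb{T}^{(1)}_\lambda$ the same event has probability $\lambda C_2(\lambda)/C_1(\lambda)$. A cleaner choice may be an event whose complement has ratio $\tau_{\mathbf{p}^n}(n-1,g_n)$-type terms. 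For instance, start from a plane triangulation $t$ with one hole and $v$ vertices, and let $t'$ be $t$ plus one internal vertex with the same hole perimeter, obtained by inserting a degree-$3$ vertex in a triangle. Then
\[
\frac{\Pf(t'\subset T)}{\Pf(t\subset T)}=\frac{\tau_{(q,\mathbf{p}^n)}(n-k-1,g_n)}{\tau_{(q,\mathbf{p}^n)}(n-k,g_n)}.
\]
By Theorem~\ref{local_limit_middle}, this ratio tends to $\lambda$, using the spatial Markov formula $C_q(\lambda)\lambda^{v}$ and the fact that both probabilities converge to positive limits.

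\emph{Removing the extra boundary.} It remains to remove the extra boundary $q$ from this ratio. I would do this via the first part of the proposition, which gives $\tau_{(1,\mathbf{p}^n)}(m,g_n)\sim 6m\,\tau_{\mathbf{p}^n}(m,g_n)$ uniformly for $m=n-O(1)$. Choosing $q=1$ then gives
\[
\frac{\tau_{(1,\mathbf{p}^n)}(m-1,g_n)}{\tau_{(1,\mathbf{p}^n)}(m,g_n)}=\frac{6(m-1)}{6m}\cdot\frac{\tau_{\mathbf{p}^n}(m-1,g_n)}{\tau_{\mathbf{p}^n}(m,g_n)}(1+o(1)),
\]
hence the claim, after shifting $n$ by $O(1)$. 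The shift is harmless: the hypotheses $g_n/n\to\theta$ and $|\mathbf{p}^n|=o(n)$ are stable under it, and Theorem~\ref{local_limit_middle} applies to the shifted sequences.
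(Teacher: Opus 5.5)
\paragraph{The gap: the first limit.} The first limit does not follow as you argue it.

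The digon-plus-loop operation at $(t,e)$ produces an element of $\mathcal{T}_{(1,\mathbf{p}^n)}(n,g_n)$ only when the origin of $e$ is not on $\partial t$. (The index is $n'=n$: no vertex is created, and the index is fixed by the vertex count $n+2-2g_n$.) If the origin of $e$ lies on $\partial t$, the new loop boundary shares a vertex with some $\partial_i$, and boundaries must be vertex-disjoint. So there is no bijection defined on all pairs $(t,e)$.

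Your target-side ``constraint'', on the other hand, is vacuous. Take any element of $\mathcal{T}_{(1,\mathbf{p}^n)}(n,g_n)$ with $n\ge 1$, and let $x$ be the vertex of the loop. The face across the loop is a triangle whose two other sides are distinct edges between $x$ and some $y$; if they were glued, the whole map would be just the loop plus a pendant edge. Merging these two edges inverts the operation. The correct statement is therefore the exact identity
\[
\tau_{(1,\mathbf{p}^n)}(n,g_n)=2\#E\;\tau_{\mathbf{p}^n}(n,g_n)\;\Pf\big(\rho^n\notin\partial T_{n,g_n,\mathbf{p}^n}\big).
\]
Your appeal to the second part of Theorem~\ref{local_limit_middle} controls nothing. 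The missing input is $\Pf(\rho^n\in\partial T_{n,g_n,\mathbf{p}^n})\to 0$.

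This input is not automatic. Boundary vertices can have large degree, so the number of oriented edges starting on $\partial t$ is not obviously $o(n)$ even though $|\mathbf{p}^n|=o(n)$. It is exactly Lemma~\ref{T_do_not_see_boundaries__rooted_middle}, proved by splitting the root vertex into a new boundary edge and applying Lemma~\ref{bounded_ratio_vertices}. With it, and $2\#E=6n+O(|\mathbf{p}^n|)$, your first step becomes the paper's proof.

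\paragraph{The second limit.} Once the first limit is in place, your argument is correct and differs only cosmetically from the paper's. The paper uses the single configuration with one hole of perimeter $1$ and two vertices (necessarily a loop enclosing a pendant edge). Its inclusion probability is
\[
\frac{\tau_{(1,\mathbf{p}^n)}(n-1,g_n)}{2\#E\,\tau_{\mathbf{p}^n}(n,g_n)}\to C_1(\lambda)\lambda^2=\lambda,
\]
and the paper then divides by the first limit taken at $n-1$. Your ratio $\Pf(t'\subset T)/\Pf(t\subset T)$ with $q=1$, combined with two applications of the first limit and an $O(1)$ shift, amounts to the same thing.

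Your discarded peeling computation has an index slip. Removing a triangle keeps the vertex count, so the result lies in $\mathcal{T}_{(2,\mathbf{p}^n)}(n,g_n)$, not $\mathcal{T}_{(2,\mathbf{p}^n)}(n-1,g_n)$.
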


\begin{proof}
By Lemma~\ref{T_do_not_see_boundaries__rooted_middle},
\begin{align}\label{esti}
\Pf\!\big(d(\rho^n,\partial T_{n,g_n,\mathbf{p}^n}) \le 1\big) \underset{n\to +\infty}{\to}0.
\end{align}
Given $t \in \mathcal{T}_{\mathbf{p}^{n}}(n,g_n)$ and an oriented edge $e$ whose starting vertex $x$ does not lie on a boundary, define $\varphi(t,e) \in \mathcal{T}_{(1,\mathbf{p}^{n})}(n,g_n)$ as the triangulation obtained by splitting $e$ into a digon filled with a loop at $x$. 
It is straightforward to verify that $\varphi$ defines a bijection. 
Combining~\eqref{esti} with this observation yields
\[
\frac{\tau_{(1,\mathbf{p}^{n})}(n,g_n)}{(6n + 2\sum_{i=1}^{\ell_n}(3-p^n_i))\,\tau_{\mathbf{p}^{n}}(n,g_n)} \underset{n\to +\infty}{\to}1.
\]
The first convergence follows from the fact that 
$6n + 2\sum_{i=1}^{\ell_n}(3-p^n_i) = 6n + o(n)$ since $|\mathbf{p}^n| = o(n)$.

For the second estimate, fix a triangulation $t$ with one hole of perimeter $1$ and exactly two vertices. 
By Theorem~\ref{local_limit_middle},
\[
\frac{\tau_{(1,\mathbf{p}^{n})}(n-1,g_n)}{6n\,\tau_{\mathbf{p}^{n}}(n,g_n)}
= 
\Pf(t \subset \mathbb{T}_{n,g_n,\mathbf{p}^{n}}) 
\underset{n\to +\infty}{\to}
\Pf(t \subset \mathbb{T}_{\lambda}) 
= \lambda(\theta).
\]
Using the first part of the proposition, we then have
\[
\frac{\tau_{(1,\mathbf{p}^{n})}(n-1,g_n)}{6n\,\tau_{\mathbf{p}^{n}}(n-1,g_n)} \underset{n\to +\infty}{\to}1,
\]
which completes the proof.
\end{proof}

\section{Local limit from the boundaries of $T_{n,g_n,\mathbf{p}^{n}}$}\label{section_boundary}

In this section, let $\theta \in [0,\frac{1}{2})$ and let $n,g_n \ge 0$ be such that $n + 2 - 2g_n > 0$ and $\displaystyle \frac{g_n}{n} \to \theta$. 
We also introduce $\mathbf{p}^{n} = (p^n_{1},\dots,p^n_{\ell_n})$ satisfying $|\mathbf{p}^{n}| = o(n)$ and $\frac{\ell_n}{|\mathbf{p}^n|} \to 0$.

\medskip
Throughout the rest of the paper, the notation $e^n$ refers to a uniformly chosen oriented edge on $\partial T_{n,g_n,\mathbf{p}^{n}}$, oriented so that a boundary face lies to the right of $e^n$. 
We denote by $\rho^n$ the starting vertex of $e^n$. For each $1 \le i \le \ell_n$, we also denote by $e^n_i$ the distinguished oriented edge on the $i^{\mathrm{th}}$ boundary and by $\rho^n_i$ its starting vertex.

\medskip
The goal of this section is to prove Theorem~\ref{local_limit_boundary}. 
Namely, we aim to show that, for the topology $d_{\mathrm{loc}}$, the local limit of $(T_{n,g_n,\mathbf{p}^{n}},e^n)$ is $\mathbb{H}_{\lambda(\theta)}$, where $\lambda(\theta)$ is defined in~\eqref{deflambdatheta}.

The condition $|\mathbf{p}^{n}| = o(n)$ and $\frac{\ell_n}{|\mathbf{p}^{n}|} \to 0$ can be interpreted as follows: for any fixed $A > 0$, the probability that $e^n$ lies on a boundary of perimeter less than $A$ tends to $0$ as $n \to \infty$. 
More precisely, we introduce the event
\begin{align}\label{event_boundary_perimeter_typically_large}
\mathcal{P}_n := \Big\{ e^n \text{ lies on a boundary with perimeter at least } \Big(\frac{|\mathbf{p}^n|}{\ell_n}\Big)^{1/2} \Big\}.
\end{align}

\begin{proposition}\label{perimeter_typically_large}
We have 
\[
\Pf(\mathcal{P}_n) \xrightarrow[n\to\infty]{} 1.
\]
\end{proposition}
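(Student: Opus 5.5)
This is a deterministic counting statement, because the perimeters $\mathbf{p}^n$ are fixed and only $e^n$ is random. Conditionally on $T_{n,g_n,\mathbf{p}^{n}}$, the edge $e^n$ is uniform among the $|\mathbf{p}^n|$ boundary edges, oriented with the boundary face on their right. Each boundary $\partial_i$ has exactly $p^n_i$ such edges. Hence, whatever the triangulation,
\[
\Pf(\mathcal{P}_n^c) = \frac{1}{|\mathbf{p}^n|}\sum_{i\,:\,p^n_i < (|\mathbf{p}^n|/\ell_n)^{1/2}} p^n_i .
\]
The plan is to bound this sum crudely.

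At most $\ell_n$ indices appear in the sum, and each contributes strictly less than $A_n := (|\mathbf{p}^n|/\ell_n)^{1/2}$. Therefore
\[
\Pf(\mathcal{P}_n^c) \le \frac{\ell_n A_n}{|\mathbf{p}^n|} = \Big(\frac{\ell_n}{|\mathbf{p}^n|}\Big)^{1/2}.
\]
By the standing assumption $\ell_n/|\mathbf{p}^n| \to 0$, the right-hand side tends to $0$. This gives $\Pf(\mathcal{P}_n)\to 1$.

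There is no real obstacle here. The only points to check are that the number of admissible oriented boundary edges is exactly $|\mathbf{p}^n|$, and that this number does not depend on the triangulation. Both hold because the boundaries are simple and vertex-disjoint: each boundary face $\partial_i$ has exactly $p^n_i$ sides, each yielding one orientation with $\partial_i$ on the right. The exponent $1/2$ in the definition of $\mathcal{P}_n$ is chosen precisely so that both $A_n\to\infty$ and $\ell_nA_n/|\mathbf{p}^n|\to 0$.
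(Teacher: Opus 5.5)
Your proposal is correct and is essentially the paper's proof: you write $\Pf(\mathcal{P}_n^c)$ as $|\mathbf{p}^n|^{-1}$ times the sum of the short perimeters, then bound that sum by $\ell_n (|\mathbf{p}^n|/\ell_n)^{1/2}$ to get $(\ell_n/|\mathbf{p}^n|)^{1/2}\to 0$. Your added check that there are exactly $|\mathbf{p}^n|$ admissible oriented boundary edges (boundaries being simple and vertex-disjoint) is a detail the paper uses implicitly.
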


\begin{proof}
We can write
\begin{align*}
\Pf(\mathcal{P}_n^{c})
&= \Pf\Big(\exists\, i \in \{1,\dots,\ell_n\} : e^n \in \partial_i \text{ and } p^n_i < \Big(\frac{|\mathbf{p}^n|}{\ell_n}\Big)^{1/2}\Big) \\
&= |\mathbf{p}^n|^{-1} \sum_{\substack{i=1\\ p^n_i \le (\frac{|\mathbf{p}^n|}{\ell_n})^{1/2}}}^{\ell_n} p^n_i \\
&\le |\mathbf{p}^n|^{-1}\, \ell_n \Big(\frac{|\mathbf{p}^n|}{\ell_n}\Big)^{1/2}
= \Big(\frac{\ell_n}{|\mathbf{p}^n|}\Big)^{1/2} \xrightarrow[n\to\infty]{} 0.
\end{align*}
\end{proof}

\medskip
We now outline the strategy used to prove the local limit result.  
In Sections~\ref{small_holes}, \ref{subsection_pathological} and \ref{conv_subsequences}, we establish the tightness of the sequence $(T_{n,g_n,\mathbf{p}^{n}},e^n)$ for the topology $d_{\mathrm{loc}}$ (see Proposition~\ref{tightness_d_loc}). We also prove that any subsequential limit is a triangulation of the half-plane.
This is the most technical part of the paper. 
Then, in Section~\ref{last_section}, we will show that the only possible limit is $\mathbb{H}_{\lambda(\theta)}$. This last step will be relatively easy because we already have Proposition~\ref{application_cv_ratio}.

\paragraph*{Peeling cases to avoid.}
To prove tightness, we use an appropriate peeling exploration to reveal the local neighbourhood of $e^n$. 
However, we must ensure that the triangles revealed during the exploration do not fall into certain pathological cases, which we now describe. 
Let $t$ be the triangle adjacent to $e^n$, and let $z$ be its third vertex. 
Assume that $e^n \in \partial_i$ for some $1 \le i \le \ell_n$. 
The pathological situations are the following:

\begin{enumerate}
  \item \label{list_pathological_cases1}
  The vertex $z$ is the $k^{\mathrm{th}}$ vertex to the left (resp. right) of $\rho^n$ on $\partial_i$ and $T_{n,g_n,\mathbf{p}^{n}} \setminus t$ is connected.  
  This case is handled in Proposition~\ref{Finite_holes_filled_by_finite_maps}.

  \item \label{list_pathological_cases2}
  The vertex $z$ is the $k^{\mathrm{th}}$ vertex to the left (resp. right) of $\rho^n$ on $\partial_i$, and $T_{n,g_n,\mathbf{p}^{n}} \setminus t$ is disconnected, but the triangulation filling the left (resp. right) created hole of perimeter $k+1$ is either non-planar, contains other boundaries $\{\partial_1,\dots,\partial_{\ell_n}\}\setminus \{\partial_i\}$, or has a number of vertices much larger than~$1$.  
  This is also treated in Proposition~\ref{Finite_holes_filled_by_finite_maps}.

  \item \label{list_pathological_cases3}
  The discovered triangle $t$ splits $\partial_i$ into two holes of perimeter much larger than~$1$.  
  This case is treated in Proposition~\ref{no_event_H_or_O}.

  \item \label{list_pathological_cases4}
  The vertex $z$ lies on another boundary $\partial_j$ with $j \neq i$.  
  This is handled in Proposition~\ref{no_event_H_or_O}.
\end{enumerate}

\medskip
Excluding these cases ensures that a \emph{typical} local neighbourhood of $(T_{n,g_n,\mathbf{p}^{n}},e^n)$ resembles that of a triangulation of the half-plane. Items~\ref{list_pathological_cases3} and~\ref{list_pathological_cases4} are the most delicate to rule out.  
To deal with them, we crucially use the fact that $e^n$ is chosen uniformly among all boundary edges.  
If $e^n$ were conditioned to lie on a specific boundary, say $\partial_1$, we could not complete the proof.

\medskip
Fix $n,g \ge 0$ such that $n + 2 - 2g > 0$, and let $\mathbf{p} \in \mathbb{N}^{\ell}$, $t \in \mathcal{T}_{\mathbf{p}}(n,g)$, and $e \in \partial_i$ for some $1 \le i \le \ell$. 
Let $\rho$ denote the starting vertex of $e$ and $z$ the third vertex of the triangle $t_0$ adjacent to $e$. 
For $k \in \{0,\dots,p_i\}$:

\begin{itemize}
    \item[$\bullet$]
    We say that $(t,e)$ satisfies $\mathcal{L}_k$ (resp. $\mathcal{R}_k$) if $z$ is the $k^{\mathrm{th}}$ vertex to the left (resp. to the right) of $\rho$ on $\partial_i$.  
    This operation creates two holes: one of perimeter $k+1$ on the left (resp. right) of $e$, and another of perimeter $p_i - k$ on the right (resp. the left) of $e$.  
    If $t \setminus t_0$ is connected, we say that $(t,e)$ satisfies $\mathcal{L}_k^{\mathrm{nsep}}$ (resp. $\mathcal{R}_k^{\mathrm{nsep}}$), and in this case 
    \[
    t \setminus t_0 \in \mathcal{T}_{(p_1,\dots,k+1,p_i-k,\dots,p_{\ell})}(n-1,g-1).
    \]
    Otherwise, we say that $(t,e)$ satisfies $\mathcal{L}_k^{\mathrm{sep}}$ (resp. $\mathcal{R}_k^{\mathrm{sep}}$).  
    In that case, writing $t \setminus t_0 = t_1 \cup t_2$, we have
    \[
    (t_1,t_2) \in \bigsqcup_{\substack{n_1+n_2=n-1\\ g_1+g_2=g \\ I\sqcup J = \{1,\dots,\ell\}\setminus\{i\}}}
    \mathcal{T}_{(k+1,\mathbf{p}_{I})}(n_1,g_1) \times \mathcal{T}_{(p_i-k,\mathbf{p}_{J})}(n_2,g_2).
    \]

    \item[$\bullet$]
    We say that $(t,e)$ satisfies $(\mathcal{H}_k)$ if $z \in \partial_i$ and both resulting holes have perimeter at least $k$.

    \item[$\bullet$]
    We say that $(t,e)$ satisfies $(\mathcal{O})$ if $z \in \partial_j$ for some $j \neq i$.
\end{itemize}

Note that the events $(\mathcal{H}_k)$ and $(\mathcal{O})$ are disjoint, since different boundaries are vertex-disjoint.

\medskip
For any $n,k \ge 0$ and any random oriented edge $e_n$ on $\partial T_{n,g_n,\mathbf{p}^{n}}$, we denote by $\mathcal{L}_{n,k}(e_n)$ the event that $(T_{n,g_n,\mathbf{p}^{n}},e_n)$ satisfies $\mathcal{L}_k$. 
We define analogously the events $\mathcal{L}^{\mathrm{sep}}_{n,k}(e_n)$, $\mathcal{L}^{\mathrm{nsep}}_{n,k}(e_n)$, $\mathcal{R}_{n,k}(e_n)$, $\mathcal{R}^{\mathrm{sep}}_{n,k}(e_n)$, $\mathcal{R}^{\mathrm{nsep}}_{n,k}(e_n)$, $\mathcal{H}_k(e_n)$, and $\mathcal{O}_n(e_n)$.
\subsection{Small holes are filled by small planar triangulations}\label{small_holes}

This section is devoted to ruling out the first two pathological cases~\eqref{list_pathological_cases1} and~\eqref{list_pathological_cases2}.  
Under the event $\mathcal{L}_{n,k}(e^n)$, we denote by $T^1(e^n)$ the triangulation filling the left hole of perimeter $k+1$ (see Figure~\ref{small_holes_not_too_big}). Note that denoting by $t$ the triangle incident to $e^n$, under the event $\mathcal{L}_{n,k}(e^n)$, we might have $T_{n,g_n,\mathbf{p}^n }\backslash t$ connected. In that case, we have $T^1(e^n) = T_{n,g_n,\mathbf{p}^n }\backslash t$. We will show that, conditionally on $\mathcal{L}_{n,k}(e^n)$, the event $\mathcal{L}_{n,k}^{\mathrm{sep}}(e^n)$ occurs with high probability and that $T^1(e^n)$ is a planar triangulation of the $(k+1)$-gon.  
Fortunately, this is almost a corollary of Proposition~\ref{combi_estimate_rooted_middle}.

\medskip
We first prove that it is unlikely that $\mathcal{L}_{n,k}^{\mathrm{sep}}(e^n)$ occurs and that the triangulation $T^{1}(e^n)$ contains a positive proportion of all faces.

\begin{proposition}\label{small_hole_has_small_proportion_faces}
For any $k,\delta > 0$, we have 
\[
\Pf\big(\mathcal{L}_{n,k}^{\mathrm{sep}}(e^n),\, \#F(T^1(e^n)) \ge \delta n\big) \xrightarrow[n\to\infty]{} 0.
\]
\end{proposition}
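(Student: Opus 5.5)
We will write the probability as a ratio of counts and compare it with the first estimate of Proposition~\ref{combi_estimate_rooted_middle}, taking $r_1=k+1$ and $r_2$ the perimeter of the other hole.

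\textbf{Counting.} Under $\mathcal{L}_{n,k}^{\mathrm{sep}}(e^n)$ with $e^n\in\partial_i$, removing the triangle $t_0$ leaves a pair
\[
(T^1,T^2)\in \mathcal{T}_{(k+1,\mathbf{p}^n_I)}(n_1,g_1)\times \mathcal{T}_{(p_i-k,\mathbf{p}^n_J)}(n_2,g_2),
\qquad n_1+n_2=n-1,\; g_1+g_2=g_n .
\]
This decomposition is injective once we record the boundary index $i$. The position of $e^n$ on $\partial_i$ is determined by the roots, up to the at most $p_i$ choices of the distinguished edge $e_i$ relative to $e^n$. Hence
\[
\Pf\big(\mathcal{L}_{n,k}^{\mathrm{sep}}(e^n),\#F(T^1)\ge\delta n\big)\le \frac{1}{|\mathbf{p}^n|\,\tau_{\mathbf{p}^n}(n,g_n)}\sum_{i}p_i\!\!\sum_{\substack{n_1\gtrsim \delta n/2}}\!\!\tau_{(k+1,\mathbf{p}^n_I)}(n_1,g_1)\,\tau_{(p_i-k,\mathbf{p}^n_J)}(n_2,g_2).
\]
The condition $\#F(T^1)\ge\delta n$ translates, via Euler's formula and $|\mathbf{p}^n|=o(n)$, into $n_1\ge \delta n/3$ for large $n$.

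\textbf{Applying the splitting estimate.} The difficulty is that $p_i-k$ is unbounded, while Proposition~\ref{combi_estimate_rooted_middle} is stated for fixed $r_1,r_2$. To get around this, glue the hole of perimeter $p_i-k$ back into the boundary: regard $\partial_i$ minus the $k+1$ edges as a boundary of the second piece. Concretely, reattach $t_0$ together with $T^2$, so that the pair becomes a triangulation of $(k+1,\mathbf{p}^n_{J\cup\{i\}})$ together with $T^1$. This costs only a factor bounded via Lemma~\ref{bounded_ratio_vertices}. The glued object $\tilde T^2$ lies in $\mathcal{T}_{(r_2,\mathbf{p}^n_{J'})}$ with $r_2=1$, obtained by adding a loop, using Lemma~\ref{add_new_boundary}. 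After also summing over $i$ and the $p_i$ rotations, which cancels the factor $|\mathbf{p}^n|$, the right-hand side is bounded by $C_{k,\theta}\,(n\tau_{\mathbf{p}^n}(n,g_n))^{-1}$ times a sum of $\tau_{(k+1,\mathbf{p}_I)}(n_1,g_1)\,\tau_{(1,\mathbf{p}_{J'})}(n_2',g_2)$. In that sum, the component $T^1$ has a linear number of faces, and the other component contains the boundary $\partial_i$, so $J'\neq\emptyset$.

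\textbf{Concluding, and the main obstacle.} Split the sum according to which component carries at least half the vertices, as in the definition of $\mathcal{M}(n,a)$, which is symmetric in the roles up to relabelling.
\begin{itemize}
\item If the first component carries at least half the vertices, then $J'\neq\emptyset$ places the index in $\mathcal{M}(n,a)$.
\item If the second component carries at least half the vertices, then $n_1\ge\delta n/3\ge a$ places it in $\mathcal{M}(n,a)$ after swapping roles.
\end{itemize}
In both cases Proposition~\ref{combi_estimate_rooted_middle} bounds the contribution by $\varepsilon$, and since $\varepsilon$ is arbitrary the probability tends to $0$.

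The main obstacle is the reduction in the second step. The large hole $p_i-k$ must be reabsorbed so that the estimate, which involves only fixed perimeters $r_1,r_2$, applies. The counting factors from the choice of $e^n$ (a factor $p_i$ against the normalisation $|\mathbf{p}^n|$) and the normalisation $n\tau$ must also match up. If the extra factor $n$ from the normalisation does not appear naturally, I would instead obtain it by marking the edge where the loop is inserted, as in Lemma~\ref{add_new_boundary}.
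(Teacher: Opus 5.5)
Your second case, where $T^2$ carries at least half of the vertices, is fine. There $T^2$ has linear size and genus ratio bounded away from $\frac12$, so Lemmas~\ref{bounded_ratio_vertices} and~\ref{add_new_boundary} give $\tau_{(p_i-k,\mathbf{p}^n_J)}(n_2,g_2)\le C_{\theta,k}\,n^{-1}\tau_{(1,p_i,\mathbf{p}^n_J)}(n_2,g_2)$. The index then lands in $\mathcal{M}(n,a)$ with the roles swapped.

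\textbf{The gap is the first case, where $T^1$ carries at least half of the vertices.} You need a factor $n^{-1}$ to match the normalisation $(n\tau_{\mathbf{p}^n}(n,g_n))^{-1}$ of Proposition~\ref{combi_estimate_rooted_middle}. You obtain it by raising the perimeter of $T^2$ back to $p_i$ and inserting a loop into $T^2$. But Lemma~\ref{add_new_boundary} and the left inequality of Lemma~\ref{bounded_ratio_vertices}, applied to $T^2$, require $|\mathbf{p}|\le \varepsilon n_2$ and $g_2/n_2\le \frac12-4\varepsilon$. Even then, what the loop insertion really produces is a factor of order $\#E(T^2)\asymp n_2$, not $n$. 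Marking the edge where the loop goes, as you suggest at the end, gives the same $n_2$.

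In this case nothing forces $T^2$ to be large. For example, $T^2$ could be a planar triangulation of the $(p_i-k)$-gon with $O(p_i)$ vertices, while $T^1$ holds the genus, the other boundaries and almost all faces. At fixed $n_2$, the perimeter change $p_i-k\to p_i$ can even land in an empty set, since $\tau_p(m,0)=0$ for $m<p-2$. The injections that are always valid only give
$\tau_{(p_i-k,\mathbf{p}^n_J)}(n_2,g_2)\lesssim n_2^{-1}\tau_{(1,p_i,\mathbf{p}^n_J)}(n_2+k+1,g_2)$.
Proposition~\ref{combi_estimate_rooted_middle} then bounds this part of the sum only by about $\varepsilon\, n/n_2$, which need not tend to $0$. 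Inserting the loop into $T^1$ instead would give the factor $n$, but it puts the extra boundary in the wrong component and leaves $T^2$ with an unmatched boundary of perimeter $p_i-k$.

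So the regime ``$T^1$ is the bulk, $T^2$ is sublinear'' is not handled, and that is essentially the configuration the statement must exclude. In the paper the logic runs the other way. Proposition~\ref{Finite_holes_filled_by_finite_maps} first invokes the present statement to force $n_2\ge(1-4\delta)n$, and only then performs your conversion and applies Proposition~\ref{combi_estimate_rooted_middle}.

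\textbf{What the paper does instead.} Its proof uses no enumeration estimates at all. By Proposition~\ref{perimeter_typically_large} it reduces to a boundary $\partial_1$ with $p^n_1\ge 12(k+1)/(\delta\varepsilon)$. It then observes, deterministically, that at most $6(k+1)/\delta$ edges $u$ of $\partial_1$ can satisfy $\mathcal{L}^{\mathrm{sep}}_k$ with $\#F(T^1(u))\ge\delta n$. Indeed, among more than that many one can pick $3/\delta$ edges pairwise separated by at least $k+1$ edges. Their left fillings are then face-disjoint and together would contain at least $3n$ faces, exceeding the total $2n-\sum_i(p^n_i-2)$. Invariance of the law under rerooting along $\partial_1$ then bounds the probability by $6(k+1)/(\delta p^n_1)\le\varepsilon/2$. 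This argument never needs to know which of $T^1,T^2$ is larger, which is exactly the distinction your route cannot close.
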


\begin{proof}
Fix $k,\delta > 0$ and $\varepsilon > 0$.  
By Proposition~\ref{perimeter_typically_large}, it suffices to show that for all $n$ large enough and for any $1 \le i \le \ell_n$ with $p^n_i \ge \frac{12(k+1)}{\delta \varepsilon}$, we have 
\begin{align}\label{boundi}
\Pf\big(\mathcal{L}_{n,k}^{\mathrm{sep}}(e^n),\, \#F(T^1(e^n)) \ge \delta n \,\big|\, e^n \in \partial_i\big) \le \frac{\varepsilon}{2}.
\end{align}
The left-hand side of~\eqref{boundi} can be rewritten as
\[
\Pf\big(\mathcal{L}_{n,k}^{\mathrm{sep}}(e^n_i),\, \#F(T^1(e^n_i)) \ge \delta n\big).
\]
Since the order of the boundaries plays no role, we may assume $p^n_1 \ge \frac{12(k+1)}{\delta \varepsilon}$ and prove
\[
\Pf\big(\mathcal{L}_{n,k}^{\mathrm{sep}}(e^n_1),\, \#F(T^1(e^n_1)) \ge \delta n\big) \le \frac{\varepsilon}{2}.
\]

For $0 \le i < p^n_1$, let $u_i$ denote the $i^{\mathrm{th}}$ edge to the right of $e^n_1$.  
Define the event $\mathcal{B}_{n,i}$ where $\mathcal{L}_{n,k}^{\mathrm{sep}}(u_i)$ occurs and $\#F(T^1(u_i)) \ge \delta n$.  
We claim that
\begin{align}\label{bound_expectation}
\#\{\, i \in \{0,\dots,p^n_1-1\} : \mathcal{B}_{n,i} \text{ occurs} \,\} \le \frac{6(k+1)}{\delta}.
\end{align}

Indeed, suppose that
\[
\#\{\, i \in \{0,\dots,p^n_1-1\} : \mathcal{B}_{n,i} \text{ occurs} \,\} > \frac{6(k+1)}{\delta}.
\]
Then there exist distinct edges $u_{i_1},\dots,u_{i_{\frac{3}{\delta}}}$, each two of them separated by at least $k+1$ edges along $\partial_1$, such that each $\mathcal{B}_{n,i_j}$ occurs.  
Consequently, for $1 \le j < j' \le \frac{3}{\delta}$, the corresponding triangulations are disjoint: 
$F(T^1(u_{i_j})) \cap F(T^1(u_{i_{j'}})) = \emptyset$.  
Hence,
\[
2n - \sum_{i=1}^{\ell_n}(p^n_i - 2)
= \#F(T_{n,g_n,\mathbf{p}^n})
\ge \tfrac{3}{\delta}\, \delta n = 3n.
\]
That is a contradiction for $n$ large enough (see Figure~\ref{small_holes_not_too_big}).
\begin{figure}[H]
\centering
\includegraphics[scale=0.3]{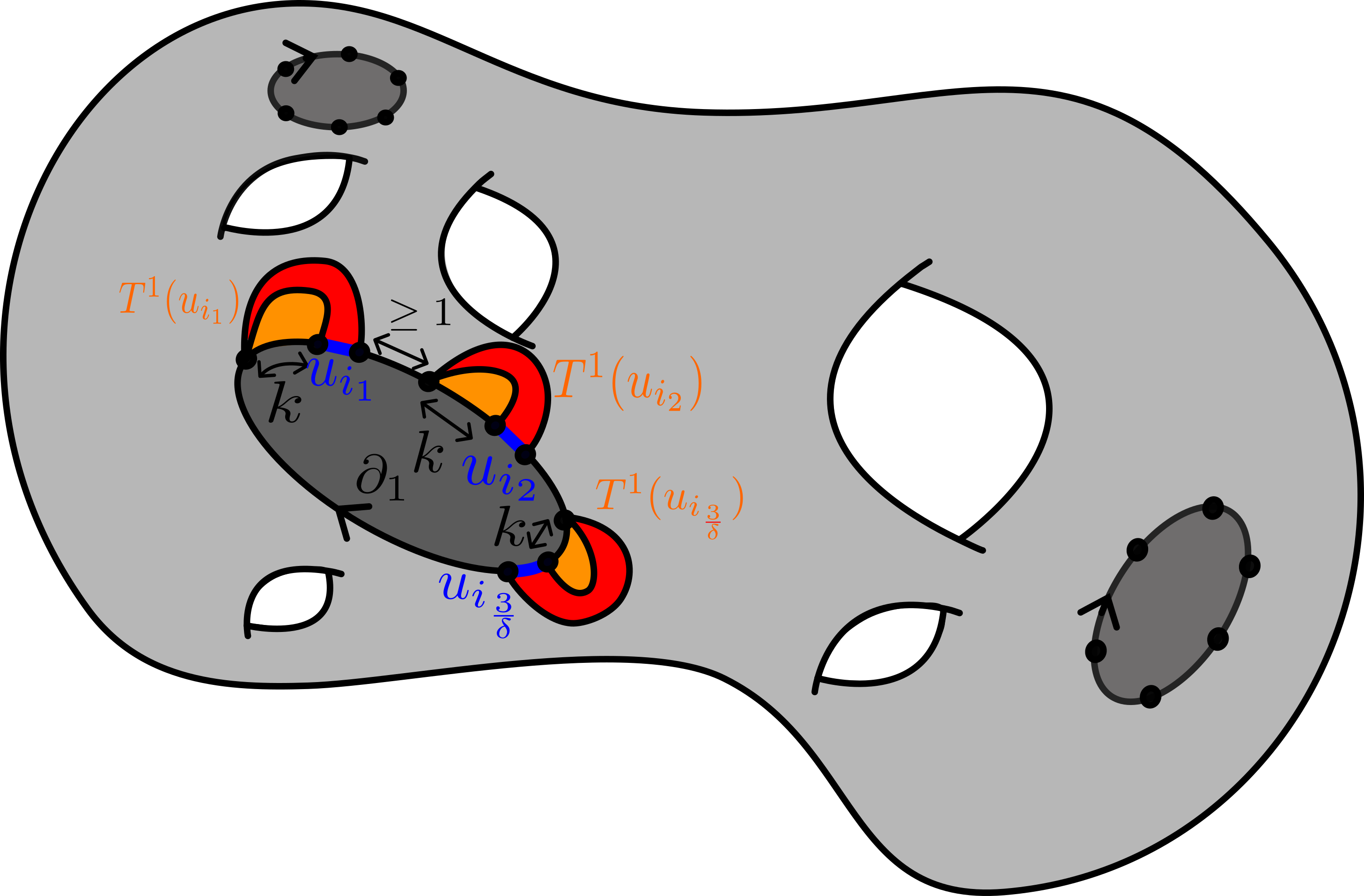}
\caption{We represent in blue the edges $u_{i_1},\dots,u_{i_{\frac{3}{\delta}}}$. The triangles behind the edges are represented in red. The components $T^1(u_{i_1}),\dots,T^1(u_{i_{\frac{3}{\delta}}})$ are represented in orange. Each orange component has at least $\delta n $ triangles, thus $T_{n,g_n,\mathbf{p}^n}$ has at least $3n$ internal faces.}
\label{small_holes_not_too_big}
\end{figure}

 Using the translation invariance of $(T_{n,g_n,\mathbf{p}^n},e^n_1)$ along $\partial_1$, we have
\[
\mathbb{E}\Big[\#\{\, i \in \{0,\dots,p^n_1-1\}: \mathcal{B}_{n,i} \text{ occurs} \,\}\Big]
= p^n_1\, \Pf\big(\mathcal{L}_{n,k}^{\mathrm{sep}}(e^n_1),\, \#F(T^1(e^n_1)) \ge \delta n\big).
\]
Combining this with~\eqref{bound_expectation} yields
\[
\Pf\big(\mathcal{L}_{n,k}^{\mathrm{sep}}(e^n_1),\, \#F(T^1(e^n_1)) \ge \delta n\big)
\le \frac{6(k+1)}{\delta p^n_1}
\le \frac{\varepsilon}{2}.
\]
This concludes the proof.

\end{proof}

\medskip
We now prove that for fixed $k \ge 0$, if $\mathcal{L}_{n,k}(e^n)$ occurs, then with high probability $T^1(e^n)$ is a small planar triangulation of the $(k+1)$-gon.

\begin{proposition}\label{Finite_holes_filled_by_finite_maps}
For any $k,\varepsilon > 0$, there exists $a > 0$ such that for $n$ large enough,
\[
\Pf\big(\mathcal{L}_{n,k}(e^n),\, T^1(e^n) \notin \bigsqcup_{j=0}^{a}\mathcal{T}_{k+1}(j,0)\big) \le \varepsilon.
\]
\end{proposition}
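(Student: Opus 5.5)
The plan is to reduce, as in the proof of Proposition~\ref{small_hole_has_small_proportion_faces}, to the case where the root is the distinguished edge $e^n_1$ of a boundary $\partial_1$ with $p^n_1$ large. By Proposition~\ref{perimeter_typically_large} it suffices to bound the probability uniformly over boundaries of perimeter at least $(|\mathbf{p}^n|/\ell_n)^{1/2}\to\infty$. We then split the event $\mathcal{L}_{n,k}(e^n_1)\cap\{T^1(e^n_1)\notin\bigsqcup_{j\le a}\mathcal{T}_{k+1}(j,0)\}$ into three parts:
\begin{enumerate}
\item the non-separating part $\mathcal{L}^{\mathrm{nsep}}_{n,k}$;
\item the separating part where $T^1$ is non-planar, contains another boundary, or has more than $a$ vertices but fewer than $\delta n$ faces;
\item the separating part where $T^1$ has at least $\delta n$ faces.
\end{enumerate}
Proposition~\ref{small_hole_has_small_proportion_faces} already shows that the third part tends to $0$ for every fixed $\delta$.

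For the non-separating part, we write the probability as a ratio
\[
\frac{\tau_{(k+1,p_1^n-k,p^n_2,\dots)}(n-1,g_n-1)}{\tau_{\mathbf{p}^n}(n,g_n)}.
\]
Using Lemma~\ref{bounded_ratio_vertices}, we remove the perimeters $k+1$ and $p^n_1-k$ in favour of the original $p^n_1$; the removal of $k+1$ costs at most $C^{k}$. This leaves an extra boundary of bounded size, which by Lemma~\ref{add_new_boundary} and the reverse injection costs a factor $O(n)$. We then use Proposition~\ref{combi_estimate_rooted_middle}, in the form $\tau_{r_1,r_2,\mathbf{p}^n}(n,g_n-1)=o(n\,\tau_{\mathbf{p}^n}(n,g_n))$ with $r_1=k+1$ and $r_2=p_1^n-k$ merged back into $p_1^n$. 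The delicate point is that $p^n_1$ is not fixed. To handle it, I would redo the argument of Proposition~\ref{combi_estimate_rooted_middle} with the large boundary treated as part of $\mathbf{p}^n$: glue a small gadget (a triangle with a hole of size $k+1$) onto the edge $e^n_1$. This reduces the statement to the fact that, in the limit $\mathbb{T}^{(p)}$-type picture of Theorem~\ref{local_limit_middle}, a fixed small hole is separated from the rest, i.e.\ one-endedness. Concretely, I would encode the configuration as a triangulation of $\mathbf{p}^n$ with an added boundary of length $k+1$ around a uniform interior edge, so that the probability becomes comparable to the quantity $\tau_{(k+1,r_2,\dots)}(n,g_n-1)/(n\tau)$ controlled by Proposition~\ref{combi_estimate_rooted_middle}.

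The separating case with few faces is handled the same way. Conditionally on separation, we sum over decompositions $((n_1,g_1,I),(n_2,g_2,J))$ with $T^1\in\mathcal{T}_{(k+1,\mathbf{p}_I)}(n_1,g_1)$. The condition $\#F(T^1)<\delta n$ places us on the side $n_1$ small, which is exactly the set $\mathcal{M}(n,a)$ with the roles of the two pieces swapped. The first estimate of Proposition~\ref{combi_estimate_rooted_middle}, with $r_1=p^n_1-k$ absorbed and $r_2=k+1$, gives a bound $\varepsilon$ after the same normalisation. The normalisation uses Lemma~\ref{bounded_ratio_vertices} and the $O(n)$ relation between $\tau_{(1,\mathbf{p})}$ and $n\tau_{\mathbf{p}}$ from Proposition~\ref{application_cv_ratio}; the latter is needed because rooting at a boundary edge gives a factor $p^n_1$ rather than $n$ in the denominator, which must be compensated. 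I expect this normalisation, namely matching the boundary-rooted count with the interior-rooted count in Proposition~\ref{combi_estimate_rooted_middle} while the boundary $p^n_1$ is unbounded, to be the main obstacle. I would first try the edge-splitting injection from Case~2 of Lemma~\ref{T_do_not_see_boundaries__rooted_middle} to transfer boundary-rooted configurations to interior-rooted ones at constant cost, and average over the $p^n_1$ positions on $\partial_1$ using translation invariance. Combining the three parts and choosing $\delta$ and then $a$ concludes the proof.
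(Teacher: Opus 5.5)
\paragraph{Review.} Your three-way split is the paper's: non-separating; separating with $\#F(T^1)<\delta n$; separating with $\#F(T^1)\ge\delta n$, killed by Proposition~\ref{small_hole_has_small_proportion_faces}. However, the proposal does not close the middle case, which is the heart of the proof.

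You apply Proposition~\ref{combi_estimate_rooted_middle} ``with $r_1=p^n_1-k$ absorbed''. You then call the normalisation the main obstacle, to be attacked by redoing that proposition with a gadget, or by the edge-splitting injection plus averaging over $\partial_1$. Neither detour is needed, and the diagnosis is off on two counts.
\begin{itemize}
\item Once you root at $e^n_1$, the denominator is just $\tau_{\mathbf p^n}(n,g_n)$; there is no factor $p^n_1$ to compensate.
\item Proposition~\ref{combi_estimate_rooted_middle} cannot take $r_1=p^n_1-k$, because its $a$ comes from a fixed two-holed triangulation with fixed perimeters $r_1,r_2$.
\end{itemize}
What you actually need is a gain of $n^{-1}$ on the big piece, to cancel the $n$ in the normalisation $(n\tau_{\mathbf p^n}(n,g_n))^{-1}$. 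It takes one line:
\[
\tau_{(p^n_1-k,\mathbf p^n_J)}(n_2,g_2)\le C^k\,\tau_{(p^n_1,\mathbf p^n_J)}(n_2,g_2)\le C_{\theta,k}\,n^{-1}\,\tau_{(1,p^n_1,\mathbf p^n_J)}(n_2,g_2).
\]
The first inequality is the second part of Lemma~\ref{bounded_ratio_vertices} applied $k$ times; the second is the lower bound in Lemma~\ref{add_new_boundary}. After this, $p^n_1$ simply sits inside $\mathbf p^n$, which Proposition~\ref{combi_estimate_rooted_middle} allows since it only assumes $|\mathbf p^n|=o(n)$. The two extra boundaries now have fixed perimeters $1$ and $k+1$. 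The sum is therefore a sub-sum of the one over $\mathcal M(n,a)$ with the roles of the pieces swapped, so it is at most $C_{\theta,k}\varepsilon$.

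\paragraph{Uniformity and the role of $\delta$.} The displayed bound must hold uniformly over all $(n_2,g_2,J)$ in the sum. Proposition~\ref{application_cv_ratio}, which is an asymptotic statement along a single sequence, is therefore not the right source for the relation between $\tau_{(1,\cdot)}$ and $n\tau$. Lemmas~\ref{bounded_ratio_vertices} and~\ref{add_new_boundary} are uniform, but they require $g_2/n_2$ to stay below $\frac{1}{2}$ by a margin and $|\mathbf p^n|$ to be small compared to $n_2$. Guaranteeing this is the second job of the cut-off $\#F(T^1)<\delta n$, which you use only to land in $\mathcal M(n,a)$. The cut-off gives $n_1\le 2\delta n$, hence $n_2\ge(1-4\delta)n$. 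Choosing $\delta$ with $g_n/n\le(1-4\delta)(\frac{\theta}{2}+\frac{1}{4})$ then yields $g_2/n_2\le\frac{\theta}{2}+\frac{1}{4}<\frac{1}{2}$.

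\paragraph{Non-separating case.} Here your own first steps already finish the job, so the ``delicate point'' about unbounded $p^n_1$ does not exist. You have
\[
\tau_{(k+1,p^n_1-k,\dots)}(n-1,g_n-1)\le C^k\,\tau_{(1,\mathbf p^n)}(n-1,g_n-1)\le C'n\,\tau_{\mathbf p^n}(n,g_n-1),
\]
using the loop-insertion bijection from the proof of Proposition~\ref{application_cv_ratio} together with Lemma~\ref{bounded_ratio_vertices}. The last statement of Proposition~\ref{combi_estimate_rooted_middle} then makes this $o(\tau_{\mathbf p^n}(n,g_n))$. The paper instead treats this term with the same $n^{-1}$ trick as above and the second statement of Proposition~\ref{combi_estimate_rooted_middle}.
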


\begin{proof}
Fix $\varepsilon > 0$.  
Let $\delta > 0$ be small enough so that, for $n$ sufficiently large,
\[
\frac{g_n}{n} \le (1 - 4\delta)\Big(\frac{\theta}{2} + \frac{1}{4}\Big).
\]
By Proposition~\ref{small_hole_has_small_proportion_faces}, it suffices to prove that there exists $a > 0$ such that for $n$ large enough,
\[
\Pf\!\left(
\mathcal{L}_{n,k}(e^n),
T^1(e^n) \notin \bigsqcup_{j=0}^{a}\mathcal{T}_{k+1}(j,0),
\big(\mathcal{L}^{\mathrm{sep}}_{n,k} \cap \{\#F(T^{1}(e^n)) \ge \delta n\}\big)^{c}
\right) \le \varepsilon.
\]
As in the proof of Proposition~\ref{small_hole_has_small_proportion_faces},  
we may assume $p^n_1 \ge k$ and show that there exists $a > 0$ such that, for $n$ large enough,
\[
\Pf\!\left(
\mathcal{L}_{n,k}(e^n_1),
T^1(e^n_1) \notin \bigsqcup_{j=0}^{a}\mathcal{T}_{k+1}(j,0),
\big(\mathcal{L}^{\mathrm{sep}}_{n,k} \cap \{\#F(T^{1}(e^n_1)) \ge \delta n\}\big)^{c}
\right) \le \varepsilon.
\]

We can express this probability as
\begin{align}\label{to_bound}
\tau_{\mathbf{p}^{n}}(n,g_n)^{-1}\!\Big(&
\tau_{k+1,p^n_1-k,p^n_2,\dots,p^n_{\ell_n}}(n,g_n-1) \\
&\nonumber \quad+ \sum_{(n_1,g_1,I),(n_2,g_2,J)}
\tau_{(k+1,\mathbf{p}^{n}_{I})}(n_1,g_1)
\tau_{(p^{n}_1-k,\mathbf{p}^{n}_{J})}(n_2,g_2)
\Big),
\end{align}
where the sum is taken over indices satisfying
\[
\left\{
\begin{array}{l}
n_1 + n_2 = n-1,\\
g_1 + g_2 = g_n,\\
I \sqcup J = \{2,\dots,\ell_n\},\\
2n_1 - \sum_{i\in I}(p^n_i - 2) \le \delta n,\\
n_1 \ge a \text{ or } g_1 \ge 1 \text{ or } I \neq \emptyset.
\end{array}
\right.
\]
The condition $2n_1 - \sum_{i\in I}(p^n_i - 2) \le \delta n$ follows from $\#F(T^1(e^n_1)) \le \delta n$, and the last condition expresses that $T^1(e^n_1) \notin \bigsqcup_{j=0}^{a}\mathcal{T}_{k+1}(j,0)$.  
We have $n_1 \le \delta n + |\mathbf{p}^n| \le 2\delta n$ for $n$ large enough, hence $n_2 \ge n - 4\delta n$.  
In particular,
\[
\frac{g_2}{n_2} \le \frac{g_n}{(1 - 4\delta)n} \le \frac{\theta}{2} + \frac{1}{4} \in [0,\tfrac{1}{2})
\]
by the choice of $\delta$.  
Combining this with $|\mathbf{p}^{n}_{J}| + p^n_1 - k \le |\mathbf{p}^{n}| = o(n)$ and using Lemmas~\ref{bounded_ratio_vertices} and~\ref{add_new_boundary}, we obtain for $n$ large enough,
\[
\tau_{p^n_{1}-k,\mathbf{p}^{n}_{J}}(n_2,g_2)
\le C_{\theta,k}\, n^{-1}\, \tau_{1,p^n_1,\mathbf{p}^n_J}(n_2,g_2).
\]
Applying both parts of Proposition~\ref{combi_estimate_rooted_middle} with $r_1 = k+1$ and $r_2 = 1$, and using that the indices in~\eqref{to_bound} satisfy $((n_1,g_1,I),(n_2,g_2,J)) \in \mathcal{M}(n,a)$, we deduce that there exists $a > 0$ such that, for $n$ large enough, the quantity in~\eqref{to_bound} is bounded by $C_{\theta,k}\varepsilon$.  
This concludes the proof, since this holds for any $\varepsilon > 0$.
\end{proof}

\subsection{Boundaries do not touch each other or fold on themselves}\label{subsection_pathological}

In this section, we rule out the last two pathological cases, namely~\eqref{list_pathological_cases3} and~\eqref{list_pathological_cases4}.  
Pathological case~\eqref{list_pathological_cases3} occurs when the boundary containing $e^n$ folds back onto itself at $e^n$, whereas pathological case~\eqref{list_pathological_cases4} arises when the boundary containing $e^n$ comes too close to another boundary at $e^n$ (see Figure~\ref{boundary_touch_itself_illustration}).  
\begin{figure}[H]
\centering
\includegraphics[scale=0.3]{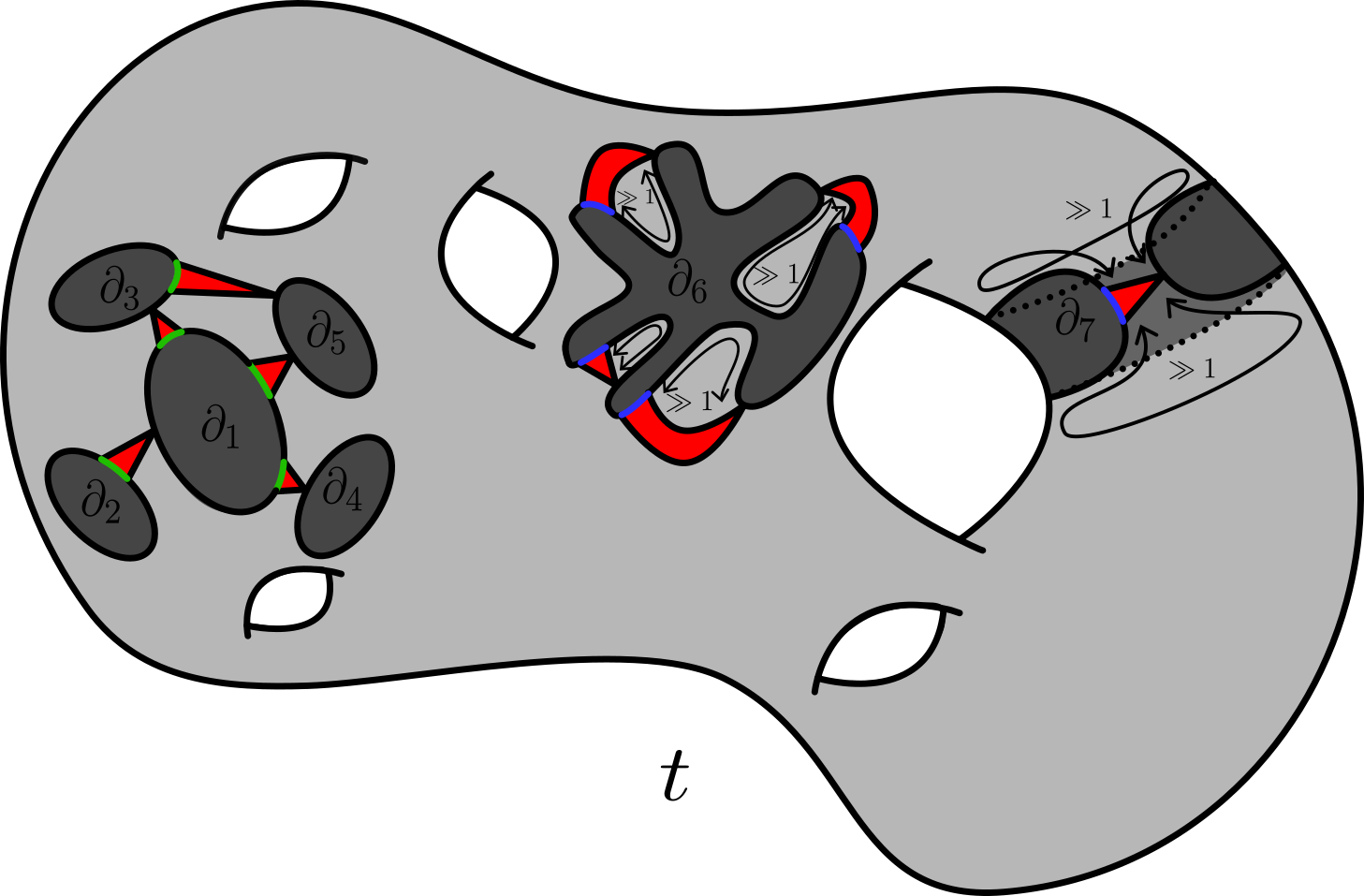}
\caption{We represent a triangulation of the multi-polygon with many edges $e \in \partial t$ such that $(t,e)$ satisfies $(\mathcal{H}_k) \text{ or } (\mathcal{O})$ with $k \gg 1$. The green edges $e$ are such that $(t,e)$ satisfies $(\mathcal{O})$. The blue edges $e$ are such that $(t,e)$ satisfies $(\mathcal{H}_k)$.}
\label{boundary_touch_itself_illustration}
\end{figure}

The main result of this section is the following.

\begin{proposition}\label{no_event_H_or_O}
For any $\varepsilon > 0$, there exists $k \ge 0$ such that for all sufficiently large $n$,
\[
\Pf\big( (T_{n,g_n,\mathbf{p}^{n}},e^n) \text{ satisfies } (\mathcal{H}_k) \text{ or } (\mathcal{O}) \big) \le \varepsilon.
\]
\end{proposition}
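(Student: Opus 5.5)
We rule out $(\mathcal{H}_k)$ and $(\mathcal{O})$ with the same device as in Proposition~\ref{small_hole_has_small_proportion_faces}. We exploit that $e^n$ is uniform among \emph{all} boundary edges and bound the number of boundary edges $e$ of a fixed triangulation $t\in\mathcal{T}_{\mathbf{p}^n}(n,g_n)$ for which $(t,e)$ satisfies $(\mathcal{H}_k)$ or $(\mathcal{O})$, together with a combinatorial estimate on the probability of configurations where these edges are numerous.

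\emph{Case $(\mathcal{O})$.} Call an edge $e\in\partial_i$ such that the triangle $t_e$ behind it has third vertex on $\partial_j$, $j\ne i$, a \emph{bridge} between $\partial_i$ and $\partial_j$. The bridge triangles are face-disjoint. Consider the auxiliary graph $G$ whose vertices are the boundaries and which has an edge $\{i,j\}$ for each pair of boundaries connected by at least one bridge. Two bridges from $\partial_i$ to $\partial_j$ either are \emph{parallel}, meaning they bound, together with arcs of $\partial_i$ and $\partial_j$, a region which separates the map, or they are homotopically distinct and create genus (cf.\ the Type~III/V steps of the peeling diagram). I would use a planar-type argument on the union of the boundaries and the bridge triangles. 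This union is a map with $\ell_n$ boundary faces and some other faces, and its Euler characteristic shows that, up to $O(\ell_n+g_n)$ exceptions, bridges come in parallel families. Between consecutive parallel bridges lies a hole of perimeter $O(1)$ (the two bridge triangles share a vertex) or a hole of larger perimeter. In the first case we are essentially in pathological cases~\eqref{list_pathological_cases1}--\eqref{list_pathological_cases2}, and Proposition~\ref{Finite_holes_filled_by_finite_maps} and Proposition~\ref{combi_estimate_rooted_middle} show that such small holes are typically filled with small planar maps. A long strip of parallel bridges then costs, via Lemma~\ref{bounded_ratio_vertices}, a geometric factor in the expected count. The $O(\ell_n)$ bound contributes $\ell_n/|\mathbf{p}^n|\to0$ to the probability. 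The $O(g_n)$ non-parallel bridges are the dangerous term. For them I would apply the same "many copies" argument as in Lemma~\ref{T_is_almost_surely_planar_rooted_middle}: if a positive fraction $\eta|\mathbf{p}^n|$ of boundary edges were non-parallel bridges, cutting along them would reduce the genus by order $\eta|\mathbf{p}^n|$ or merge many boundaries. Proposition~\ref{combi_estimate_rooted_middle} ($\tau(n,g-1)/\tau(n,g)=o(n^{-1})$) and Lemma~\ref{add_new_boundary} then show this is too costly, since each merge gains at most a factor $|\mathbf{p}^n|\ll n$ in choices while losing a factor $n$.

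\emph{Case $(\mathcal{H}_k)$.} The same approach applies within one boundary $\partial_i$. A chord triangle at $e$ splitting $\partial_i$ into two arcs of lengths $\ge k$ separates the map or adds a handle. In the nonseparating case the cost is $\tau_{\ldots}(n-1,g_n-1)$, which is $o(n^{-1})\tau$ relative to the total and is summed over at most $|\mathbf{p}^n|$ positions, so it is negligible. In the separating case the chords along $\partial_i$ are nested (laminar). If there are many chords with both sides $\ge k$, then some chord has a side $\ge k$ containing no further such chord, and that side is a hole whose boundary consists mostly of original boundary edges. I would bound the expected number of such edges by comparing with the planar structure inside the hole and using Lemma~\ref{estimate_big_hole}, which says the splitting probability into two macroscopic pieces decays like $p^{-1/4+\varepsilon}$. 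This is where the loss of $k$ should come in: summing the decay over dyadic scales $\ge k$ gives a total $\lesssim k^{-1/4+\varepsilon}$, which tends to $0$ as $k\to\infty$.

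The main obstacle is the $(\mathcal{O})$ case together with the separating $(\mathcal{H}_k)$ case in which the separated pieces carry genus or other boundaries. Here Lemma~\ref{estimate_big_hole} applies only to planar fillings. I expect to need the estimate of Proposition~\ref{combi_estimate_rooted_middle} (that one of two components carries essentially everything) to reduce to the planar situation before invoking Lemma~\ref{estimate_big_hole}, and the bookkeeping of the $\mathbf{p}_I,\mathbf{p}_J$ splits will be delicate.
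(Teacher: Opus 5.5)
\textbf{Verdict: the proposal has genuine gaps.} The central configuration is never ruled out, and the genus cost used for crossings is too weak.

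\textbf{The linear pattern is not excluded.} Picture $\partial_i$ folded onto itself, or two boundaries zipped together, along a thin triangulated strip. This gives a long nested family of separating chords, or a long family of parallel bridges, with small planar pieces between consecutive ones. Almost every boundary edge in the strip then satisfies $(\mathcal{H}_k)$ or $(\mathcal{O})$. Yet the genus does not drop and at most one merge occurs, so no counting argument sees it. None of your tools handles this:
\begin{itemize}
\item Controlling an innermost $\mathcal{H}_k$-chord accounts for one edge per nest, not for the whole nest.
\item The ``geometric factor'' from Lemma~\ref{bounded_ratio_vertices} only says that each further edge is bad with conditional probability at most $1-c_\theta$. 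That does not make a single bad edge unlikely. It also does not beat the entropy of choosing which edges of a window are bad once bad edges have density $\eta$.
\item Lemma~\ref{estimate_big_hole} cannot be summed over dyadic scales $\ge k$. It only covers macroscopic splits $p_1\ge \alpha p$, with a constant depending on $\alpha$.
\end{itemize}

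The paper uses two ingredients you are missing:
\begin{itemize}
\item \emph{The localized event $(\mathcal{C}_{L,p})$.} The planar piece between the root triangle and a $pL$-close chord has perimeter in $[p/8,4pL]$ and carries at least $p/30$ chords with both sides $\ge p/10$. Lemma~\ref{intermediate_lemma} (Lemma~\ref{estimate_big_hole} iterated five times, giving $p^{-6/5}$) then beats the $O(pL)$ possible positions; this is Proposition~\ref{bound_event_C}.
\item \emph{The deterministic dichotomy of Proposition~\ref{many_events_C}.} It is proved via the ordering of Proposition~\ref{discover_nsep_first} and the forest $\mathcal{F}$. It shows that many bad edges force either $\varepsilon|\mathbf{p}|$ edges satisfying $(\mathcal{C}_{\varepsilon^{-1},p})$, or $r\ge \varepsilon p^{-1}|\mathbf{p}|$ triangles whose removal lowers $(\ell-\ell')+2(g-g')$ by exactly $r$.
\end{itemize}
Your parallel/non-parallel split cannot replace this. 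Pairwise crossing is not stable under removing other triangles, which is why the ordering is needed. Your Euler bound ``up to $O(\ell_n+g_n)$ exceptions'' is vacuous, since $g_n\asymp n\gg|\mathbf{p}^n|$. For $(\mathcal{O})$, the paper just conditions on the bridge and passes to the class with merged boundary, which reduces to the $(\mathcal{H})$ case.

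\textbf{The genus cost is off by a factor $n$.} A non-separating chord lowers the genus by one but creates an extra boundary. By Lemma~\ref{add_new_boundary}, each boundary carries a factor of order $n$. So $\tau(n,g-1)/\tau(n,g)=o(n^{-1})$ from Proposition~\ref{combi_estimate_rooted_middle} only gives a cost $o(1)$ per such chord, not $o(n^{-1})$. This already breaks your nonseparating $(\mathcal{H}_k)$ step, where you sum over up to $p_i$ positions.

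In the many-copies count, each removed triangle comes with $O(|\mathbf{p}^n|)$ choices of third vertex, and there is an overall $e^{C|\mathbf{p}^n|}$ error. You therefore need a cost $O(n^{-1})$ per triangle, i.e.\ $n^{-2}$ per handle, to reach a bound like $\bigl(|\mathbf{p}^n|/n\bigr)^{r}$. With only $o(n^{-1})$ per handle, a non-separating chord contributes $|\mathbf{p}^n|\cdot o(1)$ with no rate on the $o(1)$, which need not tend to $0$. This is exactly why the paper uses the Goulden--Jackson inequality $\tau(n,g)\ge n^2\tau(n-2,g-1)$ at this point, the only place it is used.
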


\paragraph{Heuristic idea.}
We first give some intuition for why Proposition~\ref{no_event_H_or_O} holds.  
Suppose, for the sake of contradiction, that there exists $\varepsilon > 0$ and arbitrarily large $k,n$ such that
\[
\Pf\big( (T_{n,g_n,\mathbf{p}^{n}},e^n) \text{ satisfies } (\mathcal{H}_k) \text{ or } (\mathcal{O}) \big) \ge \varepsilon.
\]
Then, at least a proportion $\varepsilon/2$ of triangulations $t \in \mathcal{T}_{\mathbf{p}^{n}}(n,g_n)$ would contain at least $e_1,\dots,e_{\frac{\varepsilon}{2}|\mathbf{p}^{n}|}$ distinct boundary edges such that each pair $(t,e_j)$ satisfies $(\mathcal{H}_k)$ or $(\mathcal{O})$. Let $f_j$ denote the triangle incident to $e_j$. We claim that only two possible scenarios (see Proposition~\ref{many_events_C}) can occur:
\begin{itemize}
    \item[$\bullet$] The triangles $f_1,\dots,f_{\frac{\varepsilon}{2}|\mathbf{p}^{n}|}$ cross each other a lot, which significantly reduces the genus of $t \setminus \{f_1 \cup \cdots \cup f_{\frac{\varepsilon}{2}|\mathbf{p}^{n}|}\}$ (see the right of Figure~\ref{pattern}).
    \item[$\bullet$] A positive proportion of the edges $e_j$ have large neighbourhoods that resemble a collection of planar triangulations glued together in a ``linear'' fashion (see left of Figure~\ref{pattern}).
\end{itemize}
This section decomposes as follows:
\begin{itemize}
	\item[$\bullet$] In Section~\ref{first_subsection} we show that the second scenario is very unlikely as $n$ tends to $+\infty$. This is Proposition~\ref{bound_event_C}.
	\item[$\bullet$] In Section~\ref{second_subsection} we  prove that we fall in one of the two scenarios. This is Proposition~\ref{many_events_C}.
	\item[$\bullet$] In Section~\ref{third_subsection} we  exclude the first scenario and thus prove Proposition~\ref{no_event_H_or_O}.
\end{itemize}
The first situation can be controlled using genus-reduction estimates as in the proof of Lemma~\ref{T_is_almost_surely_planar_rooted_middle}.  
The second one is more delicate: a uniform planar triangulation of a polygon is very unlikely to display a ``linear'' structure (see Proposition~\ref{intermediate_lemma}).  
Moreover, since $T_{n,g_n,\mathbf{p}^{n}}$ is chosen uniformly at random, any planar neighbourhood of $(T_{n,g_n,\mathbf{p}^{n}},e^n)$ should itself be uniformly distributed, conditionally on its size. Thus, the left pattern of Figure~\ref{pattern} should be atypical.

\begin{figure}[H]
    \centering
    \includegraphics[scale=0.7]{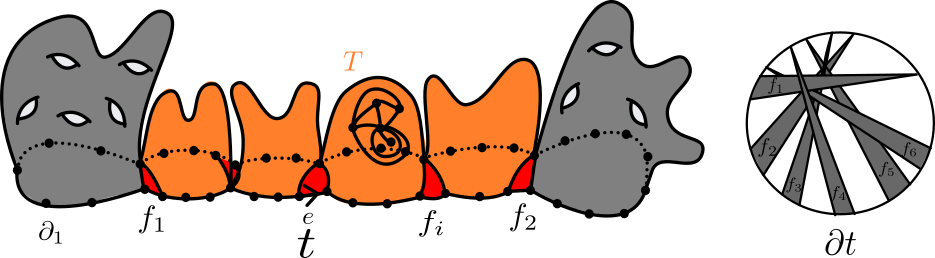}
    \caption{On the left: a triangulation $(t,e)$ satisfying $(\mathcal{C}_{L,p})$.  
    The red triangles $f_i$ correspond to triangles discovered behind edges $e'$ lying on $\partial_1$ such that $(t,e')$ satisfies $(\mathcal{H}_{pL})$.  
    Each of them is $pL$-close to the triangle $t$ behind $e^n$.  
    The planar triangulation $T$ in orange between $f_1$ and $f_2$ is \emph{non-typical} because the red triangles disconnect $T$ into a collection of planar triangulations arranged in a line. On the right: the boundary of a triangulation $t$ where the faces $f_1,\cdots,f_6$ cross each other a lot.}
    \label{pattern}
\end{figure}

\medskip
Let us now formalize this intuition.  
Fix $n,g$, $\mathbf{p} = (p_1,\dots,p_{\ell})$, and $t \in \mathcal{T}_{\mathbf{p}}(n,g)$.  
Let $e_1,e_2$ be two edges on $\partial t$, and let $f_1,f_2$ denote the triangles behind them.  
Denote by $x_1,x_2$ the vertices of $e_1$, by $x_3$ the third vertex of $f_1$, and symmetrically by $y_1,y_2,y_3$ the vertices of $f_2$.  
Assume that both $x_3$ and $y_3$ lie on $\partial t$.

\begin{definition1}\label{def_Lclose}
For $L > 0$, we say that $f_1$ and $f_2$ (see Figure~\ref{Lclose}) are \emph{$L$-close} if:
\begin{itemize}
    \item each vertex $x_1,x_2,x_3$ is at distance at most $L$ from $\{y_1,y_2,y_3\}$ along $\partial t$,
    \item each vertex $y_1,y_2,y_3$ is at distance at most $L$ from $\{x_1,x_2,x_3\}$ along $\partial t$.
\end{itemize}

\begin{figure}[H]
\centering
\includegraphics[scale=0.5]{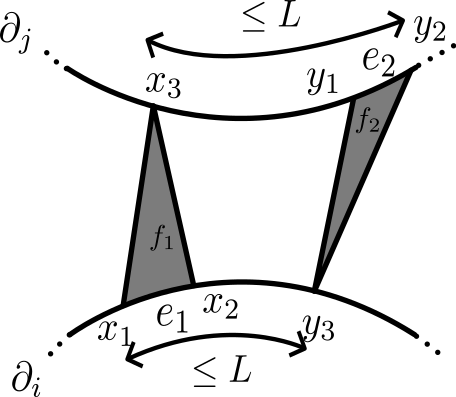}
\caption{Example where the triangles $f_1$ and $f_2$ are $L$-close. Note that it is possible that $i=j$.}
\label{Lclose}
\end{figure}
\end{definition1}

In particular, if $f_1$ and $f_2$ are $L$-close for some $L \ge 0$, then there exist boundaries $\partial_i$ and $\partial_j$ such that all six vertices $x_1,x_2,x_3,y_1,y_2,y_3$ belong to $\partial_i \cup \partial_j$.

\begin{definition1}\label{defClp}
Fix $L,p \ge 0$, a triangulation $t$ of the $(p_1,\dots,p_{\ell})$-gon, and an oriented edge $e$ that lies on $\partial_i$ for $1\le i \le \ell$. We denote by $f$ the triangle that lies behind $e$. We say that $(t,e)$ satisfies $(\mathcal{C}_{L,p})$ (see Figure~\ref{figCLp}) if $(t,e)$ satisfies $(\mathcal{H}_{2pL})$ or $(\mathcal{O})$ and if there exist $p$ distinct edges $e_1,\dots,e_p$ (different from $e$) on $\partial T$ such that, for each $1 \le i \le p$, writing $f_i$ for the triangle behind $e_i$, we have that $f_i$ and $f$ are $pL$-close.
\end{definition1}

\begin{figure}[H]
\centering
\includegraphics[scale=0.4]{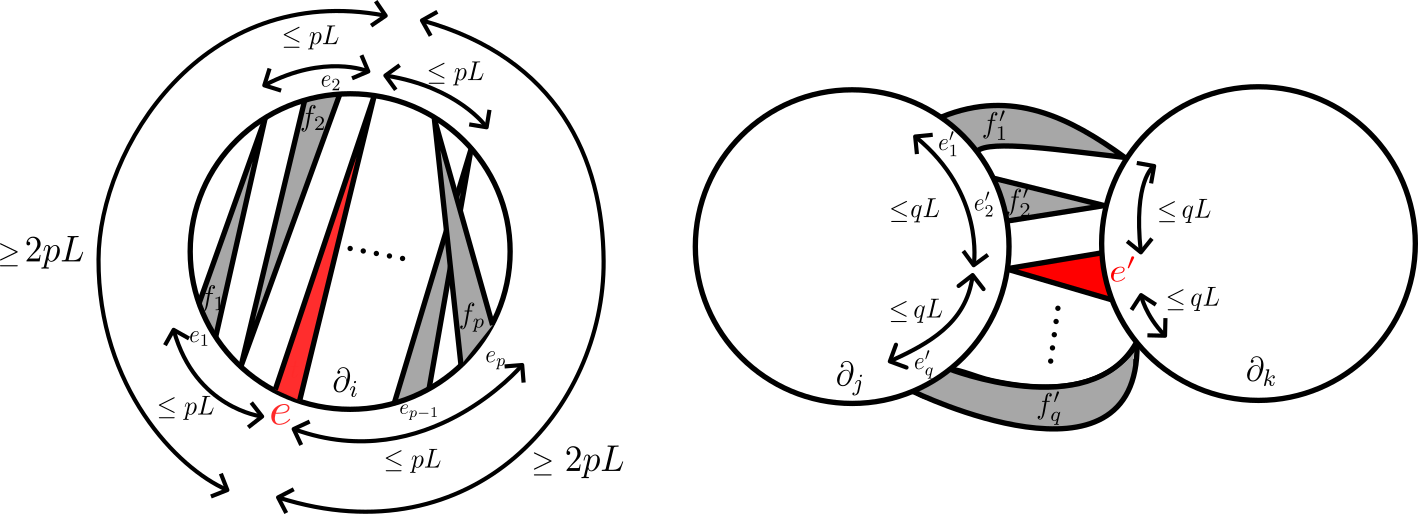}
\caption{Examples of $(t,e)$ and $(t,e')$ satisfying $(\mathcal{C}_{L,p})$.  
On the left: an edge $e \in \partial_i$ such that $(t,e)$ satisfies $(\mathcal{H}_{p})$ and $(\mathcal{C}_{L,p})$.  
On the right: an edge $e' \in \partial_k$ such that $(t,e')$ satisfies $(\mathcal{O})$ and $(\mathcal{C}_{L,q})$.}
\label{figCLp}
\end{figure}

\subsubsection{$(T_{n,g_n,\mathbf{p}^{n}},e^n)$ is unlikely to satisfy $(\mathcal{C}_{L,p})$}\label{first_subsection}

We now show that $(T_{n,g_n,\mathbf{p}^{n}},e^n)$ is unlikely to satisfy $(\mathcal{C}_{L,p})$ for any fixed $L$ when $p$ is large enough.

\begin{proposition}\label{bound_event_C}
For any $\varepsilon > 0$ and any $L \ge 0$, there exists $p \ge 0$ such that, for $n$ large enough
\[
\Pf\big( (T_{n,g_n,\mathbf{p}^{n}},e^n) \text{ satisfies } (\mathcal{C}_{L,p}) \big) \le \varepsilon.
\]
\end{proposition}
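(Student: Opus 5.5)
We aim to show that, for fixed $L$, the $p$ triangles $pL$-close to the triangle $f$ behind $e^n$ force a very atypical ``linear'' planar structure. The plan has four steps: a structural reduction, a planar estimate, a combinatorial bound via Lemma~\ref{estimate_big_hole}, and a final pigeonhole choice of $p$.

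\textbf{Structural reduction.} Under $(\mathcal{C}_{L,p})$, all the triangles $f_1,\dots,f_p$ have their vertices within boundary distance $pL$ of the three vertices of $f$. These vertices lie on at most two boundaries $\partial_i,\partial_j$, and we place them in a window of $O(pL)$ boundary vertices around each vertex of $f$. Since $f$ satisfies $(\mathcal{H}_{2pL})$ or $(\mathcal{O})$, the windows around $x_1,x_2$ and around $x_3$ are at boundary distance at least $pL$ from each other. Hence each $f_i$ has one side joining the two windows. These $p$ chords, together with $f$, are boundary-to-boundary triangles. By pigeonhole, at least $p/3$ of them have the same combinatorial type, say with two vertices near $\{x_1,x_2\}$ and one near $x_3$.

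\textbf{Planar non-crossing case.} First suppose the region between consecutive chords is planar, with no other boundary inside. Then the chords are ordered and cut out a sequence of pieces $P_1,\dots,P_m$ with $m\ge p/3$. Each piece is a planar triangulation of a polygon whose perimeter is $O(pL)$ and which has two boundary arcs of bounded total length between consecutive chords. Conditionally on this perimeter data, these pieces are independent uniform planar triangulations, by the spatial Markov property of the uniform measure. The key estimate, which will be Proposition~\ref{intermediate_lemma}, is the following. For a planar triangulation of a $q$-gon, the probability that the triangle behind a fixed edge splits the polygon into two holes both of size at least $\alpha q$ is at most $C_{\alpha,\varepsilon}q^{-1/4+\varepsilon}$, by Lemma~\ref{estimate_big_hole}.

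\textbf{Combining the pieces.} To use this estimate, I would reveal the pieces one after another. At each step, the existence of a next chord landing in the far window is a ``split into two macroscopic holes'' event in the remaining planar triangulation. The remaining perimeter is at least about $2pL$, so each such step has conditional probability at most $C(pL)^{-1/4+\varepsilon}$. Chaining over the $\sim p/3$ steps gives a bound that is negligible as $p\to\infty$ for fixed $L$.

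\textbf{Genus and other boundaries.} If instead the region between the chords is non-planar or contains other boundaries, we bound this contribution directly. Proposition~\ref{estimate_genus_reducing} and Proposition~\ref{combi_estimate_rooted_middle} show that pinching off a bounded region carrying genus or an extra boundary costs $o(1)$, with the estimates summed over the $O((pL)^2)$ possible chord endpoints. The surplus configurations for a fixed $p$ then contribute $o(1)$ as $n\to\infty$. Finally, we use uniformity of $e^n$ on $\partial$ and Proposition~\ref{perimeter_typically_large} to ensure $\partial_i$ has length $\gg pL$, so that the windows are well defined.

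The main obstacle is the planar chaining step. The pieces are not fixed-size triangulations, and the chords may nest rather than line up, so the conditional independence has to be set up carefully through a peeling along one boundary arc. If a clean chaining fails, I would fall back on the following. Among $p/3$ ordered chords, some consecutive pair bounds a piece of perimeter $\ge c\,pL$ split into two halves of size $\ge\alpha$ times its perimeter. A union bound over the $O(p^2L^2)$ positions, combined with Lemma~\ref{estimate_big_hole}, then gives roughly $p^2L^2 (pL)^{-1/4+\varepsilon}$. This is not small, so the fallback would need either a sharper split estimate or averaging over translations along $\partial_i$, as in Proposition~\ref{small_hole_has_small_proportion_faces}. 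I expect that averaging trick is the real mechanism: each chord structure is shared by many edges, so its cost is divided accordingly.
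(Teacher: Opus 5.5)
Your structural reduction (pigeonholing the $pL$-close triangles into windows around the vertices of $f$) matches the paper in spirit, and so does your use of genus-reduction estimates for crossing triangles and for a non-planar middle region. The paper handles the latter by a re-rooting that reduces it to Proposition~\ref{Finite_holes_filled_by_finite_maps}.

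\textbf{The gap: the planar chaining step.} The planar case is the heart of the proposition, and your chaining argument for it does not work, for two reasons.

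\emph{Union over positions.} Lemma~\ref{estimate_big_hole} bounds a macroscopic split only for the triangle behind one \emph{fixed} edge. The event ``there is a next chord landing in the far window'' is a union over $O(pL)$ candidate near-arc edges. So the per-step bound $C(pL)^{-1/4+\varepsilon}$ is not justified.

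\emph{Consecutive chords need not be separated.} Even at fixed positions, take consecutive near-arc edges whose triangles all share the same far vertex. After conditioning on one such triangle, the next one only cuts off a thin strip. That is not a split into two macroscopic holes, Lemma~\ref{estimate_big_hole} says nothing about it, and such steps are not rare. So the chain does not produce $\sim p/3$ small factors.

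As you note yourself, the fallback $p^2L^2(pL)^{-1/4+\varepsilon}$ is not small. Averaging along $\partial_i$ in the big map does not by itself help either: one still needs a planar bound decaying faster than $p^{-1}$.

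\textbf{The missing idea: Lemma~\ref{intermediate_lemma}.} This lemma is not the single-edge bound you attribute to it. It states that for a uniform planar triangulation of a $q$-gon, $\Pf(N_{m,q,\alpha_1}\ge \alpha_2 q)=O(q^{-6/5})$, which is $o(q^{-1})$.

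The paper proves it by sampling five uniform boundary edges. On $\{N_{m,q,\alpha_1}\ge\alpha_2 q\}$, with probability at least $(\alpha_2/2)^5$ all five satisfy $(\mathcal{H}_{\alpha_1 q})$. They are also far from one another and from the others' third vertices. Hence the six pieces they cut out all have perimeter at least $cq$, even if several triangles share a third vertex. Sampling replaces your union over positions by an average. Iterating Lemma~\ref{estimate_big_hole} five times then bounds this configuration by $O(q^{-5/4+5\varepsilon})$.

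\textbf{How the paper applies it.} The lemma is applied to the planar piece $T$, of perimeter $r_2\in[p/8,4pL]$, between $f$ and the extreme close triangle $f'$ located $k$ edges from $e^n_1$. Under $(\mathcal{C}_{L,p})$, at least $p/30$ boundary edges of $T$ satisfy $(\mathcal{H}_{p/10})$, a fraction of order $1/L$. Summing $C_Lp^{-6/5}$ over the $O(pL)$ values of $k$ gives $O_L(p^{-1/5})$.

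The exponent $6/5>1$ is exactly what survives this union bound. With the crude exponent $-1/4$, this is why at least five sampled edges are needed.

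\textbf{The $(\mathcal{O})$ case.} You leave this case implicit. The paper reduces it to the $(\mathcal{H})$ case by removing $f$, which merges $\partial_1$ and $\partial_j$ into one boundary of a conditionally uniform triangulation. It then applies the $(\mathcal{H})$ case with translation invariance and Markov's inequality.
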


We first prove that the ``linear pattern’’ illustrated in Figure~\ref{pattern} is unlikely to occur in a uniform planar triangulation.  
For any $m,p \ge 0$ and $\alpha > 0$, define
\begin{align}\label{defN}
N_{m,p,\alpha} := \#\big\{ e \in \partial T_{m,0,p} : (T_{m,0,p},e) \text{ satisfies } (\mathcal{H}_{\alpha p}) \big\}.
\end{align}
\begin{lemma}\label{intermediate_lemma}
Fix $\alpha_1,\alpha_2 > 0$. There exists a constant $C_{\alpha_1,\alpha_2} >0$ such that for any $m,p \ge 0$ we have
    \begin{align*}
        \Pf(N_{m,p,\alpha_1}\ge \alpha_2 p) \le C_{\alpha_1,\alpha_2}p^{-\frac{6}{5}}.
    \end{align*}
\end{lemma}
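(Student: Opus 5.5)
Our plan is to combine a first-moment bound with translation invariance along the boundary and the estimate of Lemma~\ref{estimate_big_hole}. We use the rotational invariance of $T_{m,0,p}$ under rerooting along its single boundary. Writing $e$ for the distinguished boundary edge, we get
\[
\mathbb{E}[N_{m,p,\alpha_1}] = p\,\Pf\big((T_{m,0,p},e)\text{ satisfies }(\mathcal{H}_{\alpha_1 p})\big).
\]
Under $(\mathcal{H}_{\alpha_1 p})$, the triangle behind $e$ has its third vertex on the boundary. Removing it leaves two planar triangulations of polygons with perimeters $p_1+p_2=p+1$ and $\alpha_1 p\le p_1,p_2$. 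By symmetry (left/right) we may assume $\alpha_1 p\le p_1\le (p+1)/2$, up to a factor $2$. Then
\[
\Pf(\mathcal{H}_{\alpha_1 p}) \le 2\sum_{\substack{n_1+n_2=m-1\\p_1+p_2=p+1\\ \alpha_1 p\le p_1\le p/2}}\frac{\tau_{p_1}(n_1,0)\tau_{p_2}(n_2,0)}{\tau_p(m,0)} \le C_{\alpha_1,\varepsilon}\,p^{-\frac14+\varepsilon}
\]
by Lemma~\ref{estimate_big_hole}. Markov's inequality then gives $\Pf(N\ge\alpha_2 p)\le \mathbb{E}[N]/(\alpha_2p)\le C p^{-1/4+\varepsilon}$. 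This is far from $p^{-6/5}$, so a single first moment is not enough.

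To upgrade, we use a higher moment, or equivalently many disjoint separating triangles. On $\{N\ge\alpha_2 p\}$ there are at least $\alpha_2 p$ boundary edges whose triangles each cut the polygon into two pieces of perimeter $\ge \alpha_1 p$. These triangles are chords of a planar polygon, hence pairwise non-crossing. A family of non-crossing chords all of macroscopic size $\alpha_1 p$ is essentially linearly ordered, with only boundedly many (in terms of $\alpha_1$) ``parallel classes''. Therefore we can extract $K$ such triangles, with $K$ a fixed integer to be chosen, that are nested: they successively cut the polygon into $K+1$ pieces in a line, each of macroscopic perimeter. The number of choices of the $K$ root edges is at most $p^K$.

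For a fixed such choice, we decompose $T_{m,0,p}$ along the $K$ triangles into $K+1$ planar triangulations. We then bound the probability by iterating the one-split estimate. Peeling the first triangle gives a factor $\sum \tau\tau/\tau_p(m,0)$. The conditional law of the remaining large piece is again uniform planar, so the next split costs the same type of ratio. Each split is controlled by the summand-level bound behind Lemma~\ref{estimate_big_hole}. In that proof the bound $\mathcal{O}(n^{-5/4+\varepsilon})$ holds for fixed $(p_1,p_2)$, since the $p^{-1/4}$ came only from summing over $p_1$. Here each split position is fixed by the chosen edge up to $O(p)$ choices of the third vertex. We thus aim for a bound of the form $p^{K}\cdot(p\cdot p^{-5/4+\varepsilon})^{K}$ up to the choice of vertices. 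This is the step we expect to be the main obstacle. The piece sizes are random and not comparable to $p$ a priori, and the ratio estimates are stated with $n$ rather than $p$. We would handle this by splitting according to whether the volumes are of order $\ge p^2$ or not, using $n\ge p-2$ so that powers of $n$ dominate powers of $p$. We would also exploit that the nested configuration forces $K$ independent macroscopic splits, each costing at least $p^{-1/4+\varepsilon}$ after summing over its position. This gives $\Pf(N\ge\alpha_2p)\le C\,(\text{const})^K p^{-K/4+K\varepsilon}$, which is $\le C p^{-6/5}$ once $K=5$ and $\varepsilon$ is small. Verifying that the iterated splits genuinely multiply, that is, the conditional uniformity and that the constant from Lemma~\ref{estimate_big_hole} is uniform in the sub-polygon sizes, is the delicate point.
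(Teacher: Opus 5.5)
\paragraph{The counting step has a genuine gap.} You fix $K$ root edges and bound the probability that this particular tuple realises $K$ macroscopic splits by roughly $(p\cdot p^{-5/4+\varepsilon})^K=p^{-K/4+K\varepsilon}$. You then pay $p^K$ for the choice of the tuple. That is a union bound, and it gives $p^{K}\cdot p^{-K/4+K\varepsilon}=p^{3K/4+K\varepsilon}$, which diverges. Your final bound $C(\mathrm{const})^K p^{-K/4+K\varepsilon}$ simply drops the factor $p^K$. No choice of $K$ repairs this: with the per-split cost $p^{-1/4+\varepsilon}$ supplied by Lemma~\ref{estimate_big_hole}, a union bound over the $\asymp p^K$ tuples always loses more than it gains.

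\paragraph{The missing idea.} Extracting one nested family from $\{N\ge\alpha_2 p\}$ is not enough. You need that this event produces $\gtrsim (cp)^K$ good, well-separated $K$-tuples. Then the $K$-th moment Markov inequality $\Pf(N\ge\alpha_2 p)\le (cp)^{-K}\,\mathbb{E}[\#\{\text{good $K$-tuples}\}]$ cancels the $p^K$. You announce ``a higher moment'' but never actually use one.

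\paragraph{What the paper does ($K=5$).} Choose five uniform boundary edges. On $\{N\ge\alpha_2p\}$, with probability at least $(\alpha_2/2)^5$:
\begin{itemize}
\item they all lie in $I$;
\item each is at distance $\ge\frac{\alpha_2}{100}p$ along the boundary from the other chosen edges and their third vertices.
\end{itemize}
This forces the event $A$ that all six pieces of $T_{m,0,p}\setminus(f_1\cup\dots\cup f_5)$ have perimeter $\ge cp$, where $c=\min(\alpha_2/100,\alpha_1)$. Hence $\Pf(N\ge\alpha_2p)\le(2/\alpha_2)^5\,\Pf(A)$.

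Because the tuple is random, the $p^5$ normalisation absorbs the positional factors in the count of configurations (the root of $T_{m,0,p}$ and the positions of the chords on each piece). This leaves $\Pf(A)\le C\,\tau_p(m,0)^{-1}\sum\prod_{i=1}^6\tau_{p_i}(m_i,0)$, summed over $p_i\ge cp$. No linear or nested arrangement is needed; any tree arrangement of well-separated triangles works.

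\paragraph{Your ``main obstacle'' is routine.} Sum out two adjacent pieces at a time; together with their separating triangle they merge into a polygon of perimeter $p'\ge cp$. Apply Lemma~\ref{estimate_big_hole} directly to that sub-polygon. Its constant is uniform in the volume, since it holds for all $n+2\ge p$, so each merge costs $C(cp)^{-1/4+\varepsilon}$. This needs no case distinction on the volumes and no conditional-law argument. Five merges give $p^{-5/4+5\varepsilon}\le p^{-6/5}$.
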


\begin{proof}
 Let $(e^i)_{1 \le i \le 5}$ be five distinct edges chosen uniformly at random on $\partial T_{m,0,p}$. We define $c = \min(\frac{\alpha_2}{100},\alpha_1)$. Let $A$ be the event that the triangles $(f_i)_{1 \le i \le 5}$ incident to the edges $(e^i)_{1 \le i \le 5}$ have all their vertices on $\partial T_{m,0,p}$, and such that the six planar triangulations composing $T_{m,0,p} \backslash (f_1 \cup \cdots  \cup f_5)$ have perimeters at least $cp$. The probability of this event satisfies
    \begin{align}\label{proba_event}
        \mathbb{P}(A) \le 120\tau_{p}(m,0)^{-1} \sum_{\substack{m_1+\cdots +m_6 = m-5\\p_1+\cdots +p_6 = p+5\\p_1,\cdots ,p_6 \ge  c p}} \prod_{i=1}^{6}\tau_{p_i}(m_i,0),
    \end{align}
    where the factor $ 120=5!$ counts the order in which the edges $e^1\cdots ,e^5$ appear on the boundary. Using Proposition~\ref{estimate_big_hole} with $\alpha = c$ and $\varepsilon > 0$ such that $-\frac{5}{4}+5\varepsilon \le -\frac{6}{5}$, it follows that for any $m' \ge 1$ and $1 \le p' \le m' + 2$ we have 
    \begin{align*}
    \sum_{\substack{m_1+m_2 = m'-1\\ p_1+p_2 = p'+1 \\p_1,p_2 \ge c p'}} \tau_{p_1}(m_1,0) \tau_{p_2}(m_2,0) \le C_{\alpha_1,\alpha_2}(p')^{-\frac{1}{4}+\varepsilon}\tau_{p'}(m',0).
    \end{align*}
Fix $m_1,\cdots,m_4$ and $p_1,\cdots,p_4$ in \eqref{proba_event} and let us write $m' =  m-m_1-\cdots - m_4$ and $p' = p+4 - p_1-\cdots -p_4$. Note that we have $p' \ge cp$. Then, summing over $m_5$ and $p_5$ in \eqref{proba_event} leads to the sum
\begin{align*}
    \sum_{\substack{m_5+m_6 = m'\\ p_5+p_6 = p' + 1 \\p_5,p_6 \ge c p}} \tau_{p_5}(m_5,0) \tau_{p_6}(m_6,0) &\le C_{\alpha_1,\alpha_2}c^{-\frac{1}{4}+\varepsilon}p^{-\frac{1}{4}+\varepsilon}\tau_{p'}(m',0).
\end{align*}    
We deduce 
\begin{align*}
\sum_{\substack{m_1+\cdots +m_6 = m-5\\p_1+\cdots +p_6 = p+5\\p_1,\cdots ,p_6 \ge  c p}} \prod_{i=1}^{6}\tau_{p_i}(m_i,0) \le  C_{\alpha_1,\alpha_2}c^{-\frac{1}{4}+\varepsilon}p^{-\frac{1}{4}+\varepsilon}\sum_{\substack{m_1+\cdots +m_5 = m-4\\p_1+\cdots +p_5 = p+4\\p_1,\cdots ,p_5 \ge  c p}} \prod_{i=1}^{5}\tau_{p_i}(m_i,0). 
\end{align*}
Iterating this argument leads to the bound
    \begin{align}\label{bound_A}
        \mathbb{P}(A) &\le   C_{\alpha_1,\alpha_2}' p^{-\frac{5}{4}+5\varepsilon} \le C_{\alpha_1,\alpha_2}' p^{-\frac{6}{5}},
    \end{align}
    for some $C_{\alpha_1,\alpha_2}' > 0$. Let $I$ denote the set of edges $e$ on the boundary $\partial T_{m,0,p}$ such that $(T_{m,0,p},e)$ satisfies $(\mathcal{H}_{\alpha_1 p})$. Let $v^1,\cdots,v^5$ denote the third vertices of the faces $f_1,\cdots,f_5$. Let $C$ denote the event that for each $i \in \{1,\cdots,5\}$ the edge $e^i$ lies in $I$ and is at distance at least $\frac{\alpha_2}{100}p$ from $\{e^j, v^j\}_{j \neq i-1}$ along $\partial T_{m,0,p}$. We claim that 
    \begin{align}\label{lower_bound}
        \mathbb{P}(C \mid N_{m,p,\alpha_1}\ge \alpha_2 p) \ge \frac{(\alpha_2)^5}{2^5}.
    \end{align}
    Indeed, this follows from the fact that $ \{N_{m,p,\alpha_1}\ge \alpha_2 p\} = \{\#I \ge \alpha_2 p\}$ and that $e^1,\cdots ,e^5$ are chosen uniformly at random on $\partial T_{m,0,p}$. Thus, for each $ 1 \le i \le 5$, conditionally on $e^1,\dots,e^{i-1}$, the probability that $e^{i}$ lies in $I$ and is at distance at least $\frac{\alpha_2}{100}p$ from $\{e^j, v^j\}_{j \le i}$ is at least $\alpha_2 - \frac{4i}{100}\alpha_2 \ge \frac{\alpha_2}{2}$. 
    
    Finally, under the event $\{N_{m,p,\alpha_1}\ge \alpha_2 p\} \cap C$, the event $ A$ occurs. Combining \eqref{bound_A} and \eqref{lower_bound}, we deduce the result:
    \begin{align*}
        \mathbb{P}(N_{m,p,\alpha_1}\ge \alpha_2 p) \le \left(\frac{(\alpha_2)^5}{2^5}\right)^{-1}C_{\alpha_1,\alpha_2}'p^{-\frac{6}{5}}.
    \end{align*}
\end{proof}

\medskip
We now prove Proposition~\ref{bound_event_C}.
\begin{proof}
Fix $\varepsilon, L > 0$. As in the proof of Proposition~\ref{small_hole_has_small_proportion_faces}, using Proposition~\ref{perimeter_typically_large}, it suffices to assume that $p^n_1 \to +\infty$ and to prove that there exists $p \ge 0$ such that, for $n$ large enough
\begin{align}\label{prove1}
\Pf\big( (T_{n,g_n,\mathbf{p}^{n}},e^n_1) \text{ satisfies } (\mathcal{C}_{L,p}) \big) \le \varepsilon.
\end{align}
If $(T_{n,g_n,\mathbf{p}^{n}},e^n_1)$ satisfies $(\mathcal{C}_{L,p})$, then it satisfies either $(\mathcal{H}_{2pL})$ or $(\mathcal{O})$. We first handle the case of $(\mathcal{H}_{2pL})$ and prove that there exists $p \ge 0$ such that, for $n$ large enough,
\begin{align}\label{prove1H}
\Pf\big( (T_{n,g_n,\mathbf{p}^{n}},e^n_1) \text{ satisfies } (\mathcal{C}_{L,p}) \text{ and } (\mathcal{H}_{2pL}) \big) \le \varepsilon.
\end{align}

Assume that $(T_{n,g_n,\mathbf{p}^{n}},e^n_1)$ satisfies both $(\mathcal{C}_{L,p})$ and $(\mathcal{H}_{2pL})$.  
Let $f$ be the triangle behind $e^n_1$ and let $z$ be its third vertex.  
Let $I_1$ (resp. $I_2$) be the segment of $pL$ edges immediately to the left (resp. right) of $e^n_1$ on $\partial_1$, and let $I_3$ (resp. $I_4$) be the segment of $pL$ edges to the left (resp. right) of $z$ on $\partial_1$. We have $p^n_1 \ge 4pL $, so the segments $I_1, I_2, I_3, I_4$ are pairwise disjoint (see Figure~\ref{proof_bound_Clp}).

For $1 \le j \le 4$, let $A_j$ be the event that there exist at least $\frac{p}{4}$ edges $e'$ in $I_j$ such that the corresponding triangles $f'$ behind $e'$ are $pL$-close to $t$. Since $(T_{n,g_n,\mathbf{p}^n},e)$ satisfies $(\mathcal{C}_{L,p})$, at least one of the events $A_j$ must occur.

\begin{figure}[H]
\centering
\includegraphics[scale=0.16]{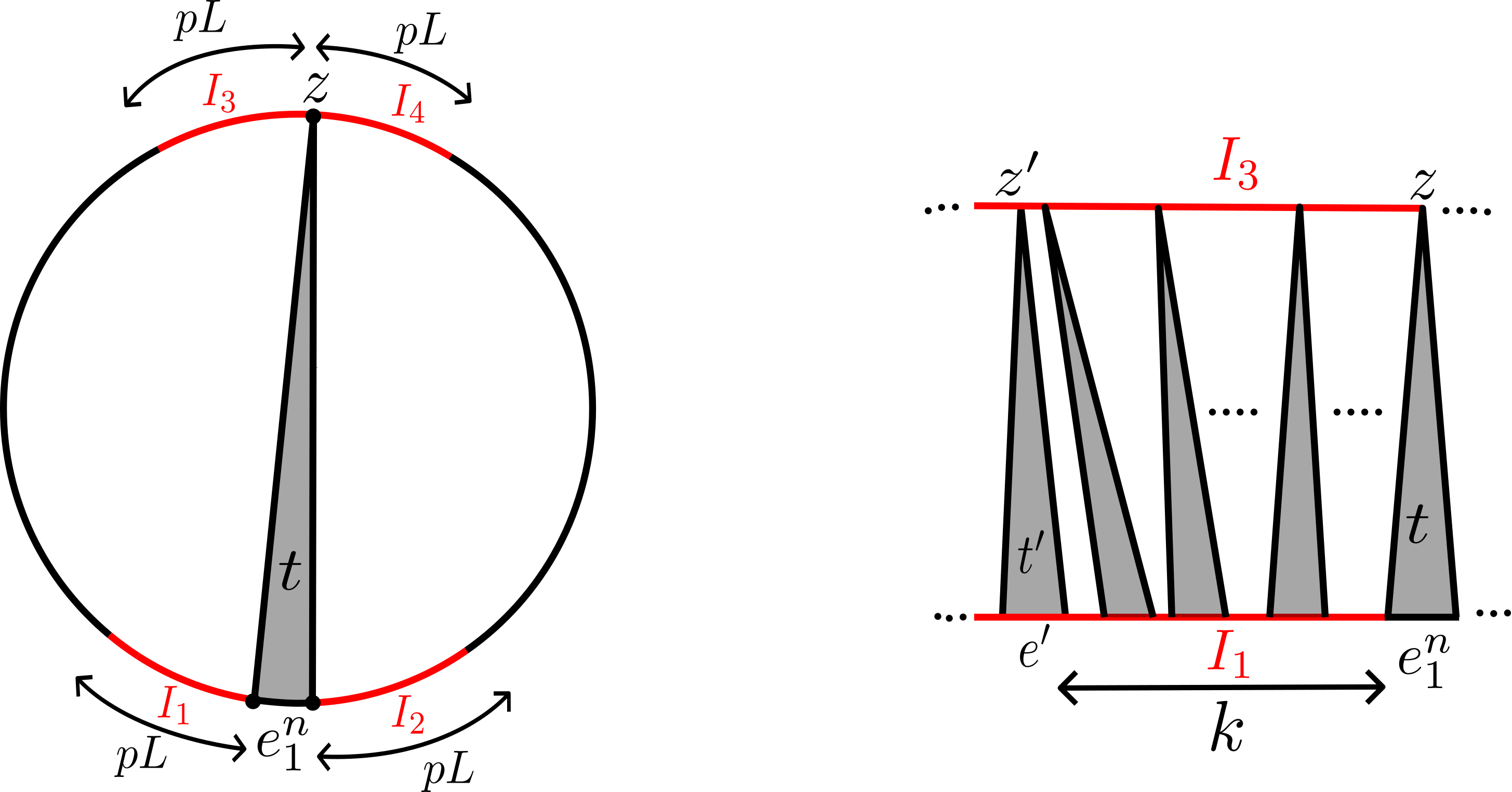}
\caption{Left: the triangle $f$ and the four disjoint segments $I_1, I_2, I_3, I_4$ on $\partial_1$.  
Right: illustration of the case where $A_1$ and $B_{1,k}$ occur.}
\label{proof_bound_Clp}
\end{figure}

Thus,
\begin{align}\label{from_C_to_A}
\Pf\big( (T_{n,g_n,\mathbf{p}^{n}},e^n_1) \text{ satisfies } (\mathcal{C}_{L,p}) \text{ and } (\mathcal{H}_{p}) \big)
    \le \Pf\Big(\bigcup_{i=1}^4 A_i\Big)
    \le \sum_{i=1}^4 \Pf(A_i).
\end{align}
By symmetry, it suffices to bound $\Pf(A_1)$. The same argument applies to the other $A_i$.

Under $A_1$, let $e'$ be the leftmost edge in $I_1$ such that $f'$ (the triangle behind $e'$) is $pL$-close to $f$, and let $z'$ be the third vertex of $f'$ (see Figure~\ref{proof_bound_Clp}).  
Then $z'$ must lie in $I_3 \cup I_4$: otherwise, the vertex $z$ would be at distance at least $pL+1$ from any vertex of $f'$ on $\partial T_{n,g_n,\mathbf{p}^n}$.

Let $B_1$ (resp. $C_1$) denote the event that $A_1$ occurs and that $z'$ is (resp. is not) incident to an edge in $I_3$.  
Thus,
\[
A_1 = B_1 \sqcup C_1.
\]

We first check that the event $C_1$ is very unlikely.  
Under $C_1$, the vertex $z'$ is incident to an edge in $I_4$, so the triangles $f$ and $f'$ cross each other, implying
\[
T_{n,g_n} \setminus \{f \cup f'\} \in \mathcal{T}_{(p_1+2,\dots,p_{\ell_n})}(n,g_n-1).
\]
Since there are at most $pL$ choices for $e'$ and at most $pL+1$ choices for $z$ (since $I_4$ contains $pL$ edges, i.e. $pL+1$ vertices), we get
\begin{align}\label{not_R1_bound}
\Pf(C_1) \le (pL+1)pL\, \frac{\tau_{(p^n_1+2,\dots,p^n_{\ell_n})}(n,g_n-1)}{\tau_{\mathbf{p}^{n}}(n,g_n)} \xrightarrow[n \to \infty]{} 0,
\end{align}
where the limit follows from Lemma~\ref{bounded_ratio_vertices} and Proposition~\ref{estimate_genus_reducing}, since $p,L$ are fixed.

Now, for $\frac{p}{4}-1 \le k \le pL$, let us denote by $B_{1,k}$ the event that $B_1$ occurs and that the segment between $e^n_1$ and $e'$ contains exactly $k$ edges.  
Then $B_1 = \bigsqcup_{k=\frac{p}{4}-1}^{pL} B_{1,k}$.  
Fix such a $k \in \{\frac{p}{4}-1,\dots,pL\}$. Let us show that
\begin{align}\label{bound_R1k}
\Pf(B_{1,k}) \le C_L\, p^{-\frac{6}{5}}.
\end{align}

Under $B_{1,k}$, the vertex $z'$ lies on $I_3$, so the union $ f \cup f'$ splits $\partial_1 t$ into three holes, denoted by $h_1,h_2,h_3$ from left to right, with random perimeters $r_1,r_2,r_3$.  
Let $T$ be the component of $T_{n,g_n,\mathbf{p}^n} \setminus \{f \cup f'\}$ that contains $h_2$. Note that $T$ might also contain $h_1$ or $h_3$. We decompose
\begin{align}\label{rewrite_PR1k}
\Pf(B_{1,k})
&= \mathbb{P}\big( B_{1,k} \text{ and } T \notin \bigcup_{m \ge 0} \mathcal{T}_{r_2}(m,0) \big)
 + \mathbb{P}\big( B_{1,k} \text{ and } T \in \bigcup_{m \ge 0} \mathcal{T}_{r_2}(m,0) \big).
\end{align}
We first show that
\begin{align}\label{planar_mid}
\mathbb{P}\big( B_{1,k} \text{ and } T \notin \bigcup_{m \ge 0} \mathcal{T}_{r_2}(m,0) \big) \xrightarrow[n \to \infty]{} 0.
\end{align}
Let us recall the notations $\mathcal{L}_{n,k}(e)$ and $\mathcal{R}_{n,k}(e)$ defined in the introduction of Section~\ref{section_boundary}. Let $u_i$ (resp. $v_i$) be the $i^{\mathrm{th}}$ edge to the left (resp. right) of $e^n_1$ on $\partial_1$.  
Under $B_{1,k}$, the events $\mathcal{L}_{n,r_1-1}(u_{k+1})$ and $\mathcal{R}_{n,r_3-1}(e^n_1)$ occur (see Figure~\ref{proofboundCLp2}).  
Hence,
\begin{align}\label{step1}
\mathbb{P}\big( B_{1,k}, T \notin \bigcup_{m \ge 0} \mathcal{T}_{r_2}(m,0) \big)
 \le \sum_{r_1,r_2,r_3} \Pf\big( \mathcal{L}_{n,r_1-1}(u_{k+1}), \mathcal{R}_{n,r_3-1}(e^n_1), T \notin \bigcup_{m \ge 0} \mathcal{T}_{r_2}(m,0) \big).
\end{align}

For fixed $r_1,r_2,r_3$, we claim that:
\begin{itemize}
    \item $\Pf(\mathcal{L}_{n,r_1-1}(u_{k+1}) \cap \mathcal{R}_{n,r_3-1}(e^n_1)) = \Pf(\mathcal{L}_{n,r_2-1}(e^n_1) \cap \mathcal{R}_{n,r_3-1}(v_1))$.
    \item the law of $T_{n,g_n,\mathbf{p}^n} \setminus \{f \cup f'\}$ under $\mathcal{L}_{n,r_1-1}(u_{k+1})$ and $\mathcal{R}_{n,r_3-1}(e^n_1)$
    is identical to that of $T_{n,g_n,\mathbf{p}^n} \setminus \{f \cup f^*\}$ under $\mathcal{L}_{n,r_2-1}(e^n_1)$ and $\mathcal{R}_{n,r_3-1}(v_1)$,
    where $f^*$ is the triangle behind $v_1$.
\end{itemize}
Indeed, in both configurations, the two triangles do not cross each other and create holes of perimeters $(r_1,r_2,r_3)$ or $(r_2,r_1,r_3)$ respectively (see Figure~\ref{proofboundCLp2}). Thus, the two items above hold since for each identities considered, both sides of the equation writes as the same ratio of combinatorial quantities.

Under $\mathcal{L}_{n,r_2-1}(e^n_1)$ and $\mathcal{R}_{n,r_3-1}(v_1)$, let $T'$ be the component of $T_{n,g_n,\mathbf{p}^n}\setminus \{f,f^*\}$ filling the hole of perimeter $r_2$.  
Then, the right-hand side of \eqref{step1} can be rewritten
\begin{align}\label{bound_non_planar}
&\sum_{r_1,r_2,r_3}\Pf(\mathcal{L}_{n,r_2-1}(e^n_1), \mathcal{R}_{n,r_3-1}(v_1), T' \notin \bigcup_{m\ge0}\mathcal{T}_{r_2}(m,0))\\
 &\le \sum_{r_2}\Pf(\mathcal{L}_{n,r_2-1}(e^n_1), T' \notin \bigcup_{m\ge0}\mathcal{T}_{r_2}(m,0)) \xrightarrow[n\to\infty]{} 0,
\end{align}
by Proposition~\ref{Finite_holes_filled_by_finite_maps}, since $r_2 \in \{\frac{p}{4},\dots,2pL+2\}$. This proves \eqref{planar_mid}.

\begin{figure}[H]
\centering
\includegraphics[scale=0.5]{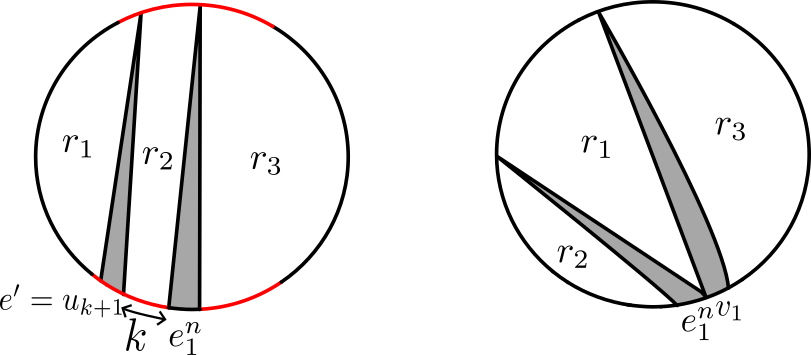}
\caption{Left: the events $\mathcal{L}_{n,r_1-1}(u_{k+1})$ and $\mathcal{R}_{n,r_3-1}(e^n_1)$ occur.  
Right: the events $\mathcal{L}_{n,r_2-1}(e^n_1)$ and $\mathcal{R}_{n,r_3-1}(v_1)$ occur.}
\label{proofboundCLp2}
\end{figure}

Next, we bound the second term in~\eqref{rewrite_PR1k}.  
Under $B_{1,k}$, there exist at least $\frac{p}{30}$ edges $e$ on $\partial T$ such that $(T,e)$ satisfies $(\mathcal{H}_{p/10})$.  
Indeed, from the definition of $A_1$, there are at least $\frac{p}{4}-1$ edges $e$ on $I_1$ such that $t$ and $t(e)$ (the triangle behind $e$) are $pL$-close.  
Discarding the $\frac{p}{10}$ leftmost and rightmost such edges leaves at least $\frac{p}{30}$ edges. For each $e$, by planarity of $T$, the third vertex of the triangle $t(e)$ lies on $I_3$. Hence $(T,e)$ satisfies $(\mathcal{H}_{p/10})$.

Since $T$ has perimeter $r_2$ with $\frac{p}{8} \le r_2 \le 4pL$ and the fact that $T$ is uniform conditionally on its perimeter $m_2$ and its volume $m$, we can apply Lemma~\ref{intermediate_lemma} with $\alpha_1 = 1/40L,\alpha_2 = 1/120L$ and we obtain the bound
\[
 \mathbb{P}\big( B_{1,k} \text{ and } T \in \bigcup_{m \ge 0} \mathcal{T}_{r_2}(m,0) \big) \le C_L\, p^{-\frac{6}{5}}.
\]
Combining this with \eqref{planar_mid} we find that for $n$ large enough we have 
\begin{align*}
 \mathbb{P}\big( B_{1,k} ) \le C_L p^{-\frac{6}{5}}.
\end{align*}
Summing over $k$ and taking $p$ large enough such that $C_L L p^{-\frac{1}{6}} \le \varepsilon/2$, we get for $n$ large enough,
\[
\Pf(B_1) \le \sum_{k=\frac{p}{4}}^{pL}\Pf(B_{1,k}) \le \varepsilon,
\qquad \Pf(A_1) \le \Pf(B_1) + \Pf(C_1) \le 2\varepsilon.
\]
The same bound holds for each $\Pf(A_i)$, completing the proof of~\eqref{from_C_to_A}.

\medskip
Finally, to bound
\begin{align}\label{bound_O}
\Pf\big( (T_{n,g_n,\mathbf{p}^{n}},e^n_1) \text{ satisfies } (\mathcal{C}_{L,p}) \text{ and } (\mathcal{O}) \big),
\end{align}
we reduce to the case already handled.  
If $(T_{n,g_n,\mathbf{p}^{n}},e^n_1)$ satisfies $(\mathcal{C}_{L,p})$ and $(\mathcal{O})$, let $z \in \partial_j$ with $2 \le j \le \ell_n$ be the third vertex of the triangle behind $e^n_1$.  
Then
\[
T_{n,g_n,\mathbf{p}^n} \setminus f \in \mathcal{T}_{(p^n_1+p^n_j+1,p^n_2,\dots,p^n_{j-1},p^n_{j+1},\dots,p^n_{\ell_n})}(n+1,g_n).
\]
Moreover, conditionally on $z \in \partial_j$, the triangulation $T_{n,g_n,\mathbf{p}^n} \setminus f$ is uniformly distributed in $\mathcal{T}_{(p^n_1+p^n_j+1,p^n_2,\dots,p^n_{j-1},p^n_{j+1},\dots,p^n_{\ell_n})}(n+1,g_n)$. Thus it suffices to bound 
$$
\Pf\big( (T_{n,g_n,\mathbf{p}^{n}},e^n_1) \text{ satisfies } (\mathcal{C}_{L,p}) \text{ and } (\mathcal{O}) | z \in \partial_j\big),
$$
uniformly on $j \in \{2,\cdots,\ell_n\}$ for $n$ large enough. For $j \in \{2,\cdots,\ell_n\}$, let $X_j$ be the set of edges in $\partial_1 \cup \partial_j $ whose incident vertices are within distance $pL$ from either $e^n_1$ or $z$ along $\partial_1 \cup \partial_j$.  
Then $\#X_j \le 4pL$. Moreover, discarding the edges having an incident vertex at distance at most $p/8$ from $e$ or $e^n_1$ in $\partial_1 \cup \partial_j$ (see the right part of Figure~\ref{figCLp}), we find
\[
\#\{ e \in X_j : (T_{n,g_n,\mathbf{p}^n}\setminus f, e) \text{ satisfies } (\mathcal{C}_{L,p/16L}) \text{ and } (\mathcal{H}_{p/8}) \} \ge p/32.
\]
By \eqref{prove1H}, if $p$ is chosen large enough, for $n$ large enough, for all $j$ we have
\[
\Pf\big( (T_{n,g_n,(p^n_1+p^n_j+1,p^n_2,\dots,p^n_{\ell_n})},e^n_1) \text{ satisfies } (\mathcal{C}_{L,p/16L}) \text{ and } (\mathcal{H}_{p/8L}) \big)
 \le \frac{\varepsilon}{128L}.
\]
Then using the translation invariance of $(T_{n,g_n,(p^n_1+p^n_j+1,p^n_2,\dots,p^n_{\ell_n})},e^n_1)$ along $\partial_1$, the Markov inequality yields
\[
\Pf\bigg(\#\{ e \in X_j : (T_{n,g_n,\mathbf{p}^n}\setminus f, e) \text{ satisfies } (\mathcal{C}_{L,p/16L}) \text{ and } (\mathcal{H}_{p/8}) \} \ge p/32 \bigg) \le (4pL)\frac{32}{p}\frac{\varepsilon}{128 L} \le \varepsilon.
\]
Thus, for $n$ large enough, we have 
$$
\forall j \in \{2,\cdots,\ell_n\},\text{ }\Pf\big( (T_{n,g_n,\mathbf{p}^{n}},e^n_1) \text{ satisfies } (\mathcal{C}_{L,p}) \text{ and } (\mathcal{O}) | z \in \partial_j\big).
$$
This concludes the proof.
\end{proof}

\subsubsection{Deterministic discovery of triangles} \label{second_subsection}
We start by mentioning that this section is entirely deterministic. It is dedicated to showing that if $(T_{n,g_n,\mathbf{p}^{n}},e)$ satisfies $(\mathcal{H}_{k})$ or $(\mathcal{O})$ for many choices of $e \in \partial T_{n,g_n,\mathbf{p}^{n}}$, we fall into one of the two cases below:
\begin{itemize}
	\item[$\bullet$] There is a positive proportion of edges $e \in \partial T_{n,g_n,\mathbf{p}^{n}}$ such that the triangles lying behind these edges cross each other significantly.
	\item[$\bullet$] There is a positive proportion of edges $e \in \partial T_{n,g_n,\mathbf{p}^{n}}$ such that $(T_{n,g_n,\mathbf{p}^{n}},e)$ satisfies $(\mathcal{C}_{L,p})$ for an appropriate choice of $L$ and $p$.
\end{itemize} 
See the proposition below.
\begin{proposition}\label{many_events_C}
Fix $n,g,\ell \ge 0$ and $(p_1,\cdots,p_{\ell}) \in (\mathbb{N}^{*})^{\ell}$. We also fix $t \in \mathcal{T}_{\mathbf{p}}(n,g)$ and $\varepsilon > 0,p\ge 4$. Suppose that there exist distinct edges $e_1,\cdots,e_{1000\varepsilon |\mathbf{p}|}$ on $\partial_1\cup \cdots \cup \partial_{\ell}$ such that for all $1 \le j \le 1000\varepsilon |\mathbf{p}|$:
\begin{center}
$(t,e_j)$ satisfies $(\mathcal{O})$ or $(\mathcal{H}_{2\varepsilon^{-1} p })$.
\end{center}  
Then one of the following two statements holds:
    \begin{enumerate}
        \item\label{eq1} There exists $I \subset \{1,\cdots,1000\varepsilon |\mathbf{p}|\}$ such that $r:= |I| \ge \varepsilon p^{-1}|\mathbf{p}|$ and denoting by $f_{i}$ for the triangle that lies behind the edge $e_{i}$, we have $t \backslash \bigg(\bigsqcup_{i\in I}f_i\bigg) \in \mathcal{T}_{\mathbf{q}}(n',g')$ for some $\mathbf{q} = (q_1,\cdots,q_{\ell'})$, where $(\ell-\ell')+2(g-g')= r $ and $n' \le n + |\mathbf{p}|$.
        \item\label{eq2} There are more than $\varepsilon|\mathbf{p}|$ edges $e \in \partial t$ such that $(t,e)$ satisfies $(\mathcal{C}_{\varepsilon^{-1},p})$.
    \end{enumerate}
\end{proposition}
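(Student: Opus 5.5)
The argument is deterministic and greedy: we select triangles that can be removed one at a time while each removal lowers the quantity $(\ell-\ell')+2(g-g')$ by exactly one. Write $N=1000\varepsilon|\mathbf{p}|$ and $f_j$ for the triangle behind $e_j$. Removing a triangle $f$ whose three vertices lie on the current boundary, from a triangulation with boundaries, has one of three effects. If the third vertex lies on another boundary (case $(\mathcal{O})$), two boundaries merge, so $\ell$ drops by one and $g$ is unchanged. If the third vertex lies on the same boundary, either the complement is disconnected (a separating split into two components) or it is connected (the genus drops by one and the number of boundaries rises by one, so $(\ell-\ell')+2(g-g')$ increases by $1$). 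Hence every non-separating removal adds exactly $1$ to $(\ell-\ell')+2(g-g')$. Separating removals are the bad moves, because they produce several pieces and break the form $\mathcal{T}_{\mathbf{q}}(n',g')$. The volume bound $n'\le n+|\mathbf{p}|$ is easy: each removal changes $n$ by at most one and we remove at most $N\le|\mathbf{p}|$ triangles, assuming $\varepsilon$ is small.

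We run the selection greedily. Keep a current triangulation $t_k$, initially $t$, and a set $I_k$. At each step look for an index $j\notin I_k$ with three properties: $f_j$ is still a face of $t_k$, its vertices lie on $\partial t_k$, and removing it leaves $t_k$ connected. If one exists, add $j$ to $I_k$ and remove $f_j$. Two facts need checking. First, a non-separating removal keeps the boundaries simple and disjoint after re-identification, so $t_{k+1}$ lies in some $\mathcal{T}_{\mathbf{q}}$. Second, the later $f_{j'}$ stay triangles behind boundary edges of $t_k$, which holds because all the $e_j$ are distinct boundary edges. If the process reaches $\varepsilon p^{-1}|\mathbf{p}|$ steps, we are in alternative~\ref{eq1}. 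Otherwise it stops at some $t_K$ with $K<\varepsilon p^{-1}|\mathbf{p}|$, and then every remaining triangle $f_j$ separates $t_K$. Equivalently, in $t_K$ each remaining $f_j$ is of type $(\mathcal{H})$ and cuts its boundary of $t_K$ into two arcs, each still long, since the original arcs had length at least $2\varepsilon^{-1}p$ and were modified only near the at most $K$ removed triangles.

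The main obstacle is to deduce alternative~\ref{eq2} in this remaining case. The separating triangles behind one boundary cycle of $t_K$ form a non-crossing (laminar) family of chords, and the $K$ removed triangles account for only about $3K\le 3\varepsilon p^{-1}|\mathbf{p}|$ boundary vertices of $t$. The $N-K\ge 999\varepsilon|\mathbf{p}|$ remaining chords therefore form a forest of nested chords with long arcs. The plan is a counting argument on this laminar family. Cut every boundary into consecutive windows of length $\varepsilon^{-1}p$. A non-crossing family has fewer than $2\times$(number of windows) "isolated" chords, meaning chords whose two endpoints are not shared, at precision $p\varepsilon^{-1}$, with at least $p$ other chords; there are at most $|\mathbf{p}|\varepsilon p^{-1}$ windows. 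By pigeonhole, the chords whose endpoint windows are both shared by at least $p$ others, which are then $p\varepsilon^{-1}$-close to them, number at least $999\varepsilon|\mathbf{p}|-O(\varepsilon|\mathbf{p}|/p)-O(K)>\varepsilon|\mathbf{p}|$. This uses $p\ge4$ and pulls back to $t$, because the closeness of endpoints along $\partial t$ is preserved up to the removed short pieces. Since each such edge still satisfies $(\mathcal{H}_{2\varepsilon^{-1}p})$ or $(\mathcal{O})$ in $t$, each satisfies $(\mathcal{C}_{\varepsilon^{-1},p})$. The delicate bookkeeping concerns distances along $\partial t$ versus $\partial t_K$ after merges; I would handle it by measuring distances only through original boundary vertices, so that the merges caused by the $K$ removed $(\mathcal{O})$-triangles only shorten the relevant paths.
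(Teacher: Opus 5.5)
Your greedy reduction is a legitimate and simpler substitute for the paper's ordering ($\mathbf{NSEP}$ steps first, then $(\mathbf{SEP},\mathbf{NSEP})$ pairs, Proposition~\ref{discover_nsep_first}). When it stops, two remaining triangles that each disconnect $t_K$ cannot cross, since each lies in one component of the complement of the other, so you do get a laminar family.

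It needs one repair. A removal that keeps $t_k$ connected need not raise $(\ell-\ell')+2(g-g')$. If two listed triangles form a fan (consecutive bases, common apex) and you remove one, the other becomes an ear with two consecutive boundary edges, and removing it changes neither $\ell$ nor $g$. So only accept removals that merge two boundaries or split one into two non-degenerate ones, and discard the at most $2K$ leftover ears.

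The genuine gap is the counting that should produce alternative~2.
\begin{itemize}
\item Your lemma that a non-crossing family has fewer than $2\times(\text{number of windows})$ isolated chords is false. On a long boundary with windows $0,\dots,M$, take for each $i\le (M-2)/6$ a group of exactly $p$ nested chords from window $3i$ to window $M-3i$. Every chord has only $p-1$ companions within $p\varepsilon^{-1}$, so about $pM/6$ chords are isolated, far more than $2M$ for large $p$.
\item Non-crossing only bounds the number of distinct window pairs used. The isolated chords can therefore number about $2p$ times the number of windows, an error of the same order $\varepsilon|\mathbf{p}|$ as your main term, not $O(\varepsilon|\mathbf{p}|/p)$. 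Likewise, each extra piece cut out by a removed triangle costs up to $p$ chords, so the $K$-term is $O(pK)$, not $O(K)$.
\item This can still close thanks to the constant $1000$, but only if the number of windows is $O(\varepsilon p^{-1}|\mathbf{p}|)$. Your bound of $\varepsilon p^{-1}|\mathbf{p}|$ windows is false: every boundary carrying a chord needs its own window, $\ell$ may be comparable to $|\mathbf{p}|$, and $(\mathcal{O})$-edges can end on arbitrarily short boundaries.
\item The missing observation is that a remaining chord either lies on a boundary of $t$ of length at least $4\varepsilon^{-1}p-1$ (roughly at most $\varepsilon p^{-1}|\mathbf{p}|/4$ of these), or joins two original boundaries already merged by the greedy (at most $2K$ of them). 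This uses the fact that the chord disconnects $t_K$.
\end{itemize}

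The second problem is the passage between $\partial t_K$ and $\partial t$. Laminarity lives on the cycles of $t_K$, while $(\mathcal{C}_{\varepsilon^{-1},p})$ requires closeness along $\partial t$. Your remark that merges only shorten the relevant paths points the wrong way. An edge of a removed triangle can join two far-apart points of one boundary of $t$, or two different boundaries, so closeness along $\partial t_K$ does not give closeness along $\partial t$. The windows must be cut on $\partial t$, and the outerplanar count done on the maximal runs of original edges of each window along the cycles of $t_K$; there are at most the number of windows plus $O(K)$ such runs. Separately, your claim that all remaining arcs are still long fails for chords that were of type $(\mathcal{O})$ in $t$, though the window count does not need it.

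The paper sidesteps these issues with the forest $\mathcal{F}$ of holes:
\begin{itemize}
\item A degree-$2$ hole whose two neighbours are holes is bounded only by edges of $\partial t$ and two triangle sides. Its perimeter therefore controls distances along $\partial t$ directly (Item~\ref{it4forest} of Proposition~\ref{properties_forest}).
\item A degree-$1$ hole has perimeter at least $2\varepsilon^{-1}p$ because of $(\mathcal{H}_{2\varepsilon^{-1}p})$.
\item Edges of removed triangles contribute at most $2m$ further leaves.
\end{itemize}
This bounds the leaves, and hence the branching, without ever counting boundaries of $t$.
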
 
\begin{figure}[H]
    \centering    \includegraphics[scale =0.15]{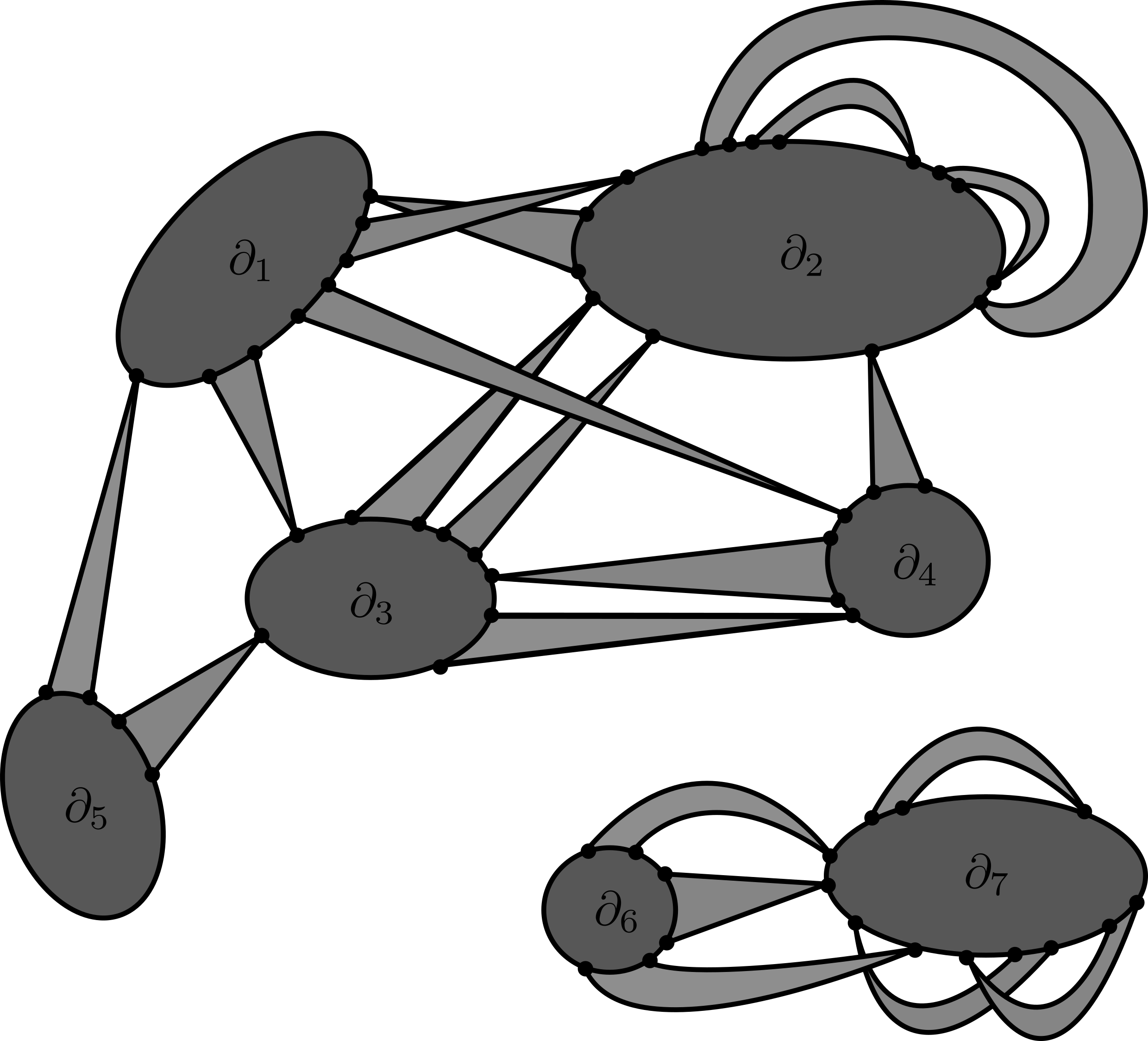}
    \caption{We represent here the boundaries of a triangulation $t$ with $7$ boundaries and the triangles behind the edges $e$ on the boundaries such that $(t,e)$ satisfies $(\mathcal{O})$ or $(\mathcal{H}_{\varepsilon^{-1} p })$.}
    \label{many_triangles}
\end{figure} 
We fix $n,g,\ell > 0$, $\mathbf{p} \in (\mathbb{N}^{*})^{\ell}$, $t \in \mathcal{T}_{\mathbf{p}}(n,g)$, $0 < \varepsilon < \frac{1}{1000}$, $p \ge 4$ and $(e_1,\cdots,e_{1000\varepsilon |\mathbf{p}|})$ as in the statement of the proposition. For $1 \le i \le 1000\varepsilon |\mathbf{p}|$, we denote by $f_i$ the triangle that lies behind $e_i$ in $t$. We assume throughout the whole section that Case $1$ of Proposition~\ref{many_events_C} does not hold. Thus, we aim to prove that Case 2 occurs. Let us explain intuitively why this proposition is true and why it is not obvious. In words, the fact that \eqref{eq1} does not hold means that the triangles $f_1,\cdots,f_{1000\varepsilon |\mathbf{p}|}$ do not “cross each other” much. Thus, the triangles $f_j$ must be “aligned” (see Figure~\ref{pattern}). In other words, there exists a positive proportion of edges $e \in \partial t$ such that $(t,e)$ satisfies $(\mathcal{C}_{\varepsilon^{-1},p})$. However, making this idea precise is not straightforward. In particular, we must be careful when discussing triangles that “cross each other”. Although this notion might seem intuitive (see Figure~\ref{many_triangles}), it is quite technical to make it rigorous. Indeed, we might have four triangles $f_1,f_2,f_3,f_4$ such that $f_1,f_2$ cross each other and so do $f_3,f_4$. However, $f_3$ and $f_4$ may no longer cross each other in $t \backslash (f_1\cup f_2)$ (see Figure~\ref{crossing_to_nocrossing}). To deal with this, we will discover the triangles in a well-chosen order (see Proposition~\ref{discover_nsep_first}). 
\begin{figure}[H]
\centering
\includegraphics[scale=1]{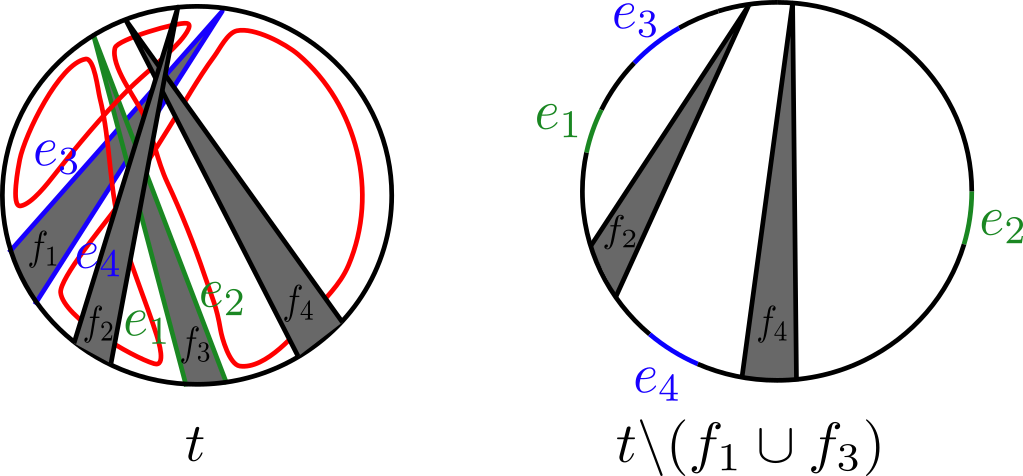}
\caption{On the left, we represent the boundary $\partial_1$ of a triangulation $t$ and four triangles $f_1,f_2,f_3,f_4$ such that $f_1,f_3$ cross each other, and so do $f_2,f_4$. The red path is the hole of $\partial t \cup f_1 \cup f_3$. On the right, we represent $t \backslash (f_1\cup f_3)$. The triangles $f_2,f_4$ no longer cross each other in $t \backslash (f_1\cup f_3)$. More precisely, in both orders $(1,3,2,4)$ and $(2,4,1,3)$, step $1$ is \textbf{SEP} and step $2$ is \textbf{NSEP}. However, for the order $(1,3,2,4)$, both steps $3$ and $4$ are \textbf{SEP}.}
\label{crossing_to_nocrossing}
\end{figure}

To prove this result, we discover the triangles $f_1,\cdots,f_{1000\varepsilon |\mathbf{p}|}$ one by one according to a well-chosen order. 
\begin{definition1}\label{order}
An order is a bijection from $\{1,\cdots,1000\varepsilon |\mathbf{p}|\}$ to itself. We write $i_{1},\cdots,i_{1000\varepsilon |\mathbf{p}|}$ for an order.
A partial order is an injection from $\{1,\cdots,m\}$ to $\{1,\cdots,1000\varepsilon |\mathbf{p}|\}$ with $m \le 1000\varepsilon |\mathbf{p}|$. We write $i_{1},\cdots,i_{m}$ for a partial order.
\end{definition1}

\noindent For any $1 \le j \le 1000\varepsilon |\mathbf{p}|$, we denote by $\mathcal{B}_j = \{\alpha^j_1,\cdots,\alpha^j_{\ell_j}\}$ the set of boundaries of $t \backslash (f_{i_1}\cup\cdots\cup f_{i_j})$. The elements of $\mathcal{B}_j$ are vertex-injective cycles that are vertex-disjoint from each other. The edges of the boundaries $\alpha^j_i$ consist of edges of the initial boundaries of $t$, plus potential edges coming from the already discovered triangles $f_{i_1},\cdots,f_{i_j}$. We refer to the discovery of the triangle $f_{i_{j}}$ as \emph{step $j$}. After discovering $f_{i_1},\cdots,f_{i_j}$, we say that we are at time $j$. The step $j$ is completely determined by the edge $e_{i_{j}}$, which lies on one of the boundaries $\{\alpha^j_1,\cdots,\alpha^j_{\ell_j}\}$, and by the vertex $v_{i_j}$, which lies on one of these boundaries and corresponds to the third vertex of the triangle $f_{i_{j}}$.
\begin{definition1}
For any order $(i_j)$ and any $1 \le j \le 1000 \varepsilon |\mathbf{p}|$, step $j$ is said to be \textbf{SEP} if $e_{i_{j}}\in \alpha^j_{s}$ and $v_{i_{j}} \in \alpha^j_{s}$ for some $s \in \{1,\cdots,\ell_j\}$. Otherwise, it is said to be \textbf{NSEP}. Thus, a \textbf{SEP} step splits the boundary $\alpha_s^j$ containing $e_{i_j}$ into two boundaries. The \textbf{NSEP} occurs when $f_{i_{j}}$ connects two distinct boundaries.\\
\end{definition1}
 
\begin{remark1}
Note that the type of a step $j$ depends on $i_1,\cdots, i_{j-1}$. Therefore, the type of step $j$ is not determined by $i_{j}$ alone. See Figure~\ref{crossing_to_nocrossing}.
\end{remark1}

Let us explain, in words, the order in which we will discover the triangles. We start by choosing \textbf{NSEP} steps as long as possible. When this is no longer possible, we consider pairs of consecutive steps such that the first is \textbf{SEP} and the second \textbf{NSEP}. The pairs (\textbf{SEP},\textbf{NSEP}) have the effect of reducing the genus of $t \backslash (f_{i_1}\cup \cdots \cup f_{i_j})$. When this is no longer possible either, we will show that regardless of the order in which we discover the remaining triangles, all subsequent steps will be \textbf{SEP}.\\
\begin{figure}[H]
    \centering
    \includegraphics[scale =0.15]{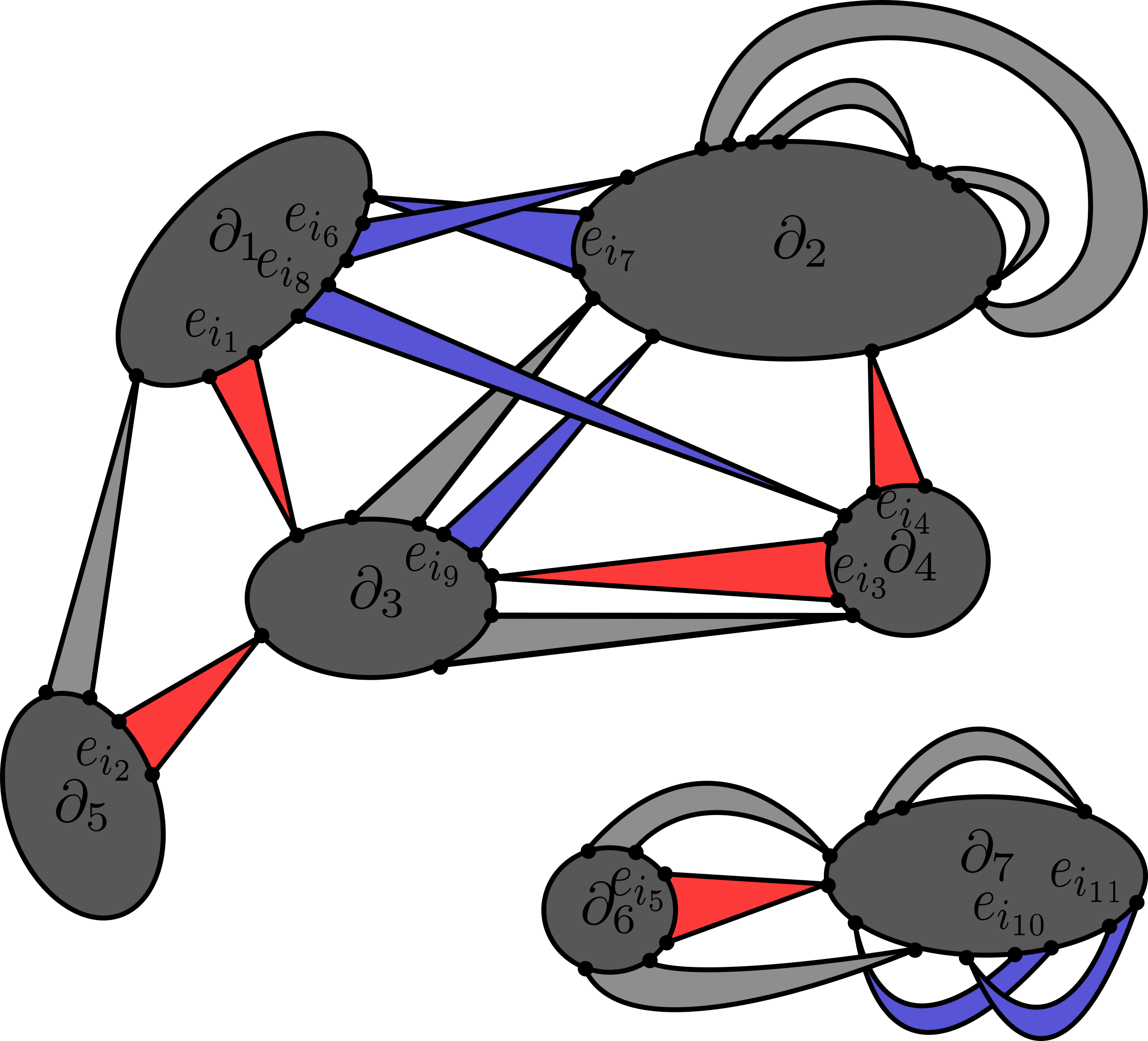}
    \caption{Starting from the example in Figure~\ref{many_triangles}, we first discover $i_1,i_2,i_3,i_4,i_5$ since each of these steps is \textbf{NSEP}. Then we discover $i_6,i_7,i_8,i_9,i_{10},i_{11}$ as these alternate between \textbf{SEP} and \textbf{NSEP} steps. By Proposition~\ref{discover_nsep_first}, for any ordering of the grey triangles, all remaining steps after step $11$ are \textbf{SEP}.}
    \label{discover_first_triangles}
\end{figure}

\begin{proposition}\label{discover_nsep_first}
There exist $m_1,m_2 \ge 0$ and a partial order $i_1,\cdots,i_{m_1+2m_2}$ with $m:=m_1+2m_2 \le \varepsilon p^{-1}|\mathbf{p}|$ such that:
\begin{enumerate}
    \item \label{item1} For any $j \in \{1,\cdots,m_1\}$, step $j$ is $\mathbf{NSEP}$.
    \item\label{item2} For any $j \in \{1,\cdots,m_2\}$, step $m_1+2j-1$ is $\mathbf{SEP}$ and step $m_1+2j$ is $\mathbf{NSEP}$.
    \item \label{item3} For any $i_{m+1},\cdots,i_{1000 \varepsilon |\mathbf{p}|}$ completing the partial order $i_1,\cdots,i_m$ to a full order $i_1,\cdots,i_{1000 \varepsilon |\mathbf{p}|}$ and for any $j \in \{m+1,\cdots,1000 \varepsilon |\mathbf{p}|\}$, step $j$ is $\mathbf{SEP}$.
\end{enumerate}
\end{proposition}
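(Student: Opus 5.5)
We build the partial order greedily, in two phases, and use the standing assumption that Case~\ref{eq1} of Proposition~\ref{many_events_C} fails to bound its length. In \emph{Phase 1}, as long as some not-yet-used index $i$ makes the next step \textbf{NSEP}, we choose it as the next $i_j$; let $m_1$ be the number of steps taken. In \emph{Phase 2}, as long as there are two unused indices $i,i'$ such that discovering $f_i$ is \textbf{SEP} and discovering $f_{i'}$ right after is \textbf{NSEP}, we append the pair $(i,i')$; let $m_2$ be the number of such pairs. Items~\ref{item1} and~\ref{item2} then hold by construction. Item~\ref{item3} has to be argued separately, and the bound on $m$ comes from the negation of Case~\ref{eq1}.

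For the bound on $m$, we track the quantity $(\ell_j)+2g_j$, where $\ell_j=|\mathcal{B}_j|$ and $g_j$ is the genus of $t\setminus(f_{i_1}\cup\cdots\cup f_{i_j})$. The basic accounting is as follows. An \textbf{NSEP} step merges two boundaries, either of the same component (connected complement, so $\ell$ drops by one and the Euler characteristic forces $g$ to be unchanged) or of different components. A \textbf{SEP} step splits a boundary into two. In the latter case either the complement disconnects, which does not change the total genus, or it stays connected, so $\ell$ increases by one and $g$ decreases by one. Either way, each \textbf{NSEP} step lowers $\ell+2g$ by one (counted over all components, with genera summed), and each pair (\textbf{SEP},\textbf{NSEP}) lowers it by one net. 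Consequently, if $m\ge \varepsilon p^{-1}|\mathbf{p}|$, taking $I$ to be the first $r\approx\varepsilon p^{-1}|\mathbf{p}|$ indices would almost realize Case~\ref{eq1}. The face count $n'\le n+|\mathbf{p}|$ is automatic, since removing at most $|\mathbf{p}|$ triangles changes the normalized size by $O(|\mathbf{p}|)$. The mismatch between $r$ and the drop in $\ell+2g$ inside the pairs is repaired by adjusting which steps go into $I$: one option is to keep only the \textbf{NSEP} steps together with their predecessors and check that removing only the \textbf{NSEP} triangle of a pair also lowers $\ell+2g$ by one. This is where I expect some bookkeeping. The conclusion is that we may stop both phases once $m$ reaches $\varepsilon p^{-1}|\mathbf{p}|$, so $m\le\varepsilon p^{-1}|\mathbf{p}|$; if instead the phases would run longer, Case~\ref{eq1} holds.

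The main obstacle is Item~\ref{item3}: after both phases end, we must show that every remaining step stays \textbf{SEP} for every completion order, even though step types depend on the past (Figure~\ref{crossing_to_nocrossing}). I would argue by contradiction. Take a completion and the first index $j>m$ whose step is \textbf{NSEP}. Steps $m+1,\dots,j-1$ are then all \textbf{SEP}. Let $f$ be the triangle at step $j$; its edge and third vertex lie on distinct boundaries of $\mathcal{B}_{j-1}$. A \textbf{SEP} step only splits a cycle into two by adding a chord through the triangle. So if $f$'s endpoints already lay on distinct boundaries at time $m$, $f$ would have been available in Phase~1, which contradicts the termination of Phase~1. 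Hence they were on a common boundary at time $m$, and some earlier \textbf{SEP} step $f'$ among steps $m+1,\dots,j-1$ separated them. I would then show that one can pick a \emph{single} such $f'$ (for instance the last separating one) so that at time $m$ the step $f'$ is \textbf{SEP} and $f$ becomes \textbf{NSEP} immediately after $f'$. This is a planarity and interlacing argument on the cyclic boundary: the endpoints of $f$ and $f'$ interlace on the common cycle, and the intermediate \textbf{SEP} chords that do not interlace with $f$ can be discarded. That gives an admissible Phase~2 pair and contradicts the termination of Phase~2. Making this interlacing reduction rigorous when several intermediate \textbf{SEP} chords are nested is the step I expect to need the most care.
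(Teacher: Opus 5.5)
Your architecture is the paper's: a non-extendable run of \textbf{NSEP} steps, then a non-extendable run of (\textbf{SEP},\textbf{NSEP}) pairs, the bound on $m$ from the failure of Case~1, and Item~3 by contradiction. Non-extendability is all the paper uses as well. However, the one structural fact that makes all three parts work is missing, and the accounting you give in its place is wrong.

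The fact is the following. Suppose that at some time no \textbf{NSEP} step is available, the next step is \textbf{SEP} and cuts a boundary $\alpha$ into $\beta,\gamma$, and the step after it is \textbf{NSEP}. Then the second triangle must join $\beta$ to $\gamma$. Otherwise its edge (an edge of $\partial t$) and its third vertex lie on two boundaries that were already distinct before the \textbf{SEP} step, because the vertices of $\beta\cup\gamma$ come from $\alpha$ and the other boundaries are unchanged. So it would already have been an available \textbf{NSEP} step.

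Several consequences follow.
\begin{itemize}
\item The \textbf{SEP} step did not disconnect the complement, since a single face lies in one component. For the same reason an \textbf{NSEP} step can never merge boundaries of different components, contrary to what you wrote.
\item The pair keeps the complement connected, keeps the number of boundaries, and lowers the genus by exactly one. So it lowers $\ell+2g$ by two, not by one.
\item Each boundary after the pair has its vertices among those of a distinct boundary before it. Hence, by induction over the pairs, no \textbf{NSEP} step is available after any pair.
\item Taking $I=\{i_1,\dots,i_m\}$ gives $t\setminus\bigsqcup_{i\in I}f_i\in\mathcal{T}_{\mathbf q}(n+m_1,g-m_2)$ with $\ell'=\ell-m_1$. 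Therefore $(\ell-\ell')+2(g-g')=m$ exactly and $n'\le n+|\mathbf{p}|$, and the negation of Case~1 gives $m<\varepsilon p^{-1}|\mathbf{p}|$ directly.
\end{itemize}
There is no mismatch to repair; removing only the \textbf{NSEP} triangle of a pair does not produce a valid accounting. You also must not stop the phases early, since Item~3 relies on their natural termination.

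The same missing invariant undermines your Item~3 argument. The sentence ``if the endpoints of $f$ lay on distinct boundaries at time $m$, $f$ would have been available in Phase~1'' confuses time $m$ with time $m_1$. What you need is that no \textbf{NSEP} step is available at time $m$, which is exactly the invariant above.

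The interlacing reduction you leave open is avoided in the paper by an induction on $j$ for the statement $\mathcal{P}_j$: every completion has steps $m+1,\dots,j$ of type \textbf{SEP}.
\begin{itemize}
\item $\mathcal{P}_{m+1}$ is the invariant.
\item $\mathcal{P}_{m+2}$ is the termination of Phase~2.
\item Suppose steps $m+1,\dots,j$ are \textbf{SEP} but step $j+1$ is \textbf{NSEP}. The reconnection argument combined with $\mathcal{P}_j$ (delete $i_j$) shows that $f_{i_{j+1}}$ joins the two pieces cut by $f_{i_j}$.
\item Now delete $i_{j-1}$ instead. Then $f_{i_j}$ is still \textbf{SEP} and $f_{i_{j+1}}$ still joins the two pieces it cuts, because undoing the \textbf{SEP} step $f_{i_{j-1}}$ only enlarges those pieces. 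So step $j$ of the shortened order is \textbf{NSEP}, contradicting $\mathcal{P}_j$.
\end{itemize}
Since only one step is removed at a time, nested chords never have to be analysed simultaneously.
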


Here we emphasize that our proof is entirely deterministic and depends only on $t$.

\begin{proof}
We first construct $m_1,m_2 \ge 0$ and $i_1,\cdots,i_{m}$, and then verify that items \eqref{item1}, \eqref{item2}, and \eqref{item3} are satisfied. To do so, let us introduce a partial order $(i_1,\cdots,i_{m_1})$ of maximal size such that for $1 \le j \le m_1$, step $j$ is $\textbf{NSEP}$. Thus, for any $j \in \{1,\cdots,m_1\}$, the triangulation $t \backslash (f_1\cup \cdots \cup f_j)$ has exactly one less boundary component than $t \backslash (f_1\cup \cdots \cup f_{j-1})$ and the same genus. We deduce that
\begin{center}
 $t \backslash(f_1\cup \cdots \cup f_{m_1}) \in \mathcal{T}_{(q_1,\cdots,q_{\ell- m_1})}(n+m_1,g)$,
\end{center}
for some $(q_1,\cdots,q_{\ell-m_1})$.

Now, let us construct $i_{m_1+1},\cdots,i_{m_1+2m_2}$. We denote by $Z$ the set of tuples $(i_{m_1+1},\cdots,i_{m_1+2j})$ with $j \ge 0$ satisfying the following conditions:
\begin{enumerate}
	\item \label{cd1} $(i_1,\cdots,i_{m_1+2j})$ is a partial order.
	\item \label{cd2} For any $s \in \{1,\cdots,j\}$, step $m_1+2s-1$ is \textbf{SEP}.
	\item \label{cd3} For any $s \in \{1,\cdots,j\}$, step $m_1+2s$ is \textbf{NSEP}.
\end{enumerate}

We first claim that for any $(i_{m_1+1},\cdots,i_{m_1+2j}) \in Z$:
\begin{itemize}
	\item[a)] We have $t\backslash (f_1\cup \cdots \cup f_{m_1+2j}) \in \mathcal{T}_{(q_1,\cdots,q_{\ell - m_1})}(n+m_1,g-j)$ where $(q_1,\cdots,q_{\ell - m_1}) \in (\mathbb{N}^{*})^{\ell-m_1}$.
	\item[b)] For any $i_{m_1+2j+1} \in \{1,\cdots,1000\varepsilon |\mathbf{p}|\}\backslash \{i_1,\cdots,i_{m_1+2j}\}$, in the partial order $(i_1,\cdots,i_{m_1+2j},i_{m_1+2j+1})$, step $m_1+2j+1$ is \textbf{SEP}.
\end{itemize}
 
We prove this claim by induction on $j \ge 0$.
\begin{itemize}
	\item[$\bullet$] For $j = 0$, we have $t\backslash (f_1 \cup \cdots \cup f_{m_1}) = t \in \mathcal{T}_{(q_1,\cdots,q_{\ell - m_1})}(n+m_1,g)$, so item (a) holds. Item (b) follows from the definition of $m_1$.
	\item[$\bullet$] Suppose the result holds for some $j \ge 0$. Let us fix $(i_{m_1+1},\cdots,i_{m_1+2j+2}) \in Z$. Then $(i_{m_1+1},\cdots,i_{m_1+2j}) \in Z$, so
	\begin{center}
	$t\backslash (f_1 \cup \cdots \cup f_{m_1+2j}) \in \mathcal{T}_{(q_1,\cdots,q_{\ell - m_1})}(n+m_1,g-j)$.
	\end{center} 
Moreover, by item~\eqref{cd2}, step $m_1+2j+1$ is \textbf{SEP}. Thus, the triangle $f_{i_{m_1+2j+1}}$ splits a boundary $\alpha \in \mathcal{B}_{m_1+2j}$ into two boundaries $\beta,\gamma \in \mathcal{B}_{m_1+2j+1}$. We claim that 
\begin{align}\label{disconnect_connect}
 f_{i_{m_1+2j+2}} \text{ connects } \beta \text{ and } \gamma \text{ in } t\backslash (f_1 \cup \cdots \cup f_{m_1+2j} \cup f_{m_1+2j+1}).
\end{align} 
Otherwise, in the partial order $(i_1,\cdots,i_{m_1+2j},i_{m_1+2j+2})$ (removing $i_{m_1+2j+1}$), step $m_1+2j+1$ would be \textbf{NSEP} (see Figure~\ref{SEPNSEP}), contradicting item (b) of the induction hypothesis.
\begin{figure}[H]
\centering
\includegraphics[scale=0.45]{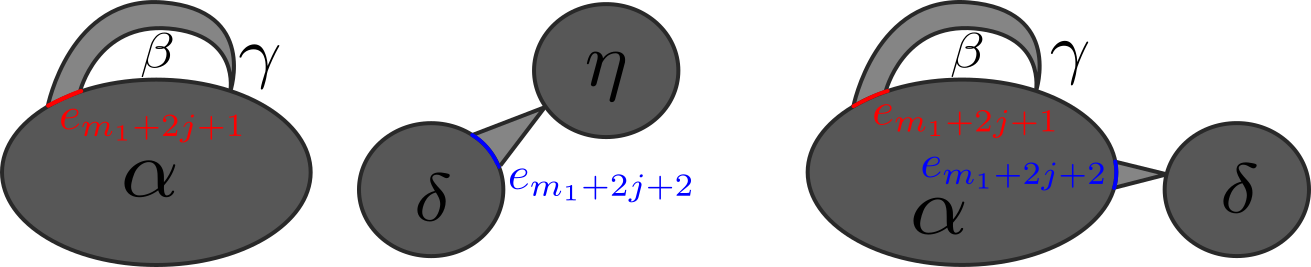}
\caption{On the left, step $m_1+2j+2$ is \textbf{NSEP} and connects two boundaries $\delta,\eta \in \mathcal{B}_{m_1+2j+1} \backslash \{\beta,\gamma\}$. On the right, step $m_1+2j+2$ is \textbf{NSEP} and connects boundary $\gamma$ with a boundary $\delta \in \mathcal{B}_{m_1+2j+1} \backslash \{\beta,\gamma\}$. In both cases, in the partial order $(i_1,\cdots,i_{m_1+2j},i_{m_1+2j+2})$, step $m_1+2j+1$ becomes \textbf{NSEP}.}
\label{SEPNSEP}
\end{figure} 
Hence,
\begin{center}
	$t\backslash (f_1 \cup \cdots \cup f_{m_1+2j} \cup f_{m_1+2j+1} \cup f_{m_1+2j+2}) \in \mathcal{T}_{(q_1,\cdots,q_{\ell - m_1})}(n+m_1,g-j-1)$,
\end{center} 
so item (a) is proved. Item (b) also follows. Indeed, suppose that there exists $i_{m_1+2j+3} \in \{1,\cdots,1000\varepsilon |\mathbf{p}|\}\backslash \{i_1,\cdots,i_{m_1+2j+2}\}$ such that in the partial order $(i_1,\cdots,i_{m_1+2j+2},i_{m_1+2j+3})$, step $m_1+2j+3$ is \textbf{NSEP}. Then, removing $i_{m_1+2j+1}$ and $i_{m_1+2j+2}$ would make step $m_1+2j+1$ discovering $f_{i_{m_1+2j+3}}$, thus it would connect two boundaries and thus be of type \textbf{NSEP}, contradicting item (b) of the induction hypothesis.
\end{itemize} 

We have $\emptyset \in Z$, hence we can define $m_2$ as the maximal size of an element of $Z$. Let us fix $(i_{m_1+1},\cdots,i_{m_1+2m_2}) \in Z$ (potentially empty). By item (a) we have
\begin{center}
$t\backslash (f_1 \cup \cdots \cup f_{m_1+2m_2}) \in \mathcal{T}_{(q_1,\cdots,q_{\ell - m_1})}(n+m_1,g-m_2)$.
\end{center}
Since Item~\ref{eq1} of Proposition~\ref{many_events_C} does not hold, it follows that $(\ell -(\ell-m_1))+2(g-(g-m_2))  \le \varepsilon p^{-1} |\mathbf{p}|$, i.e.
\[
m_1+2m_2 \le \varepsilon p^{-1} |\mathbf{p}|.
\]
Items \eqref{item1} and \eqref{item2} of Proposition~\ref{discover_nsep_first} follow from the construction of $m_1$ and $m_2$. Finally, we prove item \eqref{item3}. We prove this result by induction. For any $j \in \{m+1,\cdots,1000 \varepsilon |\mathbf{p}|\}$, define:
\begin{center}
    $\mathcal{P}_j$ := For any $(i_{m+1},\cdots, i_{j})$ such that $(i_1,\cdots,i_m,\cdots,i_j)$ is a partial order, and for any $s \in \{m+1,\cdots,j\}$, step $s$ is \textbf{SEP}.
\end{center}
We prove this property by induction on $j$.
\begin{itemize}
	\item[$\bullet$] Statement $\mathcal{P}_{m+1}$ holds by item (b). Statement $\mathcal{P}_{m+2}$ holds by definition of $m_2$.
	\item[$\bullet$] Suppose $\mathcal{P}_{j}$ holds for some $j \ge m+2$. Let us prove $\mathcal{P}_{j+1}$. Fix $i_{m+1},\cdots, i_{j},i_{j+1}$ such that $(i_1,\cdots,i_m,\cdots,i_{j+1})$ is a partial order. By induction, for any $s \in \{m+1,\cdots,j\}$, step $s$ is \textbf{SEP}. Suppose that step $j+1$ is \textbf{NSEP}. Since step $j$ is \textbf{SEP}, the triangle $f_{i_j}$ splits a boundary $\alpha$ into two boundaries $\beta$ and $\gamma$. Then $f_{i_{j+1}}$ must connect $\beta$ and $\gamma$, as in \eqref{disconnect_connect}. But then, in the partial order $(i_1,\cdots,i_{j-2},i_{j},i_{j+1})$ (removing $i_{j-1}$), step $j$ would be \textbf{NSEP}, contradicting $\mathcal{P}_{j}$.
\end{itemize}
This concludes the induction. Item \eqref{item3} of Proposition~\ref{discover_nsep_first} is precisely $\mathcal{P}_{1000 \varepsilon |\mathbf{p}|}$, which concludes the proof.
\end{proof}

For the rest of this section, we fix $(i_1,\cdots,i_{m})$ given by Proposition~\ref{discover_nsep_first}. We write $t' = t \backslash (f_{i_1} \cup \cdots\cup f_{i_m})$. Let us write $t' \in \mathcal{T}_{(q_1,\cdots,q_{\ell'})}(n',g')$. We denote by $\{f'_1,\cdots,f'_{1000 \varepsilon |\mathbf{p}| - m}\} := \{f_{1},\cdots,f_{1000 \varepsilon |\mathbf{p}|}\} \backslash \{f_{i_1},\cdots,f_{i_m}\}$ the set of triangles that remain to be discovered.  

Although we are now working in $t'$, the notion of being $L-close$ is still defined with respect to $t$. 

We introduce $\mathcal{E}$ as the set of edges lying on the boundaries of $t'$ that are incident to one of the already discovered triangles $f_{i_1},\cdots,f_{i_m}$. Item~3 of Proposition~\ref{discover_nsep_first} ensures that the triangles $\{f_1',\cdots,f_{1000 \varepsilon |\mathbf{p}|-m}'\}$ are embedded in a planar way on $\partial t'$ (see Figure~\ref{forest_structure_embedding}).  

We introduce $\mathcal{H}$ as the set of holes $h$ of $\partial t' \cup f_1'\cup \cdots \cup f_{1000 \varepsilon |\mathbf{p}|-m}'$ that are incident to one of the triangles in $\{f_1',\cdots,f_{1000 \varepsilon |\mathbf{p}|-m}'\}$. 

We can now define a forest structure $\mathcal{F}$ (see Figure~\ref{forest_structure_embedding}) as follows:
\begin{itemize}
	\item[$\bullet$] The set of vertices $V$ consists of the edges $e \in \mathcal{E}$ and the holes $h \in \mathcal{H}$.
	\item[$\bullet$] The set of edges $E$ consists of:
	\begin{itemize}
	\item[$\bullet$] Edges $u = \{h,e\}$ where $h \in \mathcal{H}$, $e \in \mathcal{E}$, and $e$ lies on $h$.
	\item[$\bullet$] Edges $u = \{h,h'\}$ where $h,h' \in \mathcal{H}$ and $h,h'$ are separated by a single triangle $f_i'$.
	\end{itemize}
\end{itemize}

\begin{figure}[H]
\centering
\includegraphics[scale=0.65]{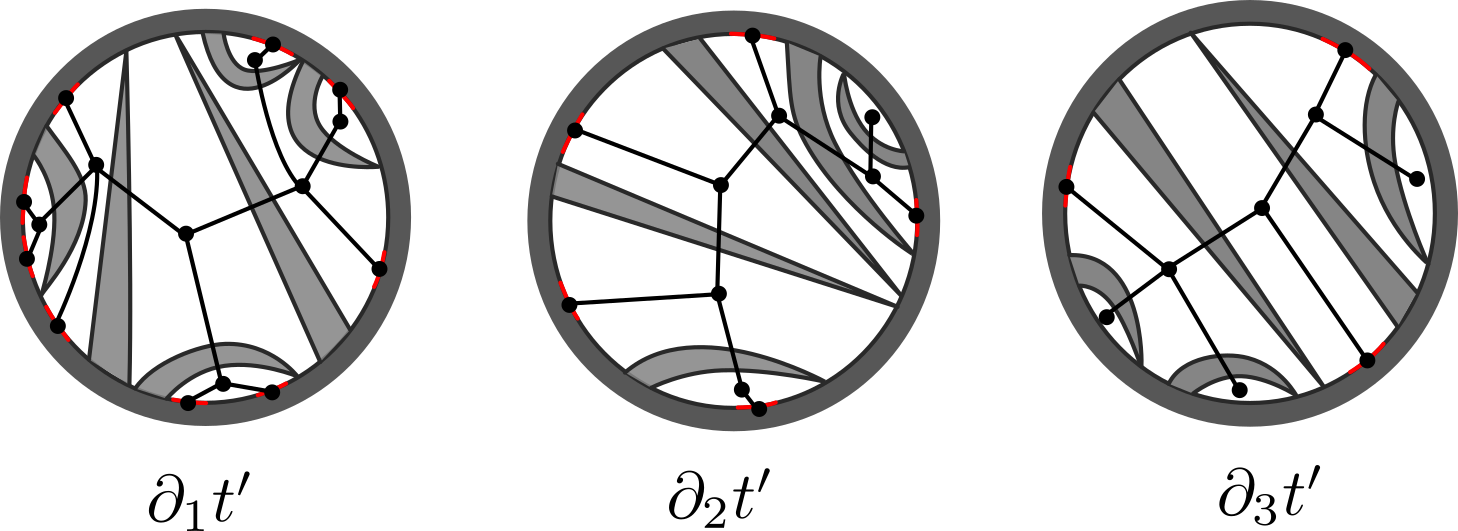}
\caption{We represent the triangles $\{f_1',\cdots,f_{1000 \varepsilon |\mathbf{p}|-m}'\}$ that remain to be discovered, embedded in the boundaries of $t'$. The edges in red correspond to $\mathcal{E}$. The forest $\mathcal{F}$ is also represented.}
\label{forest_structure_embedding}
\end{figure}

This forest encapsulates, in a more concise way, how the triangles $f_i'$ embed on $\partial t'$. In the next proposition, we aim to show that most of the vertices of the forest $\mathcal{F}$ have degree $2$. Thus, when choosing a uniform vertex $x \in \mathcal{F}$, its neighbourhood typically looks like a “line.” We relate this structure to the pattern observed in Figure~\ref{pattern} and the event $(\mathcal{C}_{L,p})$.

The number of vertices in the forest $\mathcal{F}$ is at least $2m+(1000 \varepsilon |\mathbf{p}|-m +1) = 1000 \varepsilon |\mathbf{p}| + m +1$, where $2m$ accounts for the number of edges in $\mathcal{E}$ and $1000 \varepsilon |\mathbf{p}|-m +1$ is a crude lower bound for the number of holes $h$ in $\partial t' \cup (f_1' \cup \cdots \cup f_{1000 \varepsilon |\mathbf{p}|-m}')$ that are incident to one of the triangles in $\{f_1',\cdots,f_{1000 \varepsilon |\mathbf{p}|-m}'\}$.

This is made more precise in the next proposition. For any $h \in \mathcal{H}$, we write $\phi(h)$ for the perimeter of $h$.  

\begin{proposition}\label{properties_forest}
The forest $\mathcal{F}$ satisfies the following properties:
\begin{enumerate}
    \item \label{it1forest} We have the bound $\displaystyle\sum_{h \in \mathcal{H}} \phi(h) \le 2|\mathbf{p}|$.
    \item\label{it2forest} The number of leaves plus isolated vertices is bounded by $3\varepsilon p^{-1}|\mathbf{p}|$. There are more than $1000\varepsilon|\mathbf{p}|$ vertices in $\mathcal{F}$ and more than $999\varepsilon|\mathbf{p}|$ edges.
    \item \label{it3forest} We have the bound $\displaystyle \sum_{\substack{v \in \mathcal{F}\\\deg(v)\ge 3}} \deg_{\mathcal{F}}(v) \le 18 \varepsilon p^{-1}|\mathbf{p}|$.
    \item\label{it4forest} Let $h$ be a vertex in $\mathcal{F}$ with degree $2$. Then $h \in \mathcal{H}$. Moreover, suppose that the neighbours of $h$ are $h_1,h_2 \in \mathcal{H}$. Write $f_1$ for the triangle separating $h_1$ and $h$, and $f_2$ for the triangle separating $h_2$ and $h$. Then $f_1$ and $f_2$ are $\phi(h)$-close in $t$.
\end{enumerate}
\end{proposition}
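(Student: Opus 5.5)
The plan is to prove the four items in order. Items~\ref{it1forest} and~\ref{it4forest} are local statements about how the remaining triangles $f'_i$ sit on $\partial t'$. Items~\ref{it2forest} and~\ref{it3forest} are counting arguments on the forest $\mathcal{F}$. The basic input throughout is Item~\ref{item3} of Proposition~\ref{discover_nsep_first}: every remaining step is \textbf{SEP}, whatever order is used. Hence each $f'_i$ has its edge $e$ and its third vertex on the same current boundary, and splits it into two parts. Consequently the triangles $f'_1,\dots,f'_{1000\varepsilon|\mathbf{p}|-m}$ are embedded in a non-crossing, planar way along the boundary cycles of $t'$. This is also what makes $\mathcal{F}$ a forest: every triangle is a cut between the two holes it separates.

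For Item~\ref{it1forest}, count edge incidences. Every edge of a hole $h\in\mathcal{H}$ is either a boundary edge of $t'$ or one of the two non-root sides of some $f'_i$. Each such edge lies on at most one hole. Hence
\[
\sum_{h\in\mathcal{H}}\phi(h)\le |\mathbf{q}|+2\cdot 1000\varepsilon|\mathbf{p}|.
\]
Here $|\mathbf{q}|\le|\mathbf{p}|+m$, since each discovered triangle adds at most one net boundary edge. Since $\varepsilon<1/1000$ and $m\le\varepsilon p^{-1}|\mathbf{p}|$, this gives the bound $2|\mathbf{p}|$.

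For Item~\ref{it2forest}, first classify the leaves and isolated vertices.
\begin{itemize}
\item Those lying in $\mathcal{E}$ number at most $|\mathcal{E}|\le 2m\le 2\varepsilon p^{-1}|\mathbf{p}|$.
\item A hole $h\in\mathcal{H}$ of degree at most $1$ is adjacent to exactly one triangle $f'_i$ and contains no edge of $\mathcal{E}$. So $h$ consists of one side of $f'_i$ together with an arc of original boundary edges of $t$ untouched by the first $m$ triangles. On that arc the triangle $f'_i$ looks as it does in $t$. Since $(t,e_i)$ satisfies $(\mathcal{H}_{2\varepsilon^{-1}p})$ or $(\mathcal{O})$, and no other triangle lies inside $h$, the perimeter satisfies $\phi(h)\ge 2\varepsilon^{-1}p$. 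In the $(\mathcal{O})$ case, the original boundary $\partial_j$ that was merged in contributes its full length. This is the point to check carefully.
\end{itemize}
Holes are edge-disjoint, so Item~\ref{it1forest} gives at most $2|\mathbf{p}|/(2\varepsilon^{-1}p)=\varepsilon p^{-1}|\mathbf{p}|$ such holes. Adding the two contributions gives the bound $3\varepsilon p^{-1}|\mathbf{p}|$. For the size of $\mathcal{F}$, each $f'_i$ yields at least one forest edge between the two holes on either side of it. These edges are distinct, so there are at least $1000\varepsilon|\mathbf{p}|-m\ge 999\varepsilon|\mathbf{p}|$ edges and, in a forest, more vertices than that.

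Item~\ref{it3forest} is then pure forest combinatorics. In a forest with $c$ components, $\sum_v(\deg v-2)=-2c\le 0$. Therefore
\[
\sum_{\deg v\ge3}(\deg v-2)\le \#\{\text{leaves}\}+2\,\#\{\text{isolated}\}\le 6\varepsilon p^{-1}|\mathbf{p}|,
\]
and $\deg v\le 3(\deg v-2)$ for $\deg v\ge3$ yields the bound $18\varepsilon p^{-1}|\mathbf{p}|$.

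For Item~\ref{it4forest}, note that elements of $\mathcal{E}$ have degree at most $1$, since an edge lies on one hole. Hence a degree-$2$ vertex is a hole $h$ with no $\mathcal{E}$ edge, adjacent to exactly two triangles $f_1,f_2$. Its boundary therefore consists of one side of $f_1$, one side of $f_2$, and at most two arcs of original boundary edges of $t$, of total length at most $\phi(h)$. Each vertex of $f_1$ lies on this cycle and is joined to a vertex of $f_2$ through one of these arcs. So it is within distance $\phi(h)$ along $\partial t$, and symmetrically for $f_2$, which is exactly $\phi(h)$-closeness.

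The main obstacle I expect is the leaf bound in Item~\ref{it2forest}. Two things need care there: a hole free of $\mathcal{E}$ edges really sees only original boundary, and the events $(\mathcal{H}_{2\varepsilon^{-1}p})$ and $(\mathcal{O})$, defined in $t$, still force a long hole after the first $m$ removals.
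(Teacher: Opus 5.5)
Your plan follows the paper's proof almost step for step: the leaf bound via Item~1, the forest inequality $\sum_v(\deg v-2)\le 0$ for Item~3, and the two boundary arcs for Item~4. However, the step you flagged as delicate is not handled correctly.

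For a hole $h$ of degree at most $1$, you correctly reduce to the following picture: $h$ is one non-root side $\{a,b\}$ of $f'_i$ together with an arc $\gamma$ from $a$ to $b$ made only of edges of $\partial t$. Your treatment of the $(\mathcal{O})$ case, ``the merged boundary $\partial_j$ contributes its full length'', cannot give $\phi(h)\ge 2\varepsilon^{-1}p$. Nothing prevents $p_j$, or the perimeter of the boundary containing $e_i$, from being $1$, and $(\mathcal{O})$ carries no length information at all.

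The missing observation is that $(\mathcal{O})$ is impossible here. One of $a,b$ is an endpoint of $e_i$ and the other is the apex $z$ of $f'_i$. Since $\gamma$ uses only edges of $\partial t$ and distinct boundaries of $t$ are vertex-disjoint, $z$ lies on the same boundary as $e_i$. So $(t,e_i)$ satisfies $(\mathcal{H}_{2\varepsilon^{-1}p})$, and $\gamma$ is the arc of that boundary from $a$ to $b$ avoiding $e_i$. Hence $|\gamma|\ge 2\varepsilon^{-1}p-1$ and $\phi(h)\ge 2\varepsilon^{-1}p$. This is exactly how the paper argues; without it the leaf count, and therefore Items~2 and~3, are not established.

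There are also a few smaller slips.

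In Item~1 you double-count. The root edges $e_i$ of the remaining triangles are edges of $\partial t'$ that lie on no hole. The correct count is therefore $|\mathbf{q}|+(1000\varepsilon|\mathbf{p}|-m)\le|\mathbf{p}|+1000\varepsilon|\mathbf{p}|\le 2|\mathbf{p}|$, which is the paper's ``each triangle adds at most one to the total perimeter''. Your bound $|\mathbf{p}|+m+2000\varepsilon|\mathbf{p}|$ is only about $3|\mathbf{p}|$ for $\varepsilon$ near $1/1000$. That would push the constant $3$ in Item~2 up to about $7/2$.

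In Item~4 it is not true that every vertex of $f_1$ lies on $h$. The endpoint of the root edge of $f_1$ that is not on the side facing $h$ is off that cycle. You need one extra step along this root edge, which is an edge of $\partial t$. Since each arc has length at most $\phi(h)-2$, you still get $\phi(h)$-closeness. Also, a degree-$2$ vertex can be a hole with one $\mathcal{E}$-neighbour. You may only exclude $\mathcal{E}$-edges on $h$ under the stated hypothesis that both neighbours are holes.

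Finally, counting one distinct forest edge per remaining triangle is a neat, more direct route to the edge bound than the paper's ``vertices minus components''. But it only yields more than $999\varepsilon|\mathbf{p}|$ vertices. The claimed $1000\varepsilon|\mathbf{p}|$ requires counting the $\mathcal{E}$-vertices and the holes separately, as the paper does.
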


\begin{proof}
Let us prove item~\ref{it1forest}. This follows from the fact that each discovered triangle increases the total perimeter by at most $1$. Since we have discovered $1000 \varepsilon |\mathbf{p}| < |\mathbf{p}|$ triangles, we obtain
\[
\sum_{h \in \mathcal{H}} \phi(h) \le |\mathbf{p}| + 1000 \varepsilon |\mathbf{p}| \le 2|\mathbf{p}|.
\]

\noindent Now we prove item~\ref{it2forest}. The union of the leaves and isolated vertices of $\mathcal{F}$ is given by the vertices $e \in \mathcal{E}$ and the holes $h \in \mathcal{H}$ having degree $1$ in $\mathcal{F}$. We have $\# \mathcal{E} \le 2m \le 2\varepsilon p^{-1}|\mathbf{p}|$.  
Fix $h \in \mathcal{H}$ with exactly one neighbour in $\mathcal{F}$. Then, we claim that
\begin{align}\label{holes_degree_one_phi_large}
 \phi(h) \ge 2\varepsilon^{-1} p.
\end{align}
Indeed, let $f_0'$ be the unique triangle to which $h$ is incident, and $e_0$ the edge of $\partial t'$ behind which $f_0'$ lies. Let $e_1$ be the edge of $f_0'$ lying in $h$. Denote by $a,b$ the two endpoints of $e_1$. Then $h \backslash e_1$ defines a path $\gamma$ between $a$ and $b$. Since no edge of $\mathcal{E}$ lies on $h$, the path $\gamma$ is composed entirely of edges of $\partial t$ (see Figure~\ref{forest_proof}).  

By the hypothesis of Proposition~\ref{many_events_C}, $(t,e_0)$ satisfies either $(\mathcal{H}_{\varepsilon^{-1}p})$ or $(\mathcal{O})$. It cannot satisfy $(\mathcal{O})$, since $a$ and $b$ lie on the same boundary of $\partial t$. Thus, $(t,e_0)$ satisfies $(\mathcal{H}_{2\varepsilon^{-1}p})$, and hence $d_{\partial t}(a,b) \ge 2\varepsilon^{-1}p - 1$. In particular, the path $\gamma$ has length at least $2\varepsilon^{-1}p - 1$, which implies $\phi(h) \ge 2\varepsilon^{-1} p$.  

Combining this with item~\ref{it1forest}, we find that the number of leaves plus isolated vertices is bounded by
\[
2\varepsilon p^{-1}|\mathbf{p}| + (2\varepsilon^{-1}p)^{-1} 2|\mathbf{p}| = 3 \varepsilon p^{-1}|\mathbf{p}|.
\]
The total number of vertices in $\mathcal{F}$ is at least $1000 \varepsilon |\mathbf{p}|+m+1 \ge 1000 \varepsilon |\mathbf{p}|$. In a forest, the total number of edges equals $\#V - C$, where $C$ denotes the number of connected components. Since the number of connected components is bounded by the number of leaves, the total number of edges is bounded below by 
\[
1000 \varepsilon |\mathbf{p}| - 4 \varepsilon p^{-1}|\mathbf{p}| \ge 999 \varepsilon |\mathbf{p}|.
\]

Now we prove item~\ref{it3forest}. Since $\mathcal{F}$ is a forest, we have $\sum_{v \in \mathcal{F}} (\deg_{\mathcal{F}}(v)-2) \le 0$. Thus,
\begin{align*}
\sum_{\substack{v \in \mathcal{F}\\\deg_{\mathcal{F}}(v)\ge 3}} \deg_{\mathcal{F}}(v)
&\le 3\sum_{\substack{v \in \mathcal{F}\\\deg_{\mathcal{F}}(v)\ge 3}} (\deg_{\mathcal{F}}(v)-2)
\le 3\sum_{\substack{v \in \mathcal{F}\\\deg_{\mathcal{F}}(v)\in \{0,1\} }} (2-\deg_{\mathcal{F}}(v))
\underset{\mathrm{Item}~\ref{it2forest}}{\le} 18\varepsilon p^{-1}|\mathbf{p}|.
\end{align*}

\begin{figure}[H]
\centering
\includegraphics[scale=0.8]{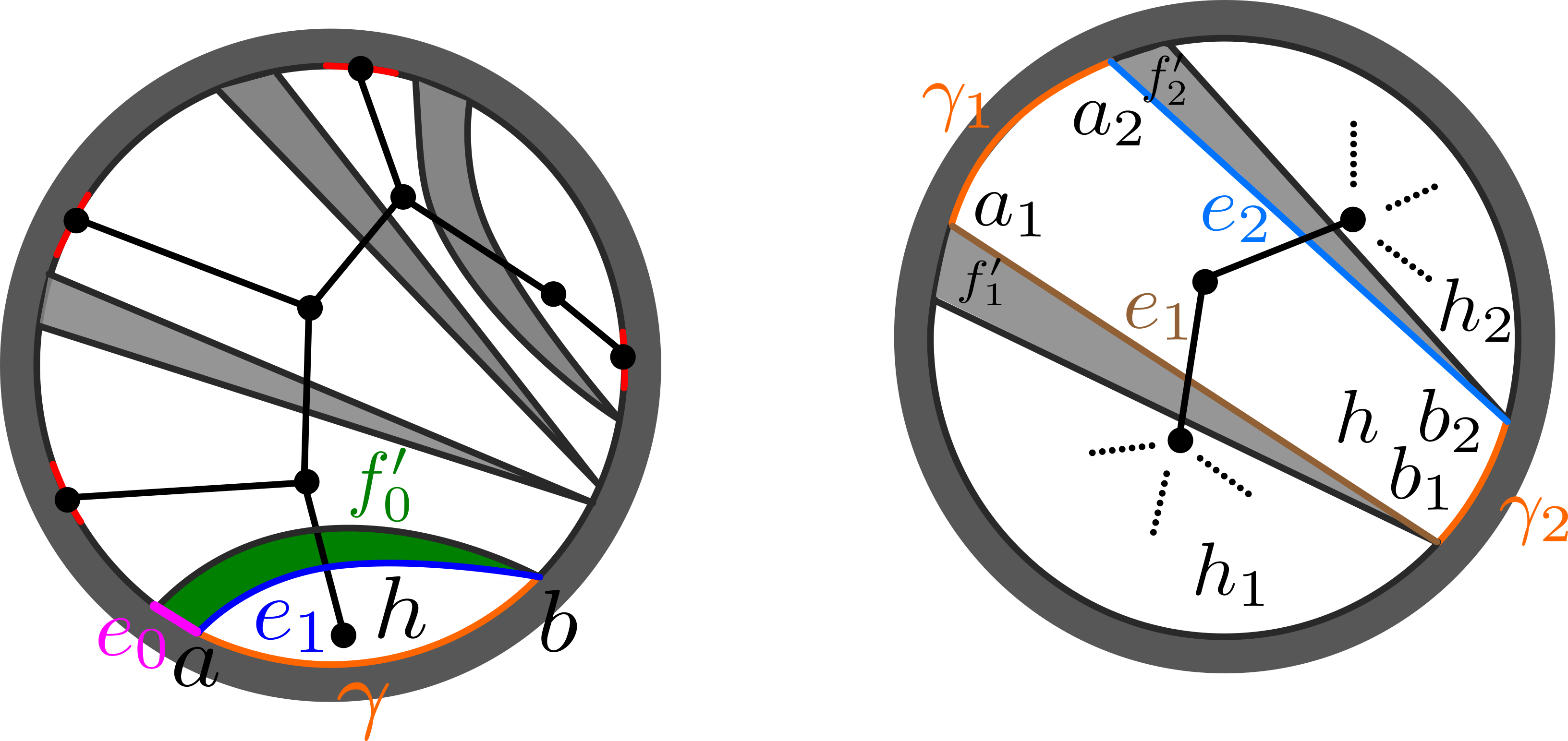}
\caption{Left: the hole $h$ has degree $1$ in $\mathcal{F}$. The edges $e\in\mathcal{E}$ are shown in red. The path $\gamma$ does not use any red edge since $h$ has degree $1$; thus, it only uses edges of $\partial t$.  
Right: the hole $h$ has degree $2$ in $\mathcal{F}$ and its two neighbours are holes $h_1$ and $h_2$. Since $h$ has degree $2$, the paths $\gamma_1$ and $\gamma_2$ only use edges of $\partial t$. Hence $d_{\partial t}(a_1,a_2) \le \phi(h) -1$ and $d_{\partial t}(b_1,b_2) \le \phi(h) -1$, so $f_1$ and $f_2$ are $\phi(h)$-close in $t$.}
\label{forest_proof}
\end{figure}

Finally, we prove item~\ref{it4forest}.  
Let $h$ be a vertex in $\mathcal{F}$ with degree $2$. Since the vertices in $\mathcal{E}$ are leaves, we deduce that $h \in \mathcal{H}$. Suppose the two neighbours $h_1,h_2$ of $h$ in $\mathcal{F}$ are elements of $\mathcal{H}$. Write $f_1$ for the triangle separating $h_1$ and $h$, and $f_2$ for the triangle separating $h_2$ and $h$. Let $e_1$ (resp. $e_2$) be the edge of $f_1$ (resp. $f_2$) that lies in $h$. Denote by $a_1,b_1$ (resp. $a_2,b_2$) the two endpoints of $e_1$ (resp. $e_2$). Then $h \backslash \{e_1,e_2\}$ consists of two paths $\gamma_1$ and $\gamma_2$ connecting $a_1,a_2$ and $b_1,b_2$.  

These paths are composed of edges of $\partial t$, since $h$ has degree $2$ (there is no edge of $\mathcal{E}$ lying on $h$). They have length at most $\phi(h)-1$. This proves that $f_1$ and $f_2$ are $\phi(h)$-close in $t$ (see Figure~\ref{forest_proof}).
\end{proof}

Now we can prove Proposition~\ref{many_events_C}.
\begin{proof}
For any edge $u$ of $\mathcal{F}$ such that both endpoints $a$ and $b$ of $u$ have degree $2$ in $\mathcal{F}$, the endpoints $a,b$ are holes in $\mathcal{H}$, and we can associate to $u$ the triangle $f(u)$ that separates $a$ and $b$. We also define $e(u)$ as the edge on $\partial t$ such that $f(u)$ lies behind $e(u)$.

Let $X \subset \mathcal{F}$ be the subset of edges $u \in \mathcal{F}$ such that the endpoints $x,y$ of $u$ are at graph distance (in $\mathcal{F}$) at least $p$ from any vertex of degree different from $2$ in $\mathcal{F}$. In words, around $u$, the forest $\mathcal{F}$ looks like a line segment of length at least $2p$ (see Figure~\ref{line_forest}).
\begin{figure}[H]
\centering
\includegraphics[scale=0.3]{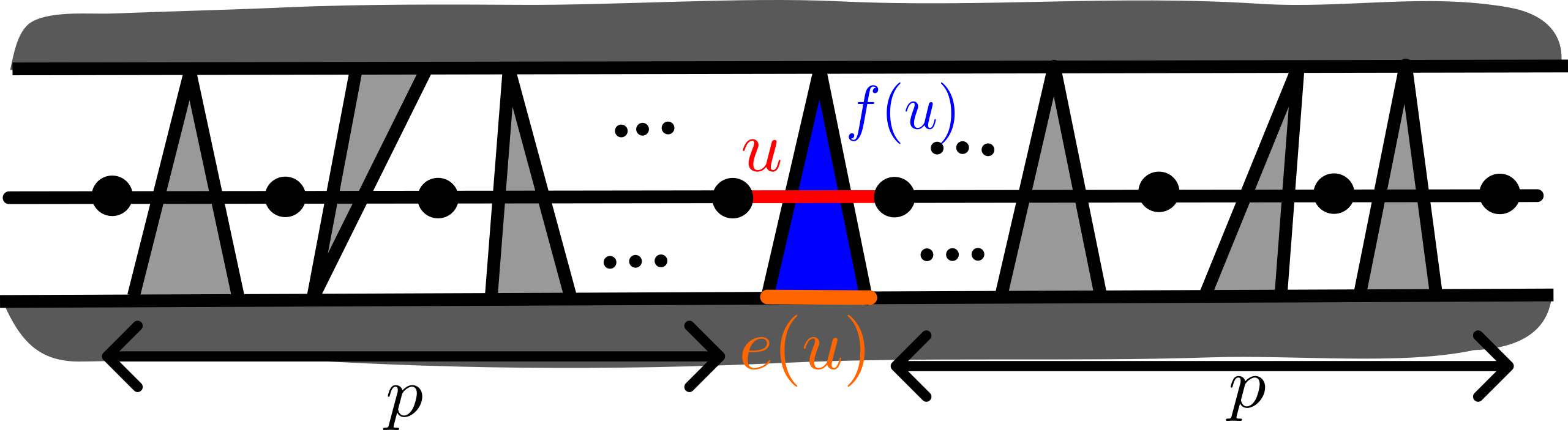}
\caption{We represent an edge $u \in X$. Thus around $u$, the forest $\mathcal{F}$ looks like a line of length at least $2p$. In particular, for any $u' \in \mathcal{E}(u)$, the triangles $t(u)$ and $t(u')$ are $\somme{h \in \mathcal{V}(u)}{}{\phi(h)}$-close.}
\label{line_forest}
\end{figure}
We denote by $\mathcal{V}(u)$ the set of vertices of $\mathcal{F}$ at distance at most $p$ from an endpoint of $u$, and by $\mathcal{E}(u)$ the set of edges of $\mathcal{F}$ whose two endpoints both belong to $\mathcal{V}(u)$. Hence, for any $u \in X$ and any $v \in \mathcal{V}(u)$, we have $\deg_{\mathcal{F}}(v) = 2$.

By item~\eqref{it4forest} of Proposition~\ref{properties_forest}, for any $u' \in \mathcal{E}(u)$, the triangles $f(u)$ and $f(u')$ are $\sum_{h \in \mathcal{V}(u)} \phi(h)$-close (see Figure~\ref{line_forest}). It follows that if 
\[
\sum_{h \in \mathcal{V}(u)} \phi(h) \le \varepsilon^{-1}p,
\]
then $(t,e(u))$ satisfies $(\mathcal{C}_{\varepsilon^{-1},p})$ (see Definition~\ref{defClp}).  

Let us therefore define
\[
X' = \bigg\{u \in X : \sum_{h \in \mathcal{V}(u)} \phi(h) \le \varepsilon^{-1} p \bigg\}.
\]
To prove Proposition~\ref{many_events_C}, it suffices to show that $\#E(X') \ge \varepsilon|\mathbf{p}|$.

Let $B$ denote the set of vertices in $\mathcal{F}$ with degree different from $2$. For any $r \ge 0$, let $C_r$ be the set of edges in $\mathcal{F}$ having at least one endpoint at graph distance at most $r-1$ from $B$.  
From items~\eqref{it2forest} and~\eqref{it3forest} of Proposition~\ref{properties_forest}, we obtain
\[
\#E(C_1) \le 18 \varepsilon p^{-1}|\mathbf{p}|.
\]
By induction, it follows that for any $r \ge 1$, we have
\[
\#E(C_{r+1}\backslash C_r) \le 18\varepsilon  p^{-1}|\mathbf{p}|.
\]
It follows that for any $r \ge 1$ we have 
$$
\#E(C_r) \le 18 r\varepsilon p^{-1}|\mathbf{p}|.
$$
Note that $E(\mathcal{F}) \backslash E(C_{2p}) \subset E(X)$. Using again item~\eqref{it2forest} of Proposition~\ref{properties_forest}, we deduce that
\[
\# E(X) \ge 963 \varepsilon |\mathbf{p}|.
\]
Now, we estimate the total contribution of the perimeters:
\[
\sum_{u \in X} \sum_{h \in \mathcal{V}(u)} \phi(h)
\le 2p \sum_{h \in \mathcal{H}} \phi(h)
\underset{\mathrm{Item}~\ref{it1forest}}{\le} 4p|\mathbf{p}|,
\]
where we used the fact that a hole $h \in \mathcal{H}$ can belong to at most $2p$ subsets $\mathcal{V}(u)$ with $u \in X$.  

It follows that there are at most $4\varepsilon|\mathbf{p}|$ edges $u \in X$ such that $\sum_{h \in \mathcal{V}(u)} \phi(h) \ge \varepsilon^{-1}p$. Therefore,
\[
\# X' \ge \#X - 4\varepsilon |\mathbf{p}| \ge 959 \varepsilon |\mathbf{p}| \ge \varepsilon |\mathbf{p}|.
\]
This concludes the proof.
\end{proof}

\subsubsection{Proof of Proposition~\ref{no_event_H_or_O}}\label{third_subsection}
This section concludes the proof of Proposition~\ref{no_event_H_or_O}. The proof consists in adapting the argument of Section~\ref{planarity_middle}.
\begin{proof}
In this proof, we will treat Case $1$ of Proposition~\ref{many_events_C}. Case $2$ is handled by Proposition~\ref{bound_event_C}. Fix $\varepsilon > 0$. By Proposition~\ref{bound_event_C}, fix a constant $p > 0$ such that, for $n$ large enough, we have 
\begin{align}\label{first_equation}
\Pf\Big((T_{n,g_n,\mathbf{p}^{n}},e^n)\text{ satisfies }(\mathcal{C}_{\varepsilon^{-1},p})\Big) \le \frac{\varepsilon^2}{20000}.
\end{align}

We reason by contradiction. Suppose that there exist arbitrarily large values of $n$ such that
\begin{center}
$\Pf\Big((T_{n,g_n,\mathbf{p}^{n}},e^n) \text{ satisfies }(\mathcal{H}_{2\varepsilon^{-1}p}) \text{ or }(\mathcal{O})\Big) \ge \varepsilon > 0.$
\end{center} 
Since $e^n$ is chosen uniformly at random on $\partial T_{n,g_n,\mathbf{p}^n}$, there is a proportion at least $\frac{\varepsilon}{2}$ of triangulations $t \in \mathcal{T}_{\mathbf{p}^{n}}(n,g_n)$ such that there exist distinct edges $(e_{1},\cdots,e_{\frac{\varepsilon}{2}|\mathbf{p}^{n}|})$ on the boundaries of $t$ with
\begin{center}
$(t,e_i)$ satisfying $(\mathcal{H}_{2\varepsilon^{-1}p})$ or $(\mathcal{O})$.
\end{center}  

By \eqref{first_equation}, there is a proportion at most $\frac{\varepsilon}{4}$ of triangulations $t \in \mathcal{T}_{\mathbf{p}^{n}}(n,g_n)$ such that there exist distinct edges $(e_{1}',\cdots,e_{\frac{\varepsilon}{4000}|\mathbf{p}^n|}')$ on $\partial t$ for which $(t,e'_j)$ satisfies $(\mathcal{C}_{\varepsilon^{-1},p})$.  
Combining this with Proposition~\ref{many_events_C}, we deduce that there is a proportion at least $\frac{\varepsilon}{4}$ of triangulations $t \in \mathcal{T}_{\mathbf{p}^{n}}(n,g_n)$ satisfying the following property:  
there exist distinct edges $e_{1},\cdots,e_{r}$ with $\varepsilon p^{-1}|\mathbf{p}^{n}| \le r \le |\mathbf{p}^{n}|$ on $\partial t$ such that, writing $f_1,\cdots,f_{r}$ for the triangles behind $e_1,\cdots,e_{r}$, we have
\begin{center}
$t \backslash (f^1 \cup \cdots \cup f^{r}) \in \mathcal{T}_{(q_1,\cdots,q_{\ell'})}(n',g')$
\end{center}
for some $(q_1,\cdots,q_{\ell'})$, where $(\ell_n-\ell') + 2(g_n-g') = r$.  
We also have $n' \le n + r \le n + |\mathbf{p}^n|$.  

For such triangulations $t$, define the pair of triangulations with holes
\[
\varphi(t) = \big(t \backslash (f^1\cup \cdots\cup f^{r}) , \partial t \cup f^1\cup \cdots \cup f^{r}\big).
\]
The function $\varphi$ is injective since one can recover $t$ by gluing $t \backslash (f^1\cup \cdots\cup f^{r})$ back to $\partial t \cup f^1\cup \cdots \cup f^{r}$.  

Using Lemma~\ref{bounded_ratio_vertices} and Lemma~\ref{add_new_boundary} (whose assumptions can be verified directly), the number of inputs is at least
\begin{align}\label{bound_input}
\exp(-C_{\theta}|\mathbf{p}^n|)\, n^{\ell_n -1}\,\frac{\varepsilon}{4}\,\tau(n,g_n).
\end{align}
On the other hand, the number of outputs is bounded by 
\[
\sum_{\substack{r,f^1,\cdots,f^r}} \tau_{\mathbf{q}}(n',g'),
\]
where $\mathbf{q} = (q_1,\cdots,q_{\ell'}),n',g'$ depend on $r,f^1,\cdots,f^r$.  

Fix $r,f^1,\cdots,f^r$ in the sum.  
Applying first Lemma~\ref{bounded_ratio_vertices} using $n' \le n + r \le n + |\mathbf{p}^n|$, and then Lemma~\ref{add_new_boundary}, we deduce the sequence of inequalities
\begin{align}\label{serie_inequality}
\tau_{\mathbf{q}}(n',g') &\le  \tau_{\mathbf{q}}(n+|\mathbf{p}^n|,g') \le (6n)^{\ell'-1}\tau(n+|\mathbf{p}^n|,g').
\end{align}

The Goulden–Jackson recursion formula \cite[Theorem~4]{GOULDEN2008932} reads:
\begin{align*}
\tau(n,g) = \frac{4}{n+1}\Big(n(3n-2)(3n-4)\tau(n-2,g-1)
+ \sum_{\substack{i+j=n-2\\h+k = g}} (3i+2)(3j+2)\tau(i,h)\tau(j,k)\Big).
\end{align*}
For $n \ge 2$, this implies
\begin{align*}
\tau(n,g) \ge n^2\tau(n-2,g-1).
\end{align*}
Hence, the right-hand side of \eqref{serie_inequality} is bounded by
\begin{align}\label{int2}
C(6n)^{\ell'-1}n^{-2(g_n-g')}\tau(n+|\mathbf{p}^n|+2(g_n-g'),g_n),
\end{align}
where $C > 0$ is a constant.  
Note that this is the only place in the paper where we use the Goulden–Jackson recursion formula.  

Iterating Lemma~\ref{bounded_ratio_vertices} and using the fact that $g_n-g' \le |\mathbf{p}^n|$, we further bound the last term by 
\begin{align}\label{int}
C\exp(C_{\theta}|\mathbf{p}^n|)n^{\ell'-2(g_n-g')-1}\tau(n,g_n).
\end{align}

Since the number of inputs of $\varphi$ is bounded above by the number of outputs, we obtain
\begin{align*}
\exp(-C_{\theta}|\mathbf{p}^n|)n^{\ell_n -1}\frac{\varepsilon}{4}\tau(n,g_n)
\le 
\sum_{\substack{r,f^1,\cdots,f^r}} C\exp(C_{\theta}|\mathbf{p}^n|)n^{\ell'-2(g_n-g')-1}\tau(n,g_n),
\end{align*}
or equivalently,
\begin{align}\label{bound1}
1 \le \frac{4C}{\varepsilon}
\sum_{\substack{r,f^1,\cdots,f^r}} \exp(C_{\theta}|\mathbf{p}^n|)n^{-r},
\end{align}
where we recall that $(\ell_n-\ell')+2(g_n-g') = r$.

For a fixed $r$ satisfying $\varepsilon p^{-1} |\mathbf{p}^n| \le r \le |\mathbf{p}^n|$, the number of possible choices for $\{f^1,\cdots,f^{r}\}$ is bounded by
\begin{align}\label{bound_choices_ti}
2^{|\mathbf{p}^{n}|}\prod_{i=0}^{r-1}(|\mathbf{p}^{n}|+i).
\end{align}

Indeed, the factor $2^{|\mathbf{p}^{n}|}$ accounts for the number of possible sets of edges $\{e^1,\cdots,e^{r}\}$.  
Then, for $f^1$ we must choose a third vertex, given $|\mathbf{p}^{n}|$ choices.  
For each subsequent $f^{i+1}$, we must choose a third vertex among the vertices on the boundaries of $t\backslash (f^1 \cup \cdots \cup f^{i})$, whose total perimeter is at most $|\mathbf{p}^{n}|+i$.  
This gives the factor $\prod_{i=0}^{r-1}(|\mathbf{p}^{n}|+i)$.  
Using $|\mathbf{p}^n|+i \le 2|\mathbf{p}^n|$, we deduce that the right-hand side of \eqref{bound1} is bounded by
\begin{align}\label{bound2_output}
\frac{4C}{\varepsilon}\exp(C_{\theta}'|\mathbf{p}^n|)\sum_{r=\varepsilon p^{-1} |\mathbf{p}^n|}^{|\mathbf{p}^n|}\Big(\frac{|\mathbf{p}^n|}{n}\Big)^{r}
\le 2C\exp(C_{\theta}'|\mathbf{p}^n|)\Big(\frac{|\mathbf{p}^n|}{n}\Big)^{\varepsilon p^{-1} |\mathbf{p}^n|}.
\end{align}

Since $|\mathbf{p}^{n}| = o(n)$, the right-hand side tends to $0$ as $n \to +\infty$.  
This leads to a contradiction in \eqref{bound1}. This completes the proof.
\end{proof}
\subsection{Subsequential limits are triangulations of the half-plane}\label{conv_subsequences}
This section is devoted to proving the following Proposition. We recall that weak Markov triangulations are defined in Definition~\ref{weak_Markovian_halfplane}.
\begin{proposition}\label{tightness_d_loc}
    The sequence $(T_{n,g_n,\mathbf{p}^{n}},e^n)_{n\ge 0}$ is tight for $d_{\mathrm{loc}}$. Moreover, any subsequential limit is an infinite half-plane triangulation that is weakly Markovian.
\end{proposition}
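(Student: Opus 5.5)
We explore the neighbourhood of $e^n$ with a peeling procedure adapted to the half-plane geometry. This reduces tightness to showing that each of the first $K$ peeling steps, for fixed $K$, is with high probability of a "half-plane type". We use the algorithm that always peels the leftmost edge of the current infinite-like hole adjacent to $\partial_i$. Its steps are of four kinds: a triangle with a new internal vertex; an event $\mathcal{L}_k$ or $\mathcal{R}_k$ with $k$ bounded, with the swallowed hole filled by a small planar triangulation; a fold $(\mathcal{H}_k)$ of the boundary onto itself; or a touch $(\mathcal{O})$ of another boundary.

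The key tool is translation invariance along $\partial T_{n,g_n,\mathbf{p}^n}$. Since $e^n$ is uniform on the boundary, each peeling step at a fixed distance of order $r$ from $e^n$ is the root step for a rerooted edge. More precisely, after revealing $j$ triangles, the complement $T_{n,g_n,\mathbf{p}^n}\setminus t_j$ is uniform in a class $\mathcal{T}_{\mathbf{p}'}(n',g_n)$ with $n'=n-O(1)$ and $|\mathbf{p}'|=|\mathbf{p}^n|+O(1)$. We therefore apply Propositions~\ref{Finite_holes_filled_by_finite_maps} and~\ref{no_event_H_or_O} inductively, step by step. These results are robust to $O(1)$ changes in the parameters, and the peeled edge in the new configuration is close to uniform on the boundary up to a bounded Radon–Nikodym factor, because only $O(1)$ boundary edges are affected and the typical boundary has size tending to infinity by Proposition~\ref{perimeter_typically_large}. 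With these we show the following for every $K$ and $\varepsilon$. There exist $k,a$ such that, with probability at least $1-\varepsilon$, none of the first $K$ steps is of type $(\mathcal{H}_k)$ or $(\mathcal{O})$; every $\mathcal{L}_j$ or $\mathcal{R}_j$ has $j\le k$; and every swallowed hole is filled by a planar triangulation with at most $a$ vertices.

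Tightness for $d_{\mathrm{loc}}$ then follows. On this good event, the ball $B_r(T_{n,g_n,\mathbf{p}^n},e^n)$ is contained in the union of the first $K(r)$ peeled triangles and their fillings. We must also control the degree of the root vertex and of the vertices reached, so that $K(r)$ can be chosen bounded with high probability. To this end we rule out a long run of $\mathcal{L}$ steps accumulating at one vertex, using the same geometric-decay bound as in Proposition~\ref{one_endedness_rooted_middle} with Lemma~\ref{bounded_ratio_vertices}. Each peeling step has probability at least $c_\theta$ of swallowing the current vertex. Hence each ball takes finitely many values up to probability $\varepsilon$. The same control shows that any subsequential limit is planar, has an infinite simple boundary, and has finite degrees. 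One-endedness follows because, on the good event, every hole created other than the main one is filled by a finite planar map.

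For the weak Markov property, fix $t$ with one infinite boundary and one infinite hole, and let $\Pf(t\subset\cdot)$ concern the finite portion of $t$. The probability $\Pf(t\subset (T_{n,g_n,\mathbf{p}^n},e^n))$ is a ratio $\tau_{\mathbf{p}'}(n-|t_{\mathrm{in}}|+\cdots,g_n)/\tau_{\mathbf{p}^n}(n,g_n)$. Here $\mathbf{p}'$ differs from $\mathbf{p}^n$ only on the root boundary, by $|\partial^*t|-|\partial t|$, up to the distribution of $e^n$ among boundaries. The ratio therefore depends on $t$ only through $(|\partial^*t|-|\partial t|,|t_{\mathrm{in}}|)$, and this passes to the limit.

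The main obstacle is the inductive step: re-applying Proposition~\ref{no_event_H_or_O} after some triangles have been revealed. The peeled edge is no longer a uniform boundary edge but a specific edge adjacent to a previously revealed triangle. I would handle this by rerooting. Since $e^n$ is uniform, a fixed shift of $e^n$ along its boundary is again uniform. I would then express each later step's bad event as a bad event for $e^n$ shifted by a bounded random amount, taking a union bound over the $O(K)$ possible shifts.
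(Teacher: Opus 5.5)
There is a genuine gap, and it sits exactly where you place the main obstacle: controlling bad steps after the first one. Neither of your two mechanisms closes it.

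\emph{The Radon--Nikodym route.} Conditionally on a good revealed region, the complement is indeed uniform in some $\mathcal{T}_{\mathbf{p}'}(n',g_n)$. But the peeled edge lies on the one boundary $\partial_I$ whose perimeter was modified. Relative to a uniform boundary edge of $T_{n',g_n,\mathbf{p}'}$, its density is $|\mathbf{p}'|/p'_I$, which is not bounded. Averaging over $I$ only mixes different configurations $\mathbf{p}'=\mathbf{p}'(I)$. Propositions~\ref{Finite_holes_filled_by_finite_maps} and~\ref{no_event_H_or_O} only control a root that is uniform over all boundary edges of one fixed configuration. The paper stresses that its argument fails for a root on a prescribed boundary, so you cannot simply invoke ``robust to $O(1)$ changes''.

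\emph{The rerooting route.} Take an algorithm that peels next to the revealed region, such as your leftmost-edge rule or any rule that explores balls. For it, the inclusion ``step $j+1$ is bad $\subset$ the root step at $\theta^{s}(e^n)$ is bad'' is false, for two reasons.
\begin{itemize}
\item The peeled edge may be an internal edge of $T$.
\item Even when it is an original boundary edge, $\mathcal{L}_k$, $\mathcal{R}_k$ and $(\mathcal{H}_{A+1})$ are read along the current hole. Near the peeled edge, that hole consists of revealed edges and vertices. For instance, an $\mathcal{R}^{\mathrm{nsep}}_1$ step whose third vertex is the vertex revealed at the previous step is bad in the complement. For the shifted edge in $T$, it is an ordinary step with an internal third vertex.
\end{itemize}
Moreover, each good step may swallow up to $A-1$ boundary edges, so the shift ranges over $O(KA)$ values, not $O(K)$, with $A=A(\varepsilon)$. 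A union bound would then require $\Pf(\mathcal{H}_{A+1})=o(1/A)$, and Proposition~\ref{no_event_H_or_O} gives no such rate.

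\emph{How the paper does it.} It inducts on $k$ with a tailored algorithm that, at step $j$, peels the original edge $\theta^{3jA}(e^n)$. On $\mathbf{GOOD}^{\mathcal{A}}_n(A,a,k)\cap\mathcal{P}_n$, every earlier step only modified the hole within distance $A$ of its own peeled edge. So near $\theta^{3kA}(e^n)$ the hole is still untouched $\partial_I$. A bad step $k+1$ is then read as a bad first step for the single deterministic shift $\theta^{3kA}(e^n)$, which is again uniform on the boundary. The base case is used once per induction step, with no union over shifts.

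\emph{The ingredient you are missing entirely.} $\Pf(\mathbf{GOOD}^{\mathcal{A}}_n(A,a,k))$ does not depend on $\mathcal{A}$. Given a good $\bar{\mathcal{E}}_j$, the complement is uniform in a class determined by the parameters. By rotation invariance, the conditional probability that the next step is bad does not depend on which hole edge is peeled. This transfers the bound to $\mathcal{A}_{\mathrm{metric}}$, which is what the paper uses to get $B_r\subset\widetilde t^n_k$, via the $c_\theta$-swallowing and binomial argument of Proposition~\ref{discover_balls}. A rule that always peels at one end of the revealed region would never reveal the triangles on the other side of $\rho^n$, so it cannot cover the ball.

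The remaining parts of your plan agree with the paper. These are: finitely many possible balls on the good event; an infinite boundary; planarity; one-endedness from the finite planar fillings; and the weak Markov property from the ratio depending only on $(|\partial^*t|-|\partial t|,|t_{\mathrm{in}}|)$.
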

\noindent To prove this result, we explore the neighbourhood of $e^n$ using a suitably chosen \emph{peeling exploration}. See Section~\ref{peeling_def} for the definition of the peeling exploration. Here, we consider the filled-in peeling, which we now define. For a peeling algorithm $\mathcal{A}$, $t \in \mathcal{T}_{\mathbf{p}^n}(n,g_n)$, and $e \in \partial t$, we define $\bar{\mathcal{E}}^{\mathcal{A}}_{0}(t,e)$ as the polygon $\partial_i$ on which $e$ lies, rooted at $e$. For $j \ge 0$, we define $\widehat{\mathcal{E}}^{\mathcal{A}}_{j+1}(t,e)$ as the triangulation with holes obtained from $\bar{\mathcal{E}}^{\mathcal{A}}_{j}(t,e)$ by revealing the triangle behind $\mathcal{A}(\bar{\mathcal{E}}^{\mathcal{A}}_{j}(t,e))$ in $t$. Then, we define $\bar{\mathcal{E}}^{\mathcal{A}}_{j+1}(t,e)$ as the map obtained by gluing to $\widehat{\mathcal{E}}^{\mathcal{A}}_{j+1}(t,e)$ the components of $(t,e) \backslash \mathcal{E}^{\mathcal{A}}_{j+1}(t,e)$ that are planar triangulations. The discovery of the $j^{th}$ triangle is referred to as step $j$. 
 
We recall the definitions of $\mathcal{L}_k, \mathcal{R}_k,(\mathcal{H}_k),(\mathcal{O})$ introduced in the introduction of Section~\ref{section_boundary}. Propositions~\ref{Finite_holes_filled_by_finite_maps} and~\ref{no_event_H_or_O} ensure that for any peeling algorithm $\mathcal{A}$ with high probability the pathological cases \eqref{list_pathological_cases1}, \eqref{list_pathological_cases2}, \eqref{list_pathological_cases3}, and \eqref{list_pathological_cases4} do not occur at the first step of the peeling exploration started from the root edge $e^n$, chosen uniformly at random on $\partial T_{n,g_n,\mathbf{p}^n}$. We prove that with high probability, these pathological cases still do not occur as long as we consider only a constant number of peeling steps. Let us make this more precise
\begin{definition1}\label{Defbad}
  Fix $A,a > 0$ and $t \in \mathcal{T}_{\mathbf{p}^n}(n,g_n)$. Let $i \in \{1,\cdots,\ell_n\}$ and $e \in \partial_i t$. Fix a peeling algorithm $\mathcal{A}$. Fix $j \ge 0$ and assume that $\bar{\mathcal{E}}^{\mathcal{A}}_{j}(t,e)$ is a planar triangulation with one hole. We say that step $j+1$ is $\mathbf{BAD}^{\mathcal{A}}(A,a)$ (see Figure~\ref{badcases}) if it satisfies one of the conditions below:
\begin{enumerate}
    \item \label{bad1} The triangulation $\bigg((t,e) \backslash \bar{\mathcal{E}}^{\mathcal{A}}_{j}(t,e),\mathcal{A}(\bar{\mathcal{E}}^{\mathcal{A}}_{j}(t,e))\bigg)$ satisfies $\mathcal{L}_k^{\mathrm{nsep}}$ or $\mathcal{R}_k^{\mathrm{nsep}}$ with $k \in \{0,\cdots,A-1\}$.
    \item \label{bad2} The triangulation $\bigg((t,e) \backslash \bar{\mathcal{E}}^{\mathcal{A}}_{j}(t,e),\mathcal{A}(\bar{\mathcal{E}}^{\mathcal{A}}_{j}(t,e))\bigg)$ satisfies $\mathcal{L}_k^{\mathrm{sep}}$ (resp. $\mathcal{R}_k^{\mathrm{sep}}$) with $k \in \{0,\cdots,A-1\}$, and writing $t_1$ for the component filling the left hole (resp. the right hole), we have $t_1 \notin \bigsqcup_{i=0}^{a}\mathcal{T}_{k+1}(i,0)$.
     \item \label{bad3} The triangulation $\bigg((t,e) \backslash \bar{\mathcal{E}}^{\mathcal{A}}_{j}(t,e),\mathcal{A}(\bar{\mathcal{E}}^{\mathcal{A}}_{j}(t,e))\bigg)$ satisfies $(\mathcal{H}_{A+1})$.
    \item \label{bad4} The triangulation $\bigg((t,e) \backslash \bar{\mathcal{E}}^{\mathcal{A}}_{j}(t,e),\mathcal{A}(\bar{\mathcal{E}}^{\mathcal{A}}_{j}(t,e))\bigg)$ satisfies $(\mathcal{O})$.
\end{enumerate}
 Otherwise, it is said to be $\mathbf{GOOD}^{\mathcal{A}}(A,a)$. Note that if step $j+1$ is $\mathbf{GOOD}^{\mathcal{A}}(A,a)$, then $\bar{\mathcal{E}}^{\mathcal{A}}_{j+1}(t,e)$ is again a planar triangulation with one hole. 
\end{definition1} 

\begin{figure}[H]
\includegraphics[scale=0.45]{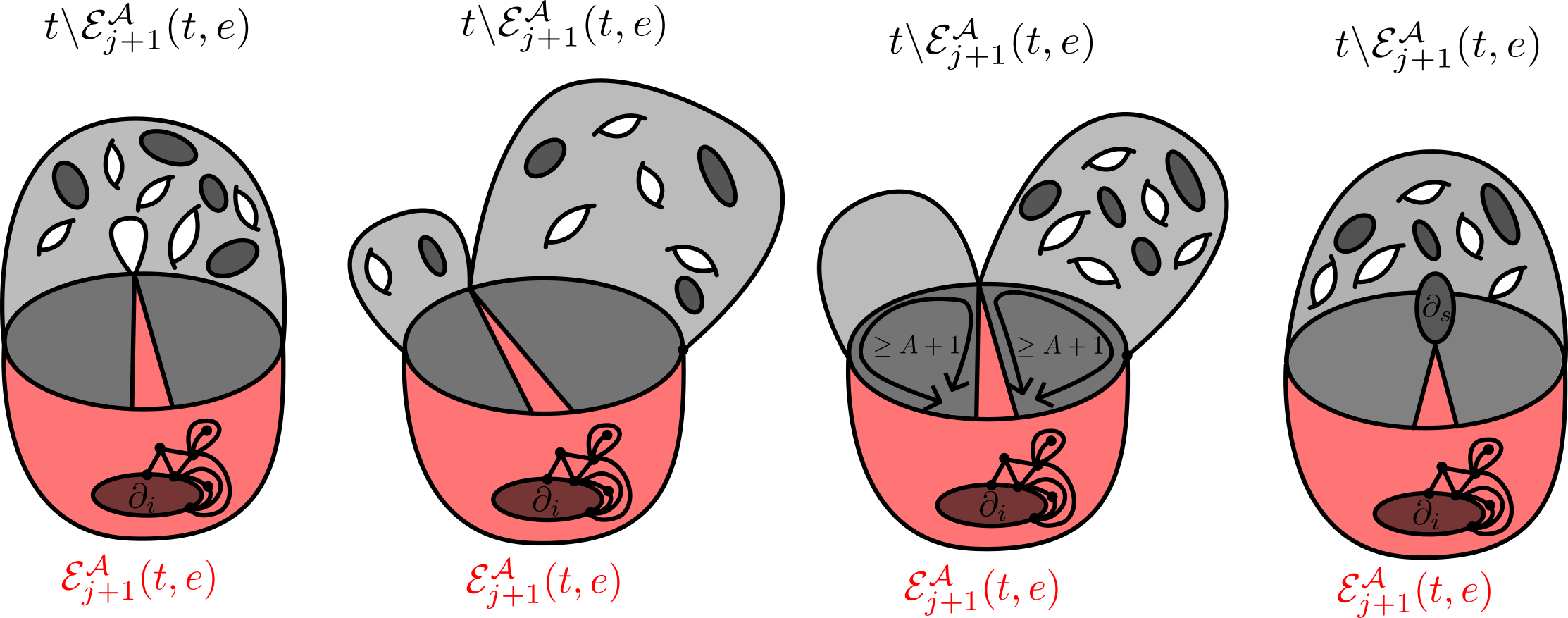}

\caption{Illustration of Definition~\ref{Defbad}.}
\label{badcases}
\end{figure} 
 
We define the event $\mathbf{GOOD}^{\mathcal{A}}_{n}(A,a,k)$ as the event that the first $k$ steps of the peeling exploration $(\bar{\mathcal{E}}^{\mathcal{A}}_j(T_{n,g_n,\mathbf{p}^n},e^n))_{j \ge 0}$ are $\mathbf{GOOD}^{\mathcal{A}}(A,a)$. We write $\mathbf{BAD}^{\mathcal{A}}_n(A,a,k) = \mathbf{GOOD}^{\mathcal{A}}_{n}(A,a,k)^{c}$. The next proposition shows that for any fixed $k \ge 1$, for suitably chosen values of $A,a \ge 0$, and for $n$ large enough, the event $\mathbf{GOOD}^{\mathcal{A}}_{n}(A,a,k)$ occurs with high probability.
 
\begin{proposition}\label{no_BAD_steps}
    For any $k \ge 1$ and any $\varepsilon > 0$, there exist $A,a > 0$ such that, for $n$ large enough and for any peeling algorithm $\mathcal{A}$, we have
    \begin{align*}
        \Pf\bigg(\mathbf{GOOD}^{\mathcal{A}}_{n}(A,a,k) \bigg) \ge 1-\varepsilon. 
    \end{align*}
\end{proposition}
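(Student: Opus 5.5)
We argue by induction on $k$, using the spatial Markov property of the uniform model to reduce each peeling step to a first-step estimate. Those first-step estimates are exactly Propositions~\ref{Finite_holes_filled_by_finite_maps} and~\ref{no_event_H_or_O}, applied to a slightly modified triangulation with boundaries.

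The key observation concerns the event $\mathbf{GOOD}^{\mathcal{A}}_n(A,a,k)$. On this event, $\bar{\mathcal{E}}^{\mathcal{A}}_k$ is a planar triangulation with one hole and a bounded number of internal vertices: at most $k$ Type~II vertices, plus at most $k(a+A)$ vertices coming from filled-in small holes. The hole of $\bar{\mathcal{E}}^{\mathcal{A}}_k$ differs from $\partial_i$ only in a segment of bounded length $O(kA)$ around $e^n$. Conditionally on $\bar{\mathcal{E}}^{\mathcal{A}}_k = \mathfrak{e}$, the complement $T_{n,g_n,\mathbf{p}^n}\setminus\mathfrak{e}$ is uniform in some $\mathcal{T}_{\mathbf{p}'}(n',g_n)$. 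Here $\mathbf{p}'$ is $\mathbf{p}^n$ with $p_i^n$ replaced by $p_i^n+O(kA)$, and $n'=n-O(k(a+A))$. The new root edge $\mathcal{A}(\mathfrak{e})$ lies on the modified boundary. The parameters $(n',g_n,\mathbf{p}')$ still satisfy $g_n/n'\to\theta$ and $|\mathbf{p}'|=o(n')$. The probability that step $k+1$ is $\mathbf{BAD}$ is then the probability that the first step from a specific boundary edge of this new uniform triangulation is bad.

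The difficulty is that Propositions~\ref{Finite_holes_filled_by_finite_maps} and~\ref{no_event_H_or_O} are stated for a root edge chosen uniformly on the whole boundary, whereas here the root edge is a specific edge on a boundary that was partly revealed. To handle this we use an absolute continuity or rerooting argument. For a fixed finite planar configuration $\mathfrak{e}$, we write
\[
\Pf(\bar{\mathcal{E}}^{\mathcal{A}}_k=\mathfrak{e},\ \text{step }k+1\ \mathbf{BAD}) = \Pf(\bar{\mathcal{E}}^{\mathcal{A}}_k=\mathfrak{e})\,\Pf(\mathbf{BAD}\mid\cdots).
\]
The conditional law of $T\setminus\mathfrak{e}$, rooted at $\mathcal{A}(\mathfrak{e})$, is the same as the law of $T_{n',g_n,\mathbf{p}'}$ rooted at the distinguished edge of the modified boundary, and by exchangeability of positions along a boundary that edge can be replaced by a uniform edge of that boundary. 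The boundary $\partial_i$ carrying $e^n$ is chosen with probability proportional to $p^n_i$, so we can bound the sum over $i$ by the uniform-boundary-edge estimate. This works because the uniform-edge probabilities for $T_{n',g_n,\mathbf{p}'}$ relate to those for $T_{n,g_n,\mathbf{p}^n}$ up to constants via Lemma~\ref{bounded_ratio_vertices}, since only boundedly many vertices and boundary edges changed. Concretely, we express both probabilities as ratios of $\tau$'s and use that the bad events depend only on local data. Summing over the finitely many $\mathfrak{e}$ in the $\mathbf{GOOD}$ set (finitely many for fixed $A,a,k$) then gives
\[
\Pf(\mathbf{BAD}_n(A,a,k+1)\setminus\mathbf{BAD}_n(A,a,k)) \le C_{k,A,a}\,\eta_n(A,a),
\]
where $\eta_n$ is the one-step bad probability.

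This produces an ordering problem: the constant $C_{k,A,a}$ depends on $A$ and $a$, while $\eta_n(A,a)$ must be made small. We resolve this by choosing parameters in a fixed order. First, Proposition~\ref{no_event_H_or_O} gives an $A$ making the $(\mathcal{H}_{A+1})$ and $(\mathcal{O})$ probabilities at most $\varepsilon/(4k)$. The constant in Proposition~\ref{no_event_H_or_O} is uniform under perturbations of the parameters of size $O(1)$ in $p_i$, since its proof only uses $|\mathbf{p}|=o(n)$ and $g/n\to\theta$. For $\mathcal{L}_k^{\mathrm{nsep}}$ and $\mathcal{R}_k^{\mathrm{nsep}}$ with $k<A$, the probability tends to $0$ as $n\to\infty$, by Proposition~\ref{combi_estimate_rooted_middle} as in the proof of Proposition~\ref{Finite_holes_filled_by_finite_maps}. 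With $A$ fixed, Proposition~\ref{Finite_holes_filled_by_finite_maps} for each $k<A$ gives $a$. We apply all these estimates directly to the perturbed model, whose parameters satisfy the same hypotheses, rather than transferring them through a constant factor. The main obstacle is exactly this uniformity: Propositions~\ref{Finite_holes_filled_by_finite_maps} and~\ref{no_event_H_or_O} must hold uniformly over the finitely many perturbations $(n',\mathbf{p}')$ and over a root edge that is deterministic given the revealed region rather than uniform. We settle the uniformity over perturbations by a subsequence/contradiction argument. We handle the deterministic root by the translation invariance along the modified boundary, which reduces to the uniform-boundary-edge case. Finally, independence from $\mathcal{A}$ holds because all bounds are uniform over the finitely many possible $\mathfrak{e}$ and peeled edges.
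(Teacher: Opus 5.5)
There is a genuine gap in your last step, where you say that translation invariance along the modified boundary ``reduces to the uniform-boundary-edge case''. It does not.

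Rotation invariance only lets you replace the peeled edge $\mathcal{A}(\mathfrak{e})$ by a uniform edge of the \emph{one} boundary carrying $e^n$, say $\partial_I$. This happens in a perturbed model $M_I$ in which only $p_I$ and the volume have changed. Proposition~\ref{no_event_H_or_O} says nothing about such a root. It controls $(\mathcal{H}_{A+1})$ and $(\mathcal{O})$ only on average over all boundaries, with weights $p_j/|\mathbf{p}|$. The paper stresses that its proof uses this averaging essentially and fails for a root conditioned to lie on a fixed boundary.

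Write $\eta$ for the uniform-boundary-edge bound in $M_I$ and $\beta_I$ for the bad-step probability from a uniform edge of the modified boundary. Applied to $M_I$, the proposition only gives $\frac{p_I+q}{|\mathbf{p}^n|+q}\,\beta_I\le\eta$. This is useless unless $p_I$ is a macroscopic fraction of $|\mathbf{p}^n|$.

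Nor can you restore the average by summing over $I$ with its size-biased law. After conditioning on $\bar{\mathcal{E}}^{\mathcal{A}}_k=\mathfrak{e}$, each $i$ carries its own model $M_i$ and weight $p_i\,\Pf(\bar{\mathcal{E}}^{\mathcal{A}}_k=\mathfrak{e}\mid I=i)$. The sum is therefore not the uniform-boundary-edge probability of any single model. Turning it into one means comparing each $M_i$ with the original model. That costs exactly the factor $C_{k,A,a}$ (of order $C^{k(A+a)}$ via Lemma~\ref{bounded_ratio_vertices}) that you correctly identified as fatal, since Proposition~\ref{no_event_H_or_O} has no rate in $A$.

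Conditions 1--2 are not the problem: the proof of Proposition~\ref{Finite_holes_filled_by_finite_maps} works on a fixed boundary of large perimeter. Your subsequence argument for uniformity over perturbations is also fine, but it does not address this issue.

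The missing idea is to avoid conditioning and never change the model. Since $\Pf(\mathbf{GOOD}^{\mathcal{A}}_n(A,a,k))$ does not depend on $\mathcal{A}$, the paper proves the inductive step for an algorithm that, at step $j$, peels $\theta^{3jA}(e^n)$, the boundary edge $3jA$ positions to the right of $e^n$. On $\mathbf{GOOD}^{\mathcal{A}}_n(A,a,k)\cap\mathcal{P}_n$ (so $p_I>6Ak$), this edge is still an untouched edge of $\partial_I$, far from the explored region. A bad step $k+1$ is then read off from the triangle behind $\theta^{3kA}(e^n)$ in $T_{n,g_n,\mathbf{p}^n}$ itself, i.e.\ $\mathbf{BAD}^{\mathcal{A}_k}_n(A,a,1)$ occurs. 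Because $\theta^{3kA}(e^n)$ is again uniform on all of $\partial T_{n,g_n,\mathbf{p}^n}$, the base case with the same $A,a$ bounds this by $\varepsilon/3$. This uses only the size-biased average and needs no comparison constants.

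If you adopt this route, also handle the case where the apex of that triangle is an internal vertex of $T$ revealed at an earlier step. Step $k+1$ then satisfies $(\mathcal{H}_{A+1})$, but the first step from $\theta^{3kA}(e^n)$ in $T$ is not bad. The paper's implication passes over this case.
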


\begin{proof}
    Observe that, due to the invariance under translation along the boundary, the probability of the event considered is independent of the peeling algorithm $\mathcal{A}$. We proceed by induction on $k \ge 1$. The base case $k = 1$ follows directly from Proposition~\ref{no_event_H_or_O} and Proposition~\ref{Finite_holes_filled_by_finite_maps}.

    Assume that the result holds for all $ 1 \le j \le k$. We now prove it for $k+1$. Fix $\varepsilon > 0$. By the induction hypothesis, there exist $A, a > 0$ such that, for any peeling algorithm $\mathcal{A}$ and for sufficiently large $n$, we have 
    \begin{align}\label{first_step_good}
        \mathbb{P}\left(\mathbf{GOOD}^{\mathcal{A}}_{n}(A,a,k)\right) \ge 1-\frac{\varepsilon}{3} \quad \text{and} \quad \mathbb{P}\left(\mathbf{GOOD}^{\mathcal{A}}_{n}(A,a,1)\right) \ge 1-\frac{\varepsilon}{3}.
    \end{align}
        We now introduce a specifically constructed peeling algorithm. For any oriented edge $e \in \partial T_{n,g_n,\mathbf{p}^n}$, let $\theta(e)$ denote the oriented edge obtained by shifting $e$ by one position to the right along the boundary $\partial T_{n,g_n,\mathbf{p}^n}$. For each $j \ge 0$, we define a peeling algorithm $\mathcal{A}_j$ as follows. For a triangulation with holes $t$ rooted at an oriented boundary edge $e_0$, we define:\footnote{If the root edge $e_0$ does not lie on a boundary, we define $\mathcal{A}(t)$ arbitrarily.}
    \begin{enumerate}
        \item \label{case1} If the edge $\theta^{3jA}(e_0)$ lies on the boundary of a hole in $t$, we set $\mathcal{A}_j(t) = \theta^{3jA}(e_0)$.
        \item \label{case2} Otherwise, the edge $\mathcal{A}_j(t)$ is chosen arbitrarily.
    \end{enumerate}
    We now define a peeling algorithm $\mathcal{A}$ which, intuitively, applies $\mathcal{A}_k$ at the $k$-th step of the exploration. More precisely, for a triangulation with holes $t$:
    \begin{itemize}
        \item If $t = \bar{\mathcal{E}}^{\mathcal{A}}_j(t_0,e_0)$ for some $j \ge 0$, then we define $\mathcal{A}(t) = \mathcal{A}_j(t)$.
        \item Otherwise, we define $\mathcal{A}(t)$ arbitrarily. 
    \end{itemize}
    Observe that, by induction on $i \in \{0,\dots,j\}$, the triangulation with holes $\bar{\mathcal{E}}^{\mathcal{A}}_j(t_0,e_0)$ uniquely determines the sequence $(\bar{\mathcal{E}}^{\mathcal{A}}_0(t_0,e_0),\dots, \bar{\mathcal{E}}^{\mathcal{A}}_i(t_0,e_0))$. Consequently, a triangulation with holes $t$ cannot satisfy $t = \bar{\mathcal{E}}^{\mathcal{A}}_i(t_0,e_0) =  \bar{\mathcal{E}}^{\mathcal{A}}_j(t_0,e_0)$ with $j \neq i$. This uniqueness of the step index ensures that $\mathcal{A}$ is well-defined.
    
    We now seek to bound
    \begin{align*}
        \mathbb{P}\left(\mathbf{GOOD}^{\mathcal{A}}_n(A,a,k) \cap \mathbf{BAD}^{\mathcal{A}}_n(A,a,k+1)\right).
    \end{align*}
    Let $I \in \{1,\dots,\ell_n\}$ be the random index such that $e^n \in \partial_I$. Note that if $p^n_I > 6Ak$ and the event $\mathbf{GOOD}^{\mathcal{A}}_n(A,a,k)$ occurs, then the edge $\theta^{3kA}(e^n)$ belongs to the boundary of a hole of $\bar{\mathcal{E}}^{\mathcal{A}}_{k}(T_{n,g_n,\mathbf{p}^n},e^n)$. Moreover, if the event $\mathbf{BAD}^{\mathcal{A}}_n(A,a,k+1)$ also occurs, then it implies the occurrence of $\mathbf{BAD}^{\mathcal{A}_{k}}_n(A,a,1)$ (see Figure~\ref{badcasek}).

    \begin{figure}[H]
        \centering
        \includegraphics[scale=0.3]{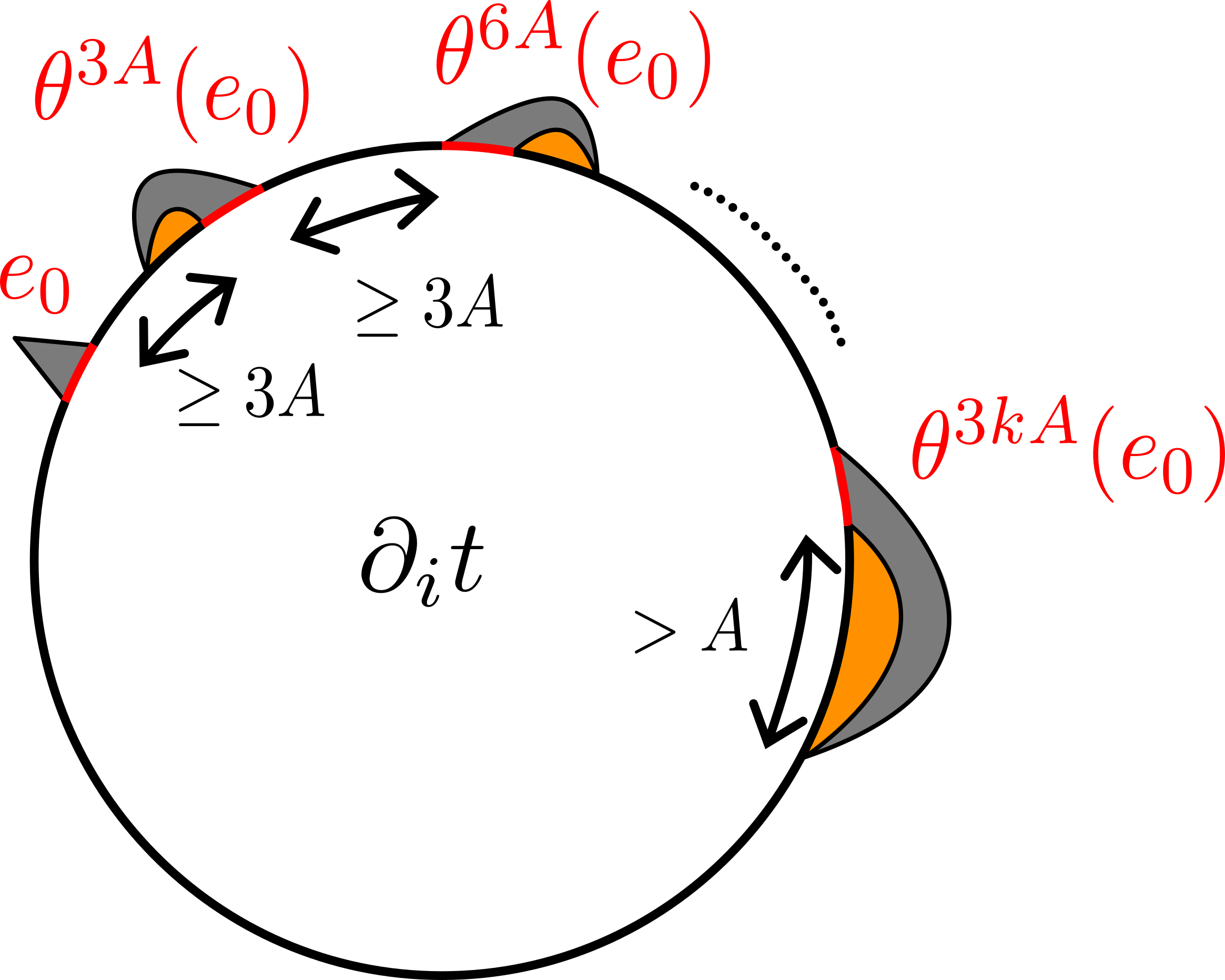}
        \caption{In this example, we depict a triangulation $(t,e_0)$ and the first $k+1$ steps of its peeling exploration using algorithm $\mathcal{A}$. The orange components represent planar triangulations filling the holes. The first $k$ steps satisfy $\mathbf{GOOD}^{\mathcal{A}}(A,a)$, whereas the $(k+1)$-th step is $\mathbf{BAD^{\mathcal{A}}}(A,a)$. Indeed, the $(k+1)$-th discovered triangle encloses more than $A$ edges to its right, which satisfies condition~\ref{bad3} in Definition~\ref{Defbad}. Step $1$ is also $\mathbf{BAD}^{\mathcal{A}_k}(A,a)$ since the first $k$ triangles do not touch any edge at distance at most $A$ from $\theta^{3kA}(e_0)$.}
        \label{badcasek}
    \end{figure} 

    Recall the event $\mathcal{P}_n := \left\{ e^n \text{ belongs to a boundary component with perimeter at least } \left(\frac{|\mathbf{p}^n|}{\ell_n}\right)^{1/2} \right\}$. Using Proposition~\ref{perimeter_typically_large}, we deduce that for sufficiently large $n$,
    \begin{align*}
        \mathbb{P}\left(\mathbf{GOOD}^{\mathcal{A}}_n(A,a,k) \cap \mathbf{BAD}^{\mathcal{A}}_n(A,a,k+1)\right) \le  \mathbb{P}\left(\mathbf{BAD}^{\mathcal{A}_{k}}_n(A,a,1)\right)+ \mathbb{P}(\mathcal{P}_n^{c}).
    \end{align*}
    Using the induction hypothesis and the fact that the second term tends to $0$ as $n \to +\infty$, we conclude that for $n$ large enough,
    \begin{align*}
        \mathbb{P}\left(\mathbf{GOOD}^{\mathcal{A}}_n(A,a,k) \cap \mathbf{BAD}^{\mathcal{A}}_n(A,a,k+1)\right) \le \frac{\varepsilon}{2}.
    \end{align*}
    Finally, for sufficiently large $n$,
    \begin{align*}
        \mathbb{P}\left(\mathbf{GOOD}^{\mathcal{A}}_n(A,a,k+1)\right) 
        &\ge \mathbb{P}\left(\mathbf{GOOD}^{\mathcal{A}}_n(A,a,k)\right) - \mathbb{P}\left(\mathbf{GOOD}^{\mathcal{A}}_n(A,a,k) \cap \mathbf{BAD}^{\mathcal{A}}_n(A,a,k+1)\right) \\
        &\ge 1 - \varepsilon.
    \end{align*}
    This concludes the proof.
\end{proof}

Now we move on to the proof of Proposition~\ref{tightness_d_loc}. To do so, we will apply Proposition~\ref{no_BAD_steps} to a peeling algorithm that discovers the balls around the root edge. Fix $t$ a planar triangulation with one hole\footnote{If $t$ does not satisfy this assumption, we define $\mathcal{A}_{\mathrm{metric}}(t,e)$ arbitrarily.} and $e$ an oriented edge of $t$. Let $\rho$ denote the origin of $e$. We define $\mathcal{A}_{\mathrm{metric}}(t,e)$ as follows:
\begin{itemize}
	\item[$\bullet$] Choose $v \in \partial^{*}t$ such that $d_{t}(\rho,v)$ is minimal, taking $v$ having the leftmost geodesic from $\rho$ to $v$ in case of equality.
	\item[$\bullet$] Define $\mathcal{A}_{\mathrm{metric}}(t,e)$ as the edge immediately to the right of $v$ on the hole.
\end{itemize}

For any $n,k \ge 0$, we set 
\[
t_k^n := \bar{\mathcal{E}}^{\mathcal{A}_{\mathrm{metric}}}_{k}(T_{n,g_n,\mathbf{p}^n},e^n),
\]
the rooted triangulation with holes obtained after $k$ steps of the peeling exploration with the algorithm $\mathcal{A}_{\mathrm{metric}}$. We also denote by $v^n_k$ the left vertex of the peeled edge $\mathcal{A}_{\mathrm{metric}}(t^n_k)$. For any $n,k \ge 0$, we define
\begin{align}\label{def_sigma}
\sigma_n(k) := \min_{x \in \partial^{*}t_k^n} d_{t_k^n}(x,\rho^n) = d_{t^n_k}(v^n_k,\rho^n).
\end{align} We aim to prove that, for any $r \ge 0$, with high probability and for $n,k$ large enough, $\sigma_n(k) \ge r$ (see Proposition~\ref{discover_balls}). The argument is now classical and goes back all the way to the study of the UIPT in \cite{Ang_growth_UIPT}. Nevertheless, we provide a detailed proof since we are working with non-planar triangulations and must ensure that the argument applies in this context. This will be guaranteed by Proposition~\ref{no_BAD_steps}.

\begin{figure}[H]
\centering  
\includegraphics[scale=0.24]{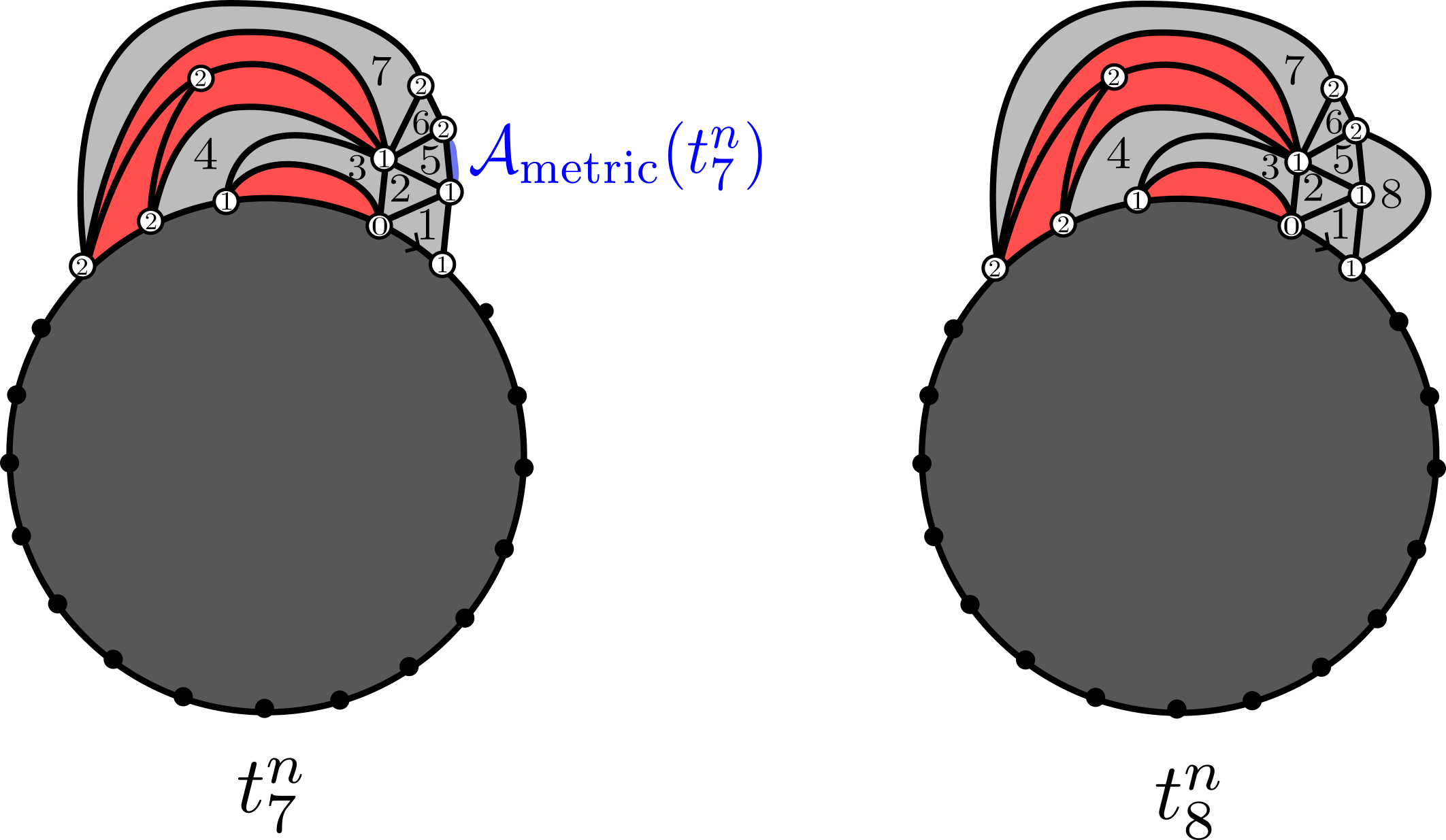}
\caption{On the left, an instance of $t_7^n$. The labels inside the white vertices indicate the distances to the origin in $t_7^n$. The blue edge represents the one to be peeled, $\mathcal{A}_{\mathrm{metric}}(t_7^n)$. On the right, we show the triangulation with holes $t_8^n$ obtained after peeling this edge. Note that the vertex $v^n_7$ to the right of the peeled edge becomes internal in $t_8^n$. In this example, the event $\mathcal{C}_8$ occurs.}
\label{algo}
\end{figure}

\begin{proposition}\label{discover_balls}
    For any $r \ge 0$ and any $\varepsilon > 0$, there exists $k > 0$ such that, for $n$ large enough, 
    \begin{align*}
        \Pf(\sigma_n(k) \ge r)\ge 1-\varepsilon.
    \end{align*}
\end{proposition}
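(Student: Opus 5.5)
The plan is to follow Angel's classical argument for the growth of the UIPT. On the high-probability event $\mathbf{GOOD}^{\mathcal{A}_{\mathrm{metric}}}_n(A,a,k)$ of Proposition~\ref{no_BAD_steps}, every step of the exploration behaves like a peeling step in a planar half-plane-like map. So $\sigma_n$ can only increase by "completing layers", and the probability of completing a layer is bounded below.

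\textbf{Basic properties of $\sigma_n$.} First, on $\mathbf{GOOD}_n(A,a,k)$, each $t^n_j$ ($j\le k$) is a planar triangulation with one hole. Along the hole, the distances to $\rho^n$ in $t^n_j$ take values in $\{\sigma_n(j),\sigma_n(j)+1\}$ on the part of the hole away from the original boundary $\partial_I$. The hole is made of a finite "active" arc of new edges, glued to the two infinite-like remaining parts of $\partial_I$. Since steps are GOOD, no step swallows more than $A$ boundary edges, and the holes filled are planar of size at most $a$. Hence $\sigma_n$ is nondecreasing in $j$: filled components are planar and separated from the hole, so distances from $\rho^n$ to hole vertices cannot decrease. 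Moreover, each step either keeps the same $v^n_j$ layer or moves along it to the right.

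\textbf{Completing a layer.} Next, I would show that a new layer at height $\sigma$ is completed after boundedly many steps with positive probability. Because the peeled edge is always immediately to the right of the leftmost vertex at minimal distance, the vertices of the hole at distance $\sigma_n(j)$ are swallowed one by one from the left. This is the subtle point in the boundary case: the hole contains the two long halves of $\partial_I$, and the minimal-distance vertices on $\partial_I$ near $\rho^n$ also have to be covered. So I would restrict to the finite window of $\partial_I$ within distance $r$ of $\rho^n$, which has at most $2r+1$ vertices. I would then argue that a step of type $\mathcal{L}_0$, or the Case-I step (a new internal vertex), followed by $\mathcal{L}$-type steps, makes $v^n_j$ internal.

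Concretely, conditionally on $t^n_j$, the probability that the next triangle closes onto the left neighbour of the peeled edge (event $\mathcal{L}_1^{\mathrm{sep}}$ with the hole filled by a single edge) is a ratio $\tau_{\mathbf{q}}(n-1,g_n)/\tau_{\mathbf{q}'}(n,g_n)$. By Lemma~\ref{bounded_ratio_vertices}, this ratio is at least $c_\theta>0$, uniformly on GOOD configurations. Each such step strictly reduces the number of vertices of the hole at distance $\sigma_n(j)$ that lie within distance $r$ of $\rho^n$, while creating no new ones at that level. The number of such vertices is at most $C(A,a,k)$ on GOOD events, since each step adds at most one vertex to the hole. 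Therefore, within $C(A,a,k)$ steps, $\sigma_n$ increases by one with probability at least $c_\theta^{C}$.

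\textbf{Iterating to reach radius $r$.} The difficulty is that $C$ depends on $k$, which itself must grow. I would handle this as in \cite{Ang_growth_UIPT}, by a geometric-trials argument at each level. From any configuration at level $\sigma$, among any block of $K$ consecutive steps the level advances with probability at least $\delta(K)$, since the number of vertices at the current level never exceeds the initial count plus $K$. Using Lemma~\ref{bounded_ratio_vertices} repeatedly, the chance that $\sigma$ stays stuck for $N$ blocks is at most $(1-\delta)^N$. I would take $k=rNK$, choose $N$ so that $r(1-\delta)^N\le\varepsilon/2$, and then pick $A,a$ from Proposition~\ref{no_BAD_steps} for this $k$ with error $\varepsilon/2$.

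I expect the main obstacle to be the uniform lower bound $\delta$. It must not deteriorate as $k$ grows, even though the active part of the hole can grow. The fix I would try first is to note that the number of minimal-level vertices within distance $r$ of $\rho^n$ is bounded in terms of $r$ and the maximal degree along the explored region. That degree is tight by the GOOD property and Proposition~\ref{Finite_holes_filled_by_finite_maps}, which gives a bound independent of $k$.
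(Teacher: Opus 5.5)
Your local ingredients are the right ones: the metric peeling, the success event in which the revealed triangle closes onto the left neighbour of $v^n_i$ and the resulting $2$-hole is glued shut, its conditional probability being at least $c_\theta$ by Lemma~\ref{bounded_ratio_vertices}, and the fact that no step creates a new hole vertex at the current minimal level. The iteration over levels, however, does not close, as you yourself suspect.

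When the level first reaches $\sigma$, the number of hole vertices at distance $\sigma$ is only bounded by the number of steps already performed plus $2$. With $k=rNK$ this is of order $rNK$. So a block of $K$ steps cannot succeed with a probability $\delta(K)$ independent of $N$, and choosing $N$ from $\delta$ is circular.

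Your proposed remedy does not repair this. The $\mathbf{GOOD}$ property only bounds the explored region by $k(A+a)$, i.e.\ in terms of $k$. Proposition~\ref{Finite_holes_filled_by_finite_maps} only controls a single filled hole at a single step. Nothing proved so far gives a $k$-independent bound on degrees near $\rho^n$. In fact, tightness of $\deg(\rho^n)$ is itself a consequence of the statement you are trying to prove (its case $r=1$). Requiring $C$ consecutive successes, with probability $c_\theta^{C}$, is also needlessly lossy.

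No uniformity is needed if you order the quantifiers sequentially, which is what an induction on $r$ gives. For the step $r\to r+1$:
\begin{itemize}
\item First take $j$ from the induction hypothesis so that $\Pf(\sigma_n(j)\ge r)\ge 1-\varepsilon/3$.
\item Let $B_n(i,r)$ be the set of hole vertices of $t^n_i$ at distance exactly $r$ from $\rho^n$. Since $\#B_n(0,r)\le 2$ and $\#(B_n(i+1,r)\setminus B_n(i,r))\le 1$, you get $\#B_n(j,r)\le j+2$ deterministically.
\item On $\{\sigma_n(j)\ge r\}$ together with good steps, $B_n(i+1,r)\subset B_n(i,r)$ for $i\ge j$. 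Each success removes an element as long as $B_n(i,r)\neq\emptyset$.
\item Because failures never enlarge $B_n$, successes need not be consecutive. Dominating the success indicators from below by i.i.d.\ Bernoulli$(c_\theta)$ variables bounds the failure probability by $\Pf(\mathrm{Bin}(k-j,c_\theta)<j+2)$ plus the probability of the bad events.
\item So you choose $k$ depending only on $(j,\varepsilon)$. Only then do you choose $A,a$ from Proposition~\ref{no_BAD_steps} for this $k$.
\item Also intersect with $\mathcal{P}_n$ from Proposition~\ref{perimeter_typically_large}. This guarantees that, given the explored part, the unexplored part is uniform on a set to which Lemma~\ref{bounded_ratio_vertices} applies.
\end{itemize}
The growth of $\#B_n(j,r)$ with $j$ is then harmless, because $j$ is fixed before $k$.
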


\begin{proof}
We proceed by induction on $r$. The case $r=0$ is trivial. Suppose the result holds for some $r \ge 0$. Fix $\varepsilon >0$ and choose $j \ge 0$ given by the induction hypothesis such that, for $n$ large enough,
\begin{align}\label{induction_prop}
\Pf(\sigma_n(j) \ge r)\ge 1-\frac{\varepsilon}{3}.
\end{align}
 Fix $ j \le k$. We aim to bound $\Pf(\sigma_n(k) < r+1)$ for $n$ large enough. By Proposition~\ref{no_BAD_steps}, there exist $A,a > 0$ such that, for $n$ large enough,
\begin{align}\label{GOOD}
\Pf\big(\mathbf{GOOD}_{n}^{\mathcal{A}_{\mathrm{metric}}}(A,a,k)\big) \ge 1- \frac{\varepsilon}{3}.
\end{align}
Let us recall $\mathcal{P}_n := \Big\{ e^n \text{ lies on a boundary with perimeter at least } \Big(\frac{|\mathbf{p}^n|}{\ell_n}\Big)^{1/2} \Big\}$. For $j \le i$, define the event $F_n(j,i,A,a)$ as the event where $\mathbf{GOOD}_{n}^{\mathcal{A}_{\mathrm{metric}}}(A,a,i)$ holds, $\sigma_n(j) \ge r$ and $\mathcal{P}_n$ occurs. The family of events $(F_n(j,i,A,a))_{j \le i}$ is decreasing in $i$. Moreover, each $F_n(j,i,A,a)$ is measurable with respect to $t^n_i$. By Proposition~\ref{perimeter_typically_large}, \eqref{induction_prop} and \eqref{GOOD}, for $n$ large enough we have 
\begin{align}\label{FnjkAa}
\Pf(F_n(j,k,A,a)) \ge 1-\varepsilon.
\end{align}

For $j \le i$, we introduce the set of vertices on the holes that are exactly at graph distance $r$ from $\rho^n$ in $t^n_i$:
\begin{align}\label{vertex_at_distance_r}
B_{n}(i,r) = \{x \in \partial^{*}t_i^n : d_{t_i^n}(x,\rho^n) = r\}.
\end{align}
We first check that
\[
\#B_n(0,r) \le 2 \quad \text{and} \quad \#(B_n(i+1,r) \setminus B_n(i,r)) \le 1 \text{ , }\forall i \ge 0.
\]
The first statement is obvious, and the second follows because at each peeling step we add at most one new vertex to the hole and we peel to the right of a vertex at minimal distance from the root. In particular, $  j+2\le \#B_n(j,r)$.  
Moreover, under $F_n(j,i,A,a)$, we have
\[
 B_n(i+1,r)\subset B_n(i,r) ,
\]
which follows from the fact that the algorithm $\mathcal{A}_{\mathrm{metric}}$ peels the edge to the left of a vertex minimizing the distance to $\rho^n$. Now we write 
\begin{align}\label{bound_Nrk}
\Pf(\sigma_n(k) \le r) \le \Pf(F_n(j,k,A,a)^{c}) +\Pf(F_n(j,k,A,a),\, B_{n}(k,r) \neq \emptyset).
\end{align}

For $j+1 \le i \le k$, let $C_i$ be the event that the $i^{\mathrm{th}}$ peeling step creates a hole of size $2$ to the left of the peeled edge and that this hole is filled by an empty map (see Figure~\ref{algo}). Then, under $C_i$ and if $\# B_n(i,r) > 0$, we have $\#B_n(i,r) - \# B_n(i-1,r) \le -1$.

We now note that, conditionally on $t^n_i$ such that $F_n(j,i,A,a)$ holds, the triangulation $(T_{n,g_n,\mathbf{p}^n},e^n) \backslash t^n_i$ is uniformly distributed in $\mathcal{T}_{(p^n_1,\cdots,p^n_{s}+q ,\cdots,p^n_{\ell_n})}(n+m,g_n)$, where $s,q,m \in \mathbb{Z}$ are $t^n_i$-measurable. Moreover, $q \in \{-Ai,\cdots,i\}$ and $m \in \{-ai,\cdots,0\}$ and $s \in \{1,\cdots,\ell_n\}$ is such that $p^n_s \ge (\frac{|\mathbf{p}^n|}{\ell_n})^{\frac{1}{2}}$.  

Using Lemma~\ref{bounded_ratio_vertices}, we obtain, for $n$ large enough,
\[
\Pf(C_i \mid t_i) = \frac{\tau_{(p^n_1,\cdots,p^n_{s}+q-1,\cdots,p^n_{\ell_n})}(n+m-1,g_n)}{\tau_{(p^n_1,\cdots,p^n_{s}+q,\cdots,p^n_{\ell_n})}(n+m,g_n)} \ge c_{\theta} > 0.
\]
From this, we can construct, for $n$ large enough, a family of i.i.d. Bernoulli random variables $(Z_i)_{i \ge 0}$ with parameter $c_{\theta}$ such that, for any $j+1 \le i \le k$, under $F_n(j,i,A,a)$, we have $\mathbf{1}_{C_i} \ge Z_i$. It follows that the second term in~\eqref{bound_Nrk} is bounded by 
\[
\Pf\Bigg(\sum_{i=j+1}^{k} Z_i \le j-2\Bigg) = \Pf(\mathrm{Bin}(k-j,c_{\theta}) \le j+2).
\]
Finally, for $n$ large enough, we can write 
\[
\Pf(\sigma_n(k) \le r) \le \Pf(F_n(j,k,A,a)^{c}) + \Pf(\mathrm{Bin}(k-j,c_{\theta}) \le j+2).
\]
Choosing $k$ large enough so that $\Pf(\mathrm{Bin}(k-j,c_{\theta}) \le j+2) \le \varepsilon$ and using~\eqref{FnjkAa} concludes the proof.
\end{proof}

We can now complete the proof of Proposition~\ref{tightness_d_loc}.

\begin{proof}
Fix $r \ge 0$ and $\varepsilon > 0$. Let us recall 
\begin{center}
$\mathcal{P}_n := \Big\{ e^n \text{ lies on a boundary with perimeter at least } \Big(\frac{|\mathbf{p}^n|}{\ell_n}\Big)^{1/2} \Big\}$.
\end{center} 
Using Proposition~\ref{discover_balls}, choose $k > 0$ such that, for $n$ large enough,
\[
\Pf(\sigma_n(k) \ge r)\ge 1-\frac{\varepsilon}{3}.
\]
Using Proposition~\ref{no_BAD_steps}, choose $A,a \ge 0$ such that, for $n$ large enough,
\[
\Pf\big(\mathbf{GOOD}^{\mathcal{A}_{\mathrm{metric}}}_{n}(A,a,k)\big) \ge 1-\frac{\varepsilon}{3}.
\]
Define $\mathcal{F}_{n}(A,a,k,r)$ as the event on which $\mathbf{GOOD}_{n}^{\mathcal{A}_{\mathrm{metric}}}(A,a,k)$ holds, $\sigma_n(k) \ge r$, and the event $\mathcal{P}_n$ occurs. Combining these facts with Proposition~\ref{perimeter_typically_large}, we obtain, for $n$ large enough,
\begin{align}\label{A}
\Pf(\mathcal{F}_n(A,a,k,r)) \ge 1-\varepsilon.
\end{align}

Under $\mathcal{F}_{n}(A,a,k,r)$ and for $n$ large enough, define $\widetilde{t}_k^n$ as the triangulation with holes $t_k^n$ where we remove the edges $e \in \partial t_k^n \cap \partial^{*}t_k^n$.  
In $\widetilde{t}_k^n$, the boundary consists of a segment colored in blue (coming from the boundary of $t_k^n$) and a segment colored in green (coming from the hole of $t_k^n$). Then the ball $B_r(T_{n,g_n,\mathbf{p}^{n}},e^n)$ is contained in $\widetilde{t}_k^n$ (see Figure~\ref{limit_halfplane}).  

Since the first $k$ steps are $\mathbf{GOOD}^{\mathcal{A}_{\mathrm{metric}}}(A,a)$, the triangulation $\widetilde{t}_k^n$ contains at most $k(A+a)$ vertices, ensuring that under $\mathcal{F}_{n}(A,a,k,r)$, the ball $B_r(T_{n,g_n,\mathbf{p}^{n}},e^n)$ can take only finitely many possible values. This establishes tightness.

\begin{figure}[H]
\centering 
\includegraphics[scale=0.2]{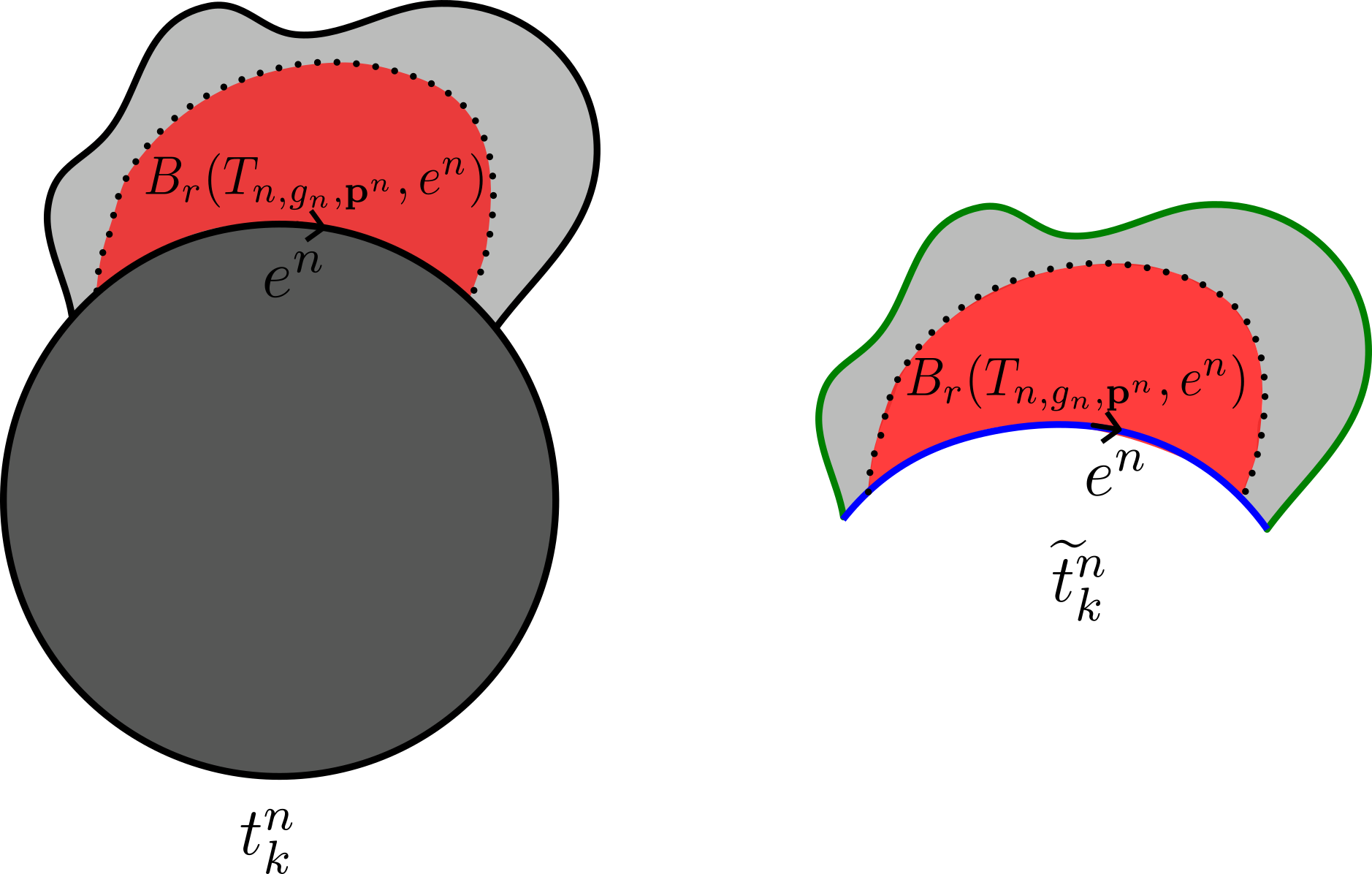}
\caption{Left: the triangulation with holes $t_k^n$. Under $B_n(A,a,k,r)$, it contains the ball of radius $r$. Right: the triangulation $\widetilde{t}_k^n$ obtained after removing edges $e \in \partial t_k^n \cap \partial^{*} t_k^n$.}
\label{limit_halfplane}
\end{figure}

Now fix $(H,e)$ a subsequential limit of $(T_{n,g_n,\mathbf{p}^{n}},e^n)$.  
On $B_n(A,a,k,r)$, the blue part of the boundary of $\widetilde{t}_k^n$ has size greater than $k$, ensuring that $H$ has an infinite boundary. Moreover, since $\widetilde{t}_k^n$ is planar, the limit is also planar. Finally, $\widetilde{t}_k^n$ contains the ball of radius $r$ and $T_{n,g_n,\mathbf{p}^n} \backslash t^n_k$ is connected, which implies that $H$ is one-ended.  

Since $(T_{n,g_n,\mathbf{p}^{n}},e^n)$ is weakly Markovian, the limit $(H,e)$ is also weakly Markovian. This completes the proof.
\end{proof}

\subsection{$\mathbb{H}_{\lambda(\theta)}$ is the limit of $(T_{n,g_n,\mathbf{p}^{n}},e^n)$}\label{last_section}
  
The goal of this section is to prove Theorem~\ref{local_limit_boundary}.  

We recall the definitions of $\mathbb{H}_{\lambda}$ and $\widetilde{\mathbb{H}}_{\lambda}$ from Section~\ref{halfplane_triangulation}. We recall that the triangulations $\widetilde{\mathbb{H}}_{\lambda}$ are subcritical while the triangulations $\mathbb{H}_{\lambda}$ are hyperbolic. We also recall that for $0 < \lambda \le \lambda_c$, the value $h(\lambda)$ is defined by the equation $\lambda = \frac{h}{(1+8h)^{3/2}}$.  

For the remainder of this section, by Proposition~\ref{tightness_d_loc}, we fix a subsequential limit $H$ of $(T_{n,g_n,\mathbf{p}^n},e^n)$.  
Using Proposition~\ref{classification_halfplane}, we fix $\Lambda$, a probability measure on $\mathbb{R}_{+}^2$, supported on
\begin{align*}
\big\{(\lambda,\beta) : 0 < \lambda \le \lambda_c \text{ and } \beta \in \{32h+4,\,8+\tfrac{1}{h}\}\big\} \cup \{(0,4)\},
\end{align*}
such that for any triangulation with holes $t$ having an infinite boundary and one infinite hole, we can write
\begin{align*}
\Pf(t \subset H ) = \int_{\mathbb{R}_{+}^2} \beta^{|\partial^{*}t|-|\partial t|}\lambda^{|t_{\mathrm{in}}|}\, \Lambda(\mathrm{d}\beta,\mathrm{d}\lambda).
\end{align*}
We recall the definition
\begin{align*}
d(\lambda) = \frac{h \log\!\big(\frac{1+\sqrt{1-4h}}{1-\sqrt{1-4h}}\big)}{(1+8h)\sqrt{1-4h}},
\end{align*} 
and that $\lambda(\theta)$ is the unique solution to
\begin{align*}
d(\lambda(\theta)) = \frac{1-2\theta}{6}.
\end{align*}
It remains to show that, $\Lambda$-almost surely,
\begin{center}
$\lambda = \lambda(\theta)$ and $\beta = 8+ \dfrac{1}{h(\lambda(\theta))}$.
\end{center}

We begin by proving that $\lambda = \lambda(\theta)$ almost surely.

\begin{lemma}\label{lambda_deter}
$\Lambda$-almost surely, $\lambda = \lambda(\theta)$.
\end{lemma}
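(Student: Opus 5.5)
The plan is to identify $\lambda$ through the probability that the root triangle of $H$ has its third vertex in the interior. Let $t_1$ be the triangulation with one infinite boundary and one infinite hole, obtained by gluing a single triangle on the root edge whose third vertex is a new internal vertex. Then $|\partial^{*}t_1|-|\partial t_1| = 1$ and $|t_{1,\mathrm{in}}|=1$, so
\[
\Pf(t_1\subset H)=\int \beta\lambda\,\Lambda(\mathrm{d}\beta,\mathrm{d}\lambda).
\]
A single event, however, only identifies a mean. I would therefore use the family of events $t_{k}$: triangulations with $|\partial^{*}t_k|-|\partial t_k|=k$ and $v=k$ internal vertices, for instance $k$ successive peeling steps of Type~II along the boundary. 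These give $\Pf(t_k\subset H)=\int(\beta\lambda)^k\,\Lambda$. More useful still are events with $p=0$ and varying $v$, which give $\int \lambda^v\,\Lambda$. To build one, glue $t_1$, then peel the new edge and close it back onto the boundary with a small planar triangulation. Working with general pairs $(p,v)$, I plan to show that, for every pair, $\int \beta^p\lambda^v\,\Lambda$ equals the corresponding value with $\lambda$ replaced by $\lambda(\theta)$. The mixture in $\lambda$ is then determined by the moments $\int\lambda^v$ for $p=0$, and those moments pin $\lambda=\lambda(\theta)$.

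The key step is the finite-$n$ computation. Fix $t$ with $p=0$ and $v$ internal vertices, planar and with a single hole. Then
\[
\Pf(t\subset (T_{n,g_n,\mathbf{p}^n},e^n))=\frac{\tau_{\mathbf{p}'}(n-v,g_n)}{\tau_{\mathbf{p}^n}(n,g_n)},
\]
where $\mathbf{p}'$ has the same entries as $\mathbf{p}^n$, except that the boundary containing $e^n$ is replaced by the hole boundary, of the same perimeter since $p=0$. For $v=1$, say, one obtains the ratio $\tau_{\mathbf{p}}(n-1,g_n)/\tau_{\mathbf{p}}(n,g_n)$ averaged over which boundary contains $e^n$ (up to a uniform relabelling of the root). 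By Proposition~\ref{application_cv_ratio} this ratio tends to $\lambda(\theta)$, uniformly since it holds for any sequence $\mathbf{p}^n$ with $|\mathbf{p}^n|=o(n)$. Iterating, $\tau_{\mathbf{p}^n}(n-v,g_n)/\tau_{\mathbf{p}^n}(n,g_n)\to\lambda(\theta)^v$. Passing to the limit along the subsequence gives $\int\lambda^v\,\Lambda=\lambda(\theta)^v$ for all $v\ge0$. Taking $v=1,2$ yields $\mathrm{Var}_\Lambda(\lambda)=0$, hence $\lambda=\lambda(\theta)$ $\Lambda$-almost surely.

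The main obstacle is constructing a $t$ with $|\partial^*t|-|\partial t|=0$ and $v\ge1$ internal vertices. Such a $t$ must exist in the class considered: for example, glue a triangle on the root edge with a new internal vertex $z$, then peel the edge from $z$ to the right endpoint and take its third vertex to be the boundary vertex one step further right. Counting gives hole minus boundary equal to $0$ and one internal vertex, so $t$ is planar with one hole. The second thing to check is that the event $\{t\subset T_n\}$ corresponds exactly to the ratio above, with no separating configuration in the finite triangulation. This holds because $t$ only closes onto the same boundary at distance $1$, and the complement is then of the stated type by Definition~\ref{triangulation_with_holes}. Since $|\mathbf{p}^n|=o(n)$ and $g_n/n\to\theta$ are preserved under these $O(1)$ modifications, Proposition~\ref{application_cv_ratio} applies directly.
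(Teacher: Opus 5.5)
Your argument is correct and is essentially the paper's proof: you test $H$ against configurations $t$ with $|\partial^{*}t|=|\partial t|$ and $v$ internal vertices, identify $\Pf(t\subset H)=\int\lambda^v\,\Lambda(\mathrm{d}\beta,\mathrm{d}\lambda)$ with $\lim \tau_{\mathbf{p}^n}(n-v,g_n)/\tau_{\mathbf{p}^n}(n,g_n)=\lambda(\theta)^v$ via Proposition~\ref{application_cv_ratio}, and conclude. The differences are cosmetic: you finish with the first two moments (zero variance) where the paper matches all moments against the compactly supported $\delta_{\lambda(\theta)}$, and your exploratory remarks about $t_1$, $t_k$ and general pairs $(p,v)$ are not needed.
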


\begin{proof}
For $r \ge 0$, let $t$ be a triangulation with holes having one infinite boundary, one infinite hole, exactly $r$ internal vertices and such that $|\partial^{*}t|=|\partial t|$. Then
\begin{align*}
\Pf(t \subset H) = \int_{\mathbb{R}_{+}^2} \lambda^{r}\, \Lambda(\mathrm{d}\beta,\mathrm{d}\lambda).
\end{align*}
On the other hand, since $H$ is a local limit of $(T_{n,g_n,\mathbf{p}^{n}},e^n)$ along a subsequence $(n_k)_{k \ge 0}$, and using Proposition~\ref{application_cv_ratio}, we can write
\begin{align*}
\Pf(t \subset H) = \lim_{k\to +\infty} \frac{\tau_{\mathbf{p}_{n_k}}(n_k-r,g_{n_k})}{\tau_{\mathbf{p}_{n_k}}(n_k,g_{n_k})} = \lambda(\theta)^r.
\end{align*}
Thus, the marginal distribution of $\lambda$ under $\Lambda$ has the same moments (of all orders) as $\delta_{\lambda(\theta)}$. Since $\delta_{\lambda(\theta)}$ has compact support, the two measures coincide. This proves the lemma.
\end{proof}

Finally, we prove that $\beta = 8+\frac{1}{h(\lambda(\theta))}$ almost surely.  
By Lemma~\ref{lambda_deter}, we can write $H = x\,\widetilde{\mathbb{H}}_{\lambda(\theta)} +(1-x)\,\mathbb{H}_{\lambda(\theta)}$ for some $0\le x\le 1$, i.e. $H$ is a mixture of $\widetilde{\mathbb{H}}_{\lambda(\theta)}$ and $\mathbb{H}_{\lambda(\theta)}$.  
If $\theta = 0$, using the first Item of Proposition~\ref{classification_halfplane}, we have $\widetilde{\mathbb{H}}_{\lambda(\theta)} = \mathbb{H}_{\lambda(\theta)}$, which immediately yields Theorem~\ref{local_limit_boundary}.  
Hence, we now assume $\theta > 0$ and reason by contradiction, assuming $x > 0$.  

Our main tool will be a surgery operation on the peeling diagrams inspired from \cite{Contat2022LastCD}. In words, the peeling diagram associated to a peeling exploration is a decorated diagram that encapsulates the genealogy of the peeling steps. See Section~\ref{peeling_def} for the formal definitions.

We now define a suitable peeling algorithm, denoted by $\mathcal{A}_{\mathrm{left}}$.  
Informally, this algorithm always peels on the “leftmost” boundary.  
We now make this notion precise.  
The definition of $\mathcal{A}_{\mathrm{left}}$ is recursive.  
Fix $t \in \mathcal{T}_{\mathbf{p}^n}(n,g_n)$ and an oriented edge $e$ lying on $\partial_1$. Then $\mathcal{E}^{\mathcal{A}_{\mathrm{left}}}_0(t,e)$ is the polygon $\partial_1$ rooted at $e$. We define $\mathcal{A}_{\mathrm{left}}(\mathcal{E}^{\mathcal{A}_{\mathrm{left}}}_0(t,e))$ as any edge chosen arbitrarily on $\partial_1$.  
Suppose that for $k \ge 0$, we have constructed $\mathcal{E}^{\mathcal{A}_{\mathrm{left}}}_k(t,e)$.  
Then, consider the peeling diagram $\mathcal{D}^{\mathcal{A}_{\mathrm{left}}}_k(t,e)$.  
For each hole $h$ of $\mathcal{E}^{\mathcal{A}_{\mathrm{left}}}_k(t,e)$, denote by $\gamma(h)$ the leftmost oriented path in $\mathcal{D}^{\mathcal{A}_{\mathrm{left}}}_k(t,e)$ from the root to $h$.  
The notion of “leftmost” is well-defined via the labelling $\leftarrow$ and $\rightarrow$ of the edges in $\mathcal{D}^{\mathcal{A}_{\mathrm{left}}}_k(t,e)$.  
Let $h_0$ denote the hole whose associated path $\gamma(h_0)$ is the leftmost among $\{\gamma(h)\}_h$.  
We then define $\mathcal{A}^{\mathrm{left}}(\mathcal{E}^{\mathcal{A}_{\mathrm{left}}}_k(t,e))$ by choosing an arbitrary edge on $h_0$.  
As will be seen later, the use of this specific peeling algorithm is crucial.\\
 
The main idea behind the argument will be that the half-plane triangulation $\widetilde{\mathbb{H}}_{\lambda(\theta)}$ has a boundary that “folds into itself”. Indeed, in $\widetilde{\mathbb{H}}_{\lambda(\theta)}$ the probability to have the first peeling step that swallows at least $k$ edges on the boundary is of order $r^{-\frac{1}{2}}$, while in $\mathbb{H}_{\lambda}$ it is of order $\exp(-Ck)$ for some constant $C > 0$. Then, with probability of order $r^{-\frac{1}{2}}$, the peeling diagram of $\widetilde{\mathbb{H}}_{\lambda(\theta)}$ might split into two disconnected components of size more than $r$. We show in Proposition~\ref{no_fattrees_locally} that the decay of this event is at least $r^{-1}$ in $T_{n,g_n,\mathbf{p}^n}$ which will exclude $\widetilde{\mathbb{H}}_{\lambda(\theta)}$.

 We now formalize this notion.

\begin{definition1}
Fix $t \in \mathcal{T}_{\mathbf{p}^n}(n,g_n)$ and an oriented edge $e$ lying on $\partial_1$.  
Fix a peeling algorithm $\mathcal{A}$.  
For any $r \ge 0$, we say that $\mathcal{D}^{\mathcal{A}}(t,e)$ satisfies $(\mathcal{S}_{r})$ if the following conditions hold:
\begin{enumerate}
 	\item \label{condi1} The first step of the peeling exploration is of type IV (the discovered triangle splits a boundary into two holes) and disconnects the peeling diagram $\mathcal{D}^{\mathcal{A}}(t,e)$ into two connected components.
 	\item \label{condi2} The right component (the one induced by the edge labelled $\rightarrow$) is a tree with more than $r$ vertices in total.
\end{enumerate}
\end{definition1} 

We recall that $e^n_1$ denotes the distinguished edge on $\partial_1$. We now show that for large $r$, there are few $t \in \mathcal{T}_{\mathbf{p}^{n}}(n,g_n)$ satisfying the above condition.

\begin{lemma}\label{no_fattrees_locally}
For $\theta > 0$, there exists a constant $C_{\theta} > 0$ such that for any $r \ge 0$, we have
\begin{align*}
\#\{t \in \mathcal{T}_{\mathbf{p}^{n}}(n,g_n) : \mathcal{D}^{\mathcal{A}_{\mathrm{left}}}(t,e^n_1) \text{ satisfies } (\mathcal{S}_r)\}\le C_{\theta}r^{-1}\tau_{\mathbf{p}^{n}}(n,g_n).
\end{align*}
\end{lemma}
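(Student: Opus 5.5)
The plan is a surgery on peeling diagrams, in the spirit of \cite{Contat2022LastCD}. From each $t$ satisfying $(\mathcal{S}_r)$ we build about $r$ distinct triangulations in $\mathcal{T}_{\mathbf{p}^{n}}(n,g_n)$, each of which comes from a bounded number of preimages. Double counting then gives the factor $r^{-1}$.

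Fix $t$ with $\mathcal{D}^{\mathcal{A}_{\mathrm{left}}}(t,e^n_1)$ satisfying $(\mathcal{S}_r)$. The first peeling step splits $\partial_1$ into a left hole of perimeter $a$ and a right hole of perimeter $b$. Since the diagram disconnects and the right component is a tree, the right hole is filled by a planar triangulation $t_R$ of the $b$-gon. This triangulation contains no other boundary, since a Type III step would add an edge labelled $(j,\cdot)$, and that is allowed inside a tree but would imply $t_R$ contains $\partial_j$. We may need to argue that the right component being a tree excludes this, or simply allow it, as the surgery below is compatible with it. Its peeling subdiagram has more than $r$ vertices. The left part $t_L$ carries all the genus. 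Because $\theta>0$ and $\mathcal{A}_{\mathrm{left}}$ always peels the leftmost hole, the left subdiagram is explored entirely before the right one. This ordering is why $\mathcal{A}_{\mathrm{left}}$ is used: the genealogy of the right tree is a contiguous block of the diagram, so it can be cut out and regrafted without disturbing the encoding of $t_L$.

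The surgery is as follows. The right tree has more than $r$ vertices. We pick one of its vertices $x$, that is, a hole appearing during the peeling of $t_R$, and graft the whole left component $\mathcal{D}_L$ onto $x$ instead of onto the root. Concretely, we reglue $t_L$ into the corresponding hole of the exploration of $t_R$, inserting one extra triangle if needed so that perimeters match and the total vertex and face counts stay fixed. We check the following:
\begin{itemize}
\item the result is a valid diagram in $\mathcal{D}^{\mathcal{A}_{\mathrm{left}}}(n,g_n,\mathbf{p}^n)$;
\item it gives a triangulation of the same genus, with the same boundaries and the same $n$;
\item the choice of $x$ can be recovered.
\end{itemize}
Recovery works because the genus-carrying component sits at a unique vertex, namely the first non-tree branch in leftmost order. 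Different $x$ then give different outputs, and each output has at most $C$ preimages. Summing over $t$ yields $r\cdot\#\{t:(\mathcal{S}_r)\}\le C\,\tau_{\mathbf{p}^{n}}(n,g_n)$.

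The main obstacle is matching perimeters. The left component is rooted at a hole of perimeter $a$, whereas the vertex $x$ has its own label $\ell_x$, and nothing forces $a=\ell_x$. I would first restrict to the vertices $x$ of the right tree whose labels are small, say $\ell_x\le 2$; there are at least $cr$ of these in a tree with more than $r$ vertices, for example the Type I leaves labelled $0$ and their parents labelled $2$. At such an $x$, graft a gadget that transforms the $a$-gon into a $2$-gon. Its cost in $n$ is compensated by shrinking the root split, and the ratio of counts is controlled by Lemma~\ref{bounded_ratio_vertices} together with Lemma~\ref{add_new_boundary}, giving constant-factor losses whose constant depends on $\theta$. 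Alternatively, we can precompose with an injection that changes $a$ by a bounded amount. I expect the careful verification that the grafted diagram is still an $\mathcal{A}_{\mathrm{left}}$-diagram, with the planar part still explored last, to be the technical core.
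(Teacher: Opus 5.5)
Your argument has a genuine gap: the double counting rests on the claim that $(t,x)\mapsto\tilde t$ is boundedly-many-to-one, and that claim fails. After you graft the left component $\mathcal{L}$ at a vertex $x$ of the right tree $\mathcal{R}$, nothing in $\tilde t$ marks where $\mathcal{R}$ ends and $\mathcal{L}$ begins. Every vertex on the path from the new root down to $x$ has a type~V descendant. Moreover, $\mathcal{L}$ may itself begin with type~II steps, or with separating type~IV steps that split off planar trees. So the ``first non-tree branch in leftmost order'' does not identify $x$, and the number of preimages of $\tilde t$ can be as large as the depth of $x$, which is unbounded.

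To restore injectivity you must keep $x$ marked in the output. That costs a factor equal to the number of diagram vertices, which is of order $10n$. Nothing on your input side compensates this factor $n$. This is exactly where $\theta>0$ enters; it does not enter in the ordering of the exploration, since left-before-right is a property of $\mathcal{A}_{\mathrm{left}}$ for any $\theta$. The paper proceeds as follows:
\begin{itemize}
\item It grafts $\mathcal{R}$, not $\mathcal{L}$, at one of the $g_n\ge \theta n/2$ type~V vertices $y$ of $\mathcal{L}$.
\item It outputs the pair $(\tilde t,x)$.
\item It recovers $y$ by backtracking from $x$ to the first ancestor having a type~V descendant; this works because $\mathcal{R}$ is a tree.
\end{itemize}
This gives $g_n r\,\#\{t:(\mathcal{S}_r)\}\le 10n\,\tau_{\mathbf{p}^{n}}(n,g_n)$, which is the $C_\theta r^{-1}$ bound.

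Your perimeter-matching fix also fails. The left hole has perimeter $a=p^n_1-k$, which is unbounded. A gadget turning an $a$-gon into a $2$-gon therefore needs of order $a$ extra triangles. Removing the single root triangle cannot compensate for this. Lemma~\ref{bounded_ratio_vertices} and Lemma~\ref{add_new_boundary} only give a constant loss per unit change of $n$ or of a perimeter, so here the loss is like $C_\varepsilon^{a}$. Changing $a$ ``by a bounded amount'' does not help either.

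The paper avoids any change of $n$, $g_n$ or $\mathbf{p}^n$:
\begin{itemize}
\item It deletes the root of the diagram.
\item It increases by $k$ every label along the leftmost path $\gamma$ from the new root to $y$, so the $k$ boundary edges of the former right hole are carried along.
\item At $y$, whose label $\ell_y$ has become $\ell_y+k$, it inserts a type~IV step. This step splits the hole into a $(k+1)$-hole, to which $\mathcal{R}$ is attached by an edge labelled $\leftarrow$, and an $\ell_y$-hole, from which the original type~V step continues.
\end{itemize}
The root triangle is merely relocated, so the output stays in $\mathcal{T}_{\mathbf{p}^{n}}(n,g_n)$ with no constant loss at all.
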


\begin{proof}
Fix $r \ge 0$ and $t \in \mathcal{T}_{\mathbf{p}^{n}}(n,g_n)$ such that $\mathcal{D}^{\mathcal{A}_{\mathrm{left}}}(t,e^n_1)$ satisfies $(\mathcal{S}_r)$.  
The root of $\mathcal{D}^{\mathcal{A}_{\mathrm{left}}}(t,e^n_1)$ has two children: one labelled $p_1^n-k$ (connected by the edge labelled $\leftarrow$) and one labelled $k+1$ (connected by the edge labelled $\rightarrow$).  
Let $\mathcal{L}$ (resp. $\mathcal{R}$) denote the descendants of the left child (resp. right child) of $\mathcal{D}^{\mathcal{A}_{\mathrm{left}}}(t,e^n_1)$ at the root.  
We also choose:
\begin{itemize}
	\item[$\bullet$] a vertex $x \in \mathcal{R}$.
	\item[$\bullet$] a vertex $y \in \mathcal{L}$ labelled $a \in \mathbb{N}^{*}$ such that the edge outgoing from $y$ is labelled by a number $s$, i.e. the corresponding peeling step is of type V (the discovered triangles connect two distinct holes). Let $z$ denote the child of $y$ and $e$ the oriented edge from $y$ to $z$.
\end{itemize} 

Let $\gamma$ be the leftmost oriented path from the root of the diagram to $y$ (see the orange path in Figure~\ref{peeling_diagram_surgery}).  
We now define a new peeling diagram obtained by a surgery operation, denoted $\widetilde{\mathcal{D}}^{\mathcal{A}_{\mathrm{left}}}(t)$.  
Care must be taken to preserve the correct labelling (see Figure~\ref{peeling_diagram_surgery}):
\begin{enumerate}
\item remove the root vertex and the edges emanating from it.
\item along the oriented path $\gamma$, increase the labels of the encountered vertices by $k$.
\item add an oriented edge labelled $\leftarrow$ starting at $y$ and connecting to $\mathcal{R}$.  
Split $e$ into two consecutive oriented edges: the first labelled $\rightarrow$, the second labelled $s$, with an intermediate vertex labelled $a$.
\end{enumerate} 

\begin{figure}[H]
\centering
\includegraphics[scale=0.3]{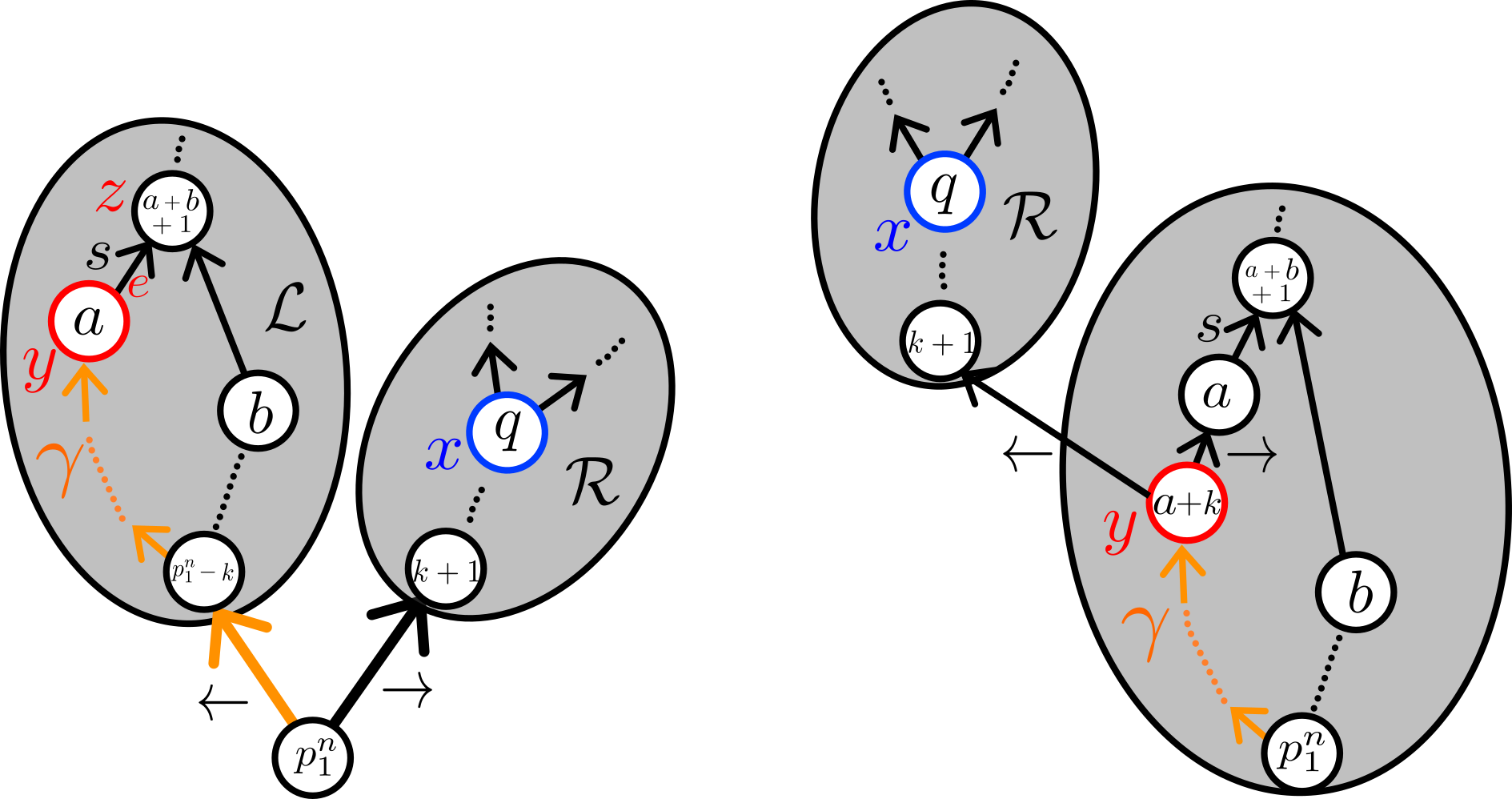}
\caption{Left: the peeling diagram $\mathcal{D}^{\mathcal{A}_{\mathrm{left}}}(t,e^n_1)$ with distinguished vertices $x$ (blue) and $y$ (red).  
The oriented path $\gamma$ is shown in orange.  
Right: the new peeling diagram $\widetilde{\mathcal{D}}^{\mathcal{A}_{\mathrm{left}}}(t,e^n_1)$ obtained after surgery.  
Note that the labels along $\gamma$ are all increased by $k$.}
\label{peeling_diagram_surgery}
\end{figure}

We claim that this new diagram is of the form $\mathcal{D}^{\mathcal{A}_{\mathrm{left}}}(\widetilde{t},e^n_1)$ for some $\widetilde{t} \in \mathcal{T}_{\mathbf{p}^{n}}(n,g_n)$.  
Indeed, given $\widetilde{\mathcal{D}}^{\mathcal{A}_{\mathrm{left}}}(t,e)$, one can reconstruct all peeling steps of a peeling exploration with algorithm $\mathcal{A}_{\mathrm{left}}$. Note that it is crucial that we are working with algorithm $\mathcal{A}_{\mathrm{left}}$. Indeed if one uses another peeling algorithm $\mathcal{A}'$ and copy the argument, it could turn out that the diagram obtained after the surgery operation is no longer a peeling diagram with algorithm $\mathcal{A}'$.

Define $\varphi(t,x,y) = (\widetilde{t},x)$.  
We claim that $\varphi$ is injective.  
Indeed, given $\varphi(t,x,y)$, and since $\mathcal{R}$ is a tree, one can start from $x$ and backtrack until reaching the first vertex $v$ that has a descendant that is a type~V step.  
This vertex is $y$.  
From $y$, one can then recover the modified region and reconstruct $(t,x,y)$.\\

Next, we compare the number of inputs and outputs of this injection.  
For any $t \in \mathcal{T}_{\mathbf{p}^{n}}(n,g_n)$ such that $\mathcal{D}^{\mathcal{A}_{\mathrm{left}}}(t,e^n_1)$ satisfies $(\mathcal{S}_r)$, the component $\mathcal{R}$ has more than $r$ vertices, giving at least $r$ choices for $x$.  
Moreover, since there are $g_n$ steps of type~V in $\mathcal{D}^{\mathcal{A}_{\mathrm{left}}}(t,e^n_1)$, there are $g_n$ choices for $y$.  
Thus, the number of inputs is at least
\begin{align*}
g_n r \#\{t \in \mathcal{T}_{\mathbf{p}^{n}}(n,g_n) : \mathcal{D}^{\mathcal{A}_{\mathrm{left}}}(t,e^n_1) \text{ satisfies } (\mathcal{S}_r)\}.
\end{align*}

For any $\widetilde{t}$, the number of possible $x$ is bounded by the number of vertices in $\mathcal{D}^{\mathcal{A}_{\mathrm{left}}}(\widetilde{t})$.  
Each internal vertex (except the root) in $\mathcal{D}^{\mathcal{A}_{\mathrm{left}}}(\widetilde{t})$ corresponds to a peeling step that introduced it: this mapping is at most two-to-one since type~IV steps introduce two new vertices in $\mathcal{D}^{\mathcal{A}_{\mathrm{left}}}(\widetilde{t})$. Thus, the number of internal vertices in $\mathcal{D}^{\mathcal{A}_{\mathrm{left}}}(\widetilde{t})$ is bounded by the number of internal faces in $t$. Each leaf corresponds to an edge in the triangulation.  
Hence, the number of vertices in the diagram is bounded by $2\#F(\widetilde{t})+\#E(\widetilde{t})+1$, where $\#E(\widetilde{t}) = 3n - \sum_{i=1}^{\ell}(p^n_i-3)$ and $\#F(\widetilde{t}) = 2n - \sum_{i=1}^{\ell}(p^n_i-2)$.  
Therefore, the number of outputs of $\varphi$ is bounded by $10n\tau_{\mathbf{p}^{n}}(n,g_n)$.  
Using the injectivity of $\varphi$ and Lemma~\ref{bounded_ratio_vertices}, we obtain  
\begin{align*}
g_n r \#\{t \in \mathcal{T}_{\mathbf{p}^{n}}(n,g_n) : \mathcal{D}^{\mathcal{A}_{\mathrm{left}}}(t) \text{ satisfies } (\mathcal{S}_r)\}
\le 10n\tau_{\mathbf{p}^{n}}(n,g_n).
\end{align*}
Dividing by $g_n r$ and noting that for $n$ large enough $g_n  \ge \frac{\theta}{2}n$ completes the proof.
\end{proof}

We can now conclude the proof of Theorem~\ref{local_limit_boundary}.

\begin{proof}
We have 
\[
(T_{n,g_n,\mathbf{p}^n},e^n) \underset{n \to +\infty}{\overset{(d)}{\longrightarrow}} x\,\widetilde{\mathbb{H}}_{\lambda(\theta)} + (1-x)\,\mathbb{H}_{\lambda(\theta)}.
\]
Let us assume $x > 0$. For $j \ge 0$, $t \in \mathcal{T}_{\mathbf{p}^n}(n,g_n)$ and $e$ an edge that lies on a boundary of $t$, we recall that $(t,e)$ satisfies $\mathcal{R}^{\mathrm{sep}}_j$ if the triangle behind $e$ in $t$ has its third vertex that lies on the same boundary as $e$ and the right hole created has perimeter $j+1$ and disconnects $t$ in two disconnected components. For any $k \in \{1,\cdots,\ell_n\}$, recalling that $e^n_k$ denotes the distringuished edge on $\partial_k$, let $\mathcal{A}_{n}(r,k)$ denote the event that $(T_{n,g_n,\mathbf{p}^n},e^n_k)$ satisfies $\mathcal{R}^{\mathrm{sep}}_j$ for some $j \ge r$, and that the triangulation $T^2$ filling the hole of perimeter $j+1$ to the right is planar.  
Let $\mathcal{A}(r)$ be the event that $(\widetilde{\mathbb{H}}_{\lambda(\theta)},e)$ satisfies $\mathcal{R}^{\mathrm{sep}}_j$ with $j \ge r$, where $e$ is the root edge of $\widetilde{\mathbb{H}}_{\lambda(\theta)}$.  

we get, for $n$ large enough,
\begin{align}
\sum_{k=1}^{\ell_n}\frac{p^n_k}{|\mathbf{p}^{n}|}\Pf(\mathcal{A}_n(r,k)) \ge \frac{x}{2}\Pf(\mathcal{A}(r)).
\end{align}

Using \eqref{formula_wlambda}, we have
\begin{align*}
\Pf(\mathcal{A}(r)) = (32h+4)^{-r}w_{\lambda(\theta)}(r+1) \underset{r \to +\infty}{\sim} C_{\lambda(\theta)}r^{-1/2}.
\end{align*}
Hence, for $r$ and $n$ large enough, there exists $k \in \{1,\cdots,\ell_n\}$ such that 
\[
\Pf(\mathcal{A}_n(r,k)) \ge \frac{x}{2}C_{\lambda(\theta)}r^{-1/2}.
\]
Now, we claim that on $\mathcal{A}_n(r,k)$, the peeling diagram $\mathcal{D}^{\mathcal{A}_{\mathrm{left}}}\big(T_{n,g_n,\mathbf{p}^n},e^n_k\big)$ satisfies $(\mathcal{S}_r)$.

Indeed, the first peeling step is of type~IV.  
Moreover, since the right hole has perimeter $r+1$ and is filled by a planar triangulation of the $(r+1)$-gon, the component $\mathcal{R}$ induced by the edge $\rightarrow$ at the root of $\mathcal{D}^{\mathcal{A}_{\mathrm{left}}}\big(T_{n,g_n,\mathbf{p}^n},e_k^n\big)$ is a tree with at least $r+1$ vertices.  
However, by Lemma~\ref{no_fattrees_locally}, we also have 
\[
\Pf\big(\mathcal{D}^{\mathcal{A}_{\mathrm{left}}}(T_{n,g_n,\mathbf{p}^n},e^n_k) \text{ satisfies } (\mathcal{S}_r)\big) \le C_{\theta}r^{-1}.
\]
We therefore deduce
\[
\frac{x}{4}C_{\lambda(\theta)}r^{-1/2} \le C_{\theta}r^{-1},
\]
which leads to a contradiction for large $r$.  
This concludes the proof.
\end{proof}

\appendix
\section{Martin boundaries and harmonic functions}

In this appendix, we provide a classification of harmonic functions for a specific class of Markov chains. The arguments used in this part are probably standard. However, we did not find a reference that works in our context. We fix a probability measure $\mu$ on 
\[
S = \{(1,1)\} \cup \{(-k,m) : (k,m) \in \mathbb{N}^2\}.
\]
We write 
\[
X = \{(p,v) \in \mathbb{Z}^2 : v \ge \max(0,p)\}.
\]
Let us introduce the vectors $e_1 = (-1,0)$ and $e_2 = (1,1)$.  
In this way, any element $(x,y) \in X$ can be uniquely decomposed as $(x,y) = k_1 e_1 + k_2 e_2$ with $k_1,k_2 \ge 0$.  
The set $X$ is stable under addition by an element of $S$.  
We assume that $\mu(e_1) > 0$ and $\mu(e_2) > 0$.  
For any $x \in X$, the random walk starting from $x$ with i.i.d.\ increments distributed according to $\mu$ takes its values in $X$.  

A function $h$ on $X$ is said to be \emph{$\mu$-harmonic} if
\begin{align}\label{harmonicity}
\forall x \in X,\quad h(x) = \sum_{y \in S} \mu(y) \, h(x+y).
\end{align}
A nonnegative function $h$ on $X$ is said to be \emph{minimal $\mu$-harmonic} if it is $\mu$-harmonic and if, for any other nonnegative $\mu$-harmonic function $w$ such that $0 \le w \le h$, there exists $C \ge 0$ with $w = C h$.  

Following the construction of \cite[Section~4]{martin}, there exists a metric space $\partial_m X_M$, called the \emph{minimal Martin boundary}, with the following properties:
\begin{enumerate}
	\item \label{it1app} For any $\alpha \in \partial_m X_M$, there exists a minimal $\mu$-harmonic function $K(\cdot,\alpha) \ge 0$.
	\item \label{it2app} For any positive $\mu$-harmonic function $h$, there exists a unique measure $\nu_h$ on $\partial_m X_M$ such that, for any $x \in S$,
	\[
	h(x) = \int_{\alpha \in \partial_m X_M} K(x,\alpha)\,\nu_h(\mathrm{d}\alpha).
	\]
\end{enumerate} 
We leave it to the reader to verify that the function defined by 
\[
h_0(p,v) = \mu(e_2)^{-v}\mathbf{1}_{\{p=v\}}
\]
is a minimal $\mu$-harmonic function on $X$.  
The next lemma classifies all minimal $\mu$-harmonic functions.

\begin{lemma}\label{minimal_harmonic}
Let $h$ be a minimal $\mu$-harmonic function on $S$ such that $h(0,0) = 1$, distinct from $h_0$.  
Then, there exist constants $\eta,\alpha \ge 0$ such that
\[
\forall (x,y) \in X, \quad h(x,y) = \eta^x \alpha^y.
\]
\end{lemma}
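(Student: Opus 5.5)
The approach is the classical Choquet--Deny argument via translation invariance. Let $h$ be minimal harmonic with $h(0,0)=1$ and $h\neq h_0$. For any $s\in S$, the shifted function $h_s(x) := h(x+s)$ is again $\mu$-harmonic, because $X+s\subset X$ and the increments are i.i.d. Harmonicity at $x$ gives $h(x)=\sum_{s}\mu(s)h(x+s)\ge \mu(s)h_s(x)$. Hence $0\le \mu(s)h_s\le h$, and minimality yields a constant $c_s\ge 0$ with $h(x+s)=c_s h(x)$ for all $x\in X$ and all $s\in S$ with $\mu(s)>0$.

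We apply this with $s=e_1=(-1,0)$ and $s=e_2=(1,1)$, which have positive mass by assumption. Every $(x,y)\in X$ decomposes uniquely as $k_1e_1+k_2e_2$ with $k_1=y-x\ge 0$ and $k_2=y\ge0$. Iterating from $(0,0)$ gives
\[
h(x,y)=c_{e_1}^{\,y-x}c_{e_2}^{\,y}.
\]
Setting $\eta:=c_{e_1}^{-1}$ and $\alpha:=c_{e_1}c_{e_2}$ we get $h(x,y)=\eta^x\alpha^y$, provided $c_{e_1}>0$.

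The main obstacle is the degenerate case $c_{e_1}=0$, which is where $h_0$ has to be excluded. If $c_{e_1}=0$, then $h$ vanishes on every point with $k_1\ge1$, that is, off the diagonal $\{p=v\}$. On the diagonal, $h(v,v)=c_{e_2}^v$. Harmonicity at $(0,0)$ only involves $(1,1)$ and points $(-k,m)$; the latter lie off the diagonal unless $k=m=0$. This gives
\[
1=\mu(e_2)c_{e_2}+\mu(0,0),
\]
so $c_{e_2}=(1-\mu(0,0))/\mu(e_2)$.

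We must show that this forces $h=h_0$. If $\mu(0,0)=0$, this is immediate. If $\mu(0,0)>0$, I would first normalise: the lazy step $(0,0)$ can be removed by replacing $\mu$ with $\mu$ conditioned on being nonzero. Harmonic functions are unchanged by this, and $h_0$ should be read as defined with the normalised weight. In the application one does have $\mu(0,0)>0$, since $Q((p,v),(p,v))=2$ times a small weight. So the statement for $h_0$ should be interpreted with $\mu(e_2)$ replaced by $\mu(e_2)/(1-\mu(0,0))$; equivalently, $h_0$ is the unique harmonic function supported on the diagonal. Under either reading, $c_{e_1}=0$ gives $h=h_0$, contradicting the hypothesis.

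Finally, nonnegativity of $h$ gives $\eta,\alpha\ge0$. Alternatively, one could try to rule out $c_{e_1}=0$ directly, but the diagonal-support computation above is cleaner, and I expect the only genuine subtlety to be this careful handling of the degenerate case and of the normalisation of $h_0$.
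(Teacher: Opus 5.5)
Your proof is correct and essentially the paper's. Minimality plus translation invariance gives $h(x+s)=h(s)\,h(x)$, hence $h(x,y)=h(e_1)^{y-x}h(e_2)^{y}$, and $h\neq h_0$ is then used to exclude $h(e_1)=0$; shifting only by the positive-mass steps $e_1,e_2$, rather than by arbitrary $(p,v)\in X$ via $\mu^n$ as the paper does, makes no difference.

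Your extra care in the degenerate case is justified: the paper's $h_0$ is harmonic only when $\mu(0,0)=0$, and its one-line exclusion of $h(e_1)=0$ tacitly assumes this. However, your side remark about the application is wrong. There $\mu(0,0)=2\tau_1(-1,0)=0$, because a hole of perimeter $1$ cannot be filled without an internal vertex; the displayed formula for $Q$ is a typo, as comparison with the peeling identity for $a_{p,v}$ shows. So no renormalisation is actually needed there.
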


\begin{proof}
Fix a minimal $\mu$-harmonic function $h$ on $S$ with $h(0,0) = 1$ and $h \neq h_0$.  
For any $(p,v) \in X$, define the shifted function $h_{(p,v)}$ by $h_{(p,v)}(a,b) = h(p+a, v+b)$.  
Fix $n \ge 0$ such that 
\[
c_{p,v} := \mu^{n}((0,0),(p,v)) > 0,
\]
which is always possible.  
By harmonicity, we have 
\[
0 \le h_{(p,v)}(a,b)c_{p,v} \le h(a,b).
\]
By minimality, there exists a constant $d_{p,v} \ge 0$ such that $h_{(p,v)} = d_{p,v} h$.  
Evaluating at $(0,0)$ gives $d_{p,v} = h(p,v)$, hence
\[
h(p+a,v+b) = h(p,v)h(a,b).
\]
We may therefore write 
\[
h(p,v) = h(e_1)^{v-p}h(e_2)^{v}.
\]
Since $h \neq h_0$, we must have $h(e_1) > 0$.  
The result follows by setting $\eta = h(e_1)^{-1}$ and $\alpha = h(e_1)h(e_2)$.
\end{proof}

As a consequence, we obtain the following corollary.

\begin{corollary}\label{harmonic}
For any $\mu$-harmonic function $h$, there exist a measure $\Lambda_0$ on $(\mathbb{R}_{+})^2$ and a constant $\lambda \ge 0$ such that, for any $(x,y) \in X$,
\[
h(x,y) = \int_{(\eta,\alpha) \in (\mathbb{R}_{+})^2} \eta^x \alpha^y \, \Lambda_0(\mathrm{d}\eta,\mathrm{d}\alpha) + \lambda\, h_0(x,y).
\]
\end{corollary}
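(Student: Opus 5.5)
The plan is to deduce Corollary~\ref{harmonic} from the Martin boundary representation together with Lemma~\ref{minimal_harmonic}. I assume $h$ is nonnegative, since that is implicit for the representation and is the situation where it is used: $h = a/b$ with $a,b \ge 0$. If $h\equiv 0$ there is nothing to prove, so assume $h$ is not identically zero.

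First I will reduce to a positive harmonic function, or handle the support directly. Since $\mu(e_1),\mu(e_2)>0$, harmonicity gives $h(x)\ge \mu(e_i)h(x+e_i)$ for each $i$. Hence if $h$ vanishes at a point $x$, it vanishes on $x+\mathbb{N}e_1+\mathbb{N}e_2$. Every point of $X$ is of the form $(0,0)+k_1e_1+k_2e_2$, so if $h(0,0)=0$ then $h\equiv 0$. Thus $h(0,0)>0$.

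If $h$ has zeros elsewhere, property~\ref{it2app} is stated for positive harmonic functions. I would therefore apply the Martin theory of \cite{martin} in its general form, which covers nonnegative harmonic functions on the set reachable from the reference point $(0,0)$; this set is all of $X$. This yields a measure $\nu_h$ on $\partial_m X_M$ with
\[
h(x)=\int K(x,\alpha)\,\nu_h(\mathrm{d}\alpha),
\]
where each $K(\cdot,\alpha)$ is minimal harmonic and normalised by $K((0,0),\alpha)=1$.

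Next I will split the boundary into $A_0=\{\alpha: K(\cdot,\alpha)=h_0\}$ and its complement. This is possible because minimal functions with value $1$ at $(0,0)$ are determined up to that normalisation, and $h_0(0,0)=1$. On $A_0$ the integral contributes $\lambda h_0$ with $\lambda=\nu_h(A_0)\ge 0$. On the complement, Lemma~\ref{minimal_harmonic} gives $K(x,\alpha)=\eta(\alpha)^{x_1}\alpha'(\alpha)^{x_2}$ for some $\eta(\alpha),\alpha'(\alpha)\ge 0$.

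Define $\Lambda_0$ as the pushforward of $\nu_h$ restricted to $A_0^c$ under $\alpha\mapsto(\eta(\alpha),\alpha'(\alpha))$. This map is measurable because
\[
\eta(\alpha)=K(e_1,\alpha)^{-1},\qquad \alpha'(\alpha)=K(e_1,\alpha)\,K(e_2,\alpha),
\]
and each $K(y,\cdot)$ is continuous on the Martin boundary. A change of variables then gives the claimed formula.

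The main obstacle is the degenerate case in this measurability step. One must check that $K(e_1,\alpha)>0$ on $A_0^c$, which is exactly what the proof of the lemma provides, so that $\eta$ is finite.

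One must also make sure that minimal functions vanishing at $(0,0)$ do not arise, which holds by the propagation argument above. Finally, the representation is only asserted at points of $S$ in property~\ref{it2app}; I expect this to be a typo for $X$, and I would use the representation on all of $X$, as in \cite{martin}.
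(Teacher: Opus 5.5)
Your proof is correct and follows the argument the paper intends: the corollary is stated there as an immediate consequence of the minimal Martin boundary representation combined with Lemma~\ref{minimal_harmonic}. As you do, one splits off the point of the boundary where the kernel equals $h_0$ and pushes the rest of $\nu_h$ forward to $(\mathbb{R}_+)^2$. Your extra remarks fill in details the paper leaves unstated: that $h$ must be nonnegative, the measurability of $\alpha\mapsto(K(e_1,\alpha)^{-1},K(e_1,\alpha)K(e_2,\alpha))$, and reading $S$ as $X$ in the representation.
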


\printbibliography
\end{document}